\DeclareMathAlphabet{\mathpzc}{OT1}{pzc}{m}{it}
\numberwithin{equation}{section}
\newtheoremstyle{my}{1.5em}{.5em}{\em}{}{\sc}{.}{0.5em}{}
\newtheorem{thm}{Theorem}[section]
\newtheorem{Theorem}[thm]{Theorem}
\newtheorem*{Theorem*}{Theorem} 
\newtheorem{corollary}[thm]{Corollary}
\newtheorem*{corollary*}{Corollary}
\newtheorem{Lemma}[thm]{Lemma}
\newtheorem{prop}[thm]{Proposition}
\newtheorem*{conjecture*}{Conjecture}
\newtheorem*{question*}{Question}
\newtheorem{defn}[thm]{Definition}
\newtheorem*{definitions*}{Definitions}
\newtheorem*{rem*}{Remark}
\newtheorem{Remark}[thm]{Remark}
\newtheorem{assumption}[thm]{Assumption}
\newtheorem*{remark*}{Remark}
\newtheorem{remarks}[thm]{Remarks}
\newtheorem*{remarks*}{Remarks}
\newtheorem*{example*}{Example}
\newtheorem{Example}[thm]{Example}
\newtheorem*{examples*}{Examples}
\newtheorem{examples}[thm]{Examples}
\newtheorem*{convention*}{Convention}
\newtheorem*{conventions*}{Conventions}
\newtheorem*{exercise*}{Exercise}
\newtheorem*{bibliographical-note*}{Bibliographical note}
\newtheorem{lemma}[thm]{Lemma}
\newenvironment{pf}{\paragraph{Proof}}{\qed\par\medskip}
\newtheorem{remark}[thm]{Remark}
\newcommand{\Acknowledgements}{{\em Acknowledgements.} }
\newcommand{\bR}{\mathbb{R}}
\newcommand{\bC}{\mathbb{C}}
\newcommand{\bP}{\mathbb{P}}
\newcommand{\half}{{\textstyle\frac{1}{2}}}
\newcommand{\Sym}{\operatorname{Sym}}
\newcommand{\Hom}{\mathrm{Hom}}
\newcommand{\Aut}{\operatorname{Aut}}
\newcommand{\Diff}{\mathrm{Diff}}
\newcommand{\A}{\mathcal{A}}
\newcommand{\oPP}{\oplus\mathbb{P}}
\numberwithin{equation}{section}
\renewcommand{\leq}{\leqslant}
\renewcommand{\geq}{\geqslant}
\newcommand{\isom}{\cong}
\newcommand{\tensor}{\otimes}
\newcommand{\blob}{{\scriptscriptstyle\bullet}}
\newcommand{\lRa}[1]{\xrightarrow{\ #1\ }}
\newcommand{\lra}{\longrightarrow}
\newcommand{\QQ}{\mathbb{Q}}
\newcommand{\blank}{-}
\newcommand{\D}{{\mathcal{D}}}
\newcommand{\PP}{\operatorname{\mathbb P}}
\newcommand{\C}{\mathbb C}
\newcommand{\RR}{\operatorname{R}}
\newcommand{\B}{\mathcal B}
\newcommand{\CC}{\mathcal C}
\newcommand{\OO}{\mathcal O}
\renewcommand{\P}{\mathcal{P}}
\newcommand{\Z}{\mathbb{Z}}
\newcommand{\R}{\mathbf R}
\newcommand{\constant}{\operatorname{constant}}
\newcommand{\Tw}{\operatorname{Tw}}
\newcommand{\Ker}{\operatorname{Ker}}
\newcommand{\Ext}{\operatorname{Ext}}
\newcommand{\Mod}{\operatorname{\mathcal{M}}}
\newcommand{\op}{\operatorname{op}}
\renewcommand{\sc}{\operatorname{sc}}
\newcommand{\Tri}{\operatorname{Tri}}
\newcommand{\ord}{\diamond}
\newcommand{\Triord}{\operatorname{Tri}^{\ord}}
\newcommand{\Zer}{\mathrm{Zer}}
\newcommand{\Pol}{\mathrm{Pol}}
\newcommand{\Quad}{\operatorname{Quad}}
\newcommand{\Stab}{\operatorname{Stab}}
\newcommand{\CY}{CY$_3$ }
\newcommand{\M}{\operatorname{M}}
\newcommand{\T}{{T}}
\newcommand{\TT}{\mathcal{T}}
\newcommand{\FF}{\mathcal F}
\renewcommand{\Im}{\operatorname{Im}}
\renewcommand{\H}{\bar{\mathcal{H}}}
\renewcommand{\op}{{\circ}}
\newcommand{\MCG}{\operatorname{MCG}}
\newcommand{\Sph}{\operatorname{Sph}}
\newcommand{\cM}{\mathcal{M}}
\newcommand{\hS}{\Hat{S}}
\newcommand{\m}{{ {m}}}
\newcommand{\Tilt}{\operatorname{Tilt}}
\renewcommand{\SS}{{\textsection}}
\renewcommand{\S}{\mathbb{S}}
\renewcommand{\M}{\mathbb{M}}
\newcommand{\res}{\operatorname{Res}}
\newcommand{\hs}{\Hat{H}(\phi)}
\newcommand{\hse}{\Hat{H}^e(\phi)}
\newcommand{\hsinput}[1]{\Hat{H}(\phi_{#1})}
\newcommand{\Crit}{\operatorname{Crit}}
\newcommand{\h}{\mathfrak{h}}
\newcommand{\Reach}{\operatorname{Aut}_{\triangle}}
\newcommand{\uReach}{\mathpzc{{Aut}}_{\triangle}}
\newcommand{\Ex}{\operatorname{Exch}}
\newcommand{\Nil}{\operatorname{Nil}_\triangle}
\newcommand{\All}{\Reach}
\newcommand{\uAll}{\uReach}
\newcommand{\uTwist}{\mathpzc{{Sph}}_\triangle}
\newcommand{\uAllo}{\mathpzc{{Aut}}^{0}_\triangle}
\newcommand{\Allo}{{\operatorname{\Aut}}^{0}_\triangle}
\newcommand{\Quadorb}{\operatorname{Quad}_{\heartsuit}}
\newcommand{\barC}{\bar{C}}
\title{Quadratic differentials as stability conditions}
\author{Tom Bridgeland} 
\thanks{During the writing of this paper T.B. was supported by All Souls College, Oxford.}
\address{Tom Bridgeland, School of Mathematics and Statistics, University of Sheffield, Hicks Building, Hounsfield Road, S3 7RH, England.}
\author{Ivan Smith}
\thanks{I.S. was partially supported by a grant from the European Research Council.}
\address{Ivan Smith, Centre for Mathematical Sciences, Cambridge, CB3 0WB, England.}
\begin{document}

\begin{abstract}
We prove that moduli spaces of meromorphic quadratic differentials with simple zeroes on compact Riemann surfaces can be identified with spaces of stability conditions on a class of \CY triangulated categories defined using quivers with potential associated to triangulated surfaces.  We relate the finite-length trajectories of such  quadratic differentials to the stable objects of the corresponding stability condition.
\end{abstract}

\maketitle \thispagestyle{empty}

\begin{footnotesize}
\setcounter{tocdepth}{1}
\tableofcontents
\end{footnotesize}
\parindent0em
\parskip1em


\section{Introduction}\label{Sec:Intro}

In this paper we prove that spaces of stability conditions on a certain class of  triangulated categories  can be identified with  moduli spaces of meromorphic quadratic differentials. The relevant categories  are Calabi-Yau of dimension three (CY$_3$), and are described using quivers with potential associated to triangulated surfaces. The observation that spaces of abelian and quadratic differentials have similar properties to spaces of stability conditions was first made by  Kontsevich and  Seidel several years ago. On the one hand, our results provide some of  the first  descriptions of spaces of stability conditions on  \CY categories, which is the case of most interest in physics.
On the other, they give a precise link  between the  trajectory structure of   flat surfaces and the theory of wall-crossing and Donaldson-Thomas invariants.

Our results  can  also be viewed  as a first step towards a mathematical understanding of the  work of physicists Gaiotto, Moore and Neitzke \cite{GMN1,GMN2}. Their paper \cite{GMN1}   describes a remarkable interpretation of the Kontsevich-Soibelman wall-crossing formula for Donaldson-Thomas invariants   in terms of hyperk{\"a}hler geometry. In the sequel \cite{GMN2} an extended example  is described, relating to parabolic Higgs bundles of rank two. The mathematical objects studied in the present paper are very closely related to their physical counterparts in \cite{GMN2}, and some of our basic constructions are taken directly from that paper. 
We hope to return to the relations with  Hitchin systems and cluster varieties in a future publication.   In another direction, the CY$_3$ categories appearing in this paper also arise as Fukaya categories of certain quasi-projective Calabi-Yau threefolds.  That relation is the subject of a sequel paper \cite{Smith-Quiver}.

In this introductory section we shall first recall some basic facts about quadratic differentials on Riemann surfaces. We then describe the simplest examples of the  categories we shall be studying, before giving a summary of our main result in that case, together with a very brief sketch of how it is proved. We then  state the other version of our result involving quadratic differentials with higher-order poles. We conclude by  discussing the relationship between the finite-length trajectories of a quadratic differential and the  stable objects of the corresponding stability condition.

As a matter of notation, the triangulated categories  we consider here are most naturally labelled by combinatorial data consisting of a smooth surface $\S$ equipped with a collection of marked points $\M\subset\S$, all considered up to diffeomorphism. Initially $\S$ will be closed, but in the second form of our result $\S$ can have non-empty boundary. The quadratic differentials we consider live on Riemann surfaces $S$ whose underlying smooth surface is obtained from $\S$ by collapsing each  boundary component to a  point. To avoid confusion, we shall try to preserve the notational distinction whereby $\S$ refers to a smooth surface, possibly with boundary, whereas $S$ is  always a Riemann surface, usually compact. All these surfaces will be assumed  to be connected.

We fix an algebraically closed field $k$ throughout.


\subsection{Quadratic differentials}

A meromorphic quadratic differential $\phi$ on a Riemann surface $S$ is a meromorphic section of the  holomorphic line bundle $\omega_S^{\tensor 2}$. We emphasize that all the differentials considered in this paper will be assumed to have simple zeroes. Two quadratic  differentials $\phi_1,\phi_2$ on Riemann surfaces $S_1,S_2$  are considered to be    equivalent if there is a holomorphic isomorphism $f\colon S_1\to S_2$ such that $f^*(\phi_2)=\phi_1$.

Let $\S$ be a compact, closed, oriented surface,  with a non-empty  set of  marked  points $\M\subset \S$. We assume that if $g(\S)=0$ then $|\M|\geq 3$. Up to diffeomorphism the pair $(\S,\M)$ is determined by the genus $g=g(\S)$ and the number $d=|\M|>0$ of marked points. We use this combinatorial data to specify a union of strata in the space of meromorphic quadratic differentials; this will be less trivial later when we allow $\S$ to have boundary. 

By a quadratic differential on $(\S,\M)$ we shall mean a pair $(S,\phi)$, where $S$ is a compact and connected Riemann surface  of genus $g=g(\S)$, and $\phi$ is a meromorphic quadratic differential with simple zeroes and exactly $d=|\M|$ poles, each one of order $\leq 2$. Note that every equivalence class of such differentials contains pairs $(S,\phi)$ such that $\S$ is the underlying smooth surface of $S$, and $\phi$ has poles precisely at the points of $\M$.

 A quadratic differential $(S,\phi)$ of this form determines a double cover $\pi\colon \Hat{S}\to S$, called the spectral cover, branched precisely at the zeroes   and simple poles of $\phi$. This cover has the property that
\[\pi^*(\phi)=\psi\tensor \psi\]
 for some globally-defined meromorphic 1-form $\psi$.
We write $\hS^\op\subset \hS$ for the complement of the poles of $\psi$.
The hat-homology group of the differential $(S,\phi)$ is defined to be \[\hs=H_1(\Hat{S}^\op;\Z)^-\]
 where 
 the superscript  indicates  the anti-invariant part for the action of the covering involution. The 1-form $\psi$ is  holomorphic on $\hS^\op$ and anti-invariant, and hence defines a de Rham cohomology class, called the period of $\phi$, which we choose to view as a group homomorphism
 \[Z_\phi\colon \hs\to \C, \qquad \gamma\mapsto \int_\gamma \psi.\]

There is a complex orbifold $\Quad(\S,\M)$ of dimension \[n=6g-6+3d\] parameterizing equivalence-classes of quadratic differentials on $(\S,\M)$. We call a quadratic differential complete if it has no simple poles; such differentials form a dense open subset $\Quad(\S,\M)_0\subset \Quad(\S,\M)$. 

The homology groups $\hs$ form a local system over the orbifold $\Quad(\S,\M)_0$. A slightly subtle point is that this local system does not extend over $\Quad(\S,\M)$, but rather has  monodromy of order 2 around each component of the divisor parameterizing differentials with a simple pole. It therefore defines a local system on an orbifold $\Quadorb(\S,\M)$ which has larger automorphism groups along this divisor. There is a natural map \[\Quadorb(\S,\M)\to \Quad(\S,\M),\]which is an isomorphism over the open subset $\Quad(\S,\M)_0$, and which induces an isomorphism on coarse moduli spaces. Fixing a free abelian group $\Gamma$ of rank $n$, we can also consider an unramified cover 
\[\Quad^\Gamma(\S,\M)\lra \Quadorb(\S,\M)\]
of framed quadratic differentials, consisting of equivalence classes of  quadratic differentials as above, equipped with a  local trivalization $\Gamma \isom \hs$ of the hat-homology local system.

In Section \ref{proof} we shall prove the following result, which is a variation on the usual existence of period co-ordinates in spaces of quadratic differentials. For this we need to assume that $(\S,\M)$ is not a torus with a single marked point.

\begin{thm}
\label{pete}
 The space of framed differentials $\Quad^\Gamma(\S,\M)$ is a complex manifold, and there is a local homeomorphism 
\begin{equation}
\label{per1}\pi\colon \Quad^\Gamma(\S,\M)\lra \Hom_\Z(\Gamma,\C),\end{equation}
obtained by composing the framing and the period. 
\end{thm}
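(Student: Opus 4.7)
The plan is to build local coordinates by passing to the spectral cover and applying the classical period coordinate theorem for meromorphic abelian differentials, then restricting to the anti-invariant part. Fix $(S_0,\phi_0)$ in the stratum together with a framing $\theta_0\colon \Gamma \iso \hs$. I would first construct the spectral cover $\pi\colon\hS_0 \to S_0$ and its anti-invariant 1-form $\psi_0$ with $\pi^*\phi_0 = \psi_0\tensor\psi_0$. A direct local calculation shows that $\psi_0$ is holomorphic off $\pi^{-1}$ of the double poles, where it has simple poles with opposite residues at the two preimages, and that its zero divisor is supported on the preimages of the zeros of $\phi_0$, with doubled order. Thus $(\hS_0,\psi_0)$ belongs to a definite stratum of meromorphic abelian differentials on $\hS_0$, with a $\sigma$-action under which $\sigma^*\psi_0 = -\psi_0$.

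Next, I would appeal to the standard period coordinate theorem (Hubbard--Masur, Veech, Kontsevich--Zorich) for moduli of pairs $(\hS,\psi)$ where $\psi$ has a prescribed combinatorial type of zeros and poles: the map sending such a pair to the periods of $\psi$ in $\Hom_\Z(H_1(\hS\smallsetminus\{\text{poles of }\psi\},\text{zeros of }\psi;\Z),\C)$ is a local diffeomorphism onto an open subset of that vector space. The covering involution acts on the deformation space, and I would check that the anti-invariant part (i) parameterizes exactly those deformations preserving the $\sigma$-symmetry, hence descends to a deformation of $(S,\phi)$, and (ii) has dimension $n=6g-6+3d$ after a Riemann--Hurwitz count ($\hat g = 4g-3+d$, together with $2p_2$ punctures at the preimages of the double poles, then halving under the involution).

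To identify the anti-invariant relative cohomology with $\Hom_\Z(\hs,\C)$, I would use that the zero set $Z\subset \hS^\op$ is pointwise fixed by $\sigma$, so the long exact sequence of the pair $(\hS^\op,Z)$ is $\sigma$-equivariant with $\sigma$ acting trivially on $H_0(Z)$ and $H_0(\hS^\op)$. Taking anti-invariants yields an isomorphism
\begin{equation*}
H_1(\hS^\op;\Z)^- \isom H_1(\hS^\op, Z;\Z)^-,
\end{equation*}
so the anti-invariant part of the classical relative period data is exactly $\Hom_\Z(\hs,\C)$. Composing with the framing $\theta\colon\Gamma\iso\hs$ (which is discrete and locally constant, hence extends uniquely to a neighbourhood) produces the map $\pi$ of \eqref{per1}, and the local diffeomorphism on strata of abelian differentials, restricted to anti-invariants, then gives that $\pi$ is a local homeomorphism. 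The fact that $\Quad^\Gamma(\S,\M)$ is a manifold (rather than an orbifold) follows because the framing kills the order-two monodromy around the simple-pole divisor, as well as any automorphisms of $(S,\phi)$ acting non-trivially on $\hs$ — this is where the exclusion of the torus with one marked point is needed, as in that case there is a non-trivial hyperelliptic-type automorphism acting trivially on hat-homology.

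The principal technical obstacle is the equivariant deformation theory near differentials with double poles: one must verify that anti-invariant deformations of $(\hS_0,\psi_0)$ as a meromorphic abelian differential exhaust the deformations of $(S_0,\phi_0)$ as a quadratic differential with the prescribed pole structure, without imposing residue constraints — the residues at simple poles of $\psi$ do vary, and correspond to loop periods around those poles, which lie in $\hs$ by the $\sigma$-anti-invariance of residues. Once this equivariant period theorem is in place, the rest of the argument is bookkeeping with the framing and a check of dimensions.
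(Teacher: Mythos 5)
Your route --- pass to the spectral cover, invoke period coordinates for a stratum of abelian differentials, and take anti-invariants --- is genuinely different from the paper's, which never leaves the quadratic side: it restricts to saddle-free differentials, decomposes the surface into horizontal strips and half-planes, and shows by an explicit gluing construction (Proposition \ref{ivans}, proved in Section \ref{glu}) that the strip periods give bijective local coordinates. Your equivariant reduction is a reasonable strategy, and the identification $H_1(\hS^\op;\Z)^-\isom H_1(\hS^\op,Z;\Z)^-$ is correct since the zeroes of $\psi$ sit at branch points fixed by the involution. But the input you call standard is not: Hubbard--Masur, Veech and Kontsevich--Zorich treat holomorphic (finite-area) abelian differentials, whereas here $\psi$ has simple poles at the preimages of the double poles of $\phi$, so the flat surface has infinite area and half-infinite cylinders. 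The paper notes explicitly that no proof of period coordinates for such meromorphic strata was available in the literature; supplying that proof is precisely the analytic content of Proposition \ref{ivans} and Theorem \ref{perper}, so as written your argument assumes the hard step rather than proving it.

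The second, independent gap is that $\Quad^\Gamma(\S,\M)$ is not a single stratum: it contains differentials with simple poles, where a zero has collided with a double pole. At such a point the spectral cover acquires a new branch point, the two punctures over the former double pole merge and fill in, and $\hs$ drops rank (by Lemma \ref{train} the rank is $6g-6+2p_1+3p_2$ when there are $p_1$ simple poles, not $6g-6+3d$). Your ``fix a stratum and apply the period theorem'' framework therefore only proves the statement on the open dense locus $\Quad(\S,\M)_0$ of complete differentials. Handling the incomplete locus is what forces the paper to introduce the extended hat-homology group $\hse$, the signed cover and the orbifold $\Quadorb(\S,\M)$, and the local homeomorphism there is proved separately (Proposition \ref{reff}) using the fact that the incomplete strata are cut out by the vanishing of the residue periods $Z_\phi(\beta_p)$, which span the kernel of the comparison map $\hse\to\hs$ (Lemma \ref{rich}). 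Your closing paragraph discusses varying residues, but those are residues of $\psi$ at preimages of \emph{double} poles of $\phi$; the degeneration to \emph{simple} poles of $\phi$ is never addressed. (The ``halving under the involution'' in your dimension count is likewise only correct in the absence of simple poles.)
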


In the excluded case the space $\Quad^\Gamma(\S,\M)$ is not a manifold because it has generic automorphism group $\Z_2$.


\subsection{Triangulations and quivers}
\label{fir}
  Suppose again that $\S$ is a compact, closed, oriented surface  with a non-empty  set of   marked points $\M\subset \S$.    For the purposes of the following discussion   we will assume that if $g(\S)=0$ then $|\M|\geq 5$.

By a non-degenerate ideal triangulation of $(\S,\M)$ we mean  a triangulation  of $\S$ whose vertex set is precisely $\M$ and  in which every vertex has valency at least 3.
To each such triangulation $T$ there is an associated quiver $Q(T)$ whose vertices are  the midpoints of the edges of $T$, and whose arrows are obtained by inscribing a small clockwise 3-cycle inside each face of $T$, as illustrated  in Figure \ref{fig1}. 
\begin{figure}[ht]
\begin{center}
\includegraphics[scale=0.45]{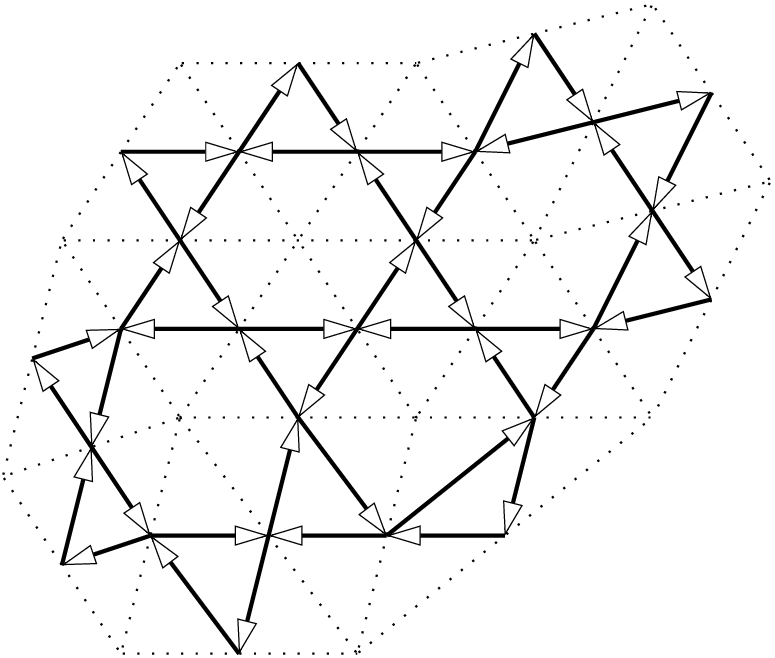}
\end{center}
\caption{Quiver associated to a triangulation.\label{fig1}}
\end{figure}

There are two  obvious systems of cycles in $Q(T)$, namely a clockwise 3-cycle $T(f)$ in each face $f$, and an anticlockwise cycle $C(p)$ of length at least 3 encircling each point $p\in \M$. We  define a potential $W(T)$ on $Q(T)$ by taking the sum
\[W(\T)=\sum_{f} T(f) - \sum_{p}  C(p).\]

Consider the derived category of  the  complete Ginzburg  algebra \cite{Ginzburg,KY} of the quiver with potential $(Q(T),W(T))$ over  $k$, and let $\D(T)$ be the full subcategory consisting of modules with finite-dimensional cohomology. It is a \CY triangulated category of finite type over $k$, and comes equipped with a canonical t-structure, whose  heart $\A(T)\subset \D(T)$   is equivalent to the category of finite-dimensional modules for the completed Jacobi algebra of  $(Q(T),W(T))$.

Suppose that two non-degenerate ideal triangulations $T_i$ are related by a flip,  in which the diagonal of a quadilateral is replaced by its opposite diagonal, as in Figure \ref{stana}. The point of the above definition is that 
the resulting quivers with potential $(Q(T_i),W(T_i))$ are related by a mutation at the vertex corresponding to the edge being flipped; see Figure \ref{stana}. It follows from general results of Keller and Yang \cite{KY} that there is a  distinguished pair of $k$-linear triangulated equivalences $\Phi_\pm\colon \D(T_1)\isom \D(\T_2)$. 

\begin{figure}[ht]
\begin{center}
\includegraphics[scale=0.45]{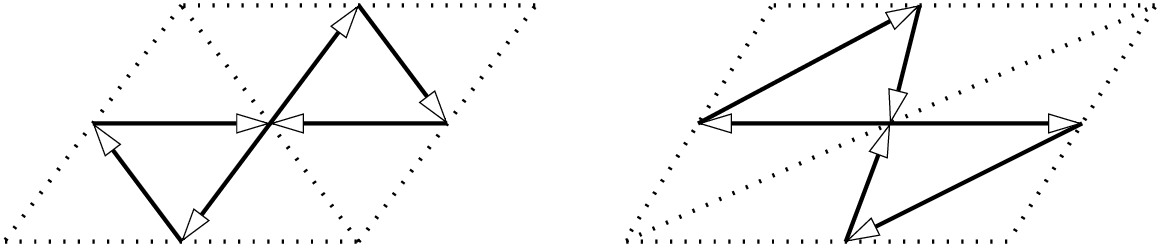}
\end{center}
\caption{Effect of a flip\label{stana}. }
\end{figure}

Labardini-Fragoso \cite{LF1} extended the correspondence between ideal triangulations and quivers with potential so as  to encompass a larger class of  triangulations containing  vertices of valency $\leq 2$. He then proved the much more difficult result that flips    induce mutations  in this more general context.
Since any two ideal triangulations are related by a finite chain of flips, it follows  that up to $k$-linear triangulated equivalence, the category $\D(T)$ is independent of the chosen triangulation.
We loosely use the notation  $\D(\S,\M)$  to denote any triangulated category $\D(T)$  defined by  an ideal triangulation $T$ of  the marked  surface $(\S,\M)$.

 
 \subsection{Stability conditions}
 \label{tw}
A stability condition on a triangulated category $\D$ is a pair $\sigma=(Z,\P)$ consisting of a group homomorphism $Z\colon K(\D)\to \C$ called the central charge, and an $\R$-graded collection of objects  \[\P=\bigcup_{\phi\in\RR}\P(\phi)\subset \D\]  known as the  semistable objects, which together satisfy some axioms (see Section \ref{stabsumm}). 

For simplicity, let us assume that the Grothendieck group $K(\D)$ is free of some finite rank $n$. There is then a complex manifold  $\Stab(\D)$ of dimension $n$ whose points are stability conditions on $\D$ satisfying a further  condition known as the support property. The map 
\begin{equation}\label{per2} \pi\colon \Stab(\D)\lra \Hom_\Z(K(\D),\C)\end{equation}
taking a stability condition to its central charge is a local homeomorphism. The manifold $\Stab(\D)$ carries a natural action of the group $\Aut (\D)$  of triangulated autoequivalences of $\D$. 

Now suppose that  $(\S,\M)$ is a compact, closed, oriented surface with marked points, and let $\D$ be the  \CY triangulated category $\D(\S,\M)$ defined in the last subsection. 
There is  a distinguished connected component \[\Stab_\triangle(\D)\subset \Stab(\D),\] containing stability conditions whose heart is one of the standard hearts $\A(T)\subset \D(T)$ discussed above.
We write \[\Reach(\D)\subset \Aut(\D)\]  for the subgroup of autoequivalences of $\D$ which preserve this component. We also define $\uReach(\D)$ to be the quotient of $\Reach(\D)$ by the subgroup of autoequivalences which act trivially on $\Stab_\triangle(\D)$.

The first form of our main result is

\begin{Theorem}  \label{Thm:Main0} 
Let $(\S,\M)$ be a compact, closed, oriented surface with marked points.  Assume that  one of the following two conditions holds
\begin{itemize}
\item[(a)] $g(\S)=0$ and  $|\M| >5$; 
\item[(b)] $g(\S)>0$ and  $|\M| > 1$. 
\end{itemize}
Then   there is an isomorphism of complex orbifolds \[
\Quadorb(\S,\M)\isom  \Stab_\triangle(\D)/ \,\uReach(\D).
\] 
 \end{Theorem}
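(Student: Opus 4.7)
The plan is to exploit the fact that both sides of the asserted isomorphism carry compatible period maps. By Theorem \ref{pete} the framed space $\Quad^\Gamma(\S,\M)$ is a local homeomorphism to $\Hom_\Z(\Gamma,\C)$, and by \eqref{per2} the same is true of $\Stab(\D)$ once a framing $\Gamma\isom K(\D)$ is fixed. It therefore suffices to construct a continuous map from $\Quad^\Gamma(\S,\M)$ to $\Stab_\triangle(\D)$ intertwining the two period maps; such a map is automatically a local homeomorphism, and the remaining work is to identify the deck groups on each side with $\uReach(\D)$.

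To define this map I would first work on the open dense locus of \emph{saddle-free} differentials, those whose horizontal foliation contains no finite-length trajectory joining two zeroes. For saddle-free $\phi$ the horizontal foliation decomposes $S$ into horizontal strips and half-planes, and the core arcs of the strips (geodesics joining marked points) assemble into a non-degenerate ideal triangulation $T_\phi$ of $(\S,\M)$, the WKB triangulation. Attach to $T_\phi$ the standard heart $\A(T_\phi)\subset\D(T_\phi)\isom\D$ from Section \ref{fir}; the simples are labelled by the edges of $T_\phi$, each edge determining a hat-homology class via a lift of the boundary circle of the dual strip, and these classes form a basis of $\hs$. Declare the central charge $Z$ to take the value of the period $Z_\phi$ on each such generator. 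The positivity condition $Z(S_i)\in\mathfrak{h}$ for each simple $S_i$ amounts to the integral of $\psi$ across a horizontal strip having strictly positive imaginary part, which holds tautologically. This defines a stability condition in $\Stab_\triangle(\D)$ depending continuously on $\phi$ in the saddle-free locus. To extend across the real codimension-one walls where $\phi$ acquires a horizontal saddle connection, observe that two nearby saddle-free differentials on opposite sides of a wall have WKB triangulations differing by a single flip (or, at more degenerate walls, by a Labardini-Fragoso mutation), and by the Keller--Yang comparison their hearts are related by the tilt at the corresponding simple. The associated stability conditions consequently glue into a continuous, period-preserving map defined on all of $\Quad^\Gamma(\S,\M)$; a connectedness argument, using that every stability condition in $\Stab_\triangle(\D)$ is by definition reachable from some standard heart by tilts corresponding to flips, then shows the image exhausts $\Stab_\triangle(\D)$.

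The remaining and principal obstacle is to identify the deck groups. The fibre of $\Quad^\Gamma\to\Quadorb(\S,\M)$ is acted on freely by the image of $\MCG(\S,\M)$ in $\GL(\Gamma)$, while the fibre of $\Stab_\triangle\to\Stab_\triangle/\uReach(\D)$ is acted on by $\uReach(\D)$. The WKB construction induces a homomorphism $\MCG(\S,\M)\to\uReach(\D)$, sending a diffeomorphism $f$ to the Keller--Yang composition of flips relating the WKB triangulations of $\phi$ and $f^*\phi$. Verifying that this homomorphism induces a bijection of deck groups is the geometric heart of the theorem: it requires both that diffeomorphisms not already acting trivially do not act trivially on $\Stab_\triangle$, and, more seriously, that every triangle-preserving autoequivalence of $\D$ is realised by a diffeomorphism of $(\S,\M)$. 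The latter reduces to showing that any two standard hearts $\A(T_1),\A(T_2)$ appearing in $\Stab_\triangle$ coming from distinct triangulations of $(\S,\M)$ are related by a sequence of flips that can be lifted to an isotopy of triangulations, which is the subtle point at which the geometry of ideal triangulations must be compared with the purely algebraic theory of mutations. Once these group identifications are established, the claimed isomorphism of orbifolds follows by descent.
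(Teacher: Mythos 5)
Your overall strategy coincides with the paper's: build the stability condition from the WKB triangulation of a saddle-free differential with central charge given by the periods of the standard saddle classes, glue across walls via the Keller--Yang equivalences, and identify the group actions through the mapping class group. However, there are several genuine gaps in the way you propose to carry this out.

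First, your surjectivity argument rests on a false premise. It is \emph{not} true that every stability condition in the distinguished connected component $\Stab_\triangle(\D)$ has a heart reachable by tilts from a standard heart: the component $\Stab_\triangle(\D)$ is in general strictly larger than the union of the cells $\Stab(\B)$ over reachable hearts $\B$. The paper instead proves that the image of the constructed map is both open (because the map intertwines the two local-homeomorphism period maps) and closed, and the closedness is the hard part: it requires showing that a convergent sequence of stability conditions in the image comes from a convergent sequence of differentials, which in turn needs a compactness statement for $\Quad(\S,\M)$ (a degeneration analysis on the Deligne--Mumford compactification) together with the correspondence between masses of stable objects and lengths of saddle connections. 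None of this is present in your sketch, and without it the map could be an open embedding onto a proper open subset.

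Second, the extension of the map beyond the saddle-free locus is much more delicate than crossing codimension-one flip walls. Your parenthetical ``at more degenerate walls, by a Labardini-Fragoso mutation'' is not correct: the deeper strata consist of differentials with ring domains, spiral domains, or several saddle trajectories, and these are not bridged by any single mutation. The paper handles them by a stratification argument (walls are locally contained in real hyperplanes, and ``walls have ends''), which lets one deform any path to a tame path meeting only the single-saddle-trajectory stratum finitely often; this is also what makes the glued map well defined independently of the chosen path. Moreover, even the codimension-one walls come in two flavours --- flips and \emph{pops} (closed saddle trajectories bounding degenerate ring domains around a double pole) --- and the latter forces the introduction of signed differentials, the orbifold $\Quadorb(\S,\M)$ rather than $\Quad(\S,\M)$, and tagged rather than ideal triangulations (self-folded triangles arise from degenerate horizontal strips). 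Your proposal works only with non-degenerate ideal triangulations and the complete locus, so it does not account for the order-two monodromy of the hat-homology local system around the simple-pole divisor, nor for the extension of the map over differentials where a zero has collided with a double pole; both are needed even to state the isomorphism with $\Quadorb(\S,\M)$ correctly.
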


The assumption on the number of punctures in the $g(\S)=0$ case of Theorem \ref{Thm:Main0} comes from a similar restriction in a crucial result of Labardini-Fragoso \cite{LF4}. We conjecture that the  conclusion  of the Theorem  holds with the weaker assumptions that $|\M|>1$ and that if $g(\S)=0$ then $|\M|>3$. The case of a once-punctured surface is special in many respects, and we leave it for future research; see Section \ref{therest} for more comments on this. The case  of a three-punctured sphere is also special, and is treated in  Section \ref{threepunctures}.


\subsection{Horizontal strip decomposition}
\label{hori}

The main ingredient in the proof of Theorem \ref{Thm:Main0} is the statement that a generic point of the space $\Quad(\S,\M)$  determines an ideal triangulation of the surface $(\S,\M)$, well-defined up to the action of the mapping class group. We learnt this idea from Gaiotto, Moore and Neitzke's work  \cite[Section 6]{GMN2}, although in retrospect, it is an immediate consequence of well-known results in the theory of quadratic differentials. 

Away from its critical points (zeroes and poles),
a quadratic differential $\phi$ on a Riemann surface $S$  induces a flat metric, together with a foliation known as the horizontal foliation. One way to see this is to  write $\phi=dz^{\tensor 2}$ for some  local co-ordinate $z$, well-defined up to $z\mapsto \pm z+\constant$. The metric is then  given by pulling back the Euclidean metric on $\C$ using $z$, and the horizontal foliation is given by the lines $\Im(z)=\constant$. 

The  integral curves  of the horizontal foliation are called  trajectories. 
The  trajectory structure  near a simple zero and a generic double pole are illustrated in Figure \ref{zeropole}.
\begin{figure}[ht]
\begin{center}
\includegraphics[scale=0.4]{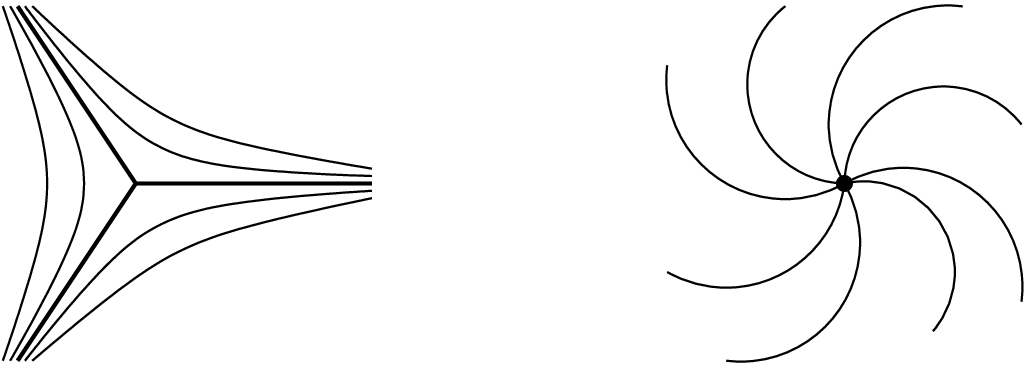}
\end{center}
\caption{Local trajectory structure at a simple zero and a generic double pole.\label{zeropole}}
\end{figure}
Note that generic double poles behave like black holes: any trajectory passing beyond a certain event horizon eventually falls into the pole. Thus for a generic differential one expects all trajectories to tend towards a double pole in at least one direction.

In the flat metric on $S$ induced by $\phi$, any pole  of  order $\geq 2$ lies at infinity.
Therefore, assuming that $S$ is compact, any finite-length trajectory $\gamma$  is either a simple closed curve  containing no critical points of $\phi$,  or is a simple arc which tends to a finite critical point of $\phi$ (a zero or simple pole) at either end. In the first case $\gamma$ is called a closed trajectory, and moves in an annulus of such trajectories known as a ring domain.
In the second case we call $\gamma$  a saddle trajectory. Note that the endpoints of  a saddle trajectory $\gamma$ could well coincide; when this happens  we call $\gamma$  a closed sadddle trajectory.

The  boundary of a ring domain  has two components, and each boundary component usually consists of unions of saddle trajectories. There is one other possibility however: a ring domain may  consist of closed curves encircling a double pole $p$ with real residue; the point $p$ is then one of the boundary components.   We call such ring domains degenerate. 
\begin{figure}[ht]
\begin{center}
\includegraphics[scale=0.4]{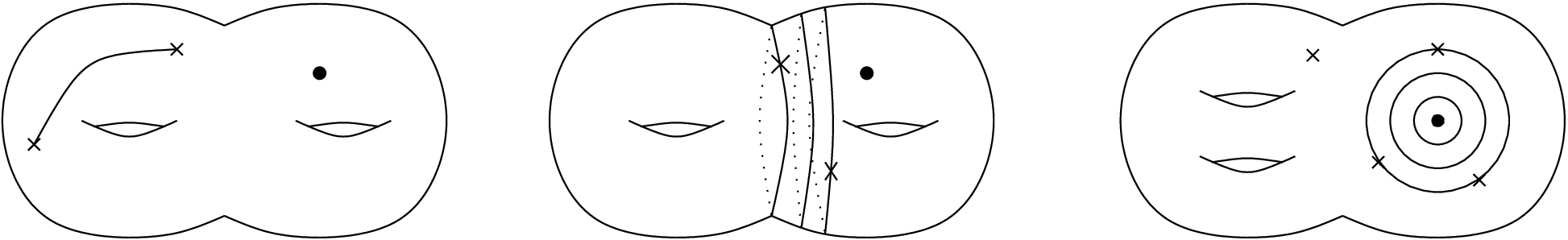}
\end{center}
\caption{A saddle trajectory, a ring domain and a degenerate ring domain.}
\end{figure}

There is a dense open subset  $B_0\subset \Quad(\S,\M)$ consisting of  differentials $(S,\phi)$ with no simple poles and no finite-length trajectories; we call such differentials saddle-free. For saddle-free differentials, each of the three horizontal trajectories leaving a given zero eventually tend towards a double pole. These separating trajectories divide the surface $S$  into a union of cells, known as horizontal strips (see Figure \ref{dualgraph}). 
\begin{figure}[ht]
\begin{center}
\includegraphics[scale=0.4]{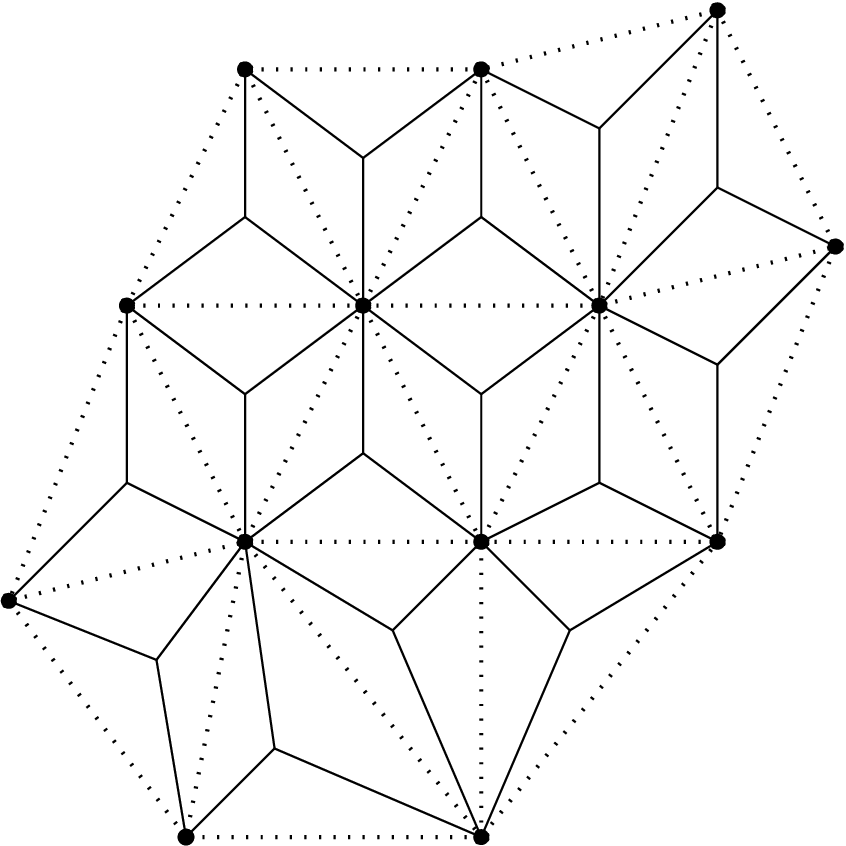}
\end{center}
\caption{The  separating (solid) and generic    trajectories (dotted) for a saddle-free differential; the black dots represent double poles\label{dualgraph}.}
\end{figure}
 Taking a single generic trajectory from each horizontal strip gives a  triangulation  of the surface $S$, whose vertices lie at the poles of $\phi$, and this then induces an ideal triangulation $T$ of the surface $(\S,\M)$,  well-defined up to the  action of the mapping class group. This is what is referred to as the WKB triangulation in \cite{GMN2}. 

The dual graph to the collection of separating trajectories is precisely the quiver $Q(T)$ considered before. In particular,  the vertices of  $Q(T)$ naturally correspond to the horizontal strips of $\phi$.
In each horizontal strip $h_i$ there is a unique homotopy class of arcs $\ell_i$ joining the two zeroes of $\phi$ lying on its boundary. Lifting $\ell_i$ to the spectral cover gives a class $\alpha_i\in \hs$, and taken together, these classes  form a basis. There is thus a natural isomorphism \[\nu\colon K(\D(T))\to \hs,\]
  which sends the class  of the simple module $S_i$ at a vertex of $Q(T)$, to the class $\alpha_i$  defined by the corresponding  horizontal strip $h_i$.

Using the isomorphism $\nu$, the period of $\phi$ can  be interpreted as  a group homomorphism $Z_\phi\colon K(\D(T))\to \C$. More concretely, this is given by
\[Z_\phi(S_i)=2\int_{\ell_i} \sqrt{\phi} \in \C,\]
where the sign of $\sqrt{\phi}$ is chosen so that $\Im Z_\phi(S_i)>0$.  We thus have a triangulated category $\D(T)$, with its canonical heart $\A(T)$, and a compatible central charge $Z_\phi$. This is precisely the data needed to define a stability condition on $\D(\T)$.

We refer to the connected components of the open subset $B_0$ as chambers; the horizontal strip decomposition and the triangulation $T$ are constant in each chamber, although the period $Z_\phi$ varies. As one moves from one chamber to a neighbouring one, the triangulation $T$ can undergo a flip. Gluing the stability conditions obtained from all these chambers using the Keller-Yang equivalences $\Phi_\pm$ referred to  above eventually leads to a proof of Theorem  \ref{Thm:Main0}.


\subsection{Higher-order poles}

We can extend Theorem \ref{Thm:Main0} to cover quadratic differentials with  poles of order $>2$. Such differentials correspond to stability conditions on categories defined by triangulations of surfaces with boundary. For this reason it will  be convenient to  also  index the relevant moduli spaces of differentials by such surfaces, as we now explain.

A marked, bordered surface $(\S,\M)$ is a pair consisting of a compact, oriented, smooth surface $\S$, possibly with boundary, together with a collection of marked points $\M\subset \S$, such that every boundary component of $\S$ contains at least one point of $\M$. The marked points $\bP\subset \M$ lying in the interior of $\S$ are called punctures. We shall always assume that   $(\S,\M)$  is not one of the following:
\begin{enumerate}
\item[(i)] a sphere with $\leq 2$ punctures;
\smallskip
\item[(ii)]an unpunctured disc with $\leq 2$ marked points on its boundary.
\end{enumerate}
These excluded surfaces have no ideal triangulations, and so our theory would be vacuous in these cases.

The trajectory structure of a quadratic differential $\phi$ near a higher-order pole is illustrated in Figure \ref{polefive};
 just as with double poles there is an event horizon beyond which all  trajectories tend to the pole, but at a pole of  order $k+2$ there are, in addition,  $k$ distinguished tangent vectors along which all trajectories enter.

\begin{figure}[ht]
\begin{center}
\includegraphics[scale=0.25]{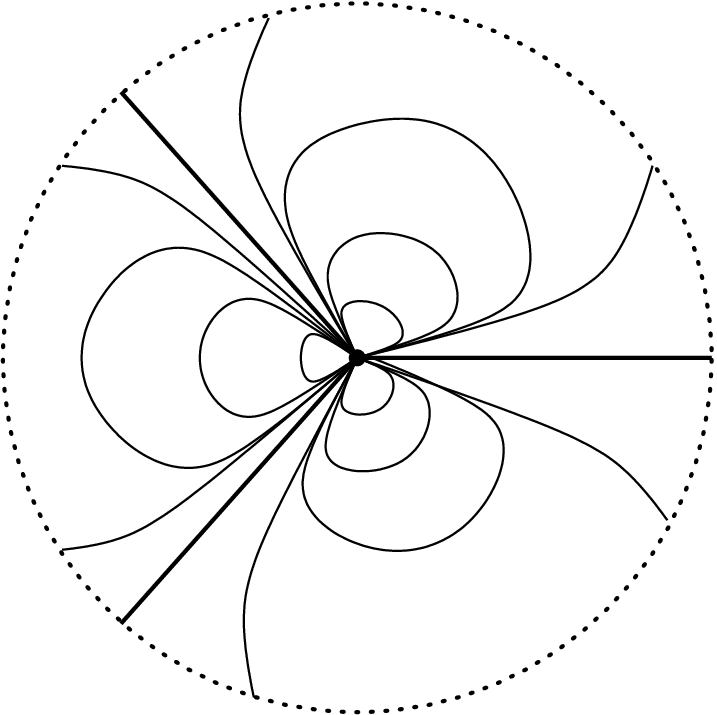}
\end{center}
\caption{Local trajectory structure at a pole of order $5$\label{polefive}.}
\end{figure}

A meromorphic quadratic differential $\phi$ on a compact Riemann surface $S$  determines a marked, bordered surface $(\S,\M)$ by the following construction.   To define the surface $\S$ we take the  underlying smooth surface of $S$ and perform  an oriented real blow-up  at each pole of $\phi$ of order $\geq 3$.  The marked points $\M$ are then the poles of $\phi$ of order $\leq 2$, considered as points of the interior of $\S$, together with the points on the boundary of $\S$ corresponding to the distinguished tangent directions. 

Let us now fix a marked, bordered surface $(\S,\M)$. 
Let $\Quad(\S,\M)$ denote the space of equivalence classes of  pairs $(S,\phi)$, consisting of a compact Riemann surface $S$, together with a meromorphic quadratic differential $\phi$ with simple zeroes,  whose associated marked bordered surface is diffeomorphic to $(\S,\M)$.

More concretely,  the pair $(\S,\M)$ is determined up to diffeomorphism by the genus $g=g(\S)$, the number of punctures $p=|\bP|$, and a collection of integers $k_i\geq 1$ encoding the number of marked points on each boundary component of $\S$. The space $\Quad(\S,\M)$ then consists of equivalence classes of pairs $(S,\phi)$ consisting of a meromorphic quadratic differential  $\phi$ on a compact Riemann surface $S$ of genus $g$, having $p$ poles of order $\leq 2$, a collection of higher-order poles with multiplicities $k_i+2$, and simple zeroes.

The space $\Quad(\S,\M)$ is a complex orbifold of dimension 
\[n=6g-6+3p+\sum_i (k_i+3).\]
We  can define the spectral cover $\pi\colon \hS\to S$, the hat-homology group $\hs$, and the spaces $\Quad^\Gamma(\S,\M)$ and $\Quadorb(\S,\M)$ exactly as before. We can also prove the analogue of Theorem \ref{pete}   in this more general setting.

The theory of  ideal triangulations of marked bordered surfaces has been developed for example in \cite{FST}. The results of Labardini-Fragoso \cite{LF1} apply equally well in this more general situation, so exactly as before, there is a \CY triangulated category $\D= \D(\S,\M)$, well-defined up to $k$-linear equivalence, and a distinguished connected component $\Stab_\triangle(\D)$.  
  
The second form of our main result is

\begin{Theorem}  \label{Thm:Main1} 
Let $(\S,\M)$ be a marked bordered surface  with non-empty boundary. 
Then there is an isomorphism of complex orbifolds \[
 \Quadorb(\S,\M)\isom  \Stab_\triangle (\D) /\,\uReach (\D).
\] \end{Theorem}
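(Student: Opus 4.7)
The plan is to mirror the strategy sketched for Theorem~\ref{Thm:Main0}, extending each step from the closed setting to marked bordered surfaces. The key observation is that the boundary marked points of $(\S,\M)$ are precisely the distinguished asymptotic directions at higher-order poles, as in Figure~\ref{polefive}; with this reinterpretation most of Section~\ref{hori} transfers verbatim, with the role previously played by double poles now played by all poles of order $\geq 2$.

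First I would establish the horizontal strip decomposition in this setting. For a saddle-free differential $(S,\phi) \in \Quad(\S,\M)$ (one with no simple poles and no finite-length trajectories), the three separating trajectories leaving each zero terminate either at an interior double pole or at a distinguished direction entering a higher-order pole, i.e.\ at a boundary marked point of $\S$. The resulting partition of $S$ into horizontal strips is combinatorially dual to an ideal triangulation $T$ of the marked bordered surface $(\S,\M)$ in the sense of Fomin-Shapiro-Thurston \cite{FST}. The lifts of the connecting arcs to the spectral cover $\hS$ give a distinguished basis of $\hs$, producing the identification $\nu\colon K(\D(T)) \isom \hs$, and the period of $\phi$ then becomes a central charge $Z_\phi$ making $\A(T) \subset \D(T)$ into a stability condition in $\Stab_\triangle(\D)$.

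Next I would globalise: as one crosses a wall in the saddle-free locus $B_0 \subset \Quad(\S,\M)$ corresponding to the appearance of a single saddle trajectory, the horizontal strip decomposition changes by a flip of the associated triangulation. By Labardini-Fragoso's results \cite{LF1}, which apply uniformly in the bordered setting, this flip corresponds to a mutation of the quiver with potential $(Q(T),W(T))$, and by Keller-Yang \cite{KY} it lifts canonically to a pair of equivalences $\Phi_\pm \colon \D(T_1) \isom \D(T_2)$. Gluing the local charts defined on each chamber via these equivalences produces a $\Reach(\D)$-equivariant local homeomorphism $\Quad^\Gamma(\S,\M) \to \Stab_\triangle(\D)$ intertwining the period maps \eqref{per1} and \eqref{per2}; the analogue of Theorem~\ref{pete} in the bordered case (also to be proved in Section~\ref{proof}) then upgrades this to a local isomorphism of complex manifolds. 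To finish, I would prove surjectivity by showing that any stability condition in $\Stab_\triangle(\D)$ can, after a small perturbation, be tilted into one whose heart is of the form $\A(T)$, and identify the deck transformations with the mapping class group $\MCG(\S,\M)$, which embeds in $\uReach(\D)$; descending gives the claimed isomorphism $\Quadorb(\S,\M) \isom \Stab_\triangle(\D)/\uReach(\D)$.

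The main obstacle will be the analysis at non-generic walls beyond the single-saddle case: closed saddle trajectories, degenerate ring domains surrounding a pole with real residue, and the various degenerations of trajectory configurations near higher-order poles must each be matched with an explicit tilt realised by a composition of Keller-Yang equivalences, and the local models around the corresponding strata in $\Quad^\Gamma(\S,\M)$ must be shown to glue coherently so that the resulting map actually descends to the claimed orbifold quotient. Encouragingly, because Labardini-Fragoso's flip-mutation compatibility holds without restriction once at least one boundary arc is present, there is no analogue of the $g(\S)=0$, $|\M|\leq 5$ exclusions of Theorem~\ref{Thm:Main0}, so the hypothesis of non-empty boundary should suffice to carry every step of the argument through.
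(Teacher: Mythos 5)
Your overall architecture matches the paper's: WKB triangulation from the horizontal strip decomposition, the identification $K(\D(T))\isom\hs$ via standard saddle classes, gluing chambers across flip walls with the Keller--Yang equivalences, and upgrading to a local isomorphism via the two period maps. But there are two genuine problems.

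The first is your surjectivity argument. You propose to show that any $\sigma\in\Stab_\triangle(\D)$ can "after a small perturbation be tilted into one whose heart is of the form $\A(T)$". This is exactly the step that fails in general: the connected component $\Stab_\triangle(\D)$ is \emph{not} the union (nor even the closure of the union) of the chambers $\Stab(\B)$ over reachable hearts $\B$ — walls of type II can accumulate (this is the "juggle" phenomenon coming from ring domains, where the spectrum of phases of finite-length trajectories has accumulation points), and there exist stability conditions in the component whose heart is not finite-length or not reachable. The paper instead proves that the image of the chamber-by-chamber map is \emph{open} (because it is a local isomorphism commuting with the period maps) and \emph{closed}, and concludes by connectedness. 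Closedness is a nontrivial completeness statement for $\Quad(\S,\M)$: one must show that a sequence of framed differentials whose periods converge and whose non-degenerate saddle connections have length bounded below has a convergent subsequence (this uses the Deligne--Mumford compactification and rules out nodal degenerations and colliding zeroes), and then match the length bound with a lower bound on masses of stable objects coming from the support property. Without this, or some substitute, your map is only an open embedding and the theorem is not proved. Relatedly, the extension of the map across walls with \emph{more than one} saddle trajectory (your "main obstacle" paragraph) cannot be done wall-by-wall: the paper needs a "walls have ends" argument showing every path can be homotoped into the tame locus $B_2$, which in turn requires the ring-shrinking surgery; acknowledging the obstacle is not the same as resolving it.

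The second problem is your closing claim that non-empty boundary removes all exceptional cases. The restriction inherited from Labardini-Fragoso's removal of boundary assumptions indeed disappears, but the theorem as stated still fails for six bordered surfaces: for the once-punctured disc with $2$ or $4$ boundary marked points and the twice-punctured disc with $2$ boundary marked points, distinct tagged triangulations can share the same quiver, so $\uReach(\D)$ must be replaced by an index-two subgroup of allowable autoequivalences; and for the unpunctured disc with $3$ or $4$ marked points and the annulus with one marked point on each boundary component, the mapping class group does not act freely on ordered triangulations and $\Quad(\S,\M)$ has a nontrivial generic automorphism group that must be killed. Your injectivity step (which you do not spell out) relies precisely on the quiver determining the triangulation up to $\MCG^\pm(\S,\M)$ and on that group acting freely, so these cases cannot be waved away. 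Finally, note that the deck group for the descent is the \emph{signed} mapping class group $\MCG^\pm(\S,\M)=\MCG(\S,\M)\ltimes\Z_2^{\PP}$, and when punctures are present the map must also be extended over the incomplete locus where zeroes collide with double poles, which requires a separate deformation estimate.
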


There are six degenerate cases which have been suppressed  in the statement of Theorem \ref{Thm:Main1}.  Firstly, if $(\S,\M)$ is one of the following three surfaces
\begin{itemize}
\item[(a)] a once-punctured disc with 2  or 4 marked points on the boundary; 
\item[(b)]  a 
 twice-punctured disc with 2 marked points on the boundary;
\end{itemize}
then Theorem \ref{Thm:Main1} continues to hold, but only if we replace $\uReach(\D)$ by a certain index 2 subgroup $\uReach^{\operatorname{allow}}(\D)$.  The  basic reason for this is that a triangulation $T$ of such a  surface is not determined up to the action of the mapping class group by the associated quiver $Q(T)$. Secondly, if $(\S,\M)$ is one of the following three surfaces
\begin{itemize}
\item[(c)] an unpunctured disc with 3 or 4 marked points on the boundary; 
\item[(d)]  an annulus with one marked point on each boundary component; 
\end{itemize}
then the space $\Quad(\S,\M)$ has a generic  automorphism group which must first be killed to make Theorem \ref{Thm:Main1} hold. These exceptional cases are treated in more detail in Section \ref{therest}.

Particular choices of the data $(\S,\M)$ lead to quivers of interest in representation theory. See Section \ref{applications} for  some examples of this. In particular, we can recover in this way some recent results of T. Sutherland \cite{Suth,Sutherland}, who used different methods to compute the spaces of numerical stability conditions on the categories $\D(\S,\M)$ in all cases in which these spaces are two-dimensional.


\subsection{Saddle trajectories and stable objects}

In the course of proving the Theorems stated above, we will in fact prove a stronger result, which gives a direct correspondence between the finite-length trajectories of a quadratic  differential and the stable objects of the corresponding stability condition.

To describe this correspondence in more detail,  fix a 
 marked bordered surface $(\S,\M)$  satisfying the assumptions of one of
our main theorems, and let $\D=\D(\S,\M)$ be the corresponding triangulated category.
Let $\phi$ be a meromorphic differential on a compact Riemann surface $S$ defining a point $\phi\in \Quad(\S,\M)$,  and  let $\sigma\in \Stab_\triangle(\D)$ be the  corresponding stability condition, well-defined up to the action of the group  $\All(\D)$.
We shall say that the differential $\phi$ is generic if for any two hat-homology classes $\gamma_i\in \hs$
\[\R\cdot Z_\phi(\gamma_1)=\R\cdot  Z_\phi(\gamma_2) \implies \Z\cdot\gamma_1=\Z\cdot \gamma_2.\]
Generic  differentials form a dense subset of $\Quad(\S,\M)$, and for simplicity we shall restrict our attention to these.

 To state the result, let us denote by  $\cM_\sigma(0)$  the moduli space  of objects in $\D$ that are stable in the stability condition $\sigma$ and of phase 0. This space can be identified with a moduli space of stable representations of a finite-dimensional algebra, and hence by work of King \cite{king}, is represented by a   quasi-projective scheme over $k$.

\begin{thm}
\label{greenlees}
Assume that $\phi$ is generic.
Then $\cM_\sigma(0)$ is smooth, and each of its connected components is either a point, or is isomorphic to the projective line $\PP^1$. Moreover,   there are bijections 
\begin{eqnarray*}
\big\{\text{0-dimensional  components of  }\cM_\sigma(0)\big\}&{\leftarrow}{\rightarrow}&\big\{\text{non-closed saddle trajectories of $\phi$}\big\}; \\
\big\{\text{1-dimensional components of  }\cM_\sigma(0)\big\}&\leftarrow\rightarrow&\big\{\text{non-degenerate ring domains  of $\phi$}\big\}. 
\end{eqnarray*}
\end{thm}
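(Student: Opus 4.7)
The plan is to pass from the finite-length trajectory structure of $\phi$ to representation theory via the WKB triangulations of nearby saddle-free rotations of $\phi$, and then to identify the stable objects in $\cM_\sigma(0)$ directly. Concretely, I would consider the one-parameter family $\phi_\theta := e^{2i\theta}\phi$ for $\theta$ in a small real neighbourhood of $0$. The genericity hypothesis on $\phi$ (no two hat-homology classes have proportional periods) ensures that $\phi_\theta$ is saddle-free for all $0 < |\theta| < \varepsilon$, so the WKB triangulations $T_\pm$ for $\theta \to 0^\pm$ are defined, together with stability conditions $\sigma_\pm \in \Stab_\triangle(\D)$ that converge from each side to $\sigma$. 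The existence of a saddle trajectory or a non-degenerate ring domain of $\phi$ at phase $0$ is reflected in the existence of a primitive class $\alpha \in \hs$ with $Z_\phi(\alpha) \in \R_{>0}$, and is precisely what causes $T_+$ and $T_-$ to differ.

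I would then construct, for each such $\alpha$, the corresponding stable object(s). For a non-closed saddle trajectory $\gamma$ joining two distinct finite critical points of $\phi$, the hat-homology class $\alpha_\gamma$ has real central charge, and using the basis of simples $S_i \in \A(T_+)$ indexed by the horizontal strips of $\phi_{\theta>0}$ one can identify an iterated extension $E_\gamma$ of those $S_i$ whose strips are crossed by $\gamma$ as $\theta$ descends from $+\varepsilon$ to $-\varepsilon$. Equivalently, $E_\gamma$ is the image of a simple object under the sequence of Keller--Yang tilts realising the chain of flips converting $T_+$ to $T_-$ across the wall $\theta=0$. Genericity forces $\alpha_\gamma$ to be primitive and to lie on a unique ray, so any $\sigma$-semistable object of class $\alpha_\gamma$ is automatically stable, and a Calabi--Yau-$3$ $\Ext$ computation gives $\Ext^1(E_\gamma,E_\gamma)=0$, so $E_\gamma$ is rigid and contributes a reduced point to $\cM_\sigma(0)$.

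For a non-degenerate ring domain $R$, the homotopy class of any closed trajectory inside $R$ lifts to a primitive class $\alpha_R \in \hs$ with $Z_\phi(\alpha_R) \in \R_{>0}$. Locally $R$ is isometric to an open flat cylinder, and its preimage in the spectral cover is either one or two cylinders according to the monodromy of $\sqrt{\phi}$ around the core circle. In either case I would reduce to a local model: essentially the Kronecker (type $\tilde{A}_1$) subquiver of $Q(T_+)$ spanned by the two strips that abut $R$. A direct calculation then shows that the $\sigma$-stable objects of class $\alpha_R$ and phase $0$ form a $\PP^1$ worth of simple representations of this local model. Geometrically this $\PP^1$ is the family of gluing parameters, or equivalently unitary flat line bundles on the core circle, that parameterise the closed trajectories filling $R$.

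The main obstacle is completeness: one must show that every $\sigma$-stable object of phase $0$ arises from a saddle trajectory or a ring domain, and that there are no other components of $\cM_\sigma(0)$. For this I would analyse hearts on $\Stab_\triangle(\D)$ as iterated tilts of $\A(T_+)$ and use the classification of reachable hearts crossing the wall at $\theta=0$ (via Labardini-Fragoso and Keller--Yang, together with a careful study of the exchange graph of the WKB triangulations on either side) to exhaust all possible classes supporting stable objects of phase $0$. The hardest step is to exclude hypothetical stable classes $\alpha$ with $Z_\phi(\alpha) \in \R_{>0}$ that do not come from any finite-length trajectory: this requires an existence result in the style of the classical theorems on saddle connections and closed geodesics on flat surfaces, producing, for each such $\alpha$, a destabilising subobject whose central charge lies strictly off the real axis.
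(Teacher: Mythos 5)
Your overall strategy---analysing the wall at $\theta=0$ via nearby rotations and reducing ring domains to a Kronecker-type local model---is in the right spirit, but there are two genuine gaps. The first is your opening claim that genericity makes $e^{2i\theta}\phi$ saddle-free for all $0<|\theta|<\varepsilon$. This fails precisely in the case you most need it: if $\phi$ has a non-degenerate ring domain $A$ of class $\alpha$ then, taking a saddle connection of class $\delta$ crossing $A$, all the classes $\delta+k\alpha$ are represented by saddle connections, so the phases of saddle trajectories of the rotated differentials accumulate at $\theta=0$ (the ``juggle'' of Section \ref{jug}). Consequently $T_\pm$ and $\sigma_\pm$ are not defined by one-sided limits, your ``two strips abutting $R$'' are not well-defined, and the proposed exhaustion of reachable hearts across the wall would require controlling an infinite composition of flips. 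The mechanism that avoids this is the ring-shrinking deformation (Proposition \ref{endofwallmove}): one moves within the space of differentials to collapse $A$ to width zero, obtaining a differential with only $2$ or $3$ saddle trajectories forming a closed curve, identifies the abelian category $\P(1)$ there with representations of the Kronecker or affine $A_2$ quiver, and then transports the answer back along the shrinking path, using Proposition \ref{jugprop} to control saddle connections on the far side of the wall.

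The second gap is completeness. You correctly flag it as the hard step, but the argument you sketch (producing, for each class $\alpha$ with real period not supported on a trajectory, a destabilising subobject via an existence theorem for geodesics) runs in the wrong direction and would itself presuppose the classification you are trying to prove. The actual mechanism is Lemma \ref{done}: if $E$ is $\sigma$-stable of class $\alpha$, perturb $\phi$ to a nearby generic differential $\psi$ keeping $Z_\psi(\alpha)$ real; openness of stability forces $\psi$ to have a saddle trajectory, genericity forces its class to be proportional to $\alpha$, and compactness of bounded-length curves (Theorem \ref{finini}) lets one pass to the limit. Two further omissions: the classification of configurations of hat-proportional saddle trajectories (Lemma \ref{alport}), which is what guarantees that a $0$-generic differential has at most one ring domain and only the configurations (J1), (J2) to check, so that the explicit quiver computation is exhaustive; and the closed saddle trajectories and degenerate ring domains, which must be shown to contribute \emph{no} stable objects (the pop case of Proposition \ref{flipandpop}, where $\P(0)=0$). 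Without these ingredients the asserted bijections cannot be established.
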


Note that with our conventions, all trajectories are assumed to be  horizontal, and  correspond to stable objects of phase 0. In particular, a stability condition $\sigma$ has a stable object of phase 0  precisely if the corresponding differential $\phi$ has a  finite-length trajectory. Stable objects of more general phases $\theta$ correspond in exactly the same way  to finite-length straight arcs which meet the horizontal foliation at a constant angle $\pi \theta$. This more general statement follows immediately from Theorem \ref{greenlees}, because the isomorphisms of our main theorems are compatible with the natural $\C^*$-actions on both sides.

 Standard results in Donaldson-Thomas theory  imply that  the two types of moduli spaces  appearing in  Theorem \ref{th} contribute  $+1$ and $-2$ respectively to the  BPS  invariants, although we do not include the proof of this here. These  exactly match the contributions to the BPS invariants described in \cite[Section 7.6]{GMN2}.  In physics terminology, non-closed saddle trajectories correspond to BPS hypermultiplets, and non-degenerate  ring domains to BPS  vectormultiplets.

It is a standard open question in the theory of flat surfaces to characterise or  
constrain the hat-homology classes which contain saddle  connections. Theorem \ref{greenlees}  relates this to the similar problem of identifying the classes in the Grothendieck group which support  stable objects. Here one has the powerful technology of Donaldson-Thomas invariants and the Kontsevich-Soibelman wall-crossing formula \cite{KS}, which  in principle allows one to determine how the spectrum of stable objects changes as the stability condition varies.   It would be interesting to see whether these techniques can be usefully applied to the theory of flat surfaces.


\subsection {Structure of the paper} The paper   splits naturally into three parts.

The first part, consisting of Sections 2--6, is concerned with spaces of meromorphic quadratic differentials.  Section 2 reviews basic notions concerning quadratic differentials, and introduces orbifolds $\Quad(g,m)$ parameterizing   differentials with simple zeroes and fixed pole orders. Section 3 consists of well-known  material on the trajectory structure of quadratic differentials. Section 4 is devoted to  proving that the period map on $\Quad(g,m)$ is a local isomorphism. Section  5 studies the stratification of the space $\Quad(g,m)$ by the number of separating trajectories. Finally, Section 6 introduces the spaces $\Quad(\S,\M)$ appearing above, in which  zeroes of the differentials are allowed to collide with  the double poles. 

The second part, comprising  Sections 7--9, is concerned with \CY triangulated categories, and more particularly, the categories $\D(\S,\M)$ described above. Section 7 consists of  general material on quivers with potential, t-structures, tilting and stability conditons. Section 8 introduces the basic combinatorial properties of ideal and tagged triangulations. Section 9 contains a more detailed study of the  categories $\D(\S,\M)$, including their autoequivalence groups, and gives a precise correspondence relating t-structures on $\D(\S,\M)$ to tagged triangulations of the  surface $(\S,\M)$.

The geometry and algebra come together in the last part, which comprises Sections 10--12. Section 10 describes the WKB triangulation associated to a saddle-free differential, and the way it changes as one passes between neighbouring chambers.  Section 11 contains the proofs of our main results identifying spaces of stability conditions with spaces of quadratic differentials. We finish in Section 12 with some illustrative examples.

The reader is advised to start with \SS\SS  2--3, the first half of \SS 6,  and \SS \SS 7--9, since these contain the essential definitions and are the least technical. It may also help to look at some of the examples in \SS 12.

\noindent \Acknowledgements    Thanks  most of all to Daniel Labardini-Fragoso, Andy Neitzke and Tom Sutherland, all of whom have been enormously helpful. Thanks too to Sergey Fomin,  Bernhard Keller, Alastair King,  Howard Masur,  Michael Shapiro and Anton Zorich for helpful conversations and correspondence. This paper owes a significant debt to the work of Davide Gaiotto, Greg Moore and Andy Neitzke \cite{GMN2}. 





\section{Quadratic differentials}\label{Sec:Differentials}

  We begin by summarizing some of the  basic properties of  meromorphic quadratic differentials on Riemann surfaces. This material is mostly well-known, although  we were unable to find any references dealing with the moduli spaces of  differentials with higher-order poles that we shall be using. Our standard reference for quadratic differentials is   Strebel's book \cite{Strebel}.

\subsection{Quadratic differentials} \label{Sec:Flat}

 Let $S$ be a Riemann surface, and let  $\omega_S$ denote its holomorphic cotangent bundle. A \emph{meromorphic quadratic differential}  $\phi$ on $S$ is a meromorphic  section  of the line bundle $\omega_S^{\tensor 2}$.  Two such  differentials $\phi_1,\phi_2$ on  surfaces $S_1,S_2$  are said to be    equivalent if there is a biholomorphism  $f\colon S_1\to S_2$ such that $f^*(\phi_2)=\phi_1$.

In terms of a local co-ordinate $z$ on $S$ we can write a quadratic differential $\phi$ as
 \[\phi(z)=\varphi(z)\, dz\tensor dz\]
 with $\varphi(z)$ a  meromorphic function. 
 We write  $\Zer(\phi),\Pol(\phi)\subset S$ for the subsets of zeroes and poles of $\phi$ respectively. The subset  $\Crit(\phi)=\Zer(\phi)\cup\Pol(\phi)$ is the set  of \emph{critical points}  of  $\phi$. 

At a point of $S \setminus \Crit(\phi)$ there is a  distinguished local co-ordinate $w$, uniquely defined up to transformations of the form $w \mapsto \pm w + \constant$, with respect to which
\[
\phi(w)\,  = \, dw \otimes dw.
\]
 In terms of an arbitrary local co-ordinate $z$ we have
$w=\int \sqrt{\varphi(z)}\,dz$.

A quadratic differential $\phi$ determines two structures on $S \setminus \Crit(\phi)$, namely a flat metric (called the $\phi$-metric) and a foliation (the horizontal foliation). The $\phi$-metric is defined locally by pulling back the  Euclidean metric on $\C$ using a distinguished co-ordinate $w$. The  horizontal foliation is given in terms of a distinguished co-ordinate by the lines  $\Im (w)=\constant$.   

The $\phi$-metric  and the horizontal foliation  on $S\setminus \Crit(\phi)$  together determine both the complex structure on $S$ and the  differential $\phi$. 
  Note that the set of quadratic differentials on a fixed surface $S$ has  a natural $S^1$ -action given by scalar multiplication : $\phi \mapsto e^{i\pi\theta}\cdot \phi$. This action has no effect on the $\phi$-metric, but alters which in the circle of foliations defined by $\Im (w/e^{i\pi\theta}) = \constant$ is regarded as being horizontal.

In terms of a local co-ordinate $z$ on $S$,  the length of a smooth path $\gamma$  in the $\phi$-metric  is 
\begin{equation}
\label{old}
\ell_\phi(\gamma)\ = \ \int_{\gamma} \, |\varphi(z)|^{1/2} \, |dz|.
\end{equation}
It is important to divide the critical set into a disjoint union \[\Crit(\phi)=\Crit_{<\infty}(\phi)\cup \Crit_\infty(\phi),\]
where $\Crit_{<\infty}(\phi)$ consists of \emph{finite critical points}, namely zeroes and simple poles, and $\Crit_\infty(\phi)$ consists of \emph{infinite critical points}, that is poles of order $\geq 2$.
We write  
 \begin{equation*} 
 S^\op=S\setminus \Crit_{\infty}(\phi)
 \end{equation*} for the complement of the infinite critical points.  

Note that the integral \eqref{old} is well-defined for curves passing through points of $\Crit_{<\infty}(\phi)$. This gives the surface $S^\op$   the structure of a metric space, in which the distance between two points $p,q\in S^\op$ is
  the infimum of the lengths of smooth curves in $S^\op$ connecting $p$ to $q$. The  topology on $S^\op$ defined by this metric   agrees with the standard one induced from the surface $S$.  


\subsection{GMN differentials}

All the quadratic  differentials considered in this paper live on compact surfaces and have simple zeroes and at least one pole. Since it will be convenient to eliminate certain degenerate situations we make the following definition.

\begin{defn}
\label{gm}
A GMN differential is a meromorphic quadratic differential $\phi$ on a compact, connected Riemann surface $S$ such that
\begin{itemize}
\item[(a)] $\phi$ has simple zeroes,
\item[(b)] $\phi$ has at least one pole,
\item[(c)] $\phi$ has at least one finite critical point.
\end{itemize}
\end{defn}

Condition (c)   excludes polar types  $(2,2)$ and $(4)$ in genus 0; differentials of these  types have unusual trajectory structures, and infinite automorphism groups.

Given a GMN differential $(S,\phi)$ we write $g$ for the genus of the surface $S$ and $d$ for  the number of poles of $\phi$. 
 The  \emph{polar type} of $\phi$ is the unordered collection of $d$ integers $m=\{m_i\}$ giving the orders of the poles of $\phi$.
We define
\begin{equation}
\label{n}n=6g-6+\sum_{i=1}^d (m_i+1),\end{equation}
A GMN differential $(S,\phi)$ is said to be \emph{complete} if $\phi$ has no simple poles, or in other words, if all $m_i\geq 2$. This is exactly the case in which the $\phi$-metric on $S\setminus\Pol(\phi)$   is complete. At the opposite extreme, the differential $(S,\phi)$  is said to have   \emph{finite area} if $\phi$ has only  simple poles, that is if  all $m_i=1$.


\subsection{Spectral cover and periods}
\label{go}

Suppose that $\phi$ is a 
GMN differential on a compact Riemann  surface $S$, with poles of order $m_i$ at points $p_i\in S$.  We can alternatively view $\phi$ as a holomorphic section
 \begin{equation}
 \label{g}{\varphi}\in H^0(S,\omega_S(E)^{\tensor 2}), \qquad E=\sum_i  \Big\lceil\frac {m_i}{2} \Big\rceil \cdot p_i,\end{equation}
 with simple zeroes at both the zeroes  and the odd order poles of $\phi$. The \emph{spectral cover}\footnote{The terminology ``spectral cover" fits with that used in the literature on Higgs bundles, cf. \cite{Hitchin}.}  of $S$ defined by   $\phi$  is the compact Riemann surface
 \[\hS=\big\{(p,l(p)): p\in S,\, l(p)\in L_p\text{ such that }l(p)\tensor l(p)={\varphi}(p)\big\}\subset L,\]
where $L$ is the total space of the line bundle $\omega_{S}(E)$.
This is a manifold because ${\varphi}$ has simple zeroes.

The obvious projection map $\pi\colon \hS\to S$ is a double cover, branched precisely over the zeroes and the odd order poles of the original meromorphic differential $\phi$. There is a covering involution $\tau\colon \hS\to \hS$,  commuting with the map $\pi$. The surface $\hS$ is connected because Definition \ref{gm} implies that $\pi$  has at least one branch point.  

We define the hat-homology group of the differential $\phi$ to be
\[\hs=H_1(\hS^\op;\Z)^-,\]
 where  $\hS^{\op}=\pi^{-1}(S^\op)$, and the superscript denotes the anti-invariant part for the action of the covering involution $\tau$.

\begin{lemma}
\label{train}

The group $\hs$ is free of rank $n$ given by \eqref{n}. \end{lemma}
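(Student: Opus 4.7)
The plan is to dispose of freeness immediately and reduce the rank computation to Riemann--Hurwitz via the transfer map for the double cover $\pi\colon\hS\to S$.

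Since $\hS$ is connected and $\hS^\op$ is obtained by removing finitely many points, $\hS^\op$ is a connected orientable surface (closed if $\phi$ has only simple poles, open otherwise). In either case $H_1(\hS^\op;\Z)$ is free abelian, so its anti-invariant subgroup $\hs$ is free automatically as a subgroup of a free abelian group.

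For the rank, I would apply the standard transfer machinery to the unramified double cover $\pi\colon\hS^\op\to S^\op$ with deck involution $\tau$: the identities $\pi_*\pi^!=2\cdot\mathrm{id}$ and $\pi^!\pi_*=1+\tau_*$ on rational homology force $\pi_*$ to identify $H_1(\hS^\op;\Q)^+$ with $H_1(S^\op;\Q)$ and to vanish on the anti-invariant summand. Therefore
\[
\operatorname{rank}\hs \;=\; b_1(\hS^\op)-b_1(S^\op)\;=\;\chi(S^\op)-\chi(\hS^\op),
\]
the last equality holding uniformly in both the compact case ($b_1=2-\chi$, since $b_0=b_2=1$) and the open case ($b_1=1-\chi$, since $b_0=1$, $b_2=0$).

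To evaluate this Euler-characteristic difference, apply Riemann--Hurwitz to $\pi\colon\hS\to S$: the branch points are the zeroes of $\phi$ together with its odd-order poles, so $\chi(\hS)=2\chi(S)-B$ where $B$ is the total count of branch points. Viewing $\phi$ as a section of $\omega_S^{\tensor 2}$ gives $4g-4+\sum m_i$ zeroes. To pass to the punctured surfaces, each even-order pole removes one point from $S$ and two unbranched preimages from $\hS$, while each odd-order pole of order $\geq 3$ removes one point from $S$ and one branched preimage from $\hS$. Assembling these contributions yields
\[
\chi(S^\op)-\chi(\hS^\op) \;=\; 6g-6 + d + \sum_{i=1}^d m_i \;=\; n,
\]
as desired.

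The expected obstacle is purely bookkeeping: one must carefully distinguish simple poles (which lie in $\hS^\op$ and are themselves branch points of $\pi$) from odd-order poles of order $\geq 3$ (which are infinite critical points removed when passing to $\hS^\op$, yet still act as branch points of $\hS\to S$). Once this parity accounting is properly organized, the case-split between compact and open $\hS^\op$ collapses uniformly, and no further difficulty arises.
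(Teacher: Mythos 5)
Your argument is essentially the paper's: Riemann--Hurwitz for $\pi\colon\hS\to S$ combined with the identification of the $\tau$-invariant part of $H_1(\hS^\op;\Q)$ with $H_1(S^\op;\Q)$, and your Euler-characteristic bookkeeping (including the parity accounting at odd-order poles of order $\geq 3$ versus simple poles) checks out and reproduces $n$. The one point you must fix is the claim that $\pi\colon\hS^\op\to S^\op$ is an \emph{unramified} double cover: it is not. The branch locus of $\pi\colon\hS\to S$ consists of the zeroes and the odd-order poles of $\phi$, and the zeroes and simple poles are finite critical points, hence lie in $S^\op$; since a GMN differential has at least one finite critical point, the restricted cover is always branched. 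So you cannot literally invoke the transfer for covering spaces. The repair is standard: for the quotient of $\hS^\op$ by the finite group $\langle\tau\rangle$ one still has a transfer on rational homology satisfying $\pi_*\pi^!=2\cdot\mathrm{id}$ and $\pi^!\pi_*=1+\tau_*$ (equivalently, $H_1(\hS^\op;\Q)^{\tau}\isom H_1(\hS^\op/\tau;\Q)=H_1(S^\op;\Q)$ holds for quotients by finite group actions whether or not the action is free), so the identity $\operatorname{rank}\hs=b_1(\hS^\op)-b_1(S^\op)$ survives. With that justification corrected, the proof is complete and coincides in substance with the one in the paper, which simply computes the ranks of $H_1$ of the two punctured surfaces directly rather than passing through Euler characteristics.
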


\begin{pf}  The Riemann-Hurwitz formula applied to the spectral cover $\pi\colon \hS\to S$ implies that
\begin{equation}\label{hatg}2\hat{g}-2=2(2g-2)+(4g-4+\sum_{i=1}^d m_i )+ (d-e),\end{equation}
where $\hat{g}$ is the genus of $\hS$, and $e$ is the number of even $m_i$. The group $H_1(S^\op;\Z)$ is free of rank
$2g+d-s-1$, where $s$ is the number of simple poles. Similarly, using equation \eqref{hatg}, and noting that each even order pole has two inverse images in $\hS$, the  group $H_1(\hS^\op;\Z)$ is free of rank \[r=2\hat{g}+d+e-s-1=8g-6+\sum_{i=1}^d m_i + 2d-s-1.\]
Since the invariant part of $H_1(\hS^\op;\Z)$ can be identified with $H_1(S^\op;\Z)$, the anti-invariant part $H_1(\hS^\op;\Z)^-$ is therefore free of rank $n$.
\end{pf}

The spectral cover $\Hat{S}$ comes equipped with a tautological section $\psi$ of the line bundle $\pi^*( \omega_S(E))$ satisfying $\pi^*(\varphi)=\psi\tensor \psi$ and $\tau^*(\psi)=-\psi$.  There is a canonical map $\eta\colon \pi^*(\omega_S)\to \omega_{\Hat{S}}$ and we can form the composition
\[\OO_{\Hat{S}} \lRa{\psi} \pi^*(\omega_S(E)) \lRa{\eta(\Hat{E})} \omega_{\Hat{S}}(\Hat{E}),\]
where  $\Hat{E}=\pi^{-1}(E)$. This defines  a meromorphic 1-form on $\Hat{S}$, which we  also denote   by $\psi$.

 Since the canonical map $\eta$ vanishes at the branch-points of $\pi$, the differential  $\psi$ is regular at the inverse images of the simple poles of $\phi$, and hence restricts to a  holomorphic 1-form on the open subsurface $\hS^\op$. 
By construction $\psi$ is anti-invariant for the action of the covering involution $\tau$, and therefore defines a de Rham cohomology class
\[[\psi]\in H^1(\hS^\op;\C)^-\]
 called the period of $\phi$.  We choose to view this instead as a group homomorphism
\[Z_\phi\colon \hs\to \C.\]


\subsection{Intersection forms} \label{if} Consider a GMN differential $\phi$ on a Riemann surface $S$, and its spectral cover $\pi\colon \hS\to S$. Write \[\Hat{D}_\infty=\pi^{-1}(\Crit_\infty(\phi)).\] Thus $\hS^\op=\Hat{S}\setminus \Hat{D}_\infty$. 
There are canonical maps of homology groups
\[ H_1(\hS^\op;\Z) =H_1(\hS\setminus \Hat{D}_\infty;\Z) \lRa{g} H_1(\hS;\Z)\lRa{h} H_1(\hS,\Hat{D}_\infty;\Z).\]

The intersection form on $H_1(\hS;\Z)$ is a non-degenerate, skew-symmetric pairing, and induces a degenerate skew-symmetric form
\[H_1(\hS^\op;\Z)\times H_1(\hS^\op;\Z)\to \Z,\]
 which we also call the intersection form, and write as $(\alpha,\beta)\mapsto \alpha\cdot \beta$.
On the other hand, Lefschetz duality gives a non-degenerate pairing
\begin{equation}
\label{wine}
\langle-,-\rangle \colon  H_1(\hS\setminus \Hat{D}_\infty;\Z)\times H_1(\hS,\Hat{D}_\infty;\Z)\to \Z.\end{equation}
These bilinear forms restrict to the anti-invariant eigenspaces for the actions of the covering involutions. 

For each pole $p\in S$ of $\phi$ of even order  there is an associated \emph{residue  class} \[ \beta_p\in H_1(\hS^\op;\Z)^-,\]
well-defined up to sign. It is  obtained by taking the inverse image  under $\pi$ of  a small  loop  in $S^\op$ encircling the point $p$, and then orienting the two connected components so that the resulting class is anti-invariant.

  The \emph{residue} of $\phi$ at $p$ is defined to be
\begin{equation} \label{Eqn:Residue}
\res_p(\phi)=Z_\phi(\beta_p) = \pm 2 \int_{\delta_p} \sqrt{\phi},
\end{equation}
and is well-defined up to sign.

\begin{lemma}
\label{mov}
The classes $\beta_p\in H_1(\hS^\op;\Z)^-$ are a $\mathbb{Q}$-basis for the kernel of the intersection form.
\end{lemma}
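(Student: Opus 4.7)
The plan is three steps: (i) identify the kernel of the intersection form on $H_1(\Hat{S}^\op;\Z)$ with the kernel of the surjection $g: H_1(\Hat{S}^\op;\Z) \to H_1(\Hat{S};\Z)$; (ii) write down explicit generators of $\ker(g)$ as small loops around the punctures; and (iii) extract the $\tau$-anti-invariant subspace. I will work with $\QQ$-coefficients so that $\tau$ acts semisimply and the homology splits into invariant and anti-invariant parts.

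For step (i), a small loop $\delta_q$ around any puncture $q\in\Hat{D}_\infty$ may be shrunk arbitrarily close to $q$ inside $\Hat{S}^\op$ and hence pushed off any given 1-cycle in $\Hat{S}^\op$, so $\ker(g)$ lies in the radical of the intersection form. Conversely, the form on $H_1(\Hat{S}^\op;\QQ)$ is the pullback via $g$ of the Poincar\'e intersection form on $H_1(\Hat{S};\QQ)$, since cycles in $\Hat{S}^\op$ and their intersections can be computed in either surface; because the form downstairs is non-degenerate and $g$ is surjective, any class in the radical must lie in $\ker(g)$. For step (ii), the long exact sequence of the pair $(\Hat{S},\Hat{S}^\op)$, together with excision identifying $H_*(\Hat{S},\Hat{S}^\op)$ with a direct sum of local terms $H_*(D^2,D^2\setminus\{0\})$, identifies $\ker(g)$ with the $\Z$-module generated by the classes $\{\delta_q : q\in\Hat{D}_\infty\}$ modulo the single relation $\sum_q \delta_q = 0$ coming from the image of $[\Hat{S}]$.

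For step (iii), I would use that $\pi$ is branched precisely over the zeroes of $\phi$ and its odd-order poles. Over an even-order pole $p$, the two preimages $q^\pm$ are swapped by $\tau$ acting as a local biholomorphism, so $\tau_*\delta_{q^+}=\delta_{q^-}$, and up to sign the anti-invariant combination $\delta_{q^+}-\delta_{q^-}$ is the residue class $\beta_p$. Over an odd-order pole of order $\geq 3$, the unique preimage is a branch point with local model $\tau(z)=-z$; the associated small loop $\delta_q$ is sent to itself, hence is $\tau$-invariant and contributes nothing to the anti-invariant part. The global relation $\sum_q\delta_q=0$ is itself invariant, so it imposes no constraint on the anti-invariant generators, and the classes $\{\beta_p\}$ indexed by even-order poles of $\phi$ form a $\QQ$-basis of $\ker(g)\cap H_1(\Hat{S}^\op;\QQ)^-$, as claimed.

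I expect the main subtlety to be step (i): cleanly verifying that the intersection form on the non-compact surface $\Hat{S}^\op$ really factors through $g$ with kernel exactly $\ker(g)$, rather than having extra pathology from the punctures. The remaining steps are equivariant bookkeeping with small loops around the infinite critical points, controlled entirely by whether $\pi$ is branched at the corresponding preimage.
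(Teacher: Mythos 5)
Your proof is correct and follows essentially the same route as the paper's: both identify the radical of the form with $\ker(g)$ using non-degeneracy of the intersection form on the closed surface $\hS$, present $\ker(g)$ by the boundary loops of the punctures modulo the single relation coming from the fundamental class, and then take $\tau$-anti-invariants, using that $\tau$ swaps the two preimages of an even-order pole and fixes the loop at the branched preimage of an odd-order pole. The only cosmetic difference is that you conclude spanning directly from the equivariant presentation of $\ker(g)$ (the relation $(1,\dots,1)$ being $\tau$-invariant), whereas the paper checks linear independence and then matches ranks via the Riemann--Hurwitz computation of $\operatorname{rk} H_1(\hS;\Z)^-$.
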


\begin{pf} If $p\in S$ is an even order pole of $\phi$, let $\{s_p,t_p\}$ be the classes in $H_1(\hS^\op;\Z)$ defined by small clockwise loops around the two inverse images of $p$ in the spectral cover $\hS$. Similarly, if $p\in S$ is a pole of odd order $\geq 3$,  let $u_p\in H_1(\hS^\op;\Z)$ be the class defined by a small loop around the single inverse image of $p$.   Standard topology of surfaces shows that there is an exact sequence
\[0\lra \Z \lRa{i}  \Z^{\oplus k}\lRa{f} H_1(\hS^\op;\Z)\lRa{h} H_1(\hS;\Z)\lra 0,\]
where the map $h$ is induced by the inclusion $\hS^\op\subset \hS$, the map $f$ sends the generators to the classes  $s_p$, $t_p$ and $u_p$ respectively, and the image of $i$ is spanned by the element $(1,1,\dots ,1)$.
 
  The covering involution exchanges $s_p$ and $t_p$, and  fixes $u_p$, and we have $\beta_p=\pm(s_p-t_p)$.
Since the image of the map $i$ lies in the invariant part of $H_1(\hS;\Z)$, the  elements $\beta_p$ are linearly independent.
The intersection form  on $H_1(\hS;\Z)^-$ is non-degenerate, so the kernel of the induced form on $H_1(\hS^\op;\Z)^-$  is precisely the kernel of the surjective map  \[h^-\colon H_1(\hS^\op;\Z)^-\to H_1(\hS;\Z)^-.\] The group $H_1(\hS;\Z)^-$ has rank $2(\hat{g}-g)$, which by \eqref{hatg}   is equal to  $n-e$, where $e$ is the number of even order poles of $\phi$. Thus the kernel of $h^-$ is spanned over $\QQ$  by the $e$ elements $\beta_p$. 
\end{pf}


\subsection{Moduli spaces}

We now consider moduli spaces of GMN differentials of fixed polar type.
For this purpose we fix a genus $g\geq 0$ and an unordered   collection of $d\geq 1$ positive integers $m=\{m_i\}$.

Define  $\Quad(g,m)$ to be the set of equivalence-classes of 
 pairs $(S,\phi)$ consisting 
 of a compact, connected Riemann surface  $S$ of genus $g$, equipped with a GMN differential $\phi$ having   polar type $\m=\{m_i\}$.

\begin{prop}
\label{rain}
The space $\Quad(g,\m)$ is either empty, or is a connected complex orbifold of dimension $n$ given by \eqref{n}.
\end{prop}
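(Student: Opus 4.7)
The approach is to realise $\Quad(g,m)$ as an open sub-orbifold of the total space of a natural holomorphic vector bundle over a moduli space of pointed Riemann surfaces.

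Let $\mathcal{M}_{g,d}$ denote the moduli orbifold of smooth compact connected Riemann surfaces of genus $g$ carrying an ordered $d$-tuple of distinct marked points; this is a complex orbifold of dimension $3g-3+d$ (once one excludes the unstable low-genus cases, which are easily handled separately). Write $\pi\colon \mathcal{C}\to \mathcal{M}_{g,d}$ for the universal curve and $\sigma_1,\dots,\sigma_d$ for the tautological sections. The cohomology spaces
\[
H^0\bigl(S,\omega_S^{\otimes 2}(\textstyle\sum_i m_i p_i)\bigr)
\]
of meromorphic quadratic differentials with poles of order at most $m_i$ at $p_i$ fit together into a coherent sheaf on $\mathcal{M}_{g,d}$. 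Since $\deg(\omega_S^{\otimes 2}(\sum m_i p_i))=4g-4+\sum m_i>2g-2$ in every case where $\Quad(g,m)$ can be non-empty, Serre duality forces $H^1=0$, and base-change then gives a holomorphic vector bundle $\mathcal{V}$ of rank $3g-3+\sum m_i$. Its total space $\mathbf{Q}$ is thus a complex orbifold of dimension
\[
(3g-3+d)+(3g-3+\textstyle\sum m_i)=6g-6+\sum(m_i+1)=n.
\]

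Inside $\mathbf{Q}$ one imposes two open conditions: (i) that the leading Laurent coefficient of $\phi$ at each $p_i$ is non-zero, so that the pole order at $p_i$ is \emph{exactly} $m_i$; and (ii) that the discriminant locus of $\phi$ on $S\setminus\{p_1,\dots,p_d\}$ is empty, so that all zeroes of $\phi$ are simple. Both are complex-analytically open, and together they cut out an open sub-orbifold $U\subset \mathbf{Q}$ of dimension $n$. Quotienting $U$ by the symmetric group action permuting marked points with equal pole orders yields $\Quad(g,m)$.

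The step requiring most care is to see that this quotient is again an orbifold, i.e.\ that every pair $(S,\phi)\in U$ has finite automorphism group. Any automorphism $f\in\Aut(S,\phi)$ permutes the non-empty finite set of finite critical points of $\phi$ (non-empty by Definition \ref{gm}(c)), so fixes a finite subset of $S$ up to a finite permutation; combined with the rigidity of automorphisms of $S$ for $g\geq 2$, or direct analysis of $\operatorname{PGL}_2$-stabilisers for $g=0$ and of translations for $g=1$, this forces $\Aut(S,\phi)$ to be finite — exactly the role played by condition (c) of Definition \ref{gm}. A handful of genuinely degenerate $(g,d)$-cases lie outside the moduli-theoretic stable range; in each of these either $\Quad(g,m)$ is empty (the dimension $n$ being negative, or the GMN conditions being incompatible with $(g,m)$), or one verifies the claim by hand, writing $\Quad(g,m)$ as an explicit affine quotient, for instance as $(\C[z]_{\deg = m-4}\setminus\{\text{discriminant}\})/(\C\rtimes\C^\ast)$ in the case $g=0$, $d=1$.
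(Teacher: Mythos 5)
Your proof is correct and follows essentially the same route as the paper: realise $\Quad(g,m)$ as an open subset of the vector bundle of sections of $\omega_S^{\tensor 2}(\sum_i m_ip_i)$ over the moduli of marked curves, compute the rank via Serre duality and Riemann--Roch, and then check finiteness of automorphism groups, with Definition \ref{gm}(c) ruling out exactly the genus-zero polar types $(2,2)$ and $(4)$ that have no finite critical points and infinite automorphisms. The only place you are terser than the paper is the finiteness step, where the paper explicitly enumerates the residual genus-zero cases $(1,3)$, $(4)$, $(5)$, $(2,2)$ and their automorphism groups, but your deferral to a direct analysis of $\operatorname{PGL}_2$-stabilisers covers the same ground.
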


\begin{pf}
Let $\Mod(g,d)$ be the moduli stack of compact Riemann surfaces of genus $g$ with an ordered set of $d$ marked points $(p_1,\cdots,p_d)$. This is a smooth, connected algebraic stack of finite type over $\C$.   Choose an ordering of the integers $m_i$, and let $\Sym({\m})\subset \Sym(d)$ be the subgroup of the symmetric group consisting of permutations $\sigma$ such that $m_{\sigma(i)}=m_i$.

At each point of $\Mod(g,d)/\Sym({\m})$  there is a Riemann surface $S$ equipped with a  well-defined divisor $D=\sum_i m_i p_i$.
The spaces  of global sections $H^0(S,\omega_S^{\tensor 2}(D))$ fit together to form  a vector bundle 
\begin{equation}
\label{hs}
\mathcal{H}(g,m)\lra \Mod(g,d)/\Sym(m).\end{equation}
To see this, note first that if $g=0$ then we can assume that the divisor $D$ has degree at least 4, since otherwise the vector spaces are all zero, and the space $\Quad(g,m)$ is empty. 
 Serre duality therefore gives
\[H^1(S,\omega_S^{\tensor 2}(D))\isom H^0(S,\omega_S(D)^\vee)^*=0\]
which proves the claim.
It then follows using Riemann-Roch  that the rank of the bundle  \eqref{hs} is $3g-3+\sum_{i=1}^d  m_i$.
 
The stack $\Quad(g,\m)$ is  the Zariski open subset of  $\mathcal{H}(g,m)$ consisting of sections with simple zeroes disjoint from the points $p_i$.  Since $\Mod(g,d)$ is connected of dimension $3g-3+d$, the stack $\Quad(g,\m)$ is either empty, or is smooth and connected of  dimension $n$.

The final step is to show that the automorphism groups of the relevant quadratic differentials are finite. This claim is clear if $g\geq 1$ or $d\geq 3$, because the same property holds for $\Mod(g,d)$ (a curve of genus $g\geq 2$ has a finite automorphism group; a curve of genus $1$ has finitely many automorphisms fixing a given point). When $g=0$ the claim is also clear if the total number of critical points is $\geq 3$. Since there is at least one pole, and the number of zeroes is $\sum m_i-4$, the only other possibilities are polar types  $(1,3)$, $(4)$, $(5)$ and $(2,2)$.

In the first three of these cases there is a single quadratic differential up to equivalence, namely $\phi= z^k\, dz^{\tensor 2}$ with $k=-1,0,1$ respectively.  The corresponding automorphism groups are  $\{1\}$, $\Z_2\ltimes\C$ and $\Z_3$ respectively.  In the remaining case $(2,2)$ the possible differentials are  $\phi=r\,dz^{\tensor 2}/z^2$ for $r\in \C^*$. Each of these differentials has automorphism group $\Z_2\ltimes\C^*$. By Definition \ref{gm}(c), a GMN differential  must have a zero or a simple pole; this exactly excludes the troublesome cases $(2,2)$ and $(4)$.  
\end{pf}

\begin{Example}
\label{nosu}
Consider the case $g=1, m=(1)$. The corresponding space  $\Quad(g,m)$ is empty, even though the  expected dimension is $n=2$. Indeed, this space parameterizes pairs $(S,\phi)$, where $S$ is a Riemann surface of genus 1, and $\phi$ is a meromorphic differential on $S$ having only a simple pole. On the surface $S$  the bundle $\omega_S$ is trivial, so $\phi$ defines a meromorphic function  with a single simple pole. The Riemann-Roch theorem shows  that no such function exists. 
\end{Example}

We shall often abuse notation by referring to the points of the space $\Quad(g,m)$ as GMN differentials, and by denoting such a point simply by $\phi\in \Quad(g,m)$. This is shorthand for the statement that $\phi$ is a GMN differential on a compact Riemann surface $S$, such that the equivalence class of the pair $(S,\phi)$ defines a point of the space $\Quad(g,m)$.

The homology groups $H_1(S^\op;\Z)^-$ form a local system over the orbifold $\Quad(g,\m)$ because we can realise the spectral cover construction in families, and the Gauss-Manin connection gives a flat connection in the resulting bundle of anti-invariant homology groups. 
Often in what follows we will be studying a small analytic neighbourhood \[\phi_0\in U\subset \Quad(g,m)\] of a fixed differential $\phi_0$. Whenever we do this we will tacitly assume that $U$ is contractible, and  use the Gauss-Manin connection to identify the hat-homology groups of all differentials in $U$.


\subsection{Framings and the period map}
\label{fr}

As in  the last section, we fix a genus $g\geq 0$ and a  collection of $d\geq 1$ positive integers $m=\{m_i\}$.  Let us also fix a free abelian group $\Gamma$ of rank $n$ given by \eqref{n}. 

As before, we consider pairs $(S,\phi)$ consisting of  a Riemann surface $S$ of genus $g$, equipped with a GMN differential $\phi$ of polar type $m=\{m_i\}$. 
A $\Gamma$-framing of such a pair  $(S,\phi)$ is  an isomorphism of groups
\[\theta\colon \Gamma\to  \hs.\]

Suppose $(S_i,\phi_i)$ for $i=1,2$  are two quadratic differentials as above, and $f\colon S_1\to S_2$ is an  isomorphism such that $f^*(\phi_2)=\phi_1$. Then $f$ lifts to an isomorphism $\Hat{f}\colon \hS_1^\op\to \hS_2^\op$, which is unique if we insist that it also satisfies $\Hat{f}^*(\psi_2)=\psi_1$, where $\psi_i$ are the distinguished  1-forms defined in Section \ref{go}. 

Let $\Quad^\Gamma(g,\m)$ be the set of equivalence classes of  triples $(S,\phi,\theta)$ consisting of a  compact, connected   Riemann surface $S$ of genus $g$  equipped with a GMN  differential $\phi$ of polar type $m=\{m_i\}$ together with a $\Gamma$-framing $\theta$.
We define triples $(S_i,\phi_i,\theta_i)$  to be equivalent if there is an isomorphism  $f\colon S_1\to S_2$  such that $f^*(\phi_2)=\phi_1$ and  such that the distinguished lift $\Hat{f}$ makes the following diagram commute
\begin{equation}\label{look}\xymatrix@C=1.5em{ &\Gamma\ar[dl]_{\theta_1}\ar[dr]^{\theta_2}\\ \hsinput{1} \ar[rr]^{\Hat{f}_*}  && \hsinput{2}
 }\end{equation}

We can define families of framed differentials in the obvious way, and  the forgetful map \begin{equation}\label{oo}\Quad^\Gamma(g,\m)\to \Quad(g,\m)\end{equation}  is then an unbranched cover. Thus the set  $\Quad^\Gamma(g,\m)$ is naturally a complex orbifold. 
The group $\Aut(\Gamma)$ of  automorphisms of the group $\Gamma$ acts on  $\Quad^\Gamma(g,\m)$, and the quotient orbifold is precisely $\Quad(g,\m)$.
Note that $\Quad^\Gamma(g,m)$ will not usually be connected, because the monodromy of the local system of hat-homology groups preserves the intersection form, and hence cannot relate all different framings of a given differential. But since all  such framings \emph{are} related by the action of  $\Aut(\Gamma)$, the  different connected components of $\Quad^\Gamma(g,m)$ are  all isomorphic.

The period of a framed GMN differential $(S,\phi,\theta)$ can  be viewed as a map  $Z_\phi\circ \theta\colon \Gamma \to \C$. This gives a well-defined \emph{period map}
\begin{equation}
\label{periodmap}\pi\colon \Quad^\Gamma(g,\m) \to \Hom_\Z(\Gamma,\C).\end{equation}
In Section \ref{per} we shall prove that, with the exception of the six special cases considered in the next section, the space  $\Quad^\Gamma(g,m)$ is  a complex manifold, and  the period map $\pi$ is a local homeomorphism.


\subsection{Generic automorphisms}

In certain special cases  the orbifolds $\Quad(g,m)$  and $\Quad^\Gamma(g,m)$ have non-trivial generic automorphism groups. In this section we classify the polar types when this  occurs.

\begin{lemma}
\label{fatman}
The generic automorphism group of a point of $\Quad(g,m)$ is trivial, with the  exception of the  polar types \[(5); \quad (6);\quad (1,1,2);\quad (3,3);\quad (1,1,1,1),\]
in genus $g=0$, and the polar type $m=(2)$ in genus $g=1$.
\end{lemma}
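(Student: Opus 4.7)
The plan is a case analysis on the genus $g$, based on the observation that any $f\in\Aut(S,\phi)$ must first permute the polar divisor as an ordered multiset (preserving orders of poles) inside $\Aut(S)$, and then satisfy the linear condition $f^*\phi=\phi$ on the finite-dimensional space $H^0(S,\omega_S^{\tensor 2}(D))$ in which $\phi$ lives. So I would first identify the generic stabilizer of the polar configuration inside $\Aut(S)$, and then check whether this finite group acts non-faithfully on the section space: the generic automorphism of $(S,\phi)$ is precisely the kernel of this action.

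For $g\geq 3$ the moduli stack $\Mod(g,d)$ has generically trivial automorphism group, so the same holds for $\Quad(g,m)$. For $g=2$, every Riemann surface carries the hyperelliptic involution $\iota$, but $\iota$ has only six fixed points, so for a generic configuration of $d\geq 1$ poles it does not permute the pole set and the generic automorphism group is still trivial. For $g=1$ and $d\geq 2$, after normalizing one pole to $0\in E$ via translation, the only candidate non-trivial automorphism is the elliptic involution $\iota\colon z\mapsto -z$, which for generic placement of the remaining poles does not preserve the pole set. For $g=1$ and $d=1$ we normalize the unique pole to $0$, so $\iota$ always preserves the polar divisor; using $\omega_E\isom\OO_E$ and the Weierstrass basis $1,\wp,\wp',\wp^2,\dots$ of $H^0(E,\OO(m_1\cdot 0))$ (in which $\wp^{(k)}$ is $\iota$-invariant precisely when $k$ is even), one checks that every section is $\iota$-invariant if and only if $m_1\leq 2$. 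Since $m_1=1$ yields an empty moduli space by Example \ref{nosu}, the only exception in genus one is $m_1=(2)$, giving generic automorphism $\Z_2$.

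For $g=0$ I would separately treat $d=1,2,3,4$ and $d\geq 5$. After normalizing pole positions via $\mathrm{PGL}_2$, one identifies the residual (possibly continuous, possibly finite) subgroup preserving the multiplicities of the poles, and then computes its action on the candidate section space, which by Riemann--Roch has dimension $\max(0,\sum m_i-3)$. The five listed cases are exactly those where the action becomes trivial on a generic section, essentially for dimensional reasons: for $(5)$, normalization forces $\phi=cz\,dz^2$ and a residual $\Z_3\subset\C^*$ of third roots of unity preserves it; for $(6)$, placing the two zeros symmetrically at $\pm 1$ leaves the reflection $z\mapsto -z$, which preserves $(z^2-1)\,dz^2$; for $(1,1,2)$ the one-dimensional space of sections is pointwise fixed by the swap $z\mapsto 1-z$ of the two simple poles; for $(3,3)$ the scaling $z\mapsto \lambda z$ acts on $\phi=(a_2z^2+a_1z+a_0)z^{-3}\,dz^2$ with invariants $a_0 a_2$ and $a_1$, which are also preserved by the inversion $z\mapsto 1/z$, so inversion descends to the identity on the moduli; and for $(1,1,1,1)$ the one-dimensional space of sections $c\,dz^2/(z(z-1)(z-\lambda))$ is pointwise preserved by the three Möbius involutions forming the Klein four-group of double transpositions on $\{0,1,\infty,\lambda\}$, as can be checked by a direct partial-fraction computation.

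For every other polar type in genus zero, either the generic polar configuration admits no non-trivial multiplicity-preserving Möbius symmetry (for instance when $d\geq 5$, or when $d=4$ with multiplicity pattern other than $(1,1,1,1)$ and generic cross-ratio, or when $d=3$ with all orders distinct, or when $d=2$ with $m_1\neq m_2$), or the residual group acts faithfully on the section space because the palindrome-type equations required by invariance strictly exceed the normalization freedom. The main obstacle is performing this faithfulness check uniformly: for $(m,m)$ with $m\geq 4$, inversion demands $m-2$ palindrome conditions on the partial-fraction coefficients while scaling supplies only one normalization; and the case $d=4$ with multiplicities $(1,1,2,2)$ is delicate because the Möbius involution $f(z)=\lambda(z-1)/(z-\lambda)$ swaps both pairs of equal-order poles for every $\lambda$, yet a direct partial-fraction computation shows that its induced linear action on the three-dimensional section space has only a one-dimensional fixed subspace, so the generic stabilizer nevertheless collapses to the trivial group.
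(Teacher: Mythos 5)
Your overall framework (compute the generic stabilizer of the polar divisor inside $\Aut(S)$, then the kernel of its linear action on $H^0(S,\omega_S^{\tensor 2}(D))$, using that for a finite group the generic stabilizer of a linear action is the kernel) is sound, and your treatment of $g\geq 2$, of $g=1$ with $d=1$, and of the five exceptional genus-zero types is essentially correct and close to the paper's for the $g=1$, $d=1$ case. But there are two genuine gaps. First, in genus $1$ with $d=2$ and $m_1=m_2$ your argument fails: you only consider automorphisms fixing the normalized pole at $0$, whereas an automorphism of $(S,\phi)$ need only permute the poles, and the involution $z\mapsto p_1+p_2-z$ \emph{always} preserves the set $\{p_1,p_2\}$ together with its multiplicities. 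So here the stabilizer of the polar divisor is $\Z_2$ for \emph{every} configuration, and you still owe the faithfulness check on the $2m$-dimensional space $H^0(E,\OO(mp_1+mp_2))$ (it does act non-trivially -- e.g.\ a function with pole order exactly $m$ at $p_1$ and regular at $p_2$ is not invariant -- but this step is absent). Second, and more seriously, the genus-zero analysis is not finished: infinitely many polar types, e.g.\ $(a,a,b)$ with $(a,a,b)\neq(1,1,2)$, $(a,a,a)$ with $a\geq 2$, $(a,a,b,b)\neq(1,1,1,1)$, and $(m,m)$ with $m\geq 4$, have generic polar configurations admitting a non-trivial multiplicity-preserving M{\"o}bius symmetry, so each requires the faithfulness check that you explicitly defer. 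The ``palindrome conditions versus normalization freedom'' count is a heuristic, not a proof, and your parenthetical assertion that $d=4$ patterns other than $(1,1,1,1)$ with generic cross-ratio carry no symmetry is contradicted two lines later by your own discussion of $(1,1,2,2)$.

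The paper closes exactly this gap with one observation you do not use: by Riemann--Roch the \emph{zeroes} of a generic genus-zero differential are themselves in completely general position, so any automorphism must permute the whole critical set $\Crit(\phi)$ respecting types. A generic configuration of $\geq 5$ points on $\PP^1$ has no M{\"o}bius symmetry, and for $4$ (resp.\ $3$) generic points the symmetries act by double transpositions (resp.\ transpositions), forcing the critical points to match up in pairs of equal type. This gives $|\Crit(\phi)|=d+\sum m_i-4\leq 4$ with matched types, and a short enumeration yields exactly $(5)$, $(6)$, $(1,1,2)$, $(3,3)$, $(1,1,1,1)$ together with the already-excluded $(2,2)$, $(4)$ and the harmless $(1,3)$. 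With this in hand, all of your deferred checks (including the delicate $(1,1,2,2)$ computation) collapse to the single remark that a M{\"o}bius map constrained by the poles cannot permute a generic set of $k\geq 1$ zeroes; I would recommend restructuring the genus-zero part of your argument around that observation rather than around section-by-section partial-fraction computations.
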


\begin{pf}
Suppose first that if $g=0$ then $d\geq 5$, and that if $g=1$ then $d\geq 2$. 
With these assumptions the  stack $\Mod(g,d)/\Sym(d)$ parameterizing compact Riemann surfaces of genus $g$  
with an unordered collection of  $d$  marked points has trivial generic automorphism group.\footnote{Consider the case when $g\geq 2$. In order for the automorphism group of a marked curve to be non-trivial the points $p_i$ must be permuted by some automorphism of the curve. Since the automorphism group of  such a curve is finite \cite[Ex. IV.5.2]{Har} this is a non-generic condition. The statement in genus $1$ is similar using the set of points $\{p_i-p_j\}$ and the fact that the group of automorphisms modulo  translations is finite. The genus 0 case is easily dealt with explicitly.}  The same is therefore true of the stack $\Mod(g,d)/\Sym(m)$ appearing in the proof of Proposition \ref{rain}. The space $\Quad(g,m)$ is an open  
subset of a vector bundle over this stack, so again, the generic automorphism group is trivial.

Consider the case $g=1$ and $d=1$. The stack $\Quad(g,m)$ then parameterizes pairs consisting of a Riemann surface $S$ of genus 1,   together with a meromorphic function on $S$ with simple zeroes and a single pole, necessarily of order $m\geq 2$.  For a generic such surface $S$, the group of automorphisms preserving the pole is generated by a single involution,  and using Riemann-Roch it is easily seen that if $m\geq 3$ then the zeroes of the generic such function are not permuted by this involution.

When $g=0$  Riemann-Roch shows that there exist differentials with any given configuration of zeroes and poles, providing only that the number $k$ of zeroes is equal to $\sum m_i - 4$. Thus if a generic point  $\phi\in \Quad(0,m)$ has non-trivial automorphisms, then $|\Crit(\phi)|\leq 4$. Moreover, if $|\Crit(\phi)|=4$ then the critical points must consist of two pairs  of the same type, since the generic automorphism group of $\Mod(0,4)/\Sym(4)$ acts on the marked points via permutations of type $(ab)(cd)$ (see e.g. \cite[Section 2.5]{JS}). If $|\Crit(\phi)|=3$ then at least two of the critical points must be of the same type.

Suppose that the generic point of $\Quad(0,m)$ does have non-trivial automorphisms. Since there is at  
least one pole, we must have $0 \leq k \leq 3$. We cannot have  $k = 3$   since there  
would then be 4 critical points whose types do not match in pairs. If $k = 2$  
 there must be two poles of the same degree, giving  
the $(3,3)$ case, or a single pole, giving the $(6)$ case. If $k = 1$  there  
must be just one pole, which 
 gives the case $(5)$, since if there were 2 poles they would have to have the same degree. Finally, if $k =  
0$ we get the cases $(1,1,2)$ and $(1,1,1,1)$, since the cases $(2,2)$ and $(4)$ have already been excluded by the defintion of a GMN differential, and the case $(1,3)$ leads to a single differential with trivial automorphism group, as discussed in the proof of Proposition \ref{rain}.
\end{pf}

\begin{examples}
\label{spec}
We consider  differentials $(S,\phi)\in \Quad(g,m)$ corresponding to some of the exceptional cases  in the statement of Lemma \ref{fatman}.

\begin{itemize}

\item[(a)] 
Consider the case $g=0$ and $m=(1,1,2)$. Taking the simple poles to be at $\{0,\infty\}\in \PP^1$ we can write any such differential in the form \[\phi(z)=\frac{c\,  dz^{\tensor 2}}{z(z-1)^2}\] for some $c\in \C^*$. Thus $\phi$ is invariant under the automorphism $z\mapsto 1/z$. The spectral cover $\hS$ is again $\PP^1$ with co-ordinate $w=\sqrt{z}$ and covering involution $w\mapsto -w$.  The automorphism $z\mapsto 1/z$ lifts to the  automorphism $w\mapsto 1/w$ of the open subsurface $\hS^\op=\PP^1\setminus \{\pm 1\}$ and acts trivially on the hat-homology group, which is $H_1(\hS^\op;\Z)=\Z$. Thus every element of $\Quad^\Gamma(g,m)$ has automorphism group $\Z_2$. 
\smallskip

\item[(b)]
Consider the case  $g=0$, $m=(3,3)$. Any such differential  is of the form 
\[\phi(z)=(tz+2s+tz^{-1})\, \frac{dz^{\tensor 2}}{z^2}, \]
for constants $s\in \C$ and $t\in \C^*$ with $s \pm t\neq 0$, and is invariant under $z\mapsto 1/z$. The  spectral cover $\hS$ has genus 1.  The open subset $\hS^\op$ is the complement of 2 points, the inverse images of the poles of $\phi$. The  automorphism $z\mapsto 1/z$ of $\PP^1$ lifts to a translation by a 2-torsion point  of $\hS$. It acts trivially on the hat-homology group, which is  $H_1(\hS;\Z)=\Z^{\oplus 2}$. Thus every point of $\Quad^\Gamma(g,m)$ has automorphism group $\Z_2$.

\smallskip
\item[(c)]

Consider the case  $g=0$, $m=(1,1,1,1)$. Such differentials are of the form \[\phi(z)=\frac{dz^{\tensor 2}}{p_4(z)},\] where $p_4(z)$ is a monic polynomial of degree 4 with distinct roots, and are invariant under any automorphism of $\PP^1$ permuting these roots. The  spectral cover $\hS$ has  genus 1. The  automorphisms of $\PP^1$ preserving $\phi$ lift to translations by 2-torsion points of $\hS$. These automorphisms  act trivially on  the hat-homology group, which is  $H_1(\hS;\Z)=\Z^{\oplus 2}$. Thus every point of $\Quad^\Gamma(g,m)$ has automorphism group $\Z_2^{\oplus 2}$. 
\end{itemize}
\end{examples}

In each of the other cases of Lemma \ref{fatman} the orbifold $\Quad^\Gamma(g,m)$ also has non-trivial generic automorphism group. The case  $g=0$, $m=(5)$ is elementary, and the case $g=0$, $m=(6)$ is very similar to Example \ref{spec}(a). The case $g=1$, $m=(2)$ is treated in Example \ref{ex} below.





\section{Trajectories and geodesics}
\label{hor}

In this section we  focus on  the global   trajectory structure of  a fixed quadratic differential, and  the basic properties of  the geodesic arcs of the associated flat metric. This material is all well-known, but since it  forms the basis for much of what follows we thought it worthwhile to give a fairly detailed treatment. The reader can find proofs and further explanations in   Strebel's book \cite{Strebel}.

 \subsection{Trajectories}
 \label{l}

 Let $\phi$ be a meromorphic quadratic differential on a compact Riemann surface $S$.   A \emph{straight arc} in $S$   is a smooth path $\gamma\colon I\to S\setminus \Crit(\phi)$, defined on an open interval $I\subset \R$, which makes a constant angle $\pi\theta$ with the horizontal foliation. In terms of a distinguished local co-ordinate $w$ as in Section \ref{Sec:Flat} the condition is  that the function $\Im (w/e^{i\pi\theta})$ should be  constant along $\gamma$. 
 The phase $\theta$ of a straight arc is a well-defined element of $\R/\Z$; in the case  $\theta=0$ the arc is said to be \emph{horizontal}.

We make the convention that all straight arcs are parameterized by arc-length in the $\phi$-metric.  Straight arcs differing by a reparameterization (necessarily of the form $t\mapsto \pm t+\constant$) will be regarded as being the same. A straight arc is  called \emph{maximal} if it is not the restriction of a straight arc defined on a larger interval.
 A  maximal horizontal straight arc is called a  \emph{trajectory}. Every point of $S\setminus \Crit(\phi)$ lies on a unique trajectory, and any two  trajectories are either disjoint or coincide.

 We define a \emph{saddle trajectory} to be a trajectory $\gamma$ whose domain of definition is a finite  interval $(a,b)\subset \R$. Since $S$ is compact, we can then extend $\gamma$  to a continuous path $\gamma\colon [a,b]\to S$, whose  endpoints $\gamma(a)$ and $\gamma(b)$ are   finite critical points of $\phi$. We tend not to distinguish between the saddle trajectory $\gamma$ and its closure. By a \emph{closed saddle trajectory} we mean a saddle trajectory whose  endpoints  coincide.
 
More generally,  a \emph{saddle connection} is a maximal straight arc of some phase $\theta$ whose domain of definition is a finite interval.  Thus a saddle trajectory is a horizontal saddle connection, and a  saddle connection  of phase $\theta$ is a saddle trajectory   for the rotated differential $e^{-i\pi\theta}\cdot \phi$.

If  a trajectory  $\gamma$ intersects itself, then it must be periodic, and have  domain $I=\R$. In this situation we usually restrict the domain of $\gamma$ to a primitive period $[a,b]\subset \R$, and refer to $\gamma$ as a \emph{closed trajectory}. By a \emph{finite-length trajectory} we mean either a closed trajectory or a  saddle trajectory.


\subsection{Hat-homology classes}

Let us again fix a meromorphic quadratic differential $\phi$ on a compact Riemann surface $S$.
 The inverse image of the horizontal foliation of $S\setminus\Crit(\phi)$ under the covering map $\pi$ defines a horizontal foliation on $\hS\setminus\pi^{-1} \Crit(\phi)$. In more detail, the  1-form $\psi$  of Section \ref{go} can be written  locally as $\psi=d\hat{w}$, and the  horizontal foliation of $\hS$ is then given by the lines $\Im (\hat{w}) =\constant$. This   foliation can be canonically oriented by insisting
that $\psi$ evaluated on the tangent vector to the oriented foliation  should lie in $\R_{>0}$ rather than $\R_{<0}$.  Note that since $\psi$ is anti-invariant, the covering involution  $\tau$ preverses the horizontal foliation on $\hS$, but reverses its orientation.

Suppose that $\gamma\colon [a,b]\to S$ is a finite-length trajectory. The inverse image $\pi^{-1}(\gamma)$  is then a closed curve in the spectral cover $\hS$, which could be disconnected (if $\gamma$ is a closed trajectory), or singular (if $\gamma$ is a closed saddle trajectory, see Figure \ref{Fig:Imprimitive}). In all cases we orient $\pi^{-1}(\gamma)$ according to the orientation discussed in the previous paragraph.  Since the covering involution flips this orientation, we obtain a  class $\Hat{\gamma}\in \hs$  called  the \emph{hat-homology} class\footnote{With this definition it is not necessarily the case that $\Hat{\gamma}$ is primitive,  cf. Figure \ref{Fig:Imprimitive}. In the literature one often sees a more complicated definition of the hat-homology class of a saddle trajectory which boils down to taking the unique primitive multiple of  our $\Hat{\gamma}$.}
 of the trajectory $\gamma$.  
 Note that, by definition, it satisfies $Z_\phi(\Hat{\gamma}) \in \R_{>0}$.

 \begin{figure}[ht]
\begin{center}
\includegraphics[scale=0.4]{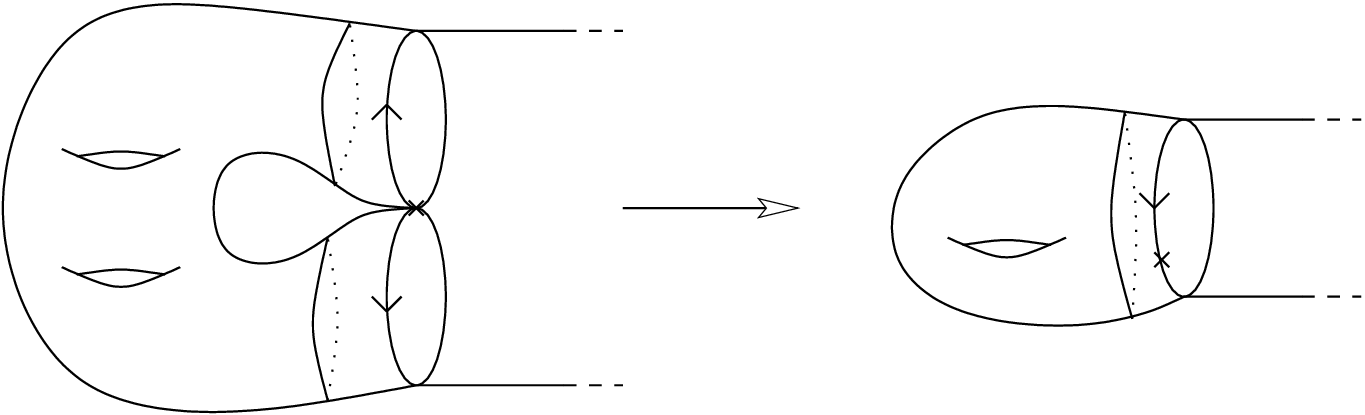}
\end{center}
\caption{A closed saddle trajectory $\gamma$, and its preimages $\gamma^{\pm}$ in the spectral cover, whose union define its (imprimitive) hat-homology class\label{Fig:Imprimitive}.}
\end{figure}

Similar remarks apply to maximal straight arcs of finite-length and nonzero phase $\theta$. The only difference is that we orient the inverse image of the arcs on $\hS$ by insisting that $\psi$ evaluated on the tangent vector should have positive imaginary part. This means that the corresponding hat-homology classes  have periods $Z_\phi(\Hat{\gamma})$ lying in the upper half-plane.  


\subsection{Critical points}
\label{crit}

We now describe the local  structure of the horizontal foliation  near a  critical point  of  a meromorphic quadratic differential, following  Strebel \cite[\SS 6\,]{Strebel}.  

Let $\phi$ be a meromorphic quadratic differential on a Riemann surface $S$. Suppose first that $p\in \Crit_{<\infty}(\phi)$ is either a simple pole of $\phi$, in which case we set $k=-1$, or a zero of some order $k\geq 1$. Then there are  local co-ordinates $t$ such that
\[
\phi(t)\,  = c^2\cdot t^{k}\, dt^{\tensor 2}, \qquad c=\half(k+2).
\]
At nearby points of  $S\setminus\{p\}$, a distinguished local co-ordinate is 
$w=t^{\frac{1}{2}(k+2)}$. The local trajectory structure  is illustrated in the cases $k=\pm 1$ in Figure \ref{zero}.

\begin{figure}[ht]
\begin{center}
\includegraphics[scale=0.4]{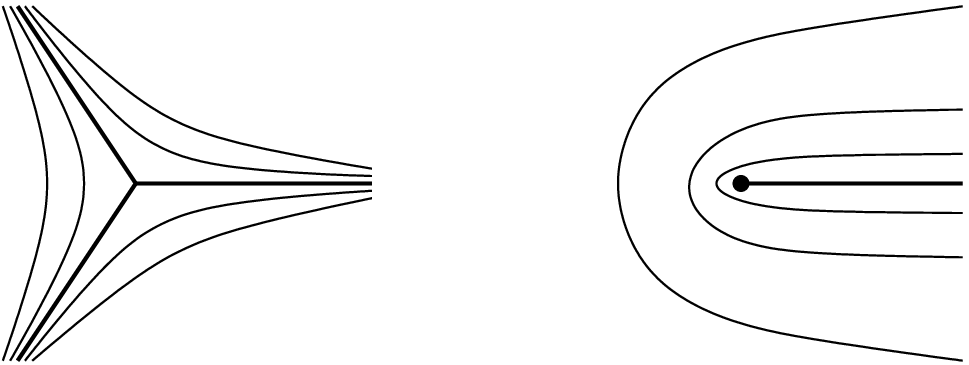}
\end{center}
\caption{Local trajectory structures at a simple zero and a simple pole\label{zero}.}
\end{figure}

Note that three horizontal rays emanate from each simple zero;  this trivalent structure will be the basic reason for the link with triangulations.

Next suppose that $p\in \Crit_{\infty}(\phi)$ is a  pole of order 2. Then there are local co-ordinates $t$ such that \[\phi(t)=r \, \frac{dt^{\otimes 2}}{t^2},\] for some well-defined constant $r\in \bC^*$. The residue of $\phi$ at $p$ is 
\begin{equation}
\label{gotcha}Z_\phi(\beta_p)=\res_p(\phi)=\pm 4\pi i \sqrt{r},\end{equation}
 and is well-defined up to sign.

At nearby points of $S\setminus\{p\}$ any branch of the function $w=\sqrt{r} \,\log(t)$  is  a distinguished local co-ordinate,
and the structure of the  horizontal foliation near  $p$ is determined by the residue  as follows:
  \begin{itemize}
\item[(i)] if $\res_p(\phi)\in \bR$  the foliation is by concentric circles centred on the pole;\smallskip
  \item[(ii)] if $\res_p(\phi)\in i\bR$ the foliation is by radial arcs emanating from the double pole;\smallskip
\item[(iii)] if $\res_p(\phi)\notin \bR\cup i\bR$  the leaves of the foliation are logarithmic spirals which wrap onto the pole.
\end{itemize}
\begin{figure}[ht] \begin{center}
\includegraphics[scale=0.9]{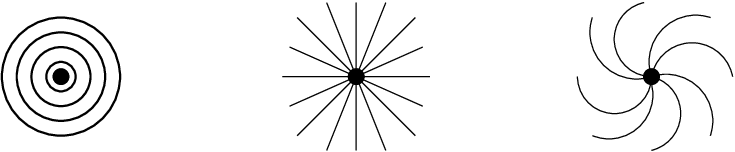}
\caption{Local trajectory structures at a double pole\label{Fig:LocalTrajectories}.}
\end{center} \end{figure}
These three cases are illustrated in Figure \ref{Fig:LocalTrajectories}. 
In cases (ii) and (iii) there is a neighbourhood $p\in U\subset S$  such that any trajectory entering $U$ tends to $p$.

Finally, suppose that $p\in \Crit_\infty(\phi)$ is a pole of order $m>2$. If $m$ is odd, there are  local co-ordinates $t$ such that
\[
\phi(t)\,  = c^2\cdot t^{-m}\, dt^{\tensor 2}, \qquad c=\half(2-m).
\]
as before. If $m\geq 4$ is even, there are local co-ordinates $t$ such that
\[\phi(t) = \big(c t^{-m/2} + \frac{b}{t}\big)^2 dt^{\tensor 2}, \quad c=\half(2-m).\]
The residue of $\phi$ at $p$  is then \[Z_\phi(\beta_p)=\res_p(\phi)=\pm 4\pi i b,\]
and is well-defined up to sign.
 
\begin{figure}[ht]
\begin{center}
\includegraphics[scale=0.4]{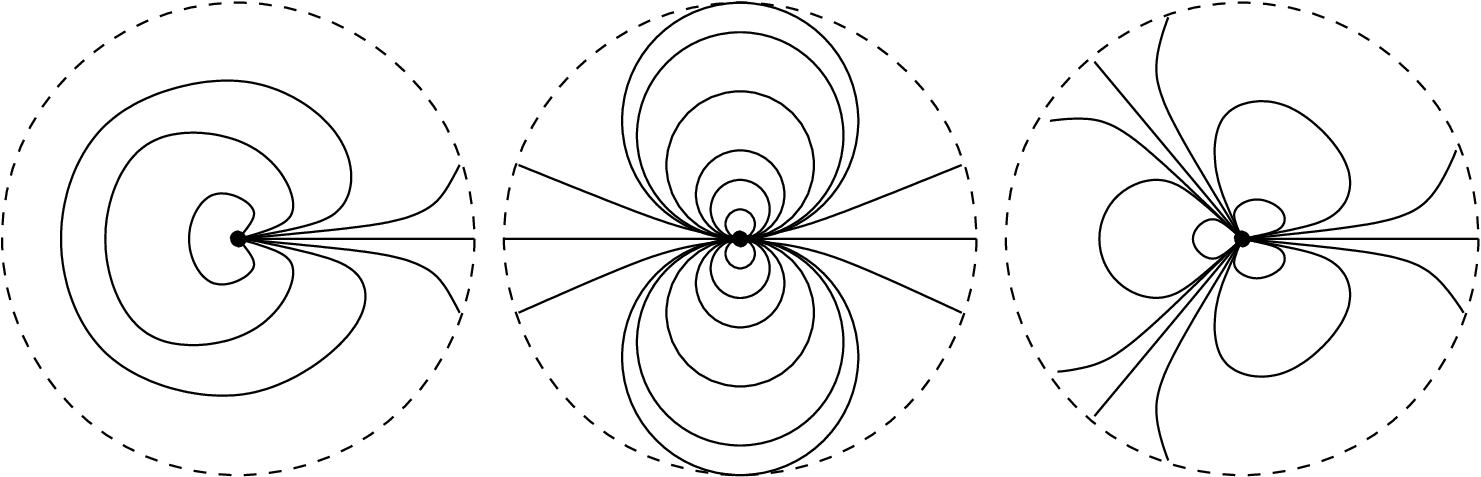}
\end{center}
\caption{Local trajectory structures at poles of order $m=3,4,5$\label{pole}.}
\end{figure}

The trajectory structure in these cases is  illustrated in Figure \ref{pole}.
 There is  a neighbourhood $p\in U\subset S$ and a collection of  $m-2$ distinguished tangent directions $v_i$ at $p$, such that any trajectory entering $U$ eventually tends to  $p$ and  becomes asymptotic to  one of the $v_i$. 
 

\subsection{Global trajectories}

Let $\phi$ be a GMN differential on a  compact Riemann surface $S$. We now consider  the global structure of the horizontal foliation of $\phi$, again following Strebel \cite[\SS\SS \,9--11]{Strebel}.
Every  trajectory of  $\phi$ falls into exactly one of the following categories:
\begin{enumerate}
\item \emph {saddle trajectories} approach  finite critical points  at both ends; \smallskip
\item \emph{separating trajectories}\footnote{These trajectories do not separate the surface: we call them separating because in the generic saddle-free situation considered in Section \ref{sadfree} the separating trajectories divide the surface into a disjoint union of cells.} approach critical points  at each end, one finite and one infinite;\smallskip
\item \emph{generic trajectories} approach  infinite critical points  at both ends;\smallskip
\item \emph{closed trajectories} are simple closed curves in $S\setminus\Crit(\phi)$;\smallskip
\item \emph{recurrent trajectories} are recurrent in at least one direction.\end{enumerate}
Since only finitely many horizontal arcs emerge from each finite critical point, the number of saddle trajectories and separating trajectories is finite.
 Removing these  from $S$, together with the critical points $\Crit(\phi)$,  the remaining open surface splits as  a disjoint union of connected components which can be classified  as follows\footnote{See \cite[Section 11.4]{Strebel}. Strictly speaking the decomposition is into \emph{maximal} horizontal strips, half-planes etc, but since all such domains we consider will be maximal, we  drop the qualifier. Recall that we have outlawed various degenerate cases: by assumption $\phi$ has at least one finite critical point, and at least one pole.}  
\begin{enumerate}
\item A \emph{half-plane} is equivalent to the upper half-plane \[\{z\in \C :\Im(z)>0\}\subset \C\] equipped with the differential $dz^{\tensor 2}$. It is swept out by generic trajectories which connect a fixed pole of order $m>2$ to itself. The boundary  is made up of saddle trajectories and separating trajectories.\smallskip

\item
 A \emph{horizontal strip} is equivalent to a region \[\{z\in \C:a<\Im(z)<b\}\subset \C, \]
  equipped with the differential $dz^{\tensor 2}$.  It is swept out by generic trajectories connecting two (not necessarily distinct) poles of arbitrary order $m\geq 2$. Each component of the boundary  is made up of  saddle trajectories and separating trajectories. \smallskip

\item
 A \emph{ring domain}  is equivalent to a region \[\{z\in \C:a<|z|<b\}\subset \C^*,\]
  equipped with the differential $r\, dz^{\tensor 2}/z^2$ for some $r\in \R_{<0}$.   It is swept out by closed trajectories. Each component of the boundary  is either made up  of saddle trajectories or is a single double pole of $\phi$ with  real residue.\smallskip

\item A \emph{spiral domain} is defined to be the interior of the closure of a recurrent trajectory. The only fact  we shall need is that the boundary of a spiral domain is  made up of saddle trajectories.  In particular there are no infinite critical points in  the closure of a spiral domain.
\end{enumerate}

A ring domain $A$ will be called  \emph{degenerate} if  one of its boundary components consists of a double pole $p$. The residue $\res_p(\phi)$ is then necessarily real, and $A$ consists of closed trajectories encircling $p$. Conversely, any double pole $p$ with real residue is contained in a degenerate ring domain.
A ring domain $A$  will be called \emph{strongly non-degenerate} if its boundary consists of two,  pairwise disjoint, simple closed curves on $S$.    Not all non-degenerate ring domains are strongly non-degenerate;  for example,  in the  case of finite area  differentials, there is a dense subspace of $\Quad(g,m)$ consisting of differentials which have a single dense ring domain  \cite[Theorem 25.2]{Strebel}. 


\subsection{Saddle-free differentials}
\label{sadfree}

 We say that a GMN differential  is \emph{saddle-free} if it has no saddle trajectories. The following simple but crucial observation comes from \cite[\SS  6.3]{GMN2}.

\begin{lemma}\label{prop:NoClosed}
If a GMN differential $\phi$ is saddle-free, and $\Crit_\infty(\phi)$ is non-empty, then $\phi$ has no closed or recurrent trajectories.
\end{lemma}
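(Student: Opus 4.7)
The plan is to argue by contradiction separately in each case, using the same basic strategy: I would localize the hypothesized trajectory inside a region of the horizontal decomposition of $S$, observe that saddle-freeness collapses the boundary of that region to a finite subset of $S$, and then invoke connectedness of $S$ minus a finite set to force the closure of the region to equal $S$, producing a contradiction with one of the standing hypotheses.

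For the divergent case, I would place the trajectory inside a spiral domain $U$ by the classification of trajectories recalled above. The structure theorem asserts that $\partial U$ in $S$ is made up of saddle trajectories together with their finite critical endpoints, and that $\bar U \cap \Crit_\infty(\phi)=\emptyset$. Saddle-freeness therefore reduces $\partial U$ to a finite subset of $\Crit_{<\infty}(\phi)$. Then $U$ and $S\setminus \bar U$ are disjoint open subsets of $S$ whose union is $S\setminus \partial U$; since removing a finite set from the connected surface $S$ leaves the result connected, and since $U$ is non-empty, I would conclude $S\setminus \bar U=\emptyset$ and hence $\bar U=S$, contradicting $\Crit_\infty(\phi)\neq\emptyset$.

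For the closed case, the same strategy applies to the ring domain $A$ containing the trajectory, whose two boundary components are each either a union of saddle trajectories or a single double pole. Saddle-freeness rules out the first alternative, so $\partial A$ consists of at most two double poles. The identical connectedness argument forces $\bar A=S$, whence $\Crit(\phi)$ is contained in a set of double poles, contradicting condition (c) of Definition~\ref{gm} that a GMN differential must possess a finite critical point.

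The only subtle point, and the main obstacle to a cleaner write-up, is extracting from the classification the fact that the topological boundary in $S$ of a ring or spiral domain contains no separating trajectories (these only appear on the boundaries of horizontal strips and half-planes); without this observation, saddle-freeness alone would not suffice to make $\partial U$ and $\partial A$ zero-dimensional, and the connectedness argument would not apply.
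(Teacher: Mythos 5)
Your proof is correct and follows essentially the same route as the paper's: both arguments place the hypothetical trajectory in a spiral domain or ring domain, use saddle-freeness to empty out the boundary (up to finitely many points), and derive a contradiction with $\Crit_\infty(\phi)\neq\emptyset$ in the spiral case and with Definition~\ref{gm}(c) in the ring case. The only difference is that you spell out the connectedness argument forcing $\bar U=S$ (resp.\ $\bar A=S$), which the paper leaves implicit in the phrase ``the surface $S$ cannot be the closure of a spiral domain.''
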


\begin{pf}
Since $\Crit_\infty(\phi)$ is non-empty the surface $S$ cannot be the closure of a spiral domain. 
On the other hand, the boundary of a spiral domain consists of saddle trajectories. Thus there can be no spiral domains, and hence no recurrent trajectories.  Similarly the boundary of a ring domain must contain saddle trajectories, except for the case when both boundary components are double poles with  real residue. This can only occur when $g=0$ and the polar type is $m=(2,2)$; such differentials are not GMN since they have no finite critical points.
\end{pf}

Let $\phi$ be a saddle-free GMN differential such that $\Crit_\infty(\phi)$ is non-empty. Removing the finitely many separating trajectories from $S\setminus\Crit(\phi)$ gives an open surface which is a disjoint union of   horizontal strips and half-planes swept out by generic trajectories. 
\begin{figure}[ht]
\begin{center}
\includegraphics[scale=0.4]{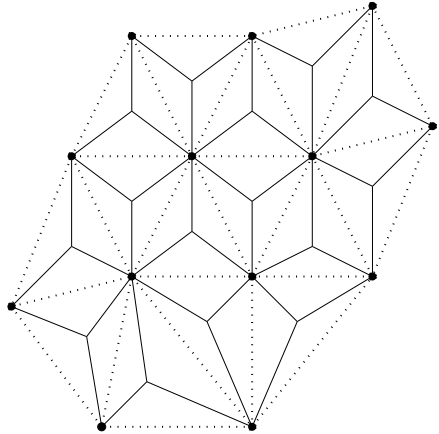}
\end{center}
\caption{The generic (dotted) and  separating trajectories (solid) for a  saddle-free GMN differential having only double poles. All horizontal  strips in the picture are non-degenerate.  \label{dualgraph2}}
\end{figure} 

Each of the two components of the boundary of a horizontal strip contains exactly one finite critical point of $\phi$. \begin{figure}[ht]
\label{twocells}
\begin{center}
\includegraphics[scale=0.4]{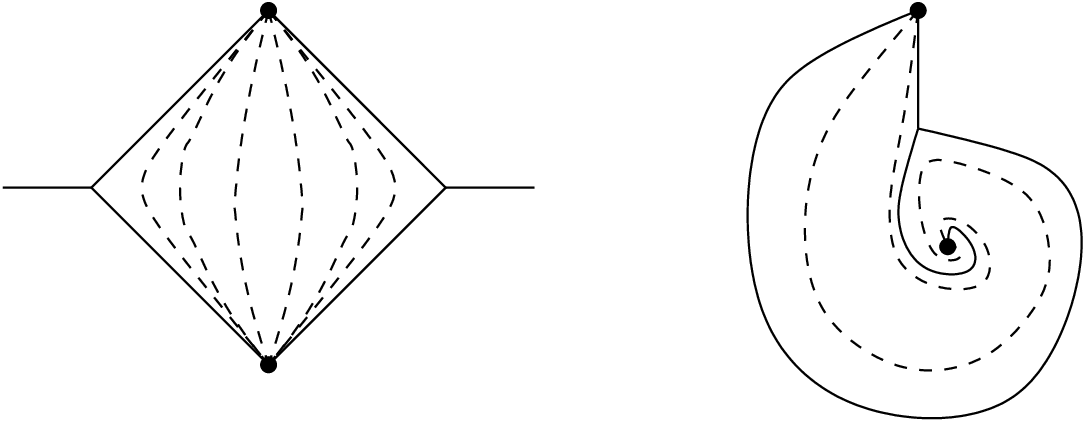}
\end{center}
\caption{Two types of strip, regular and degenerate.\label{Fig:Degenerate}}
\end{figure}If these are both zeroes, then embedded in the surface there are  two possibilities, depending on whether the two zeroes are distinct or coincide;  we call the corresponding strips \emph{regular} or \emph{degenerate} respectively. 
These two possibilities are illustrated in Figure \ref{Fig:Degenerate};  note though that  the two double poles in the first of these pictures could well coincide on the surface.

\begin{figure}[ht]
\label{twocells}
\begin{center}
\includegraphics[scale=0.4]{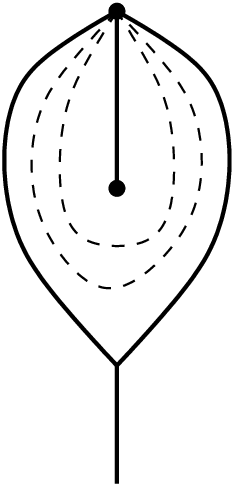}
\end{center}
\caption{Horizontal strip with a simple pole on its boundary; the simple pole is in the centre of the diagram with a double pole above and a simple zero below.\label{Fig:SimplePoleStrip}}
\end{figure}

A horizontal strip containing a simple pole in one of its boundary components is almost always of the form illustrated in Figure \ref{Fig:SimplePoleStrip}. The one exception occurs in genus 0 and polar type $(1,1,2)$: the moduli space of such differentials consists of a single $\C^*$-orbit, and the trajectory structure for a generic  element consists of a single horizontal strip  containing two simple poles in  its boundary.

\subsection{Standard saddle connections}
\label{ihope}

Let $\phi$ be a  saddle-free GMN differential on a Riemann surface $S$, and assume that $\Crit_\infty(\phi)$ is non-empty. The interior of each horizontal strip  is  equivalent to a strip in $\C$ equipped with the differential $dz^{\tensor 2}$. 
  In each  such strip $h$ there is a unique saddle connection $\ell_h$ connecting the two finite critical points  on the opposite sides of the strip.   

\begin{figure}[ht]
\begin{center}
\includegraphics[scale=0.6]{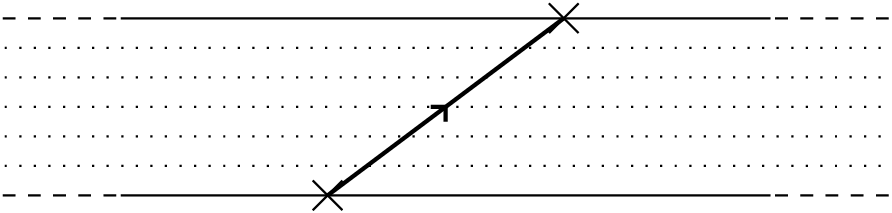}
\end{center}
\caption{A horizontal strip in $\C$ with its standard saddle connection.}
\end{figure}

Since $\phi$ is saddle-free,  $\ell_h$ must have nonzero phase.   As in Section \ref{l}, there is  an associated hat-homology class $\alpha_h\in\hs$, which by definition  satisfies $\Im Z_\phi(\alpha_h)>0$.
 We call the arcs $\ell_h$ the \emph{standard saddle connections} of the differential $\phi$. The
  classes $\alpha_h$  will be called the \emph{standard saddle classes}. 
  
\begin{lemma}
\label{bassist}
The standard saddle classes $\alpha_h$  form a basis for the group $\hs$.
\end{lemma}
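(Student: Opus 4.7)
The plan is to exhibit, for each horizontal strip $h$, a relative class $[\gamma_h^+]$ in $H_1(\hS,\Hat{D}_\infty;\Z)$ coming from a single lift of a generic horizontal trajectory in that strip, to verify via transversality that the Poincar\'e--Lefschetz pairing matrix $\langle \alpha_h,[\gamma_{h'}^+]\rangle$ equals $\pm\delta_{h,h'}$, and to count that there are exactly $n$ horizontal strips. Since $\hs$ is free of rank $n$ by Lemma \ref{train}, the pairing will induce a surjection $\hs\to\Z^n$ between free $\Z$-modules of the same rank, hence an isomorphism, and this identifies $\{\alpha_h\}$ with a $\Z$-basis of $\hs$.

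First I would construct the dual classes. Since $\phi$ is saddle-free and $\Crit_\infty(\phi)$ is non-empty, Lemma \ref{prop:NoClosed} rules out closed and divergent trajectories, so every strip $h$ is swept by generic horizontal trajectories; pick one such trajectory $\gamma_h\subset h$, regarded as an arc in $S^\op$ between the two (possibly equal) infinite critical points at the ends of the strip. As $\gamma_h$ misses every branch point of the spectral cover $\pi\colon \hS\to S$, a single choice of lift produces an arc $\gamma_h^+\subset\hS$ with endpoints in $\Hat{D}_\infty=\pi^{-1}(\Crit_\infty(\phi))$, defining a relative class $[\gamma_h^+]\in H_1(\hS,\Hat{D}_\infty;\Z)$.

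Next I would compute the Lefschetz pairing from equation \eqref{wine}. If $h\neq h'$ then $\ell_h$ and $\gamma_{h'}$ lie in disjoint horizontal strips of $S$, so their preimages on $\hS$ are disjoint and $\langle \alpha_h,[\gamma_{h'}^+]\rangle=0$. If $h=h'$ then both $\ell_h$ and $\gamma_h$ are straight arcs in the flat structure of $h$: $\gamma_h$ is horizontal, while $\ell_h$ has non-zero phase (since $\phi$ is saddle-free), so they meet transversely at a single point $x\in h$. Lifting to $\hS$, the loop $\pi^{-1}(\ell_h)$ underlying $\alpha_h$ passes through both preimages of $x$, but $\gamma_h^+$ passes through only one of them; hence the lifted intersection is a single transverse point and $\langle \alpha_h,[\gamma_h^+]\rangle=\pm 1$.

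Finally I would count horizontal strips via the Euler characteristic. In the CW decomposition of $S$ given by the horizontal decomposition, the $0$-cells are $\Crit(\phi)$ so $V=z+d$, with $z=4g-4+\sum_i m_i$ the number of zeroes and $d$ the number of poles; the $1$-cells are the separating trajectories; and the $2$-cells are the strips and half-planes. Each zero emits three separating rays and each simple pole emits one, so $E=3z+s$ where $s$ is the number of simple poles; also, at each infinite critical point the arriving separating trajectories cyclically bound the local strip-ends and half-planes, yielding $2R+H=3z+s$. Substituting into $V-E+F=2-2g$ and solving gives $R=6g-6+\sum_i m_i+d=n$.

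Combining the pieces, the pairing defines a homomorphism $\hs\to\Z^n$, $\alpha\mapsto(\langle \alpha,[\gamma_h^+]\rangle)_h$, whose values on the $\alpha_h$ are the signed standard basis vectors; the map is therefore surjective, hence an isomorphism, and the preimages $\{\pm\alpha_h\}$ of the standard basis form a $\Z$-basis of $\hs$. The main subtlety is the sign analysis ensuring that the lifted transverse intersection contributes $\pm 1$ rather than $\pm 2$: using the single-sheet lift $[\gamma_h^+]$ instead of the anti-invariant combination $[\gamma_h^+]-[\gamma_h^-]$ is precisely what makes each diagonal entry a unit rather than twice a unit, which is essential for the $\Z$-basis (as opposed to merely $\Q$-basis) conclusion.
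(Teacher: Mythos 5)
Your proposal is correct and follows essentially the same route as the paper's proof: the $\alpha_h$ are shown to pair as $\pm\delta_{hh'}$ against relative classes obtained from single lifts of generic trajectories (via the Lefschetz pairing \eqref{wine}), and the number of horizontal strips is then shown to equal $n$. The only cosmetic difference is in the counting step, where you use the Euler characteristic of the cell decomposition together with the edge--face incidence relation $2R+H=3z+s$, whereas the paper counts transversally oriented separating trajectories and the half-planes around higher-order poles directly; the two computations are equivalent bookkeeping.
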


\begin{pf}
In each horizontal strip $h_i$ we can choose a generic trajectory and then take one of its two lifts    to the spectral cover  to give a class $\delta_{h_i}$ in the relative homology group of
\eqref{wine}. The intersection number $\langle \alpha_{h_i},\delta_{h_j}\rangle$ is then nonzero precisely if  $h_i=h_j$, in which case it is $\pm 1$. Thus the elements $\alpha_{h_i}$ are linearly independent.
Lemma \ref{train} states that the group $\hs$ is free of rank $n$ given by equation \eqref{n}. To complete the proof it will be enough to show that this is also the number of horizontal strips of $\phi$.

By a transverse orientation of a separating trajectory we mean a continuous choice of normal direction; for each separating trajectory there are two possible choices.
We orient the separating trajectories in the boundary of a horizontal strip by taking the inward pointing normal direction.
Each horizontal strip then has four transversally oriented separating trajectories in its closure; for a degenerate strip, two of these consist of different orientations of the same trajectory. Similarly, each half-plane has two such oriented trajectories. Moreover, every oriented separating trajectory occurs as the boundary of exactly one half-plane or horizontal strip.

Let $x$ be the number of horizontal strips, and $s$ the number of simple poles. Three horizontal arcs emanate from each  zero, and one from each simple pole, and each of these forms the end of a separating trajectory. Each pole of order $m\geq 3$ is surrounded by $m-2$ half-planes, so the total number of these is $s+\sum_{i=1}^d (m_i-2)$. Thus
we get an equality
\[4x+2s+2\sum_{i=1}^d (m_i-2)=6 (4g-4+\sum_{i=1}^d  m_i)+2s.\]
Simplifying this expression gives $x=n$.\end{pf}


\subsection{Geodesics}
\label{geod}

Let $\phi$ be a meromorphic quadratic differential on a  Riemann surface $S$. Recall from Section \ref{Sec:Flat} that $\phi$ induces a  metric space structure on the open subsurface $S^\op=S\setminus\Crit_\infty(\phi)$.  A \emph{$\phi$-geodesic}  is defined to be a locally-rectifiable path $\gamma\colon [0,1]\to S^\op$   which is locally length-minimizing.  Note that it is not assumed that $\gamma$ is  the shortest path between its endpoints. 

It follows immediately from the definition of the  $\phi$-metric that any straight arc is a $\phi$-geodesic, and that conversely, in a neighbourhood of a non-critical point of $\phi$, any geodesic is a straight arc. Using the canonical co-ordinate systems of Section \ref{crit}, it is easy to determine the local behaviour of geodesics near a finite critical point of $\phi$. Here  we briefly summarize the results of this analysis, and refer the reader to Strebel \cite[ \SS 8]{Strebel} for more details.

 In a neighbourhood of a zero $p$ of $\phi$ of order $k$,  any two points  are joined by a unique geodesic, which is also the shortest curve in $S^\op$ connecting these points. This unique geodesic  is either a straight arc not passing through $p$, or is composed of  two radial straight arcs emanating from $p$. This second situation occurs precisely if the  angle between the radial arcs is $\geq 2\pi/(k+2)$.
\begin{center}
\begin{figure}[ht]
\includegraphics[scale=0.5]{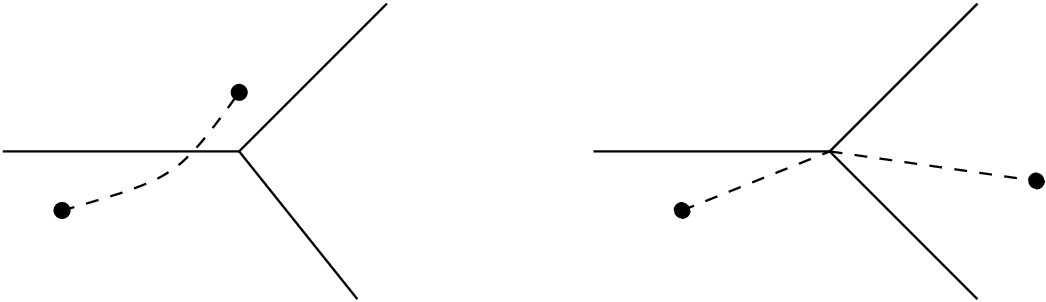}
\caption{Geodesic segments near a simple zero.\label{Fig:LocalNearZero}}
\end{figure}
\end{center}

 In  a neighbourhood of a simple pole $p$ of $\phi$, any two points are connected by at least one  geodesic, but uniqueness of geodesics fails: some pairs of points are connected by more than one straight arc. Moreover, a geodesic need not be the shortest path between its endpoints: it is length-minimizing locally, but  not necessarily globally. Note however, that no geodesic contains the point $p$ in its interior: the only geodesics passing through $p$ begin or end there.
 \begin{center}
\begin{figure}[ht]
\includegraphics[scale=0.5]{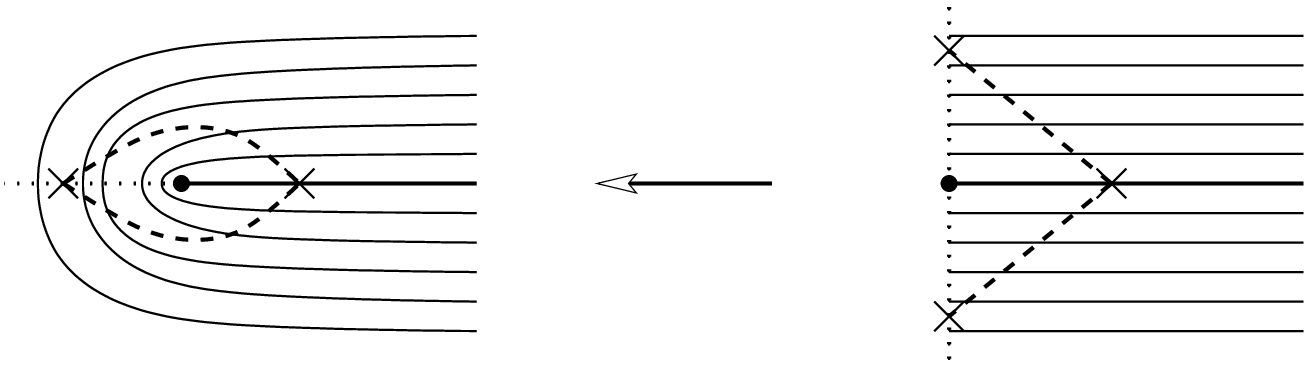}
\caption{Geodesic segments near a simple pole, and their inverse images under the square-root map. Note that the pulled-back differential has a non-critical point at the inverse image of the pole.\label{Fig:Nonunique}}
\end{figure}
\end{center}

From these local descriptions, it immediately follows that any geodesic in $S^\op$ is a union of (closures of) straight arcs, joined at zeroes of $\phi$. In particular, any geodesic connecting points of $\Crit_{<\infty}(\phi)$ is a union of saddle connections.
 Of course, the phases of the constituent saddle connections will usually be different.    


\subsection{Gluing surfaces along geodesics}
\label{cp}

It will be useful in what follows to glue  Riemann surfaces equipped with quadratic differentials along closed curves made up of unions of saddle connections.  We will use some particular examples of this construction in Sections \ref{rs} and \ref{busp} below.

Consider a topological surface $\S$ with boundary. By a  quadratic differential on $\S$ we  simply mean a quadratic differential on the interior of $\S$, that is  a  quadratic differential  on a Riemann surface  whose underlying topological surface is the interior of $\S$.  We say that two such surfaces  $\S_i$ equipped with differentials $\phi_i$ are equivalent if there is a homeomorphism $f\colon \S_1\to \S_2$ which restricts to a biholomorphism on the interiors and    satisfies $f^*(\phi_2)=\phi_1$.

Given an integer $k\geq 0$ we denote by $\mathbb{V}_k\subset \C$  the closed sector bounded by the rays of argument 0 and $2\pi(k+1)/(k+2)$. We equip the interior of $\mathbb{V}_k$ with the differential \[\phi_k(t)=c^2\cdot t^k\, dt^{\tensor 2}, \quad c=\frac{1}{2} (k+2).\] Thus, for example, $\mathbb{V}_0\subset \C$ is the closed upper half-plane equipped with the standard differential $\phi_0(t)=dt^{\tensor 2}$ on its interior. In general the differential $\phi_k$ extends holomorphically over a neighbourhood of the boundary of $\mathbb{V}_k$, and when $k>0$, the boundary $\partial \mathbb{V}_k$ then consists of two horizontal trajectories of $\phi_k$ meeting at a zero of order $k$. 

Note that the map $t^{k+2}=z^2$ gives an equivalence
\begin{equation}
\label{meme}
(\C\setminus \mathbb{V}_k,\phi_k(t)) \isom (\h, dz^{\tensor 2}).\end{equation}
Thus a copy of $\mathbb{V}_k$ can be glued to a copy of $\mathbb{V}_0$ in such a way that the differentials $\phi_k$ and $\phi_0$ on the interiors extend to a well-defined differential on $\C$.

If $\phi$ is a quadratic differential on a topological surface $\S$ with boundary, we say that the pair   $(S,\phi)$  has a \emph{gluable boundary} if  each point $x\in \partial \S$ has a neighbourhood which is equivalent to a neighbourhood of $0\in \mathbb{V}_k$ for some $k\geq 0$. In particular it follows  that the boundary $\partial \S$ is either  a union of saddle trajectories or a single closed trajectory. Note, however, that the gluable boundary condition  is a much stronger statement: if $z\in \partial\S$ is a zero of $\phi$ of order $k$,  then there are $k+2$ horizontal trajectories in $\S$ emanating from $z$, two of which lie in  the boundary. 

Suppose that  $S$ is a Riemann surface equipped with a meromorphic differential $\phi$ having simple zeroes, and  that $\gamma\subset S$ is a separating simple closed curve which is either a closed trajectory or a union of saddle trajectories. Cutting the underlying topological surface $\S$ along $\gamma$ we can view it  as a union of two surfaces  with boundary $\S_\pm$ glued along the curve $\gamma$. The assumption that $\phi$ has simple zeroes then  immediately implies that the pairs $(S_\pm,\phi|_{S_\pm})$ have gluable boundaries in the sense described above.
 
Conversely, suppose  that $\S_\pm$ are two smooth, oriented surfaces with boundary, each with a single boundary component $\partial \S_\pm$, and each equipped with  a meromorphic quadratic differential $\phi_\pm$.  

 \begin{Lemma} \label{Lem:GlueBorders}
Suppose that the pairs $(\S_\pm,\phi_\pm)$  have gluable boundaries, and that the $\phi_\pm$-lengths of the  boundaries $\partial \S_\pm$ are equal. Then there is a  Riemann surface  $S$ whose underlying topological surface $\S$ is obtained by gluing the surfaces $\S_\pm$ along their boundaries,     and a meromorphic differential $\phi$ on $S$ which coincides with the differentials $\phi_\pm$ on the interiors of the two subsurfaces $\S_\pm\subset \S$.     \end{Lemma}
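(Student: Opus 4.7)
The plan is to construct $S$ in three stages: first perform the topological gluing via an arc-length identification of the two boundary curves; then define a Riemann-surface structure and a meromorphic differential away from a finite set of ``glued zeroes'' on the new boundary; and finally extend these across the glued zeroes using the local normal forms supplied by the gluable boundary hypothesis.

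Since each $\partial\S_\pm$ is either a single closed trajectory or a union of saddle trajectories, $\phi_\pm$ induces a well-defined arc-length parameter along it. The equality of the two total lengths allows the choice of an orientation-reversing isometry $f\colon \partial\S_+\to\partial\S_-$; the reversal is needed so that the quotient $\S:=\S_+\sqcup_f\S_-$ inherits a consistent global orientation. This quotient is a closed oriented topological surface. Let $Z\subset\S$ be the finite set of points arising as zeroes of $\phi_+$ or $\phi_-$ on the glued curve. At any point $x$ on the glued curve with $x\notin Z$, the gluable boundary hypothesis supplies on each side a neighbourhood equivalent to a neighbourhood of $0$ in $\mathbb{V}_0=\h$ with differential $dw^{\tensor 2}$, and the arc-length identification glues these two half-discs along their real-axis boundaries to a disc in $\C$ carrying the differential $dz^{\tensor 2}$. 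Together with the original complex structures on the interiors of $\S_\pm$, this defines both the Riemann-surface structure on $\S\setminus Z$ and a meromorphic differential on $\S\setminus Z$ restricting to $\phi_\pm$ on the two open subsurfaces.

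It remains to extend across each $z\in Z$. Such a point arises from a zero of $\phi_+$ of some order $k_+\geq 1$ being identified with a point of $\partial\S_-$ which is either smooth ($k_-=0$) or a zero of order $k_-\geq 1$. In either case, the gluable boundary condition states that a neighbourhood of $z$ in $\S$ is obtained by gluing $\mathbb{V}_{k_+}$ to $\mathbb{V}_{k_-}$ along their boundary rays by arc length. The total $\phi$-angle at $z$ in the resulting flat metric is $\pi(k_++1)+\pi(k_-+1)=\pi(j+2)$ with $j=k_++k_-$, matching the $\phi$-angle at a zero of order $j$ in the standard local model $c^2 t^j\, dt^{\tensor 2}$ with $c=(j+2)/2$. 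A complex chart at $z$ is then built as follows: the distinguished coordinate on $\mathbb{V}_{k_\pm}$ identifies it biholomorphically with a sector in $\C$ of $\phi$-angle $\pi(k_\pm+1)$ carrying $dw^{\tensor 2}$; the two sectors glue along their boundary rays to a single sector of $\phi$-angle $\pi(j+2)$; and a root $w=\frac{2}{j+2}\, t^{(j+2)/2}$ realises this sector as a disc in $\C$ with differential $c^2 t^j\, dt^{\tensor 2}$. In the special case $k_-=0$, this construction reduces to the identification \eqref{meme}.

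The hardest step is verifying that the chart just built at $z$ is compatible, as a complex chart, with the structures already defined on $\S\setminus Z$, in particular on each of the two sectors $\mathbb{V}_{k_\pm}$ coming from the interiors of $\S_\pm$. Compatibility is controlled throughout by the $\phi$-metric together with the horizontal foliation: on $\mathbb{V}_{k_\pm}$ the sector-chart is precisely the distinguished coordinate, which agrees with the complex structure inherited from $\S_\pm$; along each generic boundary point of the neighbourhood of $z$, the new chart coincides with the half-plane chart of the previous paragraph. Once this is verified, the differential extends across every $z\in Z$ to a meromorphic quadratic differential on $S$ with a zero of order $j=k_++k_-$ at $z$, completing the construction.
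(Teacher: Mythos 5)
Your construction is correct, and its core is the same as the paper's: parameterize the boundaries by arc-length, identify them by an orientation-reversing isometry, and use the local models $\mathbb{V}_k$ together with the equivalence \eqref{meme} to see that the complex structures and differentials match up across the seam. The one genuine difference is your treatment of the exceptional points. The paper simply exploits the rotational freedom in the identification to ensure that no zero of $\phi_+$ is glued to a zero of $\phi_-$, after which every point of the seam falls under \eqref{meme} (i.e.\ $\mathbb{V}_k$ glued to $\mathbb{V}_0$) and nothing more need be said. You instead allow zeroes to collide and prove the extra local statement that gluing $\mathbb{V}_{k_+}$ to $\mathbb{V}_{k_-}$ produces a zero of order $k_++k_-$, by matching cone angles $\pi(k_++1)+\pi(k_-+1)=\pi(k_++k_-+2)$ and writing down the root chart. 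This is a strictly more general conclusion and is perfectly adequate for the lemma as stated (which does not demand simple zeroes of the glued differential); the paper's rotation trick is not a shortcut you could dispense with in applications, however, since later uses of the lemma (e.g.\ the width-zero endpoint in Proposition \ref{endofwallmove}) rely on choosing the rotation precisely so that the glued differential retains simple zeroes. Two cosmetic points: your normalisation $w=\tfrac{2}{j+2}\,t^{(j+2)/2}$ pulls back $dw^{\tensor 2}$ to $t^j\,dt^{\tensor 2}$ rather than $c^2t^j\,dt^{\tensor 2}$ (with the paper's convention $c=\tfrac12(j+2)$ the correct distinguished coordinate is $w=t^{(j+2)/2}$), and the glued surface need not be closed in general since the $\phi_\pm$ may have poles in the interiors; neither affects the argument.
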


\begin{proof}  Parameterize the two boundary components $\partial \S_\pm$ by arc-length in the $\phi_\pm$-metric, and then identify  them. When we do this we have the freedom to choose the  rotation of the two surfaces relative to each other, and we can therefore  ensure that   zeroes of $\phi_\pm$ do not become identified. The fact  that the quadratic differentials $\phi_\pm$ glue together then follows from the equivalence \eqref{meme}.  \end{proof}      
  
\begin{remarks}
\begin{itemize}
\item[(a)]It is clear from the proof of Lemma \ref{Lem:GlueBorders} that the surface $S$ is not uniquely determined by the pairs $(\S_\pm,\phi_\pm)$: we can rotate the subsurfaces $S_\pm$  relative to one another. \smallskip
\item[(b)] The gluable boundary assumption is  necessary: one cannot always glue differentials on surfaces whose boundaries are made up of saddle trajectories. Indeed, otherwise one could take  a degenerate ring domain  whose boundary consists of $i\geq 1$ saddle trajectories, and glue it to itself to obtain a meromorphic differential on a sphere with 2 double poles and $i$ simple zeroes. This   cannot exist by Riemann-Roch.
\end{itemize}
\end{remarks}



\section{Period co-ordinates}
\label{proof}

The aim of this section is  to prove that the period map \eqref{periodmap} on the space of framed  differentials  is a local isomorphism.  For finite area differentials this  is standard, but for the more general meromorphic differentials considered here there does not seem to be a proof in the literature.  The reader prepared to take this result on trust can skip to the next section. We begin by considering geodesics for the metric defined by a GMN differential $\phi$, and the way in which these change as $\phi$ moves in the corresponding space $\Quad(g,m)$.

\subsection{Existence and uniqueness of geodesics}
Let $\phi$ be a meromorphic quadratic differential on a  compact Riemann surface $S$. As in Section \ref{Sec:Flat} we equip the open subsurface $S^\op=S\setminus\Crit_\infty(\phi)$ with the metric space stucture induced by the $\phi$-metric. In this section we  state some well-known global existence and uniqueness properties for geodesics on this surface.  A more detailed treatment can be found in  \cite[\SS \SS  14--18]{Strebel}.

Given  points $p, q\in S^\op$, we denote by $\CC(p,q)$  the set of all rectifiable paths $\gamma\colon [0,1]\to S^\op$ connecting $p$ to $q$. We  equip this set with  the topology of uniform convergence.  Two curves in $\CC(p,q)$ are considered homotopic if they are homotopic relative to their endpoints through paths in $S^\op$. 
  We denote  by $\ell_\phi(\gamma)$ the length of a curve $\gamma\in \CC(p,q)$.
A curve   in $\CC(p,q)$ will be called a \emph{minimal geodesic} if no  homotopic path  has smaller length; any such curve is locally length-minimising, and hence a geodesic. 

The following result is well-known.

\begin{thm}
\label{finini}
\begin{itemize}
\item[(a)] the subset of curves in $\CC(p,q)$ representing a given homotopy class is open and closed,\smallskip
\item[(b)] the function sending a curve in $ \CC(p,q)$ to its length  is lower semi-continuous,\smallskip
\item[(c)]for any $L>0$, the subset of curves in $\CC(p,q)$ of length $\leq L$ which are parameterized proportional to arc-length is compact, \smallskip
\item[(d)]every homotopy class of curves in $\CC(p,q)$ contains at least one  minimal geodesic,\smallskip
\item[(e)]if $\phi$ has no simple poles then  geodesics in $\CC(p,q)$ are homotopic only if they are equal.\end{itemize}
\end{thm}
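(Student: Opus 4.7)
The proof splits naturally into the ``soft'' metric-space statements (a)--(d), which follow from a standard Arzel\`a--Ascoli plus lower semi-continuity package, and the ``hard'' geometric uniqueness statement (e), which uses the specific flat cone structure of the $\phi$-metric in the absence of simple poles.

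For (a), I would argue that if two curves in $\CC(p,q)$ are sufficiently close in the uniform topology, the straight-line interpolation in local coordinate patches on $S^\op$ supplies a homotopy between them relative to endpoints; since $S^\op$ is locally simply connected, this shows each homotopy class is open, and hence also closed as they partition $\CC(p,q)$. For (b), the length of a rectifiable curve equals the supremum over partitions of the sum of distances between sample points, each such sum being continuous under uniform convergence; a supremum of continuous functions is lower semi-continuous.

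For (c), the key observation is that every pole of order $\geq 2$ lies at infinite $\phi$-distance from $p$, so the closed ball of radius $L$ around $p$ in $S^\op$ has compact closure inside $S^\op$. Curves of length $\leq L$ starting at $p$ lie in this compact set, and when parameterized proportional to arc-length they are uniformly $L$-Lipschitz and hence equicontinuous. Arzel\`a--Ascoli gives relative compactness, and the limit stays in $S^\op$ and in the length-$\leq L$ set by (b). Statement (d) then follows by extracting from a length-minimizing sequence in a given homotopy class a subsequence converging uniformly, applying (a) to keep the limit in the same class, and applying (b) to conclude that the limit attains the infimum and is therefore a minimal geodesic.

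The main obstacle is (e), and here I would exploit the fact that when $\phi$ has no simple poles, the $\phi$-metric on $S^\op$ is flat with cone singularities only at the zeroes of $\phi$, where a zero of order $k\geq 1$ contributes a cone angle $\pi(k+2) \geq 3\pi$. Suppose $\gamma_1\neq\gamma_2$ in $\CC(p,q)$ are homotopic geodesics. After trimming common initial and final segments and, if necessary, subdividing at interior intersection points, we may reduce to the case of two geodesic arcs in $S^\op$ meeting only at their endpoints and bounding an embedded topological disk $D\subset S^\op$. I would then apply the Gauss--Bonnet formula to $D$ in the flat metric with interior cone points: the interior curvature contribution is a non-positive sum of defects $2\pi-\pi(k+2)\leq -\pi$, the two corners at $p,q$ contribute exterior angles strictly less than $\pi$, and any cone point on the boundary contributes an exterior angle at most $\pi$ on the $D$-side because a geodesic passing through a cone point of angle $\pi(k+2)$ must leave an angular sector of measure $\geq \pi$ on each side. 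Summing these contributions against the total $2\pi$ required by Gauss--Bonnet forces $D$ to have no area, contradicting $\gamma_1 \neq \gamma_2$. The delicate step is bookkeeping the exterior-angle inequalities at cone points lying on $\partial D$, especially when the two geodesics share such a cone point as one of their breakpoints; this is where the hypothesis excluding simple poles (whose cone angle $\pi$ would violate the inequality) is essential.
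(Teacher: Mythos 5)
The paper's own ``proof'' of this theorem is essentially a pair of citations: parts (a)--(d) are quoted as holding for any proper geodesic metric space (with a reference to Papadopoulos), and part (e) is attributed to Strebel's Theorem 16.2. Your proposal instead supplies the underlying arguments, and for (a)--(d) what you write is exactly the standard package behind the cited result: properness of $S^\op$ (which is what your observation that infinite critical points lie at infinite $\phi$-distance amounts to), Arzel\`a--Ascoli, lower semi-continuity of length, and the direct method. That part is correct and essentially equivalent in content to what the reference provides.

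For (e) you take a genuinely different, self-contained route via Gauss--Bonnet on a flat cone surface whose cone angles are all $\geq 3\pi$, in place of citing Strebel. The strategy is sound, but two points need repair. First, the reduction to ``two geodesic arcs meeting only at their endpoints and bounding an embedded disk $D\subset S^\op$'' is not automatic: the sub-loop cut out by two consecutive intersection points of homotopic arcs on a non-simply-connected surface need not be null-homotopic, so it need not bound a disk. The clean fix is to lift $\gamma_1,\gamma_2$ to the universal cover of $S^\op$ (they lift to geodesics with common endpoints for the pulled-back flat cone metric, which still has all cone angles $\geq 3\pi$ and is complete since $\phi$ has no simple poles); there every simple sub-bigon bounds a disk and your innermost-bigon argument goes through. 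Second, your stated bound ``exterior angle at most $\pi$'' at a cone point lying on $\partial D$ is too weak to close the count: what you need, and what your own justification (interior angle $\geq\pi$ on each side of a geodesic through a cone point of angle $\geq 2\pi$) actually delivers, is that the exterior angle there is $\leq 0$. With that correction the budget reads: interior cone points contribute $\leq-\pi$ each, the two corners contribute strictly less than $\pi$ each, boundary breakpoints contribute $\leq 0$ each, so the total is strictly less than $2\pi$, contradicting Gauss--Bonnet. This is also precisely where the exclusion of simple poles (cone angle $\pi$, positive curvature) enters, as you note.
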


\begin{pf}
 Since the surface $S$ is assumed compact, the metric space $S^\op$ is  proper, which is to say that all closed, bounded subsets are compact. It is also clear that any two points of $S^\op$ can  be connected by a rectifiable path. The  statements (a) - (d) hold for all metric spaces with these two properties: see for example \cite[Section 1.4]{Pap}. Part (e) is proved by Strebel \cite[Theorem 16.2]{Strebel}.
\end{pf}

 If the differential $\phi$ has no simple poles, Theorem \ref{finini} implies that  all geodesics  are minimal.
If $\phi$ has simple poles the situation is  more complicated: a given homotopy class  may contain more than one geodesic representative, and not all such representatives need be minimal.

\begin{lemma}
\label{fini}
For any  $L > 0$, there are only finitely many geodesics  $\gamma\in \CC(p,q)$ with $\ell_\phi(\gamma)\leq L$.
\end{lemma}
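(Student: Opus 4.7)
My plan is to reduce the lemma to a finiteness statement for straight arcs with prescribed endpoints, and then to bound the combinatorial complexity of any geodesic of length at most $L$. By the discussion in Section~\ref{geod}, any $\phi$-geodesic $\gamma\in\CC(p,q)$ decomposes as a concatenation of straight arcs
\[
\gamma=\gamma_{0}\cup\gamma_{1}\cup\cdots\cup\gamma_{r},
\]
where $\gamma_{0}$ runs from $p$ to some zero $z_{1}$, each $\gamma_{i}$ for $1\leq i\leq r-1$ is a saddle connection from $z_{i}$ to $z_{i+1}$, and $\gamma_{r}$ runs from $z_{r}$ to $q$ (with $r=0$ allowed, meaning $\gamma$ is a single straight arc). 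Interior junctions can only occur at zeros, never at simple poles: at a simple pole the cone angle is $\pi<2\pi$, so any path turning there fails to be locally length-minimizing.

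The key intermediate claim I would establish is: for any $a,b\in\{p,q\}\cup\Crit_{<\infty}(\phi)$ and any $L_{0}>0$, the set of straight arcs in $S^{\op}$ from $a$ to $b$ of $\phi$-length at most $L_{0}$ is finite. For this, the closed ball $B_{L_{0}}(a)\subset S^{\op}$ is compact by properness of the $\phi$-metric (cf.\ the proof of Theorem~\ref{finini}), so all such arcs lie in a compact region. Straight arcs emanating from $a$ are locally parameterized by a tangent direction $\theta$ in the cone at $a$ together with a length $\ell$, and the resulting endpoint depends continuously (in fact, locally biholomorphically, via the developing map) on the pair $(\theta,\ell)$. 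Since $b$ is isolated in the finite set $\{p,q\}\cup\Crit_{<\infty}(\phi)$, the parameters yielding an arc terminating exactly at $b$ are discrete in the bounded region $\{(\theta,\ell):\ell\leq L_{0}\}$, hence finite. Obstructions caused by other finite critical points lying on some arc in this family split the developing map into finitely many sheets within $B_{L_{0}}(a)$, and the discreteness argument applies on each sheet.

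Applying the claim with $L_{0}=1$ to pairs of zeros, the set of saddle connections between zeros of length at most $1$ is finite, hence either empty or bounded below by some $\epsilon>0$. In either case every saddle connection between two zeros has length $\geq\epsilon$. For a geodesic $\gamma$ of length $\leq L$ decomposed as above, the intermediate segments $\gamma_{1},\dots,\gamma_{r-1}$ sum to total length at most $L$, forcing $r\leq L/\epsilon+1$. The total number of geodesics of length $\leq L$ in $\CC(p,q)$ is then bounded by the number of choices of the integer $r$, of the sequence $(z_{1},\dots,z_{r})$ of zeros, and of the initial arc, intermediate saddle connections, and terminal arc, all of which are finite by the intermediate claim. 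The main obstacle is the claim itself; the delicate point is near simple poles, where the developing map has a cone singularity of angle $\pi$ and geodesics may bifurcate as in Figure~\ref{Fig:Nonunique}.
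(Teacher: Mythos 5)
Your architecture is genuinely different from the paper's. The paper deduces the lemma directly from Theorem \ref{finini}: compactness of the set of bounded-length curves together with the openness of homotopy classes gives finitely many homotopy classes of curves of length $\leq L$, uniqueness of geodesic representatives (part (e)) finishes the case with no simple poles, and a double cover branched over the simple poles reduces the general case to that one. You instead decompose a geodesic into straight arcs meeting at zeroes and count combinatorially. Your route has the genuine attraction of never invoking uniqueness of geodesics in a homotopy class, so it would treat simple poles on the same footing as zeroes and dispense with the covering trick.

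However, there is a gap in your key intermediate claim. The inference ``the parameters yielding an arc terminating exactly at $b$ are discrete in the bounded region $\{(\theta,\ell):\ell\leq L_0\}$, hence finite'' is not justified as written. The exponential/developing map based at $a$ is a local isometry only on the open locus of parameters $(\theta,\ell)$ whose arc meets no critical point at lengths less than $\ell$; on that open set the preimage of $b$ is indeed discrete, but a discrete subset of an open subset of a compact space can be infinite, accumulating at the boundary --- here, at a direction $\theta_\infty$ whose arc runs into an intermediate critical point $c$ before length $L_0$. Your remark that such obstructions ``split the developing map into finitely many sheets'' is exactly the point at issue: knowing that only finitely many branching events occur within $B_{L_0}(a)$ amounts to knowing that only finitely many saddle connections of length $\leq L_0$ emanate from $a$, which is an instance of the claim being proved, so the argument is circular as stated. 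The standard way to close this is a compactness argument: a sequence of distinct bounded-length straight arcs from $a$ to $b$ subconverges to a broken geodesic, eventually all lie in a single homotopy class, and one then argues they coincide --- which is precisely the mechanism of Theorem \ref{finini}(a),(c),(e) that the paper applies directly to all of $\CC(p,q)$. (Alternatively one can quote the finiteness of bounded-length saddle connections from the flat-surfaces literature.) Once the intermediate claim is secured, the remainder of your argument --- the positive lower bound $\epsilon$ on saddle-connection length, the bound $r\leq L/\epsilon+1$ on the number of segments, and the final count --- is correct.
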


\begin{pf}
First assume that $\phi$ has no simple poles. It follows from Theorem \ref{finini}(c) that the subset of $\CC(p,q)$ consisting of curves of length $\leq L$  has only finitely many connected components. In particular, by part (a), there can only be finitely many homotopy classes of such curves. But, by part (e), a geodesic is determined by its homotopy class, so the result follows. 
In the general case, take a covering $\pi\colon \tilde{S}\to S$ branched at all simple  poles   of $\phi$, and consider the pulled-back differential $\tilde{\phi}=\pi^*(\phi)$.  Any $\phi$-geodesic in $S$ can be lifted to a $\tilde{\phi}$-geodesic in $\tilde{S}$ of the same length. Since  $\tilde{\phi}$   has no simple poles, this reduces us to the previous case. 
\end{pf}


\subsection{Varying the differential}

\label{perm}
Our next step is to study  the way geodesics  of a GMN differential  move  as  the differential  varies in   its moduli space.
 Fix a genus $g\geq 0$ and a collection of $d\geq 1$ positive integers $m=\{m_i\}$.
Recall from the proof of Proposition \ref{rain} that, when it is non-empty, the space $\Quad(g,m)$ is an open subset of a vector bundle 
\[\mathcal{H}(g,m)\lra \Mod(g,d)/\Sym(m).\]
The  fibre of this bundle over a marked curve $(S,(p_i))$  is the space of global sections of the line bundle $\omega_S^{\tensor 2}(\sum_i m_i p_i)$.

 Let us consider a fixed differential $\phi_0\in \Quad(g,m)$, which we view as a base-point, and consider an open ball\footnote{More precisely, if $\phi_0$ is an orbifold point, we should take an {\'e}tale map $Q\to\Quad(g,m)$ from a complex ball, but we suppress this point in what follows. Alternatively one could pull back the bundle \eqref{hs} to Teichm{\"u}ller space and work locally there.} $\phi_0\in Q\subset \Quad(g,m)$. 
By Ehresmann's theorem, the universal curve over $\Mod(g,d)$ pulls back to
a differentiably locally-trivial fibre bundle over ${Q}$. It follows that we can fix an underlying smooth surface $\S$, and view the points of $Q$ as defining pairs consisting of a complex structure on $\S$ together with a meromorphic quadratic differential $\phi$ on the resulting Riemann surface $S$. Composing with a smoothly varying family of diffeomorphisms we can further  assume that  the differentials in $Q$ have poles and zeroes at the same fixed points of $\S$.

\begin{lemma}
\label{Ry}
Fix a constant $R>1$. Then  any point of $Q$ is contained in some neighbourhood $U\subset Q$ such that 
\[(1/R)\cdot \ell_{\phi_1}(\gamma)\leq  \ell_{\phi_2}(\gamma)\leq R\cdot \ell_{\phi_1}(\gamma),\]
for any curve $\gamma$ in $\S$, and any pair of differentials $\phi_i\in U$.
\end{lemma}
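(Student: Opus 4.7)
The plan is to deduce the length inequality from a pointwise estimate on the ratio of line elements $\sqrt{|\phi_t|/|\phi_0|}$, which one then integrates along $\gamma$. Since $\S$ is compact and the zeroes and poles of $\phi_t$ are, by our smooth trivialization, fixed at points $p_1,\dots,p_k \in \S$ independent of $t \in Q$, the problem reduces to finitely many local comparisons. Away from these critical points, on any open set $V$ with $\bar V \cap \{p_1,\dots,p_k\} = \emptyset$, the differential $\phi_t$ remains nowhere vanishing on $\bar V$ for $t$ close to $0$. The ratio $\sqrt{|\phi_t(v,v)|/|\phi_0(v,v)|}$ is then a continuous positive function of $(t,x,v)$ on a neighborhood of $\{0\}\times \bar V \times(T\S\setminus 0)$, homogeneous of degree $0$ in $v$ and equal to $1$ at $t=0$; compactness of $\bar V$ and of the unit tangent bundle forces this ratio into $(1/R, R)$ for $t$ in some neighborhood $U_V$ of $0$.

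Near each critical point $p_j$ the key input is the existence, for each $t$ close to $0$, of a holomorphic (for the complex structure $J_t$) coordinate $z_t$ on a fixed neighborhood $V_j\ni p_j$, depending continuously on $t$, in which $\phi_t$ acquires one of the normal forms recalled in Section \ref{crit}: namely $z_t(dz_t)^{\otimes 2}$ at a simple zero; $\tfrac14 z_t^{-1}(dz_t)^{\otimes 2}$ at a simple pole; $r_t\cdot z_t^{-2}(dz_t)^{\otimes 2}$ at a double pole, with $r_t\in\C^*$ continuous in $t$; $c^2 z_t^{-m}(dz_t)^{\otimes 2}$ at a pole of odd order $m$, with $c=(2-m)/2$ a universal constant; and $\bigl(c z_t^{-m/2}+b_t z_t^{-1}\bigr)^2(dz_t)^{\otimes 2}$ at a pole of even order $m\geq 4$, with $b_t$ continuous in $t$. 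The existence of such $z_t$ depending continuously on $t$ is standard, obtainable by a power-series construction combined with the implicit function theorem, or equivalently by the integration procedure $z_t=\int\sqrt{\phi_t}$ described in Section \ref{Sec:Flat}. In the coordinate $z_t$ the line element has an explicit form depending continuously on $t$ (through $r_t$, $b_t$, or no parameter at all), and composing with the smooth coordinate change $z_0^{-1}\circ z_t$, which approaches the identity uniformly as $t\to 0$, yields the ratio bound on a neighborhood of $p_j$ for $t$ in some $U_j\subset Q$.

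The main obstacle is precisely at poles of order $\geq 2$, where both $\sqrt{|\phi_t|}$ and $\sqrt{|\phi_0|}$ diverge at $p_j$ and the ratio is formally $\infty/\infty$. What saves the argument is the universality of the leading constants in the normal forms above: the most divergent parts of $\sqrt{|\phi_t|}$ and $\sqrt{|\phi_0|}$ coincide up to the small deformation encoded in $z_0^{-1}\circ z_t$, so the ratio extends continuously across $p_j$ with value $1$ at $t=0$. Intersecting the finitely many neighborhoods $U_V$ and $U_j$ produced by the steps above yields the required $U\subset Q$, and integrating the pointwise ratio estimate along an arbitrary curve $\gamma$ delivers both halves of the claimed length inequality.
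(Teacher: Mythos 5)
Your argument is correct and follows the same global strategy as the paper: establish a pointwise two-sided bound on the ratio $|\phi_1|/|\phi_2|$ over the unit tangent bundle by a compactness argument, then integrate along $\gamma$. Where you diverge is in how the indeterminate ratio at the critical points is resolved. You invoke the normal forms of Section \ref{crit} and a family of distinguished coordinates $z_t$ varying continuously with $t$ on a fixed neighbourhood of each $p_j$, then argue that the leading divergences cancel because the leading constants are universal. This works, but it forces you to justify the existence and joint continuity (in fact $C^1$-uniform convergence $z_t\to z_0$, since the limiting value of the ratio at an order-$m$ pole is $|f'(0)|^{m-2}$ with $f=z_0\circ z_t^{-1}$) of these coordinates, which you only assert as ``standard''; this is true but is the most delicate point of your write-up. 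The paper sidesteps all of this with a one-line device: fixing an auxiliary Riemannian metric $g$ with distance function $\eta$, the rescaled section $\eta(x,p_j)^{m_j}\cdot\phi(x)$ is smooth and non-vanishing at $p_j$ (and similarly at the zeroes), so the ratio $|\phi_1|/|\phi_2|$ --- in which the rescaling factors cancel --- is already a smooth positive function on the nonzero tangent vectors, depending smoothly on the differentials, and compactness finishes immediately. Your route buys nothing extra here, so if you keep it you should at least sketch the continuity-in-$t$ of the normal-form coordinates; otherwise the rescaling trick is the cleaner path.
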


\begin{pf}
Fix an arbitrary Riemannian metric $g$ on the smooth surface $\S$, and write $\eta(x,y)$ for the  distance between two points $x,y\in \S$ computed in this metric.  
Away from the poles $p_i$ we can view the meromorphic differential $\phi$ corresponding to a point of $Q$ as defining a smooth section of the bundle $(T^*_{\S})^{\tensor 2}$, the tensor square of the rank 2 bundle of smooth complex-valued 1-forms on $\S$. Near a pole $p_i$ of order $m_i$, the rescaled section $\eta(x,p_i)^{m_i} \cdot \phi(x)$ is smooth  in a neighbourhood of $p_i$, and has non-zero value at $p$.  Similar remarks apply near a zero of $\phi$. 

Given two points $\phi_1,\phi_2\in Q$ it follows that the ratio $|\phi_1|/|\phi_2|$, considered as a smooth function on the set of nonzero tangent vectors to $\S$,  is everywhere defined and varies smoothly with the differentials $\phi_i$. Thus around any point of $Q$ we can find a  neighbourhood  $U\subset Q$ such that $(1/R)\cdot |\phi_2|\leq |\phi_1|\leq R \cdot |\phi_2|$, for all $\phi_1,\phi_2\in U$ and all tangent vectors to $\S$.  Integrating this inequality along a curve   gives the result.
\end{pf}


\subsection{Persistence  of  saddle connections}
In this section we show that if a GMN differential varies continuously in its moduli space then its geodesics also vary continuously. We take notation as in the last section.

\begin{prop}
\label{persist}
Suppose that $\gamma_0\in \CC(p,q)$ is a $\phi_0$-geodesic. Then there is a   family of curves $\gamma(\phi)\in \CC(p,q)$, varying continuously with $\phi\in Q$, such that $\gamma_0=\gamma(\phi_0)$, and such that for all $\phi\in Q$ the curve  $\gamma(\phi)$ is a $\phi$-geodesic. 
\end{prop}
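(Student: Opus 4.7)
My strategy is to realize $\gamma(\phi)$ as the unique length-minimizing representative in the homotopy class of $\gamma_0$, after first reducing to a situation in which every homotopy class contains at most one geodesic. Shrink $Q$ so that the critical points of $\phi$ lie at fixed points of the smooth surface $\S$, and choose a branched double cover $\pi\colon\tilde\S\to\S$ of smooth surfaces branched over the simple poles of $\phi_0$ (adding one extra branch point, off $\Crit(\phi_0)\cup\gamma_0$, if the total number of simple poles is odd). For each $\phi\in Q$ the pull-back $\tilde\phi=\pi^*\phi$ is a meromorphic quadratic differential on a Riemann surface $\tilde S$ with underlying smooth surface $\tilde\S$; the local computation $t=s^2$ shows that simple poles of $\phi$ become regular points of $\tilde\phi$, so $\tilde\phi$ has no simple poles. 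Because no $\phi_0$-geodesic passes through a simple pole in its interior (Section \ref{geod}), $\gamma_0$ lifts to a $\tilde\phi_0$-geodesic $\tilde\gamma_0\in\CC(\tilde p,\tilde q)$ on $\tilde S^\op=\pi^{-1}(S^\op)$, for chosen lifts $\tilde p,\tilde q$ of $p,q$.

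\textbf{Minimization and continuity on the cover.} Since $\tilde\phi$ has no simple poles, Theorem \ref{finini}(e) says any two homotopic $\tilde\phi$-geodesics in $\CC(\tilde p,\tilde q)$ coincide; combined with Theorem \ref{finini}(d), for every $\phi\in Q$ there is a \emph{unique} minimal $\tilde\phi$-geodesic $\tilde\gamma(\phi)$ in the homotopy class $[\tilde\gamma_0]$ (this class is well-defined throughout $Q$, since $\tilde S^\op$ is topologically fixed). At the basepoint one has $\tilde\gamma(\phi_0)=\tilde\gamma_0$ by uniqueness. To prove continuity at an arbitrary $\phi_*\in Q$, take $\phi_n\to\phi_*$ and set $L_*=\ell_{\tilde\phi_*}(\tilde\gamma(\phi_*))$. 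Minimality gives $\ell_{\tilde\phi_n}(\tilde\gamma(\phi_n))\leq\ell_{\tilde\phi_n}(\tilde\gamma(\phi_*))$, and Lemma \ref{Ry} sends the right-hand side to $L_*$; a second application of Lemma \ref{Ry} bounds the $\tilde\phi_*$-lengths of the curves $\tilde\gamma(\phi_n)$ uniformly. After reparameterization by arc-length the compactness assertion Theorem \ref{finini}(c) yields a uniformly convergent sub-sequential limit $\gamma_\infty\in\CC(\tilde p,\tilde q)$, while Theorem \ref{finini}(a),(b) show $\gamma_\infty\in[\tilde\gamma_0]$ with $\ell_{\tilde\phi_*}(\gamma_\infty)\leq L_*$. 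Thus $\gamma_\infty$ is a minimal $\tilde\phi_*$-geodesic in $[\tilde\gamma_0]$, and uniqueness forces $\gamma_\infty=\tilde\gamma(\phi_*)$; as every subsequence admits such a sub-subsequential limit, the whole family converges, $\tilde\gamma(\phi_n)\to\tilde\gamma(\phi_*)$.

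\textbf{Descent and the main obstacle.} Finally set $\gamma(\phi):=\pi\circ\tilde\gamma(\phi)$. The covering $\pi$ is a local isometry for the $(\tilde\phi,\phi)$-metrics away from its isolated branch locus, so $\gamma(\phi)$ is a $\phi$-geodesic, $\gamma(\phi_0)=\gamma_0$ by construction, and continuity in $\phi$ inherits from that of $\tilde\gamma(\phi)$. The principal difficulty, and the reason for passing to the branched cover, is that in the presence of simple poles a single homotopy class on $S$ may contain several distinct $\phi$-geodesics of the same length (cf.\ Figure \ref{Fig:Nonunique}), so the naive prescription of taking the minimum-length representative on $S$ itself need not recover $\gamma_0$ at $\phi=\phi_0$. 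Eliminating simple poles via the cover restores the uniqueness statement Theorem \ref{finini}(e), which is precisely what drives the entire argument.
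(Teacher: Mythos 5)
Your argument is essentially the paper's own proof: the paper likewise handles the simple-pole-free case by combining uniqueness of homotopic geodesics (Theorem \ref{finini}(e)) with the compactness and semi-continuity statements of Theorem \ref{finini} and the length comparison of Lemma \ref{Ry}, and then reduces the general case to it via a cover branched at the simple poles; you merely perform the cover reduction first and phrase the continuity step as a direct subsequence argument rather than a contradiction. Your extra attention to the parity of the branch locus and to the fact that $\gamma_0$ lifts because no geodesic passes through a simple pole in its interior are welcome refinements, but the approach is the same.
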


\begin{pf}
Let us first consider the case when $\phi_0$ has no simple poles.
  By Theorem \ref{finini}, for each $\phi\in Q$ there exists a unique $\phi$-geodesic $\gamma(\phi)$ in $\CC(p,q)$ which is homotopic to $\gamma_0$. We must  show that the resulting curves $\gamma(\phi)$ vary continuously with $\phi$.
  Assuming the opposite, let us take $\epsilon>0$ and suppose that there exists a sequence of differentials $\phi_n\in Q$ with $\phi_n\to \phi$, such that for all $n$ the geodesic $\gamma_n=\gamma(\phi_n)$ does not lie within distance $\epsilon$ of $\gamma=\gamma(\phi)$ in the supremum norm. In other words, for each $n$, we can find $t_n\in [0,1]$ such that \[d(\gamma_n(t_n),\gamma(t_n))\geq \epsilon.\]
 Passing to a subseqence we can assume that $t_n\to t\in [0,1]$. 
Lemma \ref{Ry} shows that for any $R>1$  \begin{equation}
\label{cardy}(1/R)\cdot \ell_{\phi}(\gamma_n)\leq  \ell_{\phi_n}(\gamma_n)\leq  \ell_{\phi_{n}}(\gamma)\leq R\cdot \ell_{\phi}(\gamma),\end{equation}
for large enough $n$.
In particular, we can assume that the $\gamma_n$ all satisfy $\ell_\phi(\gamma_n)\leq L$, for some constant $L>0$.
Theorem \ref{finini} implies that, when parameterised proportional to $\phi$-arclength, some subsequence of the $\gamma_n$ converges to a limit curve $\gamma_L$.
This limit curve cannot be equal to $\gamma$, since \[d(\gamma_L(t),\gamma(t))\geq \epsilon.\] On the other hand,  the  inequalities \eqref{cardy} show that $\ell_\phi(\gamma_L)\leq \ell_\phi(\gamma)$.  This contradicts the fact, immediate from Theorem \ref{finini}, that all geodesics are minimal.

For the general case we use the same trick as in Lemma \ref{fini}. Namely, we consider a covering $\pi\colon \tilde{S}\to S$ which is branched at all simple  poles   of $\phi_0$. We can lift $\gamma_0$ to a geodesic $\tilde{\gamma}_0$ on the surface $\tilde{S}$ for the pulled-back differential  $\pi^*(\phi_0)$. This differential   has no simple poles, so we can apply what we proved above to obtain a continuous deformation of $\tilde{\gamma_0}$. Pushing back down to $S$  gives the required deformation of $\gamma_0$.  \end{pf}

\begin{remarks}
\label{outy}
\begin{itemize}
\item[(a)]
If the geodesic $\gamma_0=\gamma(\phi_0)$ of Proposition \ref{persist} is a straight arc (which is to say that it contains no zeroes of $\phi_0$ in its interior) then, by continuity,  the same is true for the geodesics $\gamma(\phi)$ for all differentials $\phi$ in some neighbourhood of $\phi_0$. Thus, in particular, saddle connections   persist under small deformations of the differential. \smallskip

\item[(b)] A minor modification  of the proof  shows that the conclusion of Proposition \ref{persist} also  holds if we allow the endpoints $p,q$ of the path $\gamma(\phi)$ to vary continuously with the differential $\phi$.
\end{itemize}
\end{remarks}


\subsection{Persistence  of  separating trajectories}

We explained in Section \ref{crit}  that an infinite critical point $p$ of a meromorphic quadratic  differential is contained in a trapping neighbourhood $p\in U$ such that all trajectories entering $U$ eventually tend towards the point $p$. In fact we can be more explicit about this neighbourhood.

\begin{lemma}
\label{trap}
Take  a point $p\in \Crit_\infty(\phi)$ which is not a double pole with real residue. Then there is  a disc $p\in D\subset S$ whose boundary consists of saddle connections and such that  any trajectory  intersecting $D$ tends to $p$ in at least one direction.
\end{lemma}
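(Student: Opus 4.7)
The plan is to combine the local normal forms from Section \ref{crit} with a boundary-replacement argument to produce a disc around $p$ whose boundary is made of saddle connections of (varying) phases.

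First I would use the distinguished local coordinate $w$ at $p$ to construct a small punctured neighborhood $V^{*}\subset S\setminus\Crit(\phi)$ of $p$ on which $\phi=dw^{\otimes 2}$ and such that every horizontal trajectory passing through a point of $V^{*}$ converges to $p$. The exclusion of the real-residue double pole case is exactly what is needed: in the three remaining cases (pole of order $\geq 3$, double pole with purely imaginary residue, and double pole with generic complex residue) Section \ref{crit} exhibits explicit trapping neighborhoods inherited from the canonical coordinate; whereas for a real-residue double pole the foliation near $p$ is by periodic concentric loops and no such $V^{*}$ exists. I would shrink $V^{*}$ further so that $\overline{V^{*}}$ contains no saddle trajectories of $\phi$ and no infinite critical points other than $p$.

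Next, I would saturate $V^{*}$ under backward horizontal flow. Let $D^{\circ}$ be the union of all open horizontal trajectories in $S\setminus\Crit(\phi)$ that intersect $V^{*}$, and set $D=D^{\circ}\cup\{p\}$. Then $D$ is an open neighborhood of $p$, every trajectory meeting $D$ tends to $p$, and $D$ decomposes naturally into finitely many cells of the trajectory structure incident to $p$: for a pole of order $m\geq 3$ these are the $m-2$ half-planes ending at $p$, while for a double pole with non-real residue they are the horizontal strips (or the spiral-domain-like regions with imaginary residue) having $p$ as an infinite end.

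The main obstacle is to arrange that $\partial D$ is a finite union of saddle connections forming a topological circle. A boundary trajectory lies in $\overline{D}$ but does not itself enter $V^{*}$; by the trajectory classification of Section \ref{hor} it is either a saddle trajectory, which is already a saddle connection, or a separating trajectory whose infinite end lies at some pole different from $p$, which has infinite length and must be replaced. My plan is to perform this replacement cell by cell: inside each cell incident to $p$, identify the two consecutive finite critical points lying on $\partial D$ and join them by a straight arc of the phase dictated by the canonical coordinate of the cell. Such an arc exists as a minimal geodesic by Theorem \ref{finini} applied inside the cell viewed as an open subset of $\C$, and it is a saddle connection since both endpoints are finite critical points. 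Concatenating these replacements cyclically around $p$ gives a closed polygon of saddle connections, which I take as the boundary of the definitive $D$. The final verification---that this polygon is a simple closed curve bounding a topological disc and that every trajectory meeting the disc still tends to $p$---follows from the transversality of each replacement arc to the horizontal foliation inside the cell containing it.
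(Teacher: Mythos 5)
Your first two steps (a local trapping neighbourhood $V^{*}$ from the normal forms of Section \ref{crit}, then saturation under the horizontal foliation) are fine, but the boundary-replacement step is where the whole content of the lemma lives, and as written it has genuine gaps. First, your description of $\partial D$ is wrong for exactly the cells that matter: the boundary of a half-plane incident to $p$, and of a horizontal strip with an end at $p$, contains separating rays whose \emph{infinite} end is at $p$ itself, not at ``some pole different from $p$''; such rays lie in the interior of any trapping neighbourhood of $p$ and hence must end up \emph{inside} the final disc, so the modification of the boundary cannot be done cell by cell. Relatedly, a single cell can be incident to $p$ along more than one end (a half-plane's generic trajectories join $p$ to itself; a strip can have both ends at $p$), so ``the two consecutive finite critical points'' is ill-defined and one chord per cell does not separate $p$ from the rest. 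Second, nothing in the argument shows that the replacement arcs concatenate into a \emph{simple closed} curve bounding a \emph{disc}: the cyclic order of the cells around $p$, the fact that arcs from adjacent cells meet at shared finite critical points, and the absence of self-intersections are all asserted rather than proved. Third, ``transversality to the foliation'' does not give the trapping property for the new region unless you also show the region bounded by your polygon is contained in the saturated set $D^{\circ}\cup\{p\}$; a priori it could enclose extra territory.

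The paper sidesteps all of this with a shorter device: take the \emph{geodesic representative} of a small loop $\delta_p$ around $p$, which exists by Theorem \ref{finini}(d) and, by the structure of geodesics described in Section \ref{geod}, is automatically a concatenation of saddle connections cutting out a disc $D\ni p$. Uniqueness of geodesics in a homotopy class (Theorem \ref{finini}(e)) then shows both that $D$ contains no further critical points and that no trajectory can cross $\partial D$ twice, and recurrence inside $D$ is excluded because the boundary of the resulting spiral domain would consist of saddle trajectories contained in $D$. Rescuing your construction would essentially require reproving these facts by hand; I recommend switching to the geodesic-representative argument.
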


\begin{pf}
Consider the geodesic representative of the closed loop $\delta_p$ around $p$. It consists of a union of straight arcs of varying phase connecting zeroes of $\phi_0$, which together cut out an open disc $p\in D\subset S$ containing no points of $\Crit_\infty(\phi)$.   This disc cannot contain any finite critical points of $\phi$ either: if $z\in D$ were such a point,  the geodesic representative of a loop round $p$ based at $z$ would be homotopic to $\delta_p$, contradicting uniqueness of geodesic representatives.  If a trajectory intersects the boundary of $D$ twice this again contradicts uniqueness of geodesics.  Hence any trajectory in one direction must either be recurrent or tend to the pole. But recurrence is also impossible since the boundary of the resulting spiral domain would involve saddle connections contained in $D$.
\end{pf}

\begin{remarks}
\begin{itemize}
\item[(a)] In the case of a double pole with real residue, the pole is enclosed in a degenerate ring domain whose boundary consists of a union of saddle trajectories. This ring domain is the analogue of the trapping neighbourhood: any trajectory intersecting $D$ is  one of the closed trajectories of $D$. \smallskip
\item[(b)] Proposition \ref{persist}  shows that the region $D=D(\phi)$ of Lemma \ref{trap} varies continuously with $\phi$. In particular, there is an open neighbourhood of the pole $p$ which has the trapping property for all differentials in a neighbourhood of a given base-point $\phi_0$.\end{itemize}
\end{remarks}

In the last section we proved that saddle connections persist to nearby differentials; we shall now prove  a similar result  for separating  trajectories. Note that in contrast to saddle connections (whose phases  vary as they deform) we can always deform separating trajectories in such a way  that they remain horizontal. 

\begin{prop}
\label{stay}
Suppose that $\gamma_0\colon [0,\infty)\to S^\op$ is a   separating trajectory for the differential $\phi_0$, which starts at a point $p\in S^\op$ and   limits to an infinite critical point $r\in S$.   Then there is a neighbourhood $\phi_0\in U\subset Q$, and a  family of curves $\gamma(\phi)\colon [0,\infty)\to S^\op$, varying continuously with $\phi\in Q$, such that $\gamma_0=\gamma(\phi_0)$, and such that for all $\phi\in Q$ the curve  $\gamma(\phi)$ is a  separating trajectory for  $\phi$, starting at $p$ and limiting to  $r$. 
\end{prop}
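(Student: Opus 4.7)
The strategy is to reduce the infinite statement to a finite-arc statement by cutting off the trajectory inside a trapping disc around $r$, and then to propagate the initial data by continuous dependence of the horizontal flow on the differential. Since $\gamma_0$ limits to the infinite critical point $r$, we know in particular that $r$ is not a double pole with real residue, so Lemma \ref{trap} applies: choose a disc $r\in D\subset S$ whose boundary consists of $\phi_0$-saddle connections and which has the trapping property for $\phi_0$. By the remark following Lemma \ref{trap}, after shrinking $Q$ we may assume that $D$ is a trapping disc for every $\phi$ in some open neighbourhood $Q_1\subset Q$ of $\phi_0$. Since $\gamma_0$ eventually enters $D$ and avoids $\Crit_\infty(\phi_0)$ in its interior, we may choose $t_0>0$ so that $q_0:=\gamma_0(t_0)$ lies in the interior of $D$ and the restriction $\gamma_0|_{[0,t_0]}$ is a horizontal straight arc containing no critical points of $\phi_0$ in its interior.

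Next, I select the initial tangent direction continuously in $\phi$. By our convention the point $p$ is a simple zero of every $\phi\in Q$. Using the canonical local co-ordinate from Section \ref{crit} at the simple zero $p$, in which $\phi_0=\tfrac{9}{4}\, t\, dt^{\otimes 2}$, the three horizontal tangent rays at $p$ are specified by the three cube roots of the coefficient of $t$. The same normalisation applies for $\phi$ near $\phi_0$, and a standard application of the implicit function theorem produces a canonical co-ordinate $t(\phi)$ varying continuously in $\phi$. This in turn produces a continuously varying choice of horizontal tangent ray $v(\phi)$ at $p$ which agrees at $\phi=\phi_0$ with the initial direction of $\gamma_0$.

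Now I propagate by flow. Outside any fixed small closed disc $B\subset S$ centred at $p$ the horizontal foliation of $\phi$ is non-singular and its leaves depend $C^\infty$-continuously on $\phi$, while inside $B$ the local model provided by $t(\phi)$ makes the initial segment of the $\phi$-trajectory from $p$ in the direction $v(\phi)$ depend continuously on $\phi$. Standard continuous dependence of ODE solutions on parameters then shows that the $\phi$-trajectory $\gamma(\phi)\colon[0,t_0]\to S$ starting at $p$ in direction $v(\phi)$ varies continuously in the supremum norm with $\phi$, and $\gamma(\phi)(t_0)$ is close to $q_0$; after shrinking $Q_1$ to a suitable neighbourhood $U$ of $\phi_0$, this endpoint lies in the interior of $D$ for every $\phi\in U$. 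Invoking the trapping property of $D$ for $\phi$, the trajectory extends uniquely to a separating trajectory $\gamma(\phi)\colon[0,\infty)\to S^\op$ for $\phi$ limiting to $r$. Continuity of the extension on any longer interval $[0,T]$ follows by the same ODE argument applied inside $D$, where the foliation is again smoothly varying in $\phi$ away from the infinite critical point $r$.

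The main technical obstacle is the local analysis at the simple zero $p$, where the foliation is singular and a direct ODE argument is unavailable. This is handled by the continuous choice of canonical co-ordinate $t(\phi)$, in which the initial direction and the trajectory segment near $p$ take a fixed normal form; once one is outside a neighbourhood of $p$ everything reduces to classical continuous dependence of flows on parameters, and the trapping property of $D$ handles the behaviour near $r$ for free.
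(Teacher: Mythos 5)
Your proof is correct, and the overall architecture matches the paper's: both reduce to a finite horizontal arc from $p$ to a point inside a trapping disc $D$ around $r$ (Lemma \ref{trap} plus the remark that $D$ traps for all nearby $\phi$), and both then extend by the trapping property and get continuity on compact intervals at the end. The middle step, however, is done by a genuinely different mechanism. The paper deforms the \emph{geodesic} from $p$ to a moving endpoint $q(\phi)\in D$, using Proposition \ref{persist} and Remark \ref{outy}(b) for continuity and the implicit function theorem on $\int_p^{q(\phi)}\sqrt{\phi}$ to force the deformed arc to remain horizontal; horizontality is the thing that has to be earned. You instead flow forward from $p$ along the $\phi$-horizontal foliation in a continuously chosen initial direction, so horizontality is automatic, and the work shifts to (i) making the initial direction at the singular point $p$ vary continuously, via the normal form of Section \ref{crit}, and (ii) continuous dependence of the non-singular flow on the parameter along the compact arc. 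Both are legitimate; your route avoids invoking the geodesic compactness machinery but requires the normal-form analysis at $p$, which the paper's route sidesteps entirely because a geodesic between two points does not need a prescribed initial direction.

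Two small points to tidy up. First, $p$ is a finite critical point but need not be a simple zero: a separating trajectory can emanate from a simple pole (the case $k=-1$ of Section \ref{crit}, with a single horizontal ray); your normal-form argument goes through verbatim with $k+2$ rays in place of three, but the blanket assertion that $p$ is a simple zero should be removed. Second, the sentence about the rays being ``specified by the three cube roots of the coefficient of $t$'' is garbled: in the normal form $\phi=\tfrac{9}{4}\,t\,dt^{\otimes 2}$ the coefficient is fixed and the rays sit at the fixed arguments $0,2\pi/3,4\pi/3$; all of the $\phi$-dependence is carried by the coordinate $t(\phi)$ itself (which is constructed directly from $\bigl(\int\sqrt{\varphi}\bigr)^{2/3}$ rather than by the implicit function theorem). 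Neither point affects the validity of the argument. You should also say explicitly, when invoking the trapping property, that the forward continuation must be the end tending to $r$ because the backward continuation terminates at the finite critical point $p$ in finite time.
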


\begin{pf}
Note that $r$ cannot be a double pole of real residue. Consider the open neighbourhood $r\in D\subset S$ which has the trapping property for  any  $\phi$ lying in some neighbourhood $\phi_0\in U\subset Q$. Take a point $q_0=q(\phi_0)\in D$ on the trajectory $\gamma_0$.
Consider the  holomorphic function near $q_0$ obtained by integrating $\sqrt{\phi}$ along the trajectory $\gamma_0$. This function varies smoothly with $\phi$ so, by the implicit function theorem,  we can continuously vary $q(\phi)\in D$ so that \begin{equation}
\label{bl}\int_{p}^{q(\phi)} \sqrt{\phi} \in \R,\end{equation}
for all $\phi\in U$, 
where the integral is taken along a path homotopic to $\gamma_0$.

By Remark \ref{outy}(b),  there is a continuous family of curves $\gamma(\phi)$ parameterized by $\phi\in U$, with $\gamma(\phi_0)=\gamma_0$, and such that for each $\phi$ the curve  $\gamma(\phi)$ is a $\phi$-geodesic connecting $p$ to $q(\phi)$. Shrinking $U$ if necessary, each of these geodesics is in fact a straight arc, and the relation \eqref{bl} shows that these arcs are all horizontal. By the trapping assumption on $D$, each arc $\gamma(\phi)$   must extend to a separating trajectory $\gamma(\phi)\colon [0,\infty)\to S$  for $\phi$. The fact that these trajectories vary continuously when restricted to any finite interval $[0,t]\subset [0,\infty)$ then follows by another application of the argument of Proposition \ref{persist}.  \end{pf}


\subsection{Horizontal strip decompositions}
\label{periodshorizontal}

Fix again a genus $g\geq 0$ and a  collection of $d\geq 1$ unordered positive integers $m=\{m_i\}$. 
As preparation for proving that the period map \eqref{periodmap} is a local isomorphism, in this section and the next  we will study  the set of all saddle-free GMN differentials whose separating trajectories decompose the underlying smooth surface $\S$ into a given fixed set of horizontal strips and half-planes.  

We say that two saddle-free GMN differentials $(S_i,\phi_i)$ have \emph{the same horizontal strip decomposition} if there is an orientation-preserving diffeomorphism $f\colon S_1\to S_2$ which maps each horizontal strip (respectively half-plane) of $\phi_1$ bijectively onto a horizontal strip (respectively half-plane) of $\phi_2$. 
In particular,  equivalent differentials have the same horizontal strip decomposition.

More concretely, two equivalence-classes of saddle-free differentials  have the same horizontal strip decomposition precsiely if we can find representatives $(S_i,\phi_i)$ which have the same underlying smooth surface $\S$, and the same horizontal strips, half-planes and separating trajectories.

We would like to classify equivalence classes of saddle-free differentials $(S,\phi)$ with a given horizontal strip decomposition in terms of the periods  of the corresponding standard saddle classes $\alpha_h$. However, the existence of   differentials with automorphisms which permute their horizontal strips makes it impossible to  assign a well-defined period point to an arbitrary  saddle-free differential. The solution is to consider framed differentials, as in Section \ref{fr}.

We say that two  framed GMN differentials 
have the same  horizontal strip decomposition if there is an orientation-preserving diffeomorphism $f\colon S_1\to S_2$  preserving the horizontal strip decomposition as before, and also preserving the framings, in the sense  that the distinguished lift $\Hat{f}$ of Section \ref{fr}  makes the diagram \eqref{look} commute.   Again, equivalent framed differentials have the same horizontal strip decomposition.

Note that, by Lemma \ref{bassist}, a  framing  of a saddle-free differential gives rise to a labelling of the horizontal strips  by the  elements of a basis of $\Gamma$, and that conversely, the framing is completely determined by this labelling. Explicitly, if the framing is given by an isomorphism $\theta\colon \Gamma\to \hs$, then the strip $h$ is naturally labelled by  the element $\theta^{-1}(\alpha_h)$. Moreover,  two saddle-free differentials  have the same horizontal strip decomposition precsiely if we can find representatives $(S_i,\phi_i)$ which have the same underlying smooth surface $\S$, and the same horizontal strips as before, and which moreover have the same labellings by elements of $\Gamma$.

The following result will be  the basis for our proof of the existence of period co-ordinates. We defer the proof to the next subsection: by what was said above it amounts to classifying  saddle-free differentials $\phi$ on a smooth surface $\S$ with a  fixed set of horizontal strips and half-planes, and also with a fixed ordering of the horizontal strips.

\begin{prop}
\label{ivans}
Let $U\subset \Quad^\Gamma(g,m)$ be the set of equivalence-classes of framed saddle-free GMN differentials with a given horizontal strip decomposition. Choosing an ordering   of  the horizontal strips, the resulting map
\[\pi_U\colon U\to \C^n , \quad \phi\mapsto Z_\phi(\alpha_{h_i})\]
 is a bijection onto the subset
$ \{(z_1,\dots, z_n)\in \C^n:\Im (z_i)>0\}.$
\end{prop}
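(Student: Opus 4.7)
The plan is to invert the map $\pi_U$ explicitly. That the image of $\pi_U$ lies in the product of upper half-planes is immediate from the convention in Section \ref{ihope} that $\Im Z_\phi(\alpha_h)>0$ for each standard saddle class. So the substance of the proposition is bijectivity onto the open orthant $\{\Im(z_i)>0\}$, which I would attack by constructing a two-sided inverse.

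For injectivity, I would argue that a framed saddle-free GMN differential with its horizontal strip decomposition fixed is determined up to equivalence by the numbers $Z_\phi(\alpha_{h_i})$. Indeed, each horizontal strip $h_i$ carries a distinguished coordinate $w_i$ with $\phi = dw_i^{\otimes 2}$; this realizes the interior of $h_i$ as a strip $\{0 < \Im w < a_i\} \subset \mathbb{C}$, and the two finite critical points lying on its opposite boundary components are at positions whose differences in $w_i$ are exactly recorded by $Z_\phi(\alpha_{h_i})$ (up to an overall horizontal translation of the strip, which has no geometric meaning). Each half-plane is, likewise, canonically a copy of $(\mathbb{H}, dw^{\otimes 2})$ with marked zeroes on its boundary. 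These model pieces are then glued along their boundary separating trajectories, each of which is a half-infinite horizontal ray from a finite critical point to an infinite one. The gluing pattern is prescribed by the common horizontal strip decomposition, the isometries involved are rigid once one insists that finite critical points are matched with finite critical points, and the framing identifies the strips consistently; so no further freedom remains.

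For surjectivity, given a tuple $(z_1,\ldots,z_n)$ with $\Im z_i > 0$, I would run the above description in reverse. Build a model strip $H_i \subset \mathbb{C}$ of the appropriate height with two marked zeroes whose positions are dictated by $z_i$, carrying $dw^{\otimes 2}$; take a standard copy of $(\mathbb{H}, dw^{\otimes 2})$ with marked boundary zeroes for each half-plane; and assemble them according to the incidence pattern of the fixed decomposition, using Lemma \ref{Lem:GlueBorders} along each separating ray. The finite critical points of the resulting differential automatically have the correct local models of Section \ref{crit} because three horizontal rays meet at each zero and one at each simple pole, and the half-plane data encodes the sectors around higher-order poles. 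Finally one frames the resulting differential by labelling each strip $H_i$ by the corresponding basis element of $\Gamma$, which is consistent by construction.

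The main obstacle is the verification, in the surjectivity step, that the glued-up object is genuinely a meromorphic quadratic differential on a compact Riemann surface with the correct polar type, rather than only a flat surface with cone points. Around each infinite critical point a finite cyclic collection of strips and half-planes is glued in a prescribed pattern, and I must check, using the local models of Section \ref{crit}, that the flat metric extends to a pole of the prescribed order $m_i$; for a pole of order $2$ one must additionally check that the residue obtained from the periods is well-defined. This is a local computation at each pole, using that the cyclic combinatorial arrangement coming from the fixed decomposition matches the arrangement around a standard pole in one of the local normal forms, so that the completion of the glued flat surface has a unique conformal extension across the puncture on which $\phi$ has the claimed pole. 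Once this local verification is done, the two constructions are inverse to each other, and the bijectivity follows.
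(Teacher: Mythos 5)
Your injectivity argument is essentially the paper's: the restriction of the differential to each strip or half-plane is a rigid standard model determined by its period, the gluings along separating trajectories are forced, and a framing is the same thing as a labelling of the strips by a basis of $\Gamma$; so two framed differentials with the same decomposition and the same periods are related by a homeomorphism which is biholomorphic off the separating trajectories, hence everywhere. For surjectivity, however, you take a genuinely different route. The paper does not assemble the surface from scratch: it deforms the reference differential $\phi$ defining $U$ by explicit diffeomorphisms $\theta_{w_i,z_i}$ of the standard strips which are translations near the boundary, leaving the half-planes and neighbourhoods of the finite critical points and separating trajectories untouched. The entire content of the proof is then the verification that the new complex structure still extends over the infinite critical points with the same pole orders, and this is done softly, via the two-sided bound $(1/R)\,|\phi|\le|\psi|\le R\,|\phi|$ and an extremal-length computation of the modulus of a punctured neighbourhood of each pole, which forces $U(\psi)$ to be a punctured disc because $U(\phi)$ is.

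In your build-from-scratch construction this verification is exactly the step you defer to ``a local computation at each pole'', and it is not a formality; this is the gap. Two concrete points. First, Lemma \ref{Lem:GlueBorders} cannot be invoked as stated: it glues compact gluable boundaries (closed curves made of saddle trajectories, or closed trajectories), whereas your separating trajectories are half-infinite rays running into infinite critical points, so you need a separate, if elementary, isometric gluing along geodesic rays. Second, and more seriously, at a double pole the end of the glued flat surface is a cyclic union of quarter-infinite strip-ends, and you must show that this end is conformally a punctured disc rather than an annulus of finite modulus, and that the differential then extends with a pole of order exactly $2$. Concretely this amounts to showing the end is a half-infinite flat cylinder of nonzero circumference, i.e.\ that the period of the residue class $\beta_p$ --- a nonzero integer combination of the chosen $z_i$ determined by the combinatorics of the decomposition --- is nonzero; this is true (for a saddle-free decomposition the residue at a double pole is never real), but it has to be argued from the decomposition data rather than read off from the normal form, since the normal form is precisely what you are trying to establish. (For poles of order $\ge 3$ the check is genuinely easy, as the half-planes are standard.) The paper's quasiconformal-deformation route buys you exactly this: the conformal type at each puncture is inherited from the reference differential, so no local analysis at the poles is needed. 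If you supply the flat-cylinder argument at the double poles and replace the appeal to Lemma \ref{Lem:GlueBorders} by a direct gluing along rays, your proof goes through and gives a more constructive, if longer, argument than the paper's.
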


The next example shows that it is possible for a a saddle-free differential to have non-trivial automorphisms which preserve each horizontal strip. Such automorphisms preserve the standard arc classes and hence give automorphisms of the corresponding framed differential. 

\begin{Example}
\label{ex}
Consider the case $g=1$ and $m=(2)$: one of the exceptional cases of Lemma \ref{fatman}.
The space $\Quad(g,m)$ parameterizes pairs $(S,\phi)$, where $S$ is a Riemann surface of genus 1, and $\phi$ is a meromorphic differential with one double pole and two simple zeroes. Such differentials can be written explicitly as \[\phi(z)=(a\wp(z)+b)\,dz^{\tensor 2},\] where $\wp(z)$ is the Weierstrass $\wp$-function  corresponding to $S$. These functions are invariant under the inverse map $z\mapsto -z$.

\begin{center}
\begin{figure}[ht]
\includegraphics[scale=0.4]{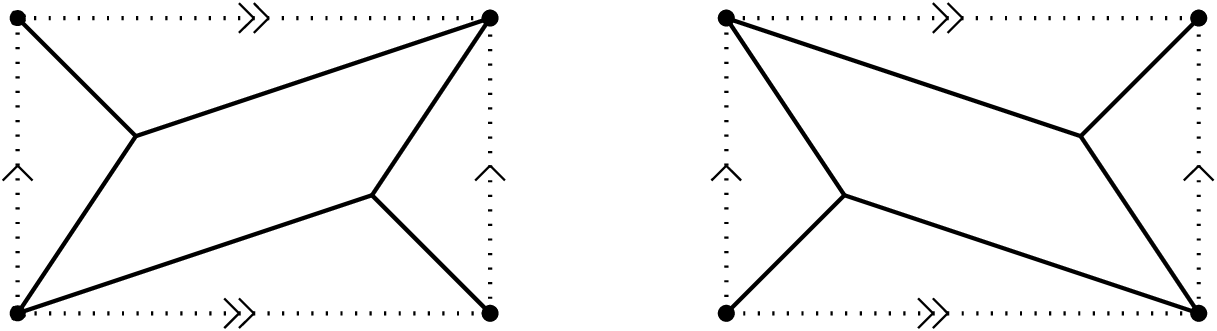}
\caption{Horizontal strip decompositions in case $g=1$, $m=(2)$.\label{Fig:tor}}
\end{figure}
\end{center}
The possible horizontal strip decompositions are shown in Figure \ref{Fig:tor}. Note that the inverse map (which is a rotation by $\pi$ on the diagram) preserves each of these decompositions, and  acts via a non-trivial automorphism of each horizontal strip.
\end{Example}

\subsection{Gluing strips}
\label{glu}

In this section we  prove Proposition \ref{ivans}.

Let $\h\subset \C$ be the upper half-plane, and take $z\in \h$.   We define the standard  complete horizontal strip of period $z$ to be the region 
\[
\barC(z)=\{ 0 \leq \Im(t)\leq \Im(z)\} \subset \bC, 
\]
with  two marked points on its boundary at $\{0,z\}$. We equip the interior $C(z)\subset\barC(z)$ with the  
 quadratic differential $dt^{\otimes 2}$.     Similarly, the standard complete half-plane $\barC(\infty)$ is the region $\{\Im(t) \geq  0\}$, equipped with the differential $dt^{\otimes 2}$ in its interior, and with a single   marked point at $0$.

For any two elements $w,z\in \h$ there is a  diffeomorphism
\[\theta_{w,z}\colon \barC(w)\isom \barC(z),\]
preserving the marked points on the boundary, and
with the further property that in a neighbourhood of each of the two  boundary components of $\barC(w)$ it is given by a translation in $\C$. To be completely definite, we can define
\[\theta_{w,z}(t)=t+\eta(\Im(t)/\Im(w))\cdot (z-w),\]  
where  $\eta\colon [0,1]\to [0,1]$ is some  smooth function satisfying $\eta([0,\frac{1}{4}])=0$  and $\eta([\frac{3}{4},1])=1$.

When $z\in \h$ there is a single non-trivial  automorphism of  $\barC(z)$  preserving the differential and the marked points, namely $t\mapsto z-t$.
We can ensure that the diffeomorphisms $\theta_{w,z}$ we have constructed commute with these non-trivial automorphisms by insising that the function $\eta$ satisfies $\eta(t)+\eta(1-t)=1$.

Let $\phi$ be a saddle-free  GMN differential  on a compact Riemann surface $S$ such that $\Crit_\infty(\phi)$ is non-empty. Thus $\phi$ determines a decomposition of the underlying smooth surface $\S$ into horizontal strips and half-planes.  The restriction of the differential $\phi$ to a horizontal strip $h_i$ is equivalent to the standard differential $dt^{\tensor 2}$ on the  standard cell $C(z_i)$ via an isomorphism $f_i\colon C(z_i)\to h_i$. This extends to a continuous map \[\bar{f_i}\colon \barC(z_i)\to \S,\]   and composing with a translation we can ensure that it takes the marked points $\{0,z_i\}$ to the finite critical points on the boundary of $h_i$. The four boundary half-edges of $\barC(z_i)$ are then taken to the  separating trajectories forming the boundary of $h_i$.

To build a differential $\phi$ on $\S$ with the same horizontal strip decomposition, and arbitrary periods $w_i$, introduce diffeomorphisms \[g_i=f_i\circ \theta_{w_i,z_i}\colon C(w_i)\to h_i.\]  Pushing forward the complex structure and quadratic differential from $C(w_i)$ using $g_i$ defines a new complex structure and differential $\psi$ on the  strips $h_i$, and this trivially extends over the separating trajectories and finite critical points since it agrees with the old one $\phi$ in a neighbourhood of these points. Note that we leave the half-planes completely unchanged.

We must now show that the new complex structure extends over the poles of $\phi$. First note that the function $\theta_{w_i,z_i}$ is invariant under translations in the real direction in $\barC(w_i)$, and hence its derivatives are bounded on $\barC(w_i)$. It follows that there is a  bound
\begin{equation}
\label{spring}(1/R)\cdot |\phi|\leq  |\psi|\leq R\cdot |\phi|,\end{equation}
where we consider both sides as functions on the tangent bundle to $S^\op=S\setminus\Crit_\infty(\phi)$, and $R>1$ is a constant depending only on the periods $w_i$ and $z_i$.

Take a small punctured disc $U\subset \S$ centered at a pole $p$ of $\phi$, and consider the complex structures $U(\phi)$ and $U(\psi)$ induced by the two differentials. Thus $U(\phi)$ is biholomorphic to the standard  punctured disc $D^*$, and we would like to know that this is also the case for $U(\psi)$.  By the Riemann mapping theorem,  $U(\psi)$ is biholomorphic to some annulus \[\{r_1<|z|<r_2\}\subset \C.\] We can compute the modulus $(1/2\pi)\, \log(r_2/r_1)\in [0,\infty]$ using extremal length \cite{FM}, and the inequalities \eqref{spring} show that this gives the same result as for $U(\phi)$. Hence $U(\psi)$ is also biholomorphic to a punctured disc, and  so we can extend the new complex structure over $p$.  Applying the inequalities \eqref{spring} again then  shows that $\psi$ extends to a meromorphic function at $p$ with the same pole order as $\phi$.

The above argument proves that the map $\pi_U$ of Proposition \ref{ivans} is surjective. To prove that it is injective, suppose that two differentials $(S_i,\phi_i)$ have the same  horizontal strip decomposition  and  the same periods $Z_{\phi_i}(\alpha_{h_j})$. Note that the restrictions of $(S_i,\phi_i)$ to the interior of a given horizontal strip $h_j$ are equivalent, via a biholomorphism which extends continuously over the boundary of the strip. Glueing these maps together gives a homeomorphism $f\colon S_1\to S_2$ which  is biholomorphic on the interior of each strip. It follows that $f$ is in fact a biholomorphism, and  since  the meromorphic sections $f^*(\phi_2)$ and $\phi_1$ coincide on an open subset, they must be equal.


\subsection{Period co-ordinates}
\label{per}

We can now prove that (with certain exceptions) the period map \eqref{periodmap} is a local isomorphism.  
Let us fix a genus $g\geq 0$ and a collection of $d\geq 1$ integers $m=\{m_i\}$.
We shall need the following easy corollary of Proposition \ref{stay}.

\begin{lemma}
\label{opener}
Suppose that at least one $m_i\geq 2$. Then the subset $B_0\subset \Quad(g,m)$ of saddle-free differentials is open  and has non-trivial intersection with every $S^1$-orbit.
\end{lemma}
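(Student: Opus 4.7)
The plan is to handle the two assertions separately. For openness, I would use the persistence of separating trajectories (Proposition \ref{stay}) to argue that the horizontal strip decomposition is rigid under small deformations, leaving no room for new saddle trajectories to appear. For the $S^1$-orbit statement, a countable-versus-uncountable cardinality argument applied to the set of phases of saddle connections suffices.

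\emph{Openness.} Fix $\phi_0\in B_0$. The hypothesis that some $m_i\geq 2$ guarantees $\Crit_\infty(\phi_0)\neq\emptyset$, so Lemma \ref{prop:NoClosed} gives that $\phi_0$ has no closed or divergent trajectories either. Consequently every horizontal ray emanating from a finite critical point of $\phi_0$ is a separating trajectory tending to an infinite critical point. Because $\phi_0$ has no ring domains, no double pole of $\phi_0$ has real residue, so Proposition \ref{stay} applies at each such endpoint. That proposition produces, for every one of the finitely many separating trajectories $\gamma$ of $\phi_0$, a continuous deformation $\gamma(\phi)$ defined on some common neighbourhood $\phi_0\in U\subset\Quad(g,m)$, which is a separating trajectory of $\phi$ with the same infinite endpoint. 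Since each simple zero of a differential in $\Quad(g,m)$ carries exactly three horizontal rays and each simple pole carries exactly one, the deformed rays exhaust all horizontal rays leaving the finite critical points of $\phi$. Therefore no horizontal trajectory of $\phi$ ends at a finite critical point in both directions, i.e., $\phi$ has no saddle trajectory, so $U\subset B_0$.

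\emph{Intersection with every $S^1$-orbit.} A straight arc of phase $\theta$ for $\phi$ is exactly a horizontal saddle trajectory of the rotated differential $e^{-i\pi\theta}\cdot\phi$, so the $S^1$-orbit of $\phi$ meets $B_0$ iff some $\theta\in\R/\Z$ is not the phase of any saddle connection of $\phi$. Every saddle connection is in particular a $\phi$-geodesic joining two points of the finite set $\Crit_{<\infty}(\phi)$; applying Lemma \ref{fini} with each pair of endpoints and length bound $L\to\infty$ gives at most countably many saddle connections of $\phi$, hence at most countably many phases. Since $\R/\Z$ is uncountable, some $\theta$ is missed, and $e^{-i\pi\theta}\cdot\phi$ lies in $B_0$.

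The main obstacle is the openness part: one must verify that a small perturbation cannot create a fresh horizontal connection between finite critical points. The decisive observation is that the total number of horizontal rays at each finite critical point is a local invariant and stays constant on $\Quad(g,m)$; combined with Proposition \ref{stay}, this forces every horizontal ray of $\phi$ near $\phi_0$ to be the continuous deformation of a separating ray of $\phi_0$, leaving no slot in which a new saddle trajectory could appear.
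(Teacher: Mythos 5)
Your proof is correct and follows essentially the same route as the paper: openness via Lemma \ref{prop:NoClosed} plus the persistence of separating trajectories (Proposition \ref{stay}), and the $S^1$-orbit statement via a countability argument. The only (harmless) difference is in the second half, where you derive countability of the set of saddle-connection phases from Lemma \ref{fini} (finitely many geodesics between finite critical points of each bounded length), whereas the paper observes directly that these phases lie in $\{\arg Z_\phi(\alpha):\alpha\in\hs\}$, which is countable because $\hs$ is a finitely generated group.
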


\begin{pf}

 By Lemma \ref{prop:NoClosed}, a GMN differential with an infinite critical point is saddle-free precisely if every trajectory leaving a finite critical point  is separating. Proposition \ref{stay} shows that this condition is stable under small deformations of the differential. Thus $B_0$ is open.

If a GMN differential $\phi$ has a saddle trajectory $\gamma$ then by the definition of the hat-homology class, $Z_\phi(\Hat{\gamma})\in \R_{>0}$.
Consider the subset   $\Theta_\phi\subset S^1$ of phases $\theta$ for which $e^{-i\pi\theta}\cdot \phi$ has a saddle trajectory. 
 Then $\Theta_\phi$ is contained in the set of elements $\arg Z_\phi(\alpha)$ for classes $\alpha\in \hs$ having nonzero period. Thus $\Theta_\phi$ is countable. In particular, the complement of $\Theta_\phi$ is non-empty.
\end{pf}

We remark that the conclusion of Lemma  \ref{opener} is definitely false without the assumption that the differential has an infinite critical point.   For a saddle-free differential on a finite-area surface every trajectory is recurrent;  a fairly simple consequence of that recurrence is that the (still countable) subset $\Theta_{\phi} \subset S^1$ is then dense, see e.g. \cite{Masur} for a detailed proof.

\begin{thm}
\label{perper}
Suppose that the polar type $(g,m)$ is not one of the 6 exceptional cases listed in Lemma \ref{fatman}. Then
the space of framed GMN differentials
$\Quad^\Gamma(g,\m)$  is either empty, or is a complex manifold of dimension $n$, and  the period map
\[\pi\colon \Quad^\Gamma(g,\m) \to \Hom_\Z(\Gamma,\C)\]
 is a local isomorphism of complex manifolds.
\end{thm}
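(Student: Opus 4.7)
The argument has two parts: showing that $\Quad^\Gamma(g,m)$ is a complex manifold, and showing that $\pi$ is a local biholomorphism.

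For the manifold structure, we already know from Proposition \ref{rain} that $\Quad(g,m)$ is a complex orbifold of dimension $n$, and the forgetful map $\Quad^\Gamma(g,m)\to \Quad(g,m)$ of \eqref{oo} is unramified, so $\Quad^\Gamma(g,m)$ is likewise a complex orbifold of dimension $n$. The task is to prove all stabilizer groups are trivial: any $f\in \Aut(S,\phi)$ whose distinguished lift $\Hat f$ acts trivially on $\hs$ must be the identity. Given such an $f$, I would first rotate $\phi$ by a suitable phase (using Lemma \ref{opener} when some $m_i\geq 2$, and the classical finite-area analogue in the remaining case) so that the rotated differential is saddle-free. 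Since $f$ still preserves the rotated differential and acts trivially on $\hs$, it fixes each of the standard saddle classes $\alpha_h$ of Lemma \ref{bassist}, and therefore permutes the horizontal strips in such a way that their hat-classes are preserved; in the non-exceptional polar types a case-by-case check---whose outcome is precisely the list in Lemma \ref{fatman}, cf.\ the Examples \ref{spec} and Example \ref{ex}---shows that $f$ must act trivially on the horizontal strip decomposition, and hence is the identity.

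For the period map, I would first verify that $\pi$ is holomorphic. In a contractible neighbourhood of any $\phi_0$, the spectral covers vary holomorphically, the canonical $1$-form $\psi$ of Section \ref{go} varies holomorphically, and the Gauss--Manin connection trivializes the local system of hat-homology groups; the periods $Z_\phi(\theta(\gamma))$ therefore depend holomorphically on $\phi$. It then suffices to show $\pi$ is a local homeomorphism, since between complex manifolds of the same dimension this upgrades automatically to a local biholomorphism. For a saddle-free framed differential $\phi_0$, Proposition \ref{ivans} supplies a bijection between the open set $U\subset\Quad^\Gamma(g,m)$ of framed differentials sharing the same horizontal strip decomposition and the open subset $\{(z_1,\ldots,z_n)\in\C^n : \Im z_i>0\}$; the explicit gluing construction of Section \ref{glu} makes the inverse continuous, so $\pi_U$ is a homeomorphism. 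For a general $\phi_0$ with at least one $m_i\geq 2$, Lemma \ref{opener} gives a phase $\theta$ with $e^{-i\pi\theta}\phi_0$ saddle-free; the rotation $\phi\mapsto e^{-i\pi\theta}\phi$ is a biholomorphism of $\Quad^\Gamma(g,m)$ intertwining $\pi$ with the scalar map $Z\mapsto e^{-i\pi\theta/2}Z$ on $\Hom_\Z(\Gamma,\C)$, so the local-homeomorphism property transfers from $e^{-i\pi\theta}\phi_0$ to $\phi_0$. The finite-area case (all $m_i=1$) is handled by the classical period-coordinate theorem of Veech--Masur, or equivalently by producing $n$ saddle connections of varying phases whose hat-classes form a basis of $\hs$ and transporting their periods over a neighbourhood via Remark \ref{outy}(a).

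The main obstacle is the manifold part: rigorously showing that in every non-exceptional polar type no non-trivial automorphism of $(S,\phi)$ becomes invisible after framing. The six exceptional cases of Lemma \ref{fatman}, together with Examples \ref{spec} and Example \ref{ex}, show that this analysis is tight and must be carried out case-by-case. The local-homeomorphism part, by contrast, is essentially bookkeeping once Proposition \ref{ivans} and the gluing construction of Section \ref{glu} are in hand.
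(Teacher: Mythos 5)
Your treatment of the period map itself (holomorphy via the Gauss--Manin trivialization, Proposition \ref{ivans} together with the gluing construction of Section \ref{glu} for saddle-free differentials, transfer by rotation, and Veech's theorem in the finite-area case) is essentially the paper's argument. The gaps are both in the manifold half of the statement.

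First, your plan to prove triviality of stabilizers by rotating to a saddle-free differential and invoking the standard saddle classes of Lemma \ref{bassist} cannot work when all $m_i=1$: a finite-area differential has no infinite critical points, so there are no horizontal strips or half-planes and no standard saddle classes, and Lemma \ref{bassist} does not apply. The mere existence of saddle-free phases (the "classical analogue" you invoke) does not help, because the saddle-free condition only yields a cell decomposition into strips and half-planes in the presence of poles of order $\geq 2$. The paper handles this case entirely differently: it pulls the fibration back to Teichm\"uller space, where one has an honest complex manifold, uses the known local injectivity of the period map there (Veech) to see that a locally defined section of $\mathcal{V}^\Gamma(g,m)\to\mathcal{V}(g,m)$ is injective on points, and then uses the triviality of the generic automorphism group to conclude it is an isomorphism onto its image.

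Second, even when some $m_i\geq 2$, your concluding implication "$f$ acts trivially on the horizontal strip decomposition, and hence is the identity" is exactly where the content lies, and as stated it is false: an automorphism can fix every horizontal strip setwise, and fix every standard saddle class, while acting on each standard strip $C(z_i)$ by the involution $t\mapsto z_i-t$ --- this is precisely the phenomenon of Example \ref{ex}. The paper's mechanism for excluding it is that such an involution, by the construction of Section \ref{glu}, automatically preserves \emph{every} differential with the same horizontal strip decomposition; since these form an open set, the involution would be a generic automorphism of $\Quad^\Gamma(g,m)$, which contradicts Lemma \ref{fatman} precisely because the polar type is assumed non-exceptional. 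Your proposed "case-by-case check" never supplies this bridge from an automorphism of a single differential to a generic automorphism, and without it Lemma \ref{fatman} (which only classifies \emph{generic} automorphism groups) cannot be quoted to finish the argument.
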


\begin{pf}
We divide into two cases.  In the finite area case when all $m_i=1$ we  appeal to the known result that the period map is a local isomorphism in this setting.\footnote{In fact this appeal can  be avoided: see Remark \ref{forw}.} We do need to be a little bit careful to prove that $\Quad^\Gamma(g,m)$ is a manifold. Consider  the complex manifold ${\mathcal V}(g,m)$ obtained by pulling back the fibration \[\Quad(g,m)\to \Mod(g,d)/\Sym(m)\] to Teichm{\"u}ller space. Pulling back the space of framed differentials gives a local isomorphism $p\colon {\mathcal V}^\Gamma(g,m) \to {\mathcal V}(g,m)$. Taking an open subset $U\subset  {\mathcal V}(g,m)$ and a locally-defined section of $p$ gives local isomorphisms
\[f\colon U\to {\mathcal V^\Gamma}(g,m)\to \Quad^\Gamma(g,m),\]
and taking the composition with the  period map $\pi$ gives a locally-defined period map on ${\mathcal V}(g,m)$. This period map  is known to be a local isomorphism \cite{Veech} and, hence, shrinking $U$ if necessary, we can assume that $f$ is  injective on points.  Our assumption ensures that the space $\Quad^\Gamma(g,m)$ has trivial generic automorphism group so it follows that $f$ is an isomorphism onto its image. Hence $\Quad^\Gamma(g,m)$ is a complex manifold and the period map $\pi$ is a local isomorphism.

Suppose now that some $m_i\geq 2$.  By Lemma \ref{opener} we can use the $S^1$-action and work in a neighbourhood  consisting of saddle-free differentials on a fixed underlying surface $\S$ and with a fixed horizontal strip decomposition. An automorphism of such a differential $\phi$ is a smooth map $f\colon \S\to \S$ satisfying $f^*(\phi)=\phi$. It preserves the framing precisely if it acts trivially on the set of horizontal strips. Assume that $f$ is not the identity.  When pulled-back to a standard strip $C(z_i)$ it must then act by $t\mapsto z_i-t$. But the construction of Section \ref{glu} shows that $f$ then preserves all differentials with the same horizontal strip decomposition as $\phi$. Since $\Quad^\Gamma(g,m)$ is assumed to have trivial generic automorphism group, this is impossible. Hence $\Quad^\Gamma(g,m)$ is a manifold. The result now follows from Propositon \ref{ivans}. The horizontal strip decomposition is locally-constant on $B_0$, so the subset $U$ appearing there is open. The map $\pi_U$ is certainly  holomorphic because its components are periods of the spectral cover, which varies holomorphically with $\phi$.  Since $\pi_U$  is also bijective, it is an isomorphism. 
\end{pf}

%





\section{Stratification by number of separating trajectories}
\label{sadred}

This rather technical section contains some further results concerning the trajectory structure of   GMN differentials.   We focus particularly on  the stratification of the space $\Quad(g,m)$ by differentials with a fixed number of separating trajectories.  Throughout,   we fix a genus $g\geq 0$ and a polar type $m=\{m_i\}$ such that  all  $m_i\geq 2$, and consider differentials in $\Quad(g,m)$. In particular, all differentials are complete and have at least one infinite critical point.

\subsection{Homology classes of saddle trajectories}

Let $\phi$ be a  complete GMN differential on a Riemann surface $S$. Recall from Section \ref{l} that every saddle trajectory $\gamma$  has an associated hat-homology class $\Hat{\gamma}\in \hs$. We say that two saddle trajectories $\gamma_1$ and $\gamma_2$ are \emph{hat-homologous} if $\Hat{\gamma_1}=\Hat{\gamma_2}$. More generally, we say that $\gamma_1$ and $\gamma_2$ are \emph{hat-proportional} if \[\Z_{>0}\cdot \Hat{\gamma_1}=\Z_{>0}\cdot\Hat{\gamma_2}\subset \hs.\]
  Recall from Section \ref{l} that a saddle connection is called closed if its two endpoints coincide. The following result\footnote{To simplify notation, for the purposes of Lemma \ref{alport} and its proof, we will extend the definition of a ``closed" saddle trajectory to include one from a critical point to itself, i.e. we allow the boundary of a ring domain as well as the trajectories in the interior of a ring domain.}
, which is the analogue in our situation of a result of  \cite{MZ}, relies essentially on the assumption that all finite critical points of $\phi$ are  simple zeroes.   

\begin{lemma}
\label{alport}
Suppose that $(S, \phi)$ admits a pair   of distinct hat-proportional saddle trajectories $\gamma_1,\gamma_2$. Then one of the following cases holds: 
\begin{enumerate}
\item[(i)] The $\gamma_i$ are hat-homologous and closed and form the two boundary components of a non-degenerate ring domain.
\smallskip

\item[(ii)] The surface $S$ contains a separating non-degenerate ring domain $A$, bounding on one side an open genus one subsurface $T\subset S$ containing no critical points of $\phi$. The boundary component of $A$ adjoining $T$  is a union of two non-closed saddle trajectories $\nu_1, \nu_2$ and either \smallskip
\begin{itemize}
\item[(iia)] the second boundary of $A$ consists of more than one saddle trajectory; then $\{\gamma_1,\gamma_2\} = \{\nu_1,\nu_2\}$ and the $\gamma_i$ are hat-homologous;
\smallskip
\item[(iib)]   the second boundary of $A$ is a closed saddle trajectory $\mu$; then     $2\Hat{\nu}_1=\Hat{\mu} = 2\Hat{\nu}_2$ and $\{\gamma_1, \gamma_2\}\subset  \{\mu,\nu_1,\nu_2\}$.\end{itemize}\smallskip

\item[(iii)] The surface $S$ is a torus and $\phi$ has a unique pole $p$, which has order $2$ and real residue.  The $\gamma_i$ are hat-homologous, have distinct endpoints, and together form the boundary of the degenerate ring domain enclosing $p$.   \end{enumerate}
\end{lemma}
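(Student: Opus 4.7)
The plan is to pass to the spectral cover $\pi \colon \hS \to S$, where the trajectories $\gamma_i$ lift to cycles representing the hat-homology classes $\Hat{\gamma}_i$. Write $m_1 \Hat{\gamma}_1 = m_2 \Hat{\gamma}_2$ in $\hs$ with coprime positive integers $m_i$. The strategy is to produce a 2-chain $C \subset \hS^\op$ with $\partial C = m_1 \Hat{\gamma}_1 - m_2 \Hat{\gamma}_2$, push it down to a distinguished region $R \subset S$ bounded by arcs of $\Sigma = \gamma_1 \cup \gamma_2$, and then use the horizontal foliation together with the simple-zero hypothesis to rule out all configurations except the three listed.

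First I would show that $\gamma_1$ and $\gamma_2$ meet only at common endpoints: two distinct horizontal arcs cannot cross transversely (they are leaves of the same foliation), and a tangential meeting at an interior point would force them to coincide on a neighbourhood and hence globally. Thus $\Sigma$ is an embedded graph in $S$ with vertices in $\Zer(\phi)$. Cutting $S$ along $\Sigma$ yields finitely many pieces, and the distinguished region $R$ supplied by $C$ satisfies $\partial R \subseteq \Sigma$. Because $\partial R$ is horizontal, the horizontal trajectories inside $R$ are trapped: a trajectory entering $R$ either closes up inside $R$, limits to an infinite critical point of $R$, returns to $\Sigma$, or is recurrent. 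Lemma \ref{prop:NoClosed} and the completeness of $\phi$ severely constrain the last two possibilities, and any infinite critical point of $\phi$ lying inside $R$ must be a double pole with real residue (so that $R$ can coexist with a degenerate ring domain).

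The case analysis then splits on whether $R$ contains critical points of $\phi$ in its interior. If $R$ contains none, then $R$ is swept by a family of horizontal trajectories of constant period, forcing $R$ to be a (possibly degenerate) ring domain. If both $\gamma_i$ are closed and bound $R$ on opposite sides we are in case (i); if instead $S$ is a torus and the ring domain is degenerate around a double pole with real residue, we are in case (iii). If $R$ contains critical points, the rigidity of the simple-zero condition (exactly three horizontal rays at each zero) forces any interior zero to send all three of its rays into $\Sigma$; accounting these rays against the endpoints provided by $\Sigma$, combined with an Euler-characteristic count applied to $R$ and its complement, pins down the topology as a non-degenerate ring domain $A$ separating an open genus-one subsurface $T$, containing no critical points, from the rest of $S$. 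This is case (ii), and distinguishing whether the outer boundary of $A$ is a single closed saddle $\mu$ (which lifts to one loop on $\hS$ while each $\nu_i$ lifts to two arcs, forcing $2\Hat{\nu}_i = \Hat{\mu}$) or a genuine chain of saddles then separates subcase (iib) from (iia).

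The main obstacle is the combinatorial argument in the previous paragraph: showing that when $R$ contains critical points, the \emph{only} compatible configuration is the genus-one handle of (ii). This requires carefully tracking the $\tau$-anti-invariance of the 2-chain $C$ on $\hS$, combined with counting horizontal rays against boundary endpoints of $\Sigma$ and analyzing how any additional saddle trajectories in the interior of $R$ interact with the horizontal foliation. The argument mirrors the abelian-differential classification of homologous saddle connections in \cite{MZ}; the new ingredient is the interplay between the double-cover structure $\pi$ and the simple-zero hypothesis on $\phi$, which rules out many configurations that occur for general translation surfaces and is what makes the conclusion so restrictive.
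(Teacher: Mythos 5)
Your overall strategy --- lift to the spectral cover, use the homology relation to bound a region $R$ with $\partial R\subseteq\gamma_1\cup\gamma_2$, and then classify $R$ via the trapped horizontal foliation --- is a reasonable reorganization of the problem, but as written it has two genuine gaps. First, the step ``produce a 2-chain $C$ with $\partial C=m_1\Hat{\gamma}_1-m_2\Hat{\gamma}_2$ and push it down to a distinguished region $R\subset S$ bounded by arcs of $\Sigma$'' is not justified and is in fact logically equivalent to the separation statements that form the first half of the argument. A singular 2-chain on $\hS^\op$ with the given boundary exists by hypothesis, but it does not descend to an \emph{embedded} subsurface of $S$ with horizontal boundary; in particular, nothing in your setup yet shows that $\gamma_1\cup\gamma_2$ separates $S$, or that the poles of $\phi$ all lie on one side. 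The paper establishes exactly these facts by the dual route: if $\gamma_1\cup\gamma_2$ fails to separate, or if there are poles (or extra zeroes) on both sides, one exhibits an explicit relative cycle $\Hat{\alpha}$ (a path between poles, or a loop from a pole around an interior zero) with $\langle\Hat{\gamma}_1,\Hat{\alpha}\rangle\neq 0=\langle\Hat{\gamma}_2,\Hat{\alpha}\rangle$, contradicting hat-proportionality. You would need to supply an equivalent argument before your region $R$ exists at all.

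Second, and more seriously, the combinatorial core --- showing that when critical points are present the only possible configuration is the genus-one handle of case (ii), and separating (iia) from (iib) while excluding the remaining configurations --- is exactly the part you defer with ``this requires carefully tracking\dots''. The paper closes this with two specific devices that your sketch does not contain: (a) a \emph{doubling} construction, gluing the relevant subsurface to its mirror along a closed trajectory of the adjacent ring domain to produce a closed surface carrying a holomorphic quadratic differential with exactly four (or at most six) simple zeroes and no poles, which forces the subsurface to be a torus by Gauss--Bonnet; and (b) the $\Z_2$-symmetry of any differential on a torus with a single double pole and simple zeroes (cf.\ Example \ref{ex}), together with the nonexistence statement of Example \ref{nosu}, which kills the asymmetric configurations in which a non-closed saddle meets the boundary of a ring domain in a single point. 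Without these (or substitutes for them), an Euler-characteristic count of rays against endpoints of $\Sigma$ does not by itself pin down the genus of $T$ or exclude the bad configurations, so the proposal does not yet constitute a proof.
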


The three cases are illustrated in Figure \ref{Fig:Hatprop}. In the cases (ii) and (iii) the torus subsurface carries an irrational foliation, i.e. the interior of the torus is a spiral domain. The final paragraph of Section \ref{busp} explains a construction of quadratic differentials illustrating these cases.

\begin{center}
\begin{figure}[ht]
\includegraphics[scale=0.4]{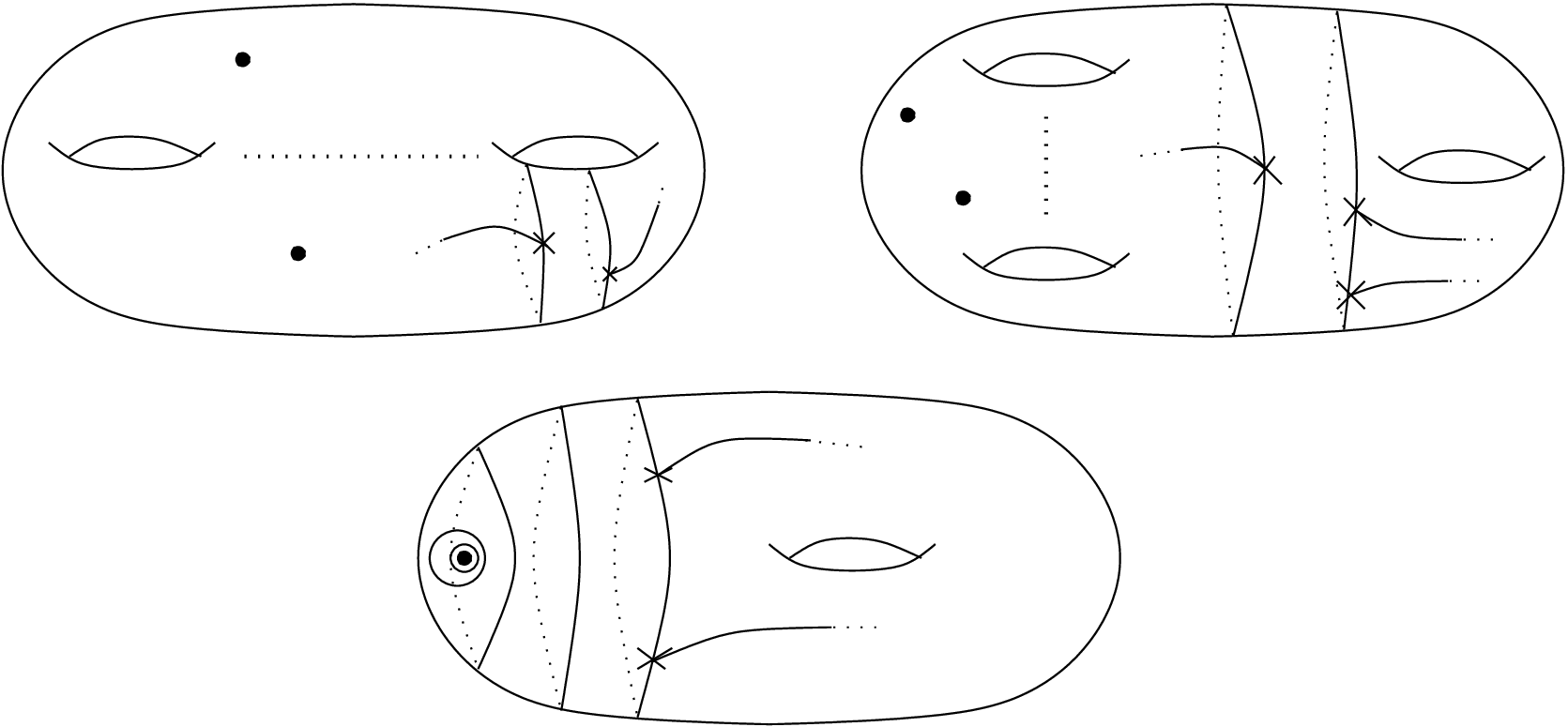}
\caption{Configurations of hat-proportional saddle trajectories.\label{Fig:Hatprop}}
\end{figure}
\end{center}

\begin{pf} 
We divide the proof into three cases, depending on how many of the saddle trajectories $\gamma_i$ are closed.

{\bf Case (1).} First suppose that $\gamma_1$ and $\gamma_2$ are both closed. They must then be disjoint.
Each $\gamma_i$ is one boundary component of a ring domain $A_i$. If both $A_i$  are degenerate,  then $\Hat{\gamma_i}=\beta_{p_i}$ for different double poles $p_i$, and these classes are  linearly independent by Lemma \ref{mov}.  
Thus we can assume that $A_1$ is non-degenerate. Then, as in Figure \ref{ring},
we can write $\Hat{\gamma}_1=\Hat{\alpha}_+ -\Hat{\alpha}_-$, with $\alpha_\pm$ being saddle trajectories contained entirely in the  ring domain $A_1$, and satisfying $\Hat{\alpha}_+\cdot \Hat{\alpha}_-=\pm 2$. But then $\Hat{\alpha}_+\cdot\Hat{\gamma}_1=\pm 2$ also, so if the $\gamma_i$ are hat-proportional, then $\gamma_2$ must meet $\alpha_+$, and hence must be the other boundary component  of $A_1$. It then follows that the $\gamma_i$ are hat-homologous, and this is case (i) of the Lemma. 
\begin{figure}[ht]
\begin{center}
\includegraphics[scale=0.4]{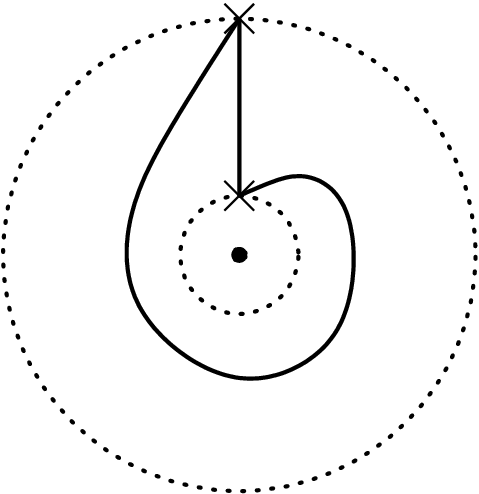}
\end{center}
\caption{A non-degenerate ring domain with a pair of  arcs  $\alpha_\pm$.  \label{ring}}
\end{figure}

{\bf Case (2).} Next suppose that neither $\gamma_1$ nor $\gamma_2$  is closed. If   $\gamma_1\cup \gamma_2$ does not
separate the surface $S$ then we can find a path $\alpha$  in $S$ connecting poles of $\phi$, whose interior lies in $S\setminus\Crit(\phi)$, and which intersects $\gamma_1$ once, and $\gamma_2$ not at all. If we take $\Hat{\alpha}$ to be one of the two inverse images of this path on the spectral cover, then the Lefschetz pairings are $\langle \Hat{\gamma}_1,\Hat{\alpha}\rangle = \pm 1$ and $\langle \Hat{\gamma}_2,\Hat{\alpha}\rangle = 0$. Hence the $\gamma_i$ are not hat-proportional.

Suppose then that $\gamma_1$ and $\gamma_2$ both have the same pair of endpoints $z_1\neq z_2$, and together
form a separating loop $\gamma$.
Consider
the third trajectories coming out of $z_1$ and $z_2$. If these lie on opposite
sides of $\gamma$ then it is easily seen that $\Hat{\gamma_1} \cdot \Hat{\gamma_2} = \pm 2$ so again the $\gamma_i$ are not hat-proportional. 
Thus we conclude that the loop $\gamma$ is one boundary component of a ring
domain $A$.

 If there are poles on both sides of $\gamma$ one can take a path between such poles meeting $\gamma_1$ exactly once and disjoint from $\gamma_2$, showing the $\gamma_i$ are not hat-proportional. Therefore we may assume that $\gamma$ separates $S$, and bounds on one side a subsurface $T$ containing no poles of $\phi$. If $T$ contains a zero $q$ of $\phi$, we may take a path $\alpha$ from a pole to itself which crosses $\gamma_1$, encircles $q$ once, and returns parallel to itself crossing $\gamma_1$ again, with $\alpha$  globally disjoint from $\gamma_2$.  Let $\hat\alpha$ denote one component of the preimage of $\alpha$ on the spectral cover. Then $\langle \hat{\alpha}, \hat{\gamma}_1\rangle = \pm 2$ and $\langle \hat\alpha, \hat{\gamma}_2\rangle  =0$, so the $\gamma_i$ are not hat-proportional.

 Since the other boundary component of the ring domain $A$ contains zeroes or a double pole of $\phi$, it necessarily lies outside $T$.  Thus a  closed trajectory $\sigma$  inside  $A$  bounds a subsurface $R \supset T$ which contains two zeroes (lying on $\gamma$) and no poles.  Doubling $R$ along $\alpha$ then gives a holomorphic quadratic differential on a closed surface with exactly four simple zeroes, which implies that the subsurface $R$ is a torus.

If the ring domain $A$ is non-degenerate we are now in the setting of  case (ii) of the Lemma, and the classes $\hat{\gamma}_i$ coincide since the preimage of $\gamma$ bounds an unpunctured subsurface of $\hat{S}$. 
On the other hand, if $A$ is a degenerate ring domain centered on a double pole $p$ then we are in the setting of case (iii), and the $\gamma_i$ are hat-homologous with hat-homology class equal to half the residue class $\beta_p$.

{\bf Case (3).} Reordering the $\gamma_i$ if necessary, we may now suppose that $\gamma_1$ is closed, and $\gamma_2$ has distinct endpoints. If $\gamma_1$ is not separating, there is a path from a pole to a pole  which meets $\gamma_1$ once and is disjoint from $\gamma_2$, so the $\gamma_i$ are not hat-proportional.  The same argument applies if $\gamma_1$ separates $S$ into subsurfaces each of which contain poles, so we may assume that $\gamma_1$ bounds a subsurface $R$ containing no pole of $\phi$.  

If the interior of $\gamma_2$ lies inside $S \setminus R$ then we may take a path $\alpha$ which goes from a pole, around one end-point of $\gamma_2$, and back to the pole parallel to itself, and which is entirely disjoint from $\gamma_1$. The Lefschetz pairing argument as above then implies that the $\gamma_i$ are not hat-proportional. We may therefore assume that $\gamma_2$ lies inside the subsurface $R$ bound by $\gamma_1$.

If $R$ contains some zero $q$ not lying on either $\gamma_i$, we may pick a closed path $\alpha$, disjoint from $\gamma_2$, which starts at a pole of $\phi$, crosses $\gamma_1$, encircles $q$, and then returns parallel to itself crossing $\gamma_1$. The Lefschetz pairings again imply that the $\gamma_i$ are not hat-proportional.  We may therefore assume that $R$ contains no zeroes other than those lying on $\gamma_1 \cup \gamma_2$. 

The curve $\gamma_1$ is a boundary component of some ring domain $A$.  Suppose $A$ is non-degenerate, and $A$ is contained inside $R$. Then $\gamma_2$ is disjoint from $\gamma_1$.  Let $\sigma$ be a  closed trajectory  in $A$.  Doubling along $\sigma$ yields a surface containing no poles and four simple zeroes, hence of genus 2, which implies that $\gamma_1$ bounds a torus, and $\gamma_2$ connects the two zeroes $z_1,z_2$ lying inside that torus. If both  zeroes $z_i$ lie on the boundary of $A$ then, by considering intersections with closed curves in  the torus, it is easy to see that $\gamma_2$ must be contained in the boundary  of $A$. We are then in case (iib) of the Lemma. Otherwise, the other boundary component of $A$ is comprised of a single saddle $\nu$, and $\gamma_2$ meets $\nu$ in a point, giving a situation as on the left of Figure \ref{Fig:ImpossConfig}.  

\begin{center}
\begin{figure}[ht]
\includegraphics[scale=0.4]{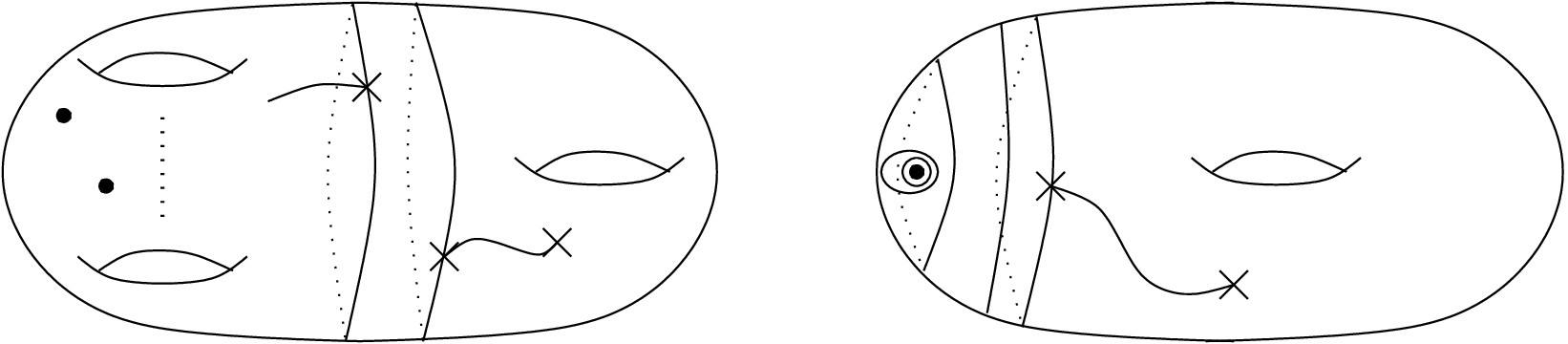}
\caption{Configurations of closed curves which cannot arise as saddle trajectories.\label{Fig:ImpossConfig}}
\end{figure}
\end{center}

We claim this configuration cannot occur. Indeed, if it could, one could replace everything outside $\sigma$ with a degenerate ring domain, yielding the right-hand picture of Figure \ref{Fig:ImpossConfig}. But, as in Example \ref{ex}, any  differential on a torus with a single double pole and  simple zeroes is invariant under  an involution of the torus which permutes the two zeroes, and this rules out the asymmetric trajectory structure shown.\footnote{Alternatively, one could collapse a zero into the double pole using the local surgery from  Section \ref{busp} below.  This would yield a quadratic differential on a torus with a simple pole and a single zero, but as in Example \ref{nosu}, no such exists.}

Suppose next that the ring domain $A$ is non-degenerate, but not contained inside $R$. The third half-edge at the zero on $\gamma_1$ then enters $R$, and we can double along a closed trajectory  $\sigma$ in $A$ to get a surface with at most six zeroes and no poles, hence with at most four zeroes. It follows again that $R$ is a torus, and that $\gamma_2$ must intersect $\gamma_1$ at one point. This gives the same local configuration of saddles as in the previous case, and by gluing in a degenerate ring domain along $\sigma$ one obtains the same contradiction as before.

Finally, if $A$ is degenerate, one arrives directly at the second picture of Figure \ref{Fig:ImpossConfig}, and that again yields a contradiction.  This then completes the proof.
\end{pf}

\subsection{Stratification}
\label{strat}

Let $\phi$ be a GMN differential  on a Riemann surface $S$ defining a point in $\Quad(g,m)$. Note that we are assuming that all $m_i\geq 2$ so $\phi$ has no simple poles, and at least one infinite critical point.
 Since exactly 3 horizontal trajectories emerge from each zero of $\phi$, there is an equality
\[r_\phi + 2 s_\phi  +t_\phi= k, \quad k=3\,|\Zer(\phi)|=3(4g-4+\sum_{i=1}^d m_i),\]
where $r_\phi$ is the number of trajectories that are recurrent in one direction, but tend to a zero in the other, $s_\phi$ is the number of saddle trajectories, and $t_\phi$ is the number of separating trajectories. Define subsets
  \[B_p=\{\phi\in \Quad(g,\m): r_\phi + 2s_\phi\leq p\}\]
Note that  $B_0=B_1$ is precisely the set of saddle-free differentials.  Indeed, by  Lemma \ref{prop:NoClosed}, a differential having no saddle trajectories has no recurrent trajectories either. We call the elements of $B_2$ \emph{tame differentials}; such differentials have at most one saddle trajectory.

\begin{lemma}
\label{open}
The subsets $B_p\subset\Quad(g,\m)$  form an increasing chain of dense open subsets
\[B_0= B_1\subset B_2\subset \dots\subset B_{k}=\Quad(g,\m).\]
\end{lemma}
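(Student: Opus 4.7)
The chain of inclusions $B_p \subset B_{p+1}$ is immediate from the definition, and $B_k = \Quad(g,m)$ follows from the identity $r_\phi + 2s_\phi + t_\phi = k$ together with $t_\phi \geq 0$. The first task is the equality $B_0 = B_1$: here I would appeal directly to Lemma \ref{prop:NoClosed}, since $r_\phi + 2s_\phi \leq 1$ forces $s_\phi = 0$, and our standing assumption that $m_i \geq 2$ for all $i$ guarantees $\Crit_\infty(\phi)\neq\emptyset$, so the lemma then forces $r_\phi = 0$ as well.

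For openness, the plan is to rewrite
\[
B_p \;=\; \{\phi \in \Quad(g,m) : t_\phi \geq k-p\}
\]
using the structural identity, thereby reducing the problem to showing that $t_\phi$ is lower semicontinuous on $\Quad(g,m)$. Given a base-point $\phi_0$, I will apply Proposition \ref{stay} --- in the slightly extended form needed to let the initial zero move with $\phi$, which is exactly the kind of variant recorded in Remark \ref{outy}(b) --- to deform each of the $t_{\phi_0}$ separating trajectories of $\phi_0$ into a separating trajectory of any nearby $\phi$, with the same limiting infinite critical point and with starting zero tracking the perturbation continuously. Shrinking the neighbourhood if necessary, these deformed trajectories remain pairwise distinct by continuity from their distinctness at $\phi_0$, giving $t_\phi \geq t_{\phi_0}$ throughout a neighbourhood and hence openness of each $B_p$.

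For density it suffices to show that $B_0$ is dense, since $B_0 \subset B_p$ for all $p$. Given $\phi \in \Quad(g,m)$, I will use the $S^1$-action together with the countability argument already exploited in the proof of Lemma \ref{opener}: the set of phases $\theta \in S^1$ for which $e^{-i\pi\theta}\phi$ admits a saddle trajectory is contained in the countable set $\{\arg Z_\phi(\alpha) : 0\neq \alpha \in \hs\}$, so picking a sequence $\theta_n \to 0$ in the cocountable complement produces a sequence $e^{-i\pi\theta_n}\phi \in B_0$ converging to $\phi$, as required.

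The main technical step is the lower semicontinuity of $t_\phi$; however, the substantive content is already packaged in the persistence results of Section \ref{perm}, and the only point requiring real care is the verification that deformations of distinct separating trajectories remain distinct, which is a routine continuity argument once one has arranged for the starting zeroes to vary continuously with $\phi$. Everything else reduces to bookkeeping with the conservation law $r_\phi + 2s_\phi + t_\phi = k$.
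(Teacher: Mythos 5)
Your proposal is correct and follows essentially the same route as the paper: rewrite $B_p$ as $\{t_\phi\geq k-p\}$, deduce openness from lower semicontinuity of $t_\phi$ via Proposition \ref{stay}, and get density from the countability of the set of phases $\arg Z_\phi(\alpha)$ at which saddle trajectories can occur (the paper phrases this as the complement of $B_0$ lying in a countable union of real hyperplanes in period coordinates, while you use the equivalent $S^1$-rotation argument from Lemma \ref{opener}). One minor simplification: in the local setup of Section \ref{perm} the critical points are already fixed on the underlying smooth surface, so the Remark \ref{outy}(b) variant allowing the starting zero to move is not actually needed here.
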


\begin{pf}
This is very similar to the proof of Lemma \ref{opener}. 
Since $B_p$ is the subset of differentials for which $t_\phi\geq k-p$, the statement that $B_p$ is open is equivalent to the condition that the function $t_\phi$ is lower semi-continuous. This follows from Proposition \ref{stay}. If a differential $\phi$ has a saddle trajectory $\gamma$ then $Z_\phi(\Hat{\gamma})\in \R$ by the definition of the hat-homology class.  In local period co-ordinates the complement of $B_0$ is therefore contained in a countable union of real hyperplanes.\end{pf}

Define  $F_p=B_p\setminus B_{p-1}$ for $p\geq 1$, and set $F_0=B_0$. There is a finite stratification
 \[\Quad(g,\m)=\bigsqcup_{p= 0}^{k}F_p\]
 by the locally-closed subsets $F_i$. The stratum $F_1$ is empty, and  differentials in $F_2$ have exactly one saddle trajectory.

 We call a GMN differential $\phi$ \emph{generic} if the periods of non-proportional  elements of the lattice $\hs$ define distinct rays in $\C$. More precisely, the condition is that   for all $\gamma_1,\gamma_2\in \hs$ there is an implication
\[\R\cdot  Z_\phi(\gamma_1) = \R\cdot  Z_\phi(\gamma_2) \implies \Z \gamma_1 = \Z \gamma_2.\]
It is easy to see that generic differentials are dense in $\Quad(g,m)$: in local period co-ordinates the complement of the set of such differentials is contained in a  countable union of real submanifolds cut out by relations of the form $Z_\phi(\gamma_1)/Z_\phi(\gamma_2)\in \R$.

We say that a differential $\phi$  is \emph{0-generic} if the sublattice
\[\{\gamma\in \hs : Z_\phi(\gamma)\in \R\}\subset \hs\]
has rank $\leq 1$. This implies in particular that all saddle trajectories for $\phi$ are hat-proportional. Clearly,  a differential $\phi$ is generic precisely if all elements of its $S^1$-orbit are 0-generic.


\subsection{Perturbing saddle trajectories}

Let $\phi_0$ be a GMN differential  on a Riemann surface $S$ defining a point \[\phi_0\in F_p\subset \Quad(g,m)\]
for some $p\geq 2$ (recall that $F_1$ is empty). Our aim in this section and the next is to show that  in period co-ordinates in a neighbourhood of $\phi_0$,  the closed subset $F_p\subset B_p$ is contained in  a real hyperplane.  
We begin by considering the case when $\phi_0$ has saddle trajectories lying in the boundary of a horizontal strip or half-plane.
 
\begin{prop}
\label{pe}
Suppose that $\phi_0\in F_p$ has a half-plane or horizontal strip with a boundary component containing precisely $s\geq 1$ saddle trajectories $\gamma_i$.
Let \[\alpha=\sum_{i=1}^s \Hat{\gamma_i}\in \Hat{H}(\phi_0)\] be the  sum of the corresponding hat-homology classes. Then there is an open  neighbourhood $\phi\in U\subset B_p$ such that
\[\phi\in U\cap F_p \implies  Z_\phi(\alpha)\in \R.\]
\end{prop}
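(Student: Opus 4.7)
The plan is to use Proposition \ref{stay} to make the separating trajectories of $\phi_0$ persist on a suitable neighbourhood $U$, and then exploit the constraint $\phi \in F_p$ to force each of the $\gamma_i$ to persist as a saddle trajectory of $\phi$. First I shrink $U$ around $\phi_0$ so that, by Proposition \ref{stay}, every separating trajectory of $\phi_0$ extends continuously to a family of separating trajectories of $\phi$ for $\phi \in U$. The combinatorial identity $r_\phi + 2 s_\phi + t_\phi = k$ (where $k = 3 \cdot |\Zer(\phi)|$ is constant on $\Quad(g,m)$) together with this persistence gives $t_\phi \geq t_{\phi_0} = k - p$, hence $r_\phi + 2 s_\phi \leq p$, so $U \subset B_p$ automatically.

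Now fix $\phi \in U \cap F_p$, so that $t_\phi = k - p$. The equality of separating trajectory counts forces that no new separating trajectory can have appeared in passing from $\phi_0$ to $\phi$; in particular, no saddle trajectory of $\phi_0$ can have broken into a pair of separating trajectories of $\phi$, since each such break would contribute $+2$ to $t_\phi$. The central claim is then that each $\gamma_i$ persists as a saddle trajectory of $\phi$. The only scenario allowed by the counting above in which $\gamma_i$ fails to persist is that its two horizontal half-edges become recurrent trajectories of $\phi$ contained in a newly formed spiral domain. I would exclude this by a local argument: the boundary of any spiral domain of $\phi$ consists of saddle trajectories of $\phi$, and for $U$ chosen small enough every saddle trajectory of $\phi$ is a continuous deformation of a saddle trajectory of $\phi_0$ (forming a genuinely new saddle trajectory would require the alignment of two previously separating half-edges, which is a non-generic coincidence). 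Hence the topological configurations of closed loops bounded by saddle trajectories of $\phi$ agree with those for $\phi_0$, and if the saddles of $\phi_0$ do not bound a spiral domain adjacent to $\gamma_i$, neither do those of $\phi$.

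Granted this claim, the orientation convention on hat-homology classes of saddle trajectories (cf.\ Section \ref{l}) gives $Z_\phi(\hat\gamma_i) \in \R_{>0}$ for each $i$, and summation yields
\[
Z_\phi(\alpha) \;=\; \sum_{i=1}^s Z_\phi(\hat\gamma_i) \;\in\; \R_{>0},
\]
which is the required conclusion.

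The main obstacle I anticipate is the rigorous exclusion of the spiral-domain scenario in the central claim, particularly in the case when $\phi_0$ itself already carries spiral domains (so $r_{\phi_0} > 0$). Handling this cleanly requires tracking not only the persistence of the separating trajectories via Proposition \ref{stay} but also the persistence of the boundaries of any pre-existing spiral domains and their interaction with the half-plane or horizontal strip $R$; the rest of the argument is combinatorial bookkeeping from the identity $r_\phi + 2 s_\phi + t_\phi = k$.
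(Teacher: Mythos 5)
Your proof has a genuine gap: the central claim, that each individual saddle trajectory $\gamma_i$ persists as a saddle trajectory of $\phi$ for every $\phi\in U\cap F_p$, is false, and it proves far too much. If it were true, every $Z_\phi(\Hat{\gamma}_i)$ would individually be real, so $U\cap F_p$ would lie in a real codimension-$s$ locus in period coordinates whenever the classes $\Hat{\gamma}_i$ are linearly independent. But $U\cap F_p$ can fill out the entire real hyperplane $\{Z_\phi(\alpha)\in\R\}$ (this is exactly the ``wall has no hole'' alternative in the proof of Proposition \ref{moremore}), and the remark following Proposition \ref{pe} records only the inequalities $\Im Z_\phi(\Hat{\gamma}_1+\dots+\Hat{\gamma}_j)\leq 0$, not equalities. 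What actually happens on the wall is that the chain of saddle trajectories in the boundary of the strip deforms to a chain of saddle connections of varying \emph{nonzero} phase: the interior vertices of the chain may drop below the horizontal, a half-edge at one zero may now terminate at a different zero (producing a saddle trajectory in a different hat-homology class, e.g.\ $\Hat{\gamma}_1+\Hat{\gamma}_2$), and so on. Your case analysis misses this: losing $\gamma_i$ does not force its half-edges to become recurrent, and your appeal to ``a non-generic coincidence'' to rule out new saddle trajectories is circular, since $F_p$ is precisely the non-generic locus on which such coincidences occur.

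The correct mechanism, and the one the paper uses, only invokes persistence of the \emph{separating} trajectories (Proposition \ref{stay}) plus persistence of the boundary chain as saddle connections (Proposition \ref{persist}). One integrates $\sqrt{\phi}$ over the deformed disc to realise it as a perturbed strip in $\C$, and sets $y_j=\Im Z_\phi(\Hat{\gamma}_1+\dots+\Hat{\gamma}_j)$, the heights of the successive boundary vertices. The constraint $\phi\in F_p$ says no half-edge that was non-separating for $\phi_0$ can become separating for $\phi$; geometrically, the first vertex with $y_j>0$ would emit a horizontal ray escaping to the pole on the left, and if $y_s<0$ the last vertex with $y_j=0$ (or the first vertex, if none) would emit one escaping to the right. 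This forces all $y_j\leq 0$ and $y_s=0$, i.e.\ $Z_\phi(\alpha)\in\R$, without any claim about the individual $\Hat{\gamma}_i$. Your opening reduction ($U\subset B_p$ via lower semicontinuity of $t_\phi$) is fine, but the heart of the argument needs to be replaced along these lines.
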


\begin{pf}
Considered as a subset of $S$, the half-plane or horizontal strip $h$ is an open disc whose boundary is a closed curve (not necessarily embedded) made  up of saddle trajectories and separating trajectories of $\phi_0$. By Propositions \ref{persist} and \ref{stay}, if $U$ is small enough, these trajectories deform continuously with the differential $\phi\in U$. The resulting deformed curve therefore also cuts out a disc in the surface $S$. 

Integrating $\sqrt{\phi}$ inside this region gives a conformal mapping into $\C$ which is a continuous perturbation of the horizontal strip or half-plane $h$.
\begin{figure}[ht]
\label{saddlechain}
\begin{center}
\includegraphics[scale=0.5]{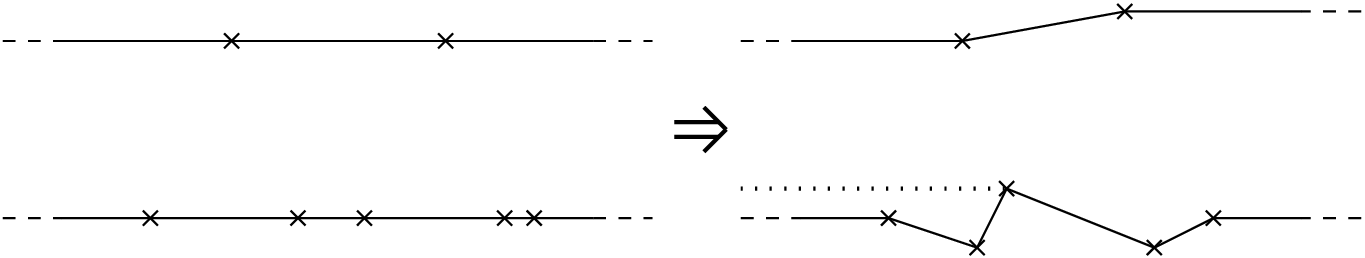}
\end{center}
\caption{Perturbing a horizontal strip\label{last}.}
\end{figure}
 The boundary of the image region in $\C$ consists of straight lines connecting the images of the critical points of the differential. If the region in question is a horizontal strip there are two boundary components; composing with the map $z\mapsto -z$  we may assume that the saddle trajectories $\gamma_i$ occur in the lower one.

Order the saddle trajectories from left to right (i.e. in anti-clockwise order around the boundary) and define real numbers \[y_i=\Im Z_\phi( \Hat{\gamma_1}+\dots +\Hat{\gamma}_i), \quad 1\leq i\leq s.\]
These numbers  give the height of the vertices of the boundary of the perturbed strip, relative to the first vertex.
In particular $y_s=\Im Z_\phi(\alpha)$. Note that  the class $\alpha$ is  definitely non-zero since $ Z_{\phi_0}(\Hat{\gamma}_j)\in \R_{>0}$. 

Suppose that $\phi\in U\cap F_p$.  This implies that if a horizontal arc emerging from a zero  forms part of a non-separating trajectory for $\phi_0$, then the same must be true for the corresponding  arc in $\phi$.  Working from the left, the first vertex with positive height $y_i$ has a ray escaping to the pole on the left, which previously formed a saddle trajectory (see Figure \ref{last}). Thus we must have  $y_i\leq 0$ for all $i$. Given this, if we also have   $y_s<0$ then the last vertex with $y_i=0$  has a ray escaping  to the pole on the right; if none of the vertices has  height $y_i=0$ then the very first vertex has such a ray. We conclude that we must also have $y_s=0$. 
 \end{pf}

Note that we actually proved more, namely that  if $\phi\in U\cap F_p$ then
$y_j\leq 0$ for  $1\leq j\leq s$.


\subsection{Saddle reduction}
\label{s}
As in the last section, let $\phi_0$ be a GMN differential  on a Riemann surface $S$ defining a point \[\phi_0\in F_p\subset \Quad(g,m)\]
for some $p\geq 2$. We shall call a saddle-connection \emph{borderline} if it lies in the boundary of a horizontal strip, half-plane or  degenerate ring-domain. 

The next result is analogous to Proposition \ref{pe} and deals with the case of saddle trajectories lying in the boundary of a degenerate ring-domain.

\begin{lemma}
\label{encircle}
Suppose that $\phi_0\in F_p$ contains a degenerate ring domain $A$ centered on a  double pole $p$. Then there is an open  neighbourhood $\phi_0\in U\subset B_p$ such that
\[\phi\in U\cap F_p \implies  Z_\phi(\beta_p)\in \R.\]
\end{lemma}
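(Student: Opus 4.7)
The plan is to reduce this to the mechanism of Proposition \ref{pe}. The key algebraic input is the identity
\[
\alpha \;:=\; \sum_{i=1}^{s}\Hat{\gamma}_i \;=\; \pm\,\beta_p \qquad \text{in } \hs,
\]
where $\gamma_1,\dots,\gamma_s$ are the saddle trajectories comprising the non-polar boundary of $A$, traversed in order. Indeed, their concatenation is a closed curve on $S$ which bounds $A$ on the side not containing $p$, and is therefore freely homotopic in $S^\op$ to a small loop $\delta_p$ encircling $p$. Since $p$ is an unramified point of the spectral cover, lifting both curves to $\Hat{S}^\op$ and taking the anti-invariant part yields classes equal up to sign.

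First I would shrink $U$ so that each $\gamma_i$ persists as a straight saddle arc $\gamma_i(\phi)$ for every $\phi\in U$ (Proposition \ref{persist} and Remark \ref{outy}(a)), and so that $p$ admits a common trapping disc $D\subset S$ for all $\phi\in U$ (Lemma \ref{trap} and the subsequent remark). Given $\phi\in U\cap F_p$, I would then show that $Z_\phi(\alpha)\in\R$, which together with the identity above gives $Z_\phi(\beta_p)\in\R$ as required. The strategy is to develop the universal cover of an annular neighbourhood of the former ring domain into $\C$ via $\int\sqrt\phi$. One fundamental domain of the boundary chain lifts to a broken polygonal line whose $i$-th vertex sits at height
\[
y_i \;=\; \Im Z_\phi\bigl(\Hat{\gamma}_1(\phi)+\cdots+\Hat{\gamma}_i(\phi)\bigr), \qquad 1\leq i\leq s,
\]
above the initial vertex, and the deck transformation translates this picture by $\pm Z_\phi(\alpha)$, so that $y_s=\pm\Im Z_\phi(\alpha)$. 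Running the same left-to-right book-keeping as in the proof of Proposition \ref{pe}, if $y_s\neq 0$ then some horizontal ray of $\phi$ at a zero on the old chain --- a ray which for $\phi_0$ formed part of the chain --- must escape the chain, and since it heads into the former ring-domain region, enters the trapping disc $D$ and becomes a separating trajectory asymptotic to $p$. Such a new separating trajectory forces $t_\phi>t_{\phi_0}$, hence $r_\phi+2s_\phi<r_{\phi_0}+2s_{\phi_0}=p$, contradicting $\phi\in F_p$. Therefore $y_s=0$.

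The main obstacle is the annular topology of a degenerate ring domain, in contrast to the simply connected strip or half-plane of Proposition \ref{pe}: one has to pass to the universal cover for the escape argument, and use the uniform trapping property of $D$ to verify that the escaping ray genuinely enters the trapping neighbourhood of $p$ rather than wandering back out through the chain. Everything else is a direct translation of the proof of Proposition \ref{pe}.
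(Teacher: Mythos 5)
Your overall skeleton (the identity $\sum_i\Hat{\gamma}_i=\pm\beta_p$, persistence of the boundary arcs, and the contradiction via a new separating trajectory falling into $p$) is sound, but the decisive step is not established, and it is exactly the step you flag as ``the main obstacle''. You need to show that the horizontal ray which escapes the perturbed chain actually tends to $p$. Your cited tool does not do this: Lemma \ref{trap} explicitly excludes double poles with real residue, and at the base point $\phi_0$ the pole $p$ has real residue (it sits in a degenerate ring domain), so neither the lemma nor Remark (b) following it (which propagates a trapping disc from a base point where the lemma applies) gives you a uniform trapping disc over $U$. What is true, from the local normal form $r\,dt^{\tensor 2}/t^2$, is that there is a fixed coordinate disc $V$ around $p$ such that for every $\phi\in U$ with $\res_p(\phi)\notin\R$ any trajectory entering $V$ falls into $p$; but even granting this, your developed picture only controls the escaping ray while it remains in the annular collar of the chain, and there is an uncontrolled intermediate region between that collar and $V$ through which the ray must pass. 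The ray is horizontal while the perturbed arcs $\gamma_i(\phi)$ are straight arcs of small nonzero phase, so nothing prevents it a priori from recrossing the chain; ruling this out would require, say, knowing that the perturbed chain is still the geodesic representative of $\delta_p$, which can fail (one of the two angles at a vertex is exactly $\pi$ for $\phi_0$ and can drop below $\pi$ after perturbation, cf.\ the discussion in the proof of Proposition \ref{jugprop}).

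The paper avoids this entirely by a factorization you should adopt: since the period map is a local isomorphism, every $\phi\in U$ is reached by first deforming $\phi_0$ within the real hypersurface $\{Z_\phi(\beta_p)\in\R\}$ --- along which $p$ remains enclosed in a degenerate ring domain --- and then applying a small rotation $e^{i\pi\theta}$. One is thus reduced to rotating an honest degenerate ring domain, where the foliation of the rotated differential inside the entire former ring domain is explicitly by logarithmic spirals (or radial arcs) falling into $p$; the ray leaving a boundary zero into that region therefore manifestly becomes a new separating trajectory, giving $t_\phi>t_{\phi_0}$ and hence $\phi\notin F_p$. With that reduction your bookkeeping on the $y_i$ and the class identity become unnecessary; without it, your argument as written has a genuine gap.
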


\begin{pf}
We can choose $U$ so that we can reach any point by first deforming $\phi_0$ maintaining the condition $Z_\phi(\beta_p)\in \R$, and then applying the $S^1$-action.
When  $Z_\phi(\beta_p)\in \R$  the pole $p$   still lies in  a degenerate ring domain. Thus it is enough to deal with rotations. The boundary of $A$ consists of a union of saddle trajectories.  To understand trajectories for the rotated differential it is equivalent to consider non-horizontal trajectories for $\phi$. It is clear that some of these will fall into the pole $p$.
\end{pf}

Consider the closed subsurface with boundary  $S_+\subset S$ which is the closure of the union of the horizontal strips, half-planes and degenerate ring domains. Consider also  the complementary closed subsurface $S_-\subset S$ which is the closure of the union of the spiral domains and non-degenerate ring domains.
It is easy to see that these two surfaces $S_\pm$ meet along a collection of simple closed curves made up of borderline  saddle-connections.

Note that all infinite critical points of $\phi_0$ are contained in the interior of $S_+$, and since a GMN differential has a non-empty collection of poles, and we are assuming that all $m_i\geq 2$, it follows that $S_+$ is  non-empty. 

\begin{prop}
\label{more}
Take $p\geq 2$ and fix a point $\phi_0\in F_p\subset \Quad(g,m)$. Then there is a neighbourhood $\phi_0\in U\subset B_p$ and   a  nonzero class $\alpha\in \hsinput{0}$ such that
\[\phi\in U\cap F_p \implies  Z_\phi(\alpha)\in \R.\]
\end{prop}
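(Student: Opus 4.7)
The plan is to reduce Proposition~\ref{more} to the two preceding results, Proposition~\ref{pe} and Lemma~\ref{encircle}, via a case analysis on the trajectory structure of $\phi_0$.

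First I would show that the hypothesis $p \geq 2$ forces $s_{\phi_0} \geq 1$. A recurrent trajectory can only exist in the closure of a spiral domain, whose boundary (by the classification recalled in Section~\ref{hor}) is a union of saddle trajectories. Hence $r_{\phi_0} \geq 1$ already implies $s_{\phi_0} \geq 1$, so $r_{\phi_0} + 2 s_{\phi_0} \geq 2$ guarantees at least one saddle trajectory in $\phi_0$.

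Next I would establish the following dichotomy: either $\phi_0$ contains a degenerate ring domain, or else some horizontal strip or half-plane of $\phi_0$ has a saddle trajectory on one of its boundary components. Assuming both fail, I would analyse the local structure at each zero $z$ of $\phi_0$. The three horizontal trajectories emanating from $z$ cut out three sectors; each sector lies in a single region, and its two bordering trajectories lie on the boundary of that region. Under the hypothesis, a sector lying in a strip or half-plane is bordered by two separating trajectories, whereas a sector lying in a non-degenerate ring domain or a spiral domain is bordered by two saddle trajectories. For this to be locally consistent at $z$, all three emanating trajectories must be of the same type: either all separating (a $+$-zero) or all saddles (a $-$-zero). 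No trajectory then joins a $+$-zero to a $-$-zero, so $S$ splits into disjoint closed subsurfaces $S = \overline{S_+} \sqcup \overline{S_-}$, where $\overline{S_+}$ comprises the strips, half-planes, separating trajectories, $+$-zeros and poles, and $\overline{S_-}$ comprises the non-degenerate ring and spiral domains, saddle trajectories and $-$-zeros. Since every $m_i \geq 2$ and degenerate ring domains are excluded by hypothesis, all poles lie in strips or half-planes, so $\overline{S_+}$ is nonempty; connectedness of $S$ then forces $\overline{S_-} = \emptyset$, giving $s_{\phi_0} = 0$ and contradicting the first step.

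Given the dichotomy, the proposition follows immediately. If $\phi_0$ has a degenerate ring domain around a pole $p$, take $\alpha = \beta_p$, nonzero by Lemma~\ref{mov}, and apply Lemma~\ref{encircle}. If instead some strip or half-plane has saddle trajectories $\gamma_1, \ldots, \gamma_s$ on one boundary component, take $\alpha = \sum_{i=1}^s \Hat{\gamma}_i$; each summand has period $Z_{\phi_0}(\Hat{\gamma}_i) \in \R_{>0}$, so $\alpha \neq 0$, and Proposition~\ref{pe} supplies the required neighbourhood. The hard part will be the topological dichotomy: it is the only new ingredient beyond the two preceding results, and relies on the sector-by-sector local analysis at zeros together with the global consequence that $S$ would decompose as a disjoint union of two closed subsurfaces, which is incompatible with connectedness once one knows that $\overline{S_+}$ is nonempty.
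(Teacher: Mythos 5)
Your argument is correct and follows essentially the same route as the paper: the paper's own proof also reduces to Proposition \ref{pe} and Lemma \ref{encircle} by exhibiting a \emph{borderline} saddle trajectory, using precisely the decomposition of $S$ into the closed subsurfaces $S_{\pm}$ (set up in the text immediately preceding the Proposition) that you reconstruct, in contrapositive form, via your classification of zeroes into $+$ and $-$ types and the connectedness of $S$. The only point to tighten is that a horizontal ray entering a spiral domain may be a divergent (recurrent) trajectory rather than a saddle trajectory, so a $-$-zero should be defined as one with no separating rays; this does not affect the rest of your argument.
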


\begin{pf}
 Since $p\geq 2$ there is at least one saddle trajectory for $\phi_0$. It follows that there must be at least one borderline saddle trajectory. Indeed, any saddle trajectory in $S_+$ is borderline, and if $S_-$ is non-empty then $S_-$ and $S_+$ are separated by borderline saddle trajectories. Combining Propositions \ref{pe} and \ref{encircle} therefore gives the result. 
\end{pf}

It follows that, shrinking $U$ if necessary, we can find
 a constant $r>0$  such that
\[e^{i\pi\theta}\cdot \phi \in B_{p-1}\text{ when }0<|\theta|<r\text{ and }\phi\in U\cap F_p.\]
Thus we can always move to a larger stratum  by small rotations of the differential.


\subsection{Ring-shrinking}
\label{rs}
The assumption that a point $\phi\in \Quad(g,m)$ is generic  gives no restriction on which stratum $F_p$ the differential $\phi$ lies in: although all saddle trajectories  are hat-proportional, $\phi$ could  well have a ring domain dividing the surface into two parts, one containing all the poles, and the other consisting of a spiral domain containing some large number of  recurrent trajectories. For this reason it will be important in what follows to use the construction of Section \ref{cp} to eliminate  ring-domains by shrinking them to a closed curve.

Recall that a ring domain is strongly non-degenerate if its boundary consists of two pairwise disjoint, simple, closed curves.   The \emph{width} of a non-degenerate ring domain is the minimal length of a path connecting the two boundary components.  The width is a strictly positive real number; by a ring domain of width zero we mean  any simple closed curve which is a union of saddle trajectories, and which is not a boundary component of a ring domain of strictly positive width. The following result, which will be used in Section \ref{sadred}, shows that any strongly non-degenerate ring domain may be shrunk to width zero. The result is illustrated in Figure \ref{Fig:ShrinkRingDomain}.

\begin{center}
\begin{figure}[ht]
\includegraphics[scale=0.5]{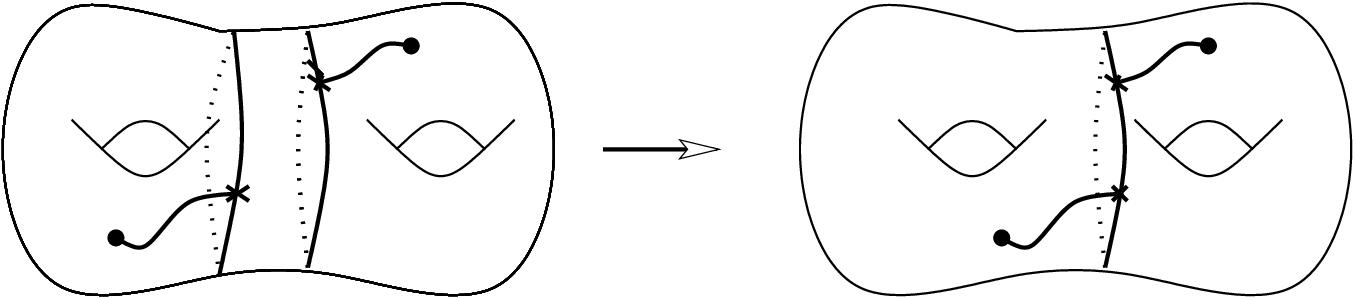}
\caption{Shrinking a ring domain to have width zero.\label{Fig:ShrinkRingDomain}}
\end{figure}
\end{center}

\begin{prop} \label{endofwallmove}
Suppose that a differential $(S_1,\phi_1)\in  \Quad(g,m)$ contains a strongly non-degenerate ring domain $A$ of width $w>0$. Then there is a continuous family  $(S_t,\phi_t)\in \Quad(g,m)$ parameterized by  $t\in [0,1]$, such that each surface  $S_t$ contains a ring domain $A_t$  of width $t.w$, strongly non-degenerate if $t>0$, 
and there are equivalences
\[(S_t\setminus \bar{A_t}, \phi_t|_{S_t\setminus \bar{A_t}})\isom (S \setminus \bar{A},\phi|_{S \setminus \bar{A}}).\]
\end{prop}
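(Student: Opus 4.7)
The natural approach is to use the gluing construction of Section \ref{cp} (Lemma \ref{Lem:GlueBorders}). Roughly, we excise $A$ from $S_1$, replace it with a narrower cylinder, and re-glue. The ingredients that make this possible are already in place: the boundary $\partial A$ consists of two disjoint simple closed curves built out of saddle trajectories, so $S_1 \setminus A$ has gluable boundary in the sense of Section \ref{cp}, and the  flat structure on $A$ itself is completely rigid once its width and circumference are fixed.

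First I would put $A$ in standard form. Since $A$ is equivalent to $\{a<|z|<b\}$ with differential $r\,dz^{\otimes 2}/z^2$, $r<0$, introducing distinguished coordinates $(u,v)$ with $u\in(0,w)$ and $v\in\R/c\Z$ identifies $A$ with the flat cylinder $\Sigma_w = (0,w)\times \R/c\Z$ equipped with the differential $-du^{\otimes 2}$ (say), whose horizontal trajectories are the circles $u=\text{const}$; here $c$ is the common length of all closed trajectories of $A$ (the circumference) and $w$ its width. For $t\in(0,1]$ let $\Sigma_{tw}=(0,tw)\times\R/c\Z$ with the same flat differential; each has two boundary circles of length $c$ and satisfies the gluable boundary condition trivially (away from the critical points, which here are absent on $\partial\Sigma_{tw}$).

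Next I would construct $S_t$ as follows. The subsurface $S_1^\circ := S_1\setminus A$ inherits from $\phi_1$ a quadratic differential with gluable boundary, its two boundary components being precisely $\partial A$ (each of length $c$). The original gluing recovering $S_1$ identifies $\partial\Sigma_w$ with $\partial S_1^\circ$ via certain isometries $\iota_0,\iota_1$ of the two boundary circles, and these identifications place the zeroes of $\phi_1$ sitting on $\partial A$ at the  prescribed points of the two circles $\{0,w\}\times\R/c\Z$. For each $t\in(0,1]$ use the \emph{same} boundary identifications (translating  $\{w\}\times\R/c\Z$  to $\{tw\}\times\R/c\Z$ by $u\mapsto tu$) to glue $\Sigma_{tw}$ to $S_1^\circ$; for $t=0$ directly identify the two boundary components of $S_1^\circ$ via the composition $\iota_1\circ \iota_0^{-1}$. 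Lemma \ref{Lem:GlueBorders} then produces for each $t$ a compact Riemann surface $S_t$ and meromorphic quadratic differential $\phi_t$; by construction $\phi_t$ has simple zeroes, the same poles and polar orders as $\phi_1$, so $(S_t,\phi_t)\in\Quad(g,m)$, and $A_t:=\Sigma_{tw}\subset S_t$ is a ring domain of width $tw$ with $S_t\setminus\bar{A_t}\cong S_1\setminus\bar{A}$. For $t>0$ the two boundary components of $A_t$ remain pairwise disjoint simple closed curves (their combinatorial structure is inherited from $\partial A$), so $A_t$ is strongly non-degenerate; for $t=0$ the image of the identified boundary is a simple closed curve made up of saddle trajectories, i.e.\ a ring domain of width zero in the sense of the statement.

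Finally I would verify the continuity of $t\mapsto (S_t,\phi_t)$ in the orbifold $\Quad(g,m)$. Fix a smooth model of $S_1$ and, using a collar neighbourhood of $\partial A$, a smooth family of diffeomorphisms $\S\to S_t$ that compresses the cylinder factor linearly in $t$; then both the complex structure (on the cylinder region) and the pulled-back differential depend continuously (indeed smoothly, and holomorphically in the natural complex parameter $tw$) on $t$, while outside the cylinder nothing changes. This is enough to exhibit a continuous path in $\Quad(g,m)$. The main obstacle is essentially this last continuity statement at $t=0$: one has to check that the limiting surface obtained by collapsing the cylinder really is the same as the one produced by the degenerate gluing, and that the period integrals vary continuously through the limit. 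Both facts follow from the explicit flat model of $A_t$ and a straightforward extremal-length/removable-singularity argument at the poles (exactly as in the proof of Proposition~\ref{ivans} in Section~\ref{glu}), so the construction goes through.
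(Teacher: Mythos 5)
Your construction is essentially the paper's: excise $A$, glue in a flat cylinder of width $tw$ with the same circumference via Lemma \ref{Lem:GlueBorders}, and at $t=0$ identify the two boundary components of $S_1\setminus A$ directly.

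There is one concrete point where your write-up would fail as stated. At $t=0$ you insist on the specific identification $\iota_1\circ\iota_0^{-1}$, and then assert that ``by construction $\phi_t$ has simple zeroes.'' For $t>0$ this is automatic, since the zeroes on the two boundary circles are separated by the cylinder; but at $t=0$ the prescribed identification may superimpose a zero of $\phi_1$ lying on one boundary component of $A$ onto a zero lying on the other, producing a double zero, so that $(S_0,\phi_0)\notin\Quad(g,m)$. The freedom to avoid this is already built into Lemma \ref{Lem:GlueBorders} (one may rotate one boundary circle relative to the other before gluing), and the paper explicitly invokes it: to ensure $\phi_0$ has simple zeroes one may need to take the rotation parameter to be nonzero. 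Replacing your fixed identifications by $\iota_1\circ R_\theta\circ\iota_0^{-1}$ for a suitable (generic) rotation $R_\theta$, used uniformly for all $t$, repairs this without affecting the continuity argument or the required equivalences on $S_t\setminus\bar{A_t}$.
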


\begin{proof}
The non-degenerate ring domain $A$ is equivalent to a region \[\{a<|z|<b\}\subset \C\text{ equipped with }\phi(z)=r\cdot  dz^{\tensor 2}/z^2\] for some $r\in \R_{<0}$. The only invariants are the width, which is $w=\log(b/a)$, and the  length of the two boundary components, which is $2\pi \sqrt{r}$.
For $t\in (0,1)$ we define $A_t$ to be the ring domain with the same length boundary components as $A$, but with width $t\cdot w$. 
We define the surface $S_t$ by glueing $A_t$ into $S\setminus A$ using Lemma \ref{Lem:GlueBorders}.  There is a choice of gluing, since one may rotate one boundary component relative to the other by an angle $\theta$.  The end-point surface $S_0$ is again constructed using Lemma \ref{Lem:GlueBorders}, by directly gluing the two components of $S\setminus A$. To ensure that the resulting differential $\phi_0$ has simple zeroes we may need to take the rotation parameter $\theta$ to be nonzero. 
\end{proof}


\subsection{Walls have ends}
\label{movewalls}

We have shown above that for $p\geq 2$ the stratum $F_p\subset \Quad(g,m)$  is contained in a real hyperplane  in local period co-ordinates. It can therefore be thought of as a wall, potentially dividing two different connected components of the open subset $B_{p-1}$. Now we want to go one step further and show that if $p>2$ then these walls   always have ends: we can move along the stratum $F_p$ to get to a point near which the subset $B_{p-1}$ is locally connected.

\begin{prop}
\label{moremore}
Assume that the polar type is not $m=(2)$ and take $p>2$. Then every connected component of $F_p\subset \Quad(g,m)$ contains a point $\phi$ with a neighbourhood $\phi\in U\subset B_p$, as in Proposition \ref{more}, such that $U\cap B_{p-1}$ is connected.
\end{prop}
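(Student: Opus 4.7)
The plan is to exhibit, in each connected component of $F_p$, a point $\phi_1$ at which, in local period coordinates, the stratum $F_p$ is contained in a real linear subspace of real codimension at least $2$. Since the complement of such a subspace in a complex manifold is connected, and since $U\cap B_{p-1}=U\setminus F_p$ whenever $U\subset B_p$, this will provide the required neighbourhood.

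By the persistence results of Propositions \ref{persist} and \ref{stay}, each saddle trajectory $\gamma$ of a differential $\phi\in F_p$ contributes a real-linear equation $\Im Z_\phi(\hat{\gamma})=0$ cutting out a real hyperplane in period coordinates that contains $F_p$ near $\phi$. Hence it suffices to find, within the given connected component of $F_p$, a point $\phi_1$ admitting a pair of saddle trajectories whose hat-classes are $\bZ$-linearly independent in $\hs$.

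Starting from $\phi_0\in F_p$, if such a pair already exists we are done. Otherwise every pair of saddles at $\phi_0$ is hat-proportional, and Lemma \ref{alport} forces the saddles into one of three configurations: (i) two hat-homologous closed saddles bounding a non-degenerate ring domain; (ii) a non-degenerate ring domain separating off a genus-one subsurface together with prescribed saddles on its boundary; or (iii) the torus-with-real-residue-double-pole case, which is forbidden by the hypothesis $m\neq(2)$. In cases (i) and (ii) the surface carries a strongly non-degenerate ring domain, and we apply Proposition \ref{endofwallmove} to shrink it to width zero through a continuous family in $\Quad(g,m)$. Tracking the saddle and recurrent trajectories through this surgery---the boundary saddles persist, and no spiral domain is touched---one verifies that the family lies in $F_p$, and that the terminal differential no longer contains the offending ring. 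Iterating at most finitely many times we obtain $\phi_1\in F_p$ in the same connected component of $F_p$ as $\phi_0$.

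The terminal $\phi_1$ admits no further ring domain of type (i) or (ii), so the only way for all its saddles to remain pairwise hat-proportional would be to fall into case (iii), which is impossible by hypothesis; thus some pair of saddles at $\phi_1$ has independent hat-classes, completing the argument. The principal technical obstacle is the bookkeeping required to track $r_\phi$ and $s_\phi$ through the ring-shrinking surgery of Proposition \ref{endofwallmove} and to confirm that the iterated procedure cannot introduce new hat-proportional obstructions outside the excluded cases; a related subtlety is ensuring that when $\phi_0$ has $r_\phi>0$, the saddles on the boundary of each spiral domain admit enough perturbation within $F_p$ to generate an additional independent class.
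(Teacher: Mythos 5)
Your overall route --- shrink ring domains via Proposition \ref{endofwallmove} until you reach a point of the same component of $F_p$ whose saddle trajectories carry non-proportional hat-homology classes, using Lemma \ref{alport} and the hypothesis $m\neq(2)$ to exclude case (iii) --- is the paper's route. But the step on which everything rests is false. You claim that each saddle trajectory $\gamma$ of $\phi\in F_p$ imposes the equation $\Im Z_{\phi'}(\hat{\gamma})=0$ on all nearby $\phi'\in F_p$, so that two independent saddle classes force $F_p$ into real codimension $2$. Propositions \ref{persist} and \ref{stay} do not give this: a saddle trajectory persists only as a saddle \emph{connection} of varying phase, and membership in $F_p$ fixes only the count $r_{\phi}+2s_{\phi}$, not which particular arcs remain horizontal. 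What Proposition \ref{pe} actually establishes is much weaker: for the saddles $\gamma_1,\dots,\gamma_s$ in one boundary component of a strip or half-plane, the constraint on $U\cap F_p$ is the single equation $\Im Z_\phi(\sum_i\hat{\gamma}_i)=0$ together with the \emph{inequalities} $\Im Z_\phi(\hat{\gamma}_1+\dots+\hat{\gamma}_j)\leq 0$; the individual classes are not forced to stay real, and the set of hat-classes realised by saddle trajectories is not locally constant on $F_p$. Consequently, at your terminal point the most you can extract is that $F_p$ lies in a codimension-one ``half-hyperplane'' with boundary --- which is exactly what the paper uses: after shrinking, it shows $U\cap F_p\subset\{y=0\}\cap\{z\leq 0\}$ with $y=\Im Z_\phi(\hat{\gamma}_1+\hat{\gamma}_2)$ and $z=\pm\Im Z_\phi(\hat{\gamma}_1)$, and the complement of such a half-wall in a ball is connected because $\hat{\gamma}_1,\hat{\gamma}_2$ are independent. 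That inequality analysis (the remark following Proposition \ref{pe}) is the missing ingredient; without it your proof has no valid conclusion at the terminal point.

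Two smaller omissions. First, the paper opens with a dichotomy you skip: either the inclusion $U\cap F_p\subset\{Z_\phi(\alpha)\in\R\}$ from Proposition \ref{more} is strict, in which case the wall already has a hole and one is done, or it is an equality, in which case one may replace $\phi_0$ by a nearby \emph{generic} differential in the same component of $F_p$; genericity is what guarantees that \emph{all} saddle trajectories are pairwise hat-proportional before Lemma \ref{alport} is invoked, and it is also needed later (e.g.\ to rule out case (iib)). Second, your case analysis presupposes at least two saddle trajectories; when there is exactly one saddle and $p>2$, Lemma \ref{alport} does not apply, and the argument instead forces a degenerate ring domain filling all of $S_+$ and hence polar type $m=(2)$, contradicting the hypothesis --- this is where that hypothesis enters a second time.
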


\begin{pf} Take $\phi_0\in F_p$ and a neighbourhood $U$ as in Proposition \ref{more}. Consider the inclusion \[U\cap F_p\subset \{\phi\in U: Z_\phi(\alpha)\in \R\}\]
If this inclusion is strict,  the wall has a hole in it, and then  $U\cap B_{p-1}$ is connected and we are done. Otherwise, these two subsets are equal, and so staying in the same connected component of $F_p$ we can replace $\phi_0$ with a very close generic differential $\phi_1$.

Suppose  that $\phi_1$ has only one saddle trajectory $\gamma$.  Then, since $p>2$, there must exist recurrent trajectories. The surfaces $S_\pm$ introduced in Section \ref{s} are thus both non-empty, and must meet along $\gamma$. Then $\gamma$ is closed and forms one boundary component of a ring domain $A$, which  has to be degenerate, since there are no saddle trajectories to form its other boundary. Thus we conclude that $S_+=A$, and since all poles of $\phi_1$ lie in $S_+$ it follows that $\phi_1$ has a single  pole $p$ of order 2, and that $\alpha$ is the corresponding residue class $\beta_p$.

Suppose then  that $\phi_1$ has more than one saddle trajectory. By the genericity assumption these are all hat-proportional, so they are arranged as in one of the cases of Lemma \ref{alport}. It follows that there are  two possibilities,   corresponding to cases (i) and (iia) of Lemma \ref{alport}: case (iii) is ruled out by the  assumption on the polar type, and case (iib) cannot occur for a 0-generic differential, since not all saddle trajectories appearing are hat-proportional. In particular we see that $\phi_1$ has a unique ring domain $A$, which is strongly non-degenerate, and  whose  boundary consists of either 2 or 3 saddle trajectories. 

By Proposition \ref{endofwallmove}, we can move along a path in $\Quad(g,m)$ in which $A$ shrinks so as to have width 0, but the rest of the differential remains unchanged. It is clear that this path remains in the stratum $F_p$ since any separating trajectory lies outside  $A$ and hence is unaffected by the shrinking process.  At the end of  this operation we arrive at a differential $\phi_2$ with no closed trajectories and either 2 or 3 saddle trajectories $\gamma_i$, which together form a simple closed curve $\gamma$.

 We claim that all the saddle trajectories $\gamma_i$ are borderline. Indeed, if the surface $S_-$ of Section \ref{s}  is empty then all saddle trajectories are borderline, and otherwise the two surfaces $S_\pm$ are separated by a simple closed curve made up of saddle trajectories, which must be $\gamma$. Examining the configuration of trajectories near $\gamma$ in the two cases it is easy to see that exactly two of the $\gamma_i$ must lie in the boundary of
  a single horizontal strip or half-plane $h$.  Proposition \ref{pe}, and the remark following it, shows that there is a neighbourhood $\phi_2\in U\subset B_p$ such that, with appropriate ordering of the $\gamma_i$,
\[\phi\in U\cap F_p \implies y=\Im Z_\phi(\Hat{\gamma}_1+\Hat{\gamma}_2)=0\text{ and } z=\pm \Im Z_\phi(\Hat{\gamma}_1)\leq 0.\]
 Lemma \ref{alport} shows that the saddle trajectories $\gamma_1,\gamma_2$ are not hat-proportional, so the variables $y$ and $z$ form part of a co-ordinate system near $\phi_2$. It follows  that  $U\setminus F_p$ is locally connected near $\phi_2$. \end{pf}


\subsection{Homotopies to tame paths}

In the proof of our main Theorems we shall  need the following consequence of Proposition \ref{moremore}.

\begin{prop} \label{TamePath}
Assume that the polar type  is not $m=(2)$. Then  any path $\beta$ in $\Quad(g,\m)$  connecting two points of  $B_2$ is homotopic relative to its end-points to a path in $B_2$. 
\end{prop}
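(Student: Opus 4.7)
My strategy is downward induction on the largest integer $k$ with $\beta\cap F_k\neq\emptyset$; when $k\le 2$ we already have $\beta\subset B_2$, so the task is, for $k>2$, to homotope $\beta$ rel endpoints within $B_k$ to a path contained in $B_{k-1}$.

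The first step is to put $\beta$ in general position. Since $B_k$ is open and contains the endpoints of $\beta$, small perturbations of $\beta$ rel endpoints remain in $B_k$. Propositions~\ref{pe}, \ref{encircle} and~\ref{more} imply that $F_k$ is locally contained in real affine hyperplanes in period co-ordinates, so a standard transversality argument produces a smooth perturbation of $\beta$ meeting $F_k$ in a finite set of generic crossings $\phi_1,\ldots,\phi_N$, each lying on the smooth real codim-$1$ locus of $F_k$ and each a transverse intersection point.

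Fix a crossing $\phi_a$ and let $C_a$ be the connected component of $F_k$ containing it. Proposition~\ref{moremore} furnishes a point $\phi_a'\in C_a$ with a neighbourhood $\phi_a'\in U_a'\subset B_k$ such that $U_a'\cap B_{k-1}$ is connected. I would choose a smooth path $\sigma_a\colon[0,1]\to C_a$ from $\phi_a$ to $\phi_a'$ running through smooth codim-$1$ points of $F_k$; along this path $F_k$ admits a transversal tubular neighbourhood $V_a\subset B_k$ foliated by short arcs each meeting $F_k$ in a single point. Sliding the crossing of $\beta$ through this foliation produces a homotopy rel endpoints in $B_k$, supported near $\phi_a$, after which $\beta$ meets $F_k$ at a single transverse point inside $U_a'$. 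Connectedness of $U_a'\cap B_{k-1}$ then allows a local detour through $U_a'\cap B_{k-1}$ that eliminates this crossing. Performing this at each $\phi_a$ in pairwise disjoint neighbourhoods produces the required path $\beta'\subset B_{k-1}$, completing the inductive step.

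The main obstacle I expect is making the sliding step precise: it requires that $F_k$ be a smooth real codim-$1$ submanifold of $B_k$ along the whole of $\sigma_a$, so that a genuine transversal tubular neighbourhood exists. This rests on the local isomorphism between $\Quad^\Gamma(g,m)$ and $\Hom_\Z(\Gamma,\C)$ given by the period map (Theorem~\ref{perper}), combined with the fact, implicit in Propositions~\ref{pe} and~\ref{encircle}, that $F_k$ is locally cut out by finitely many real-linear equations in period co-ordinates so its singular locus is of smaller dimension, which lets one route $\sigma_a$ through the smooth part. The hypothesis excluding the polar type $m=(2)$ enters through its use in Proposition~\ref{moremore}.
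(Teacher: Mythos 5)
Your overall strategy matches the paper's: induct downward through the strata, perturb $\beta$ to meet $F_p$ in finitely many points, use Proposition~\ref{moremore} to find a point $\psi$ in the same component of $F_p$ where $B_{p-1}$ is locally connected, transport the crossing there, and kill it. The gap is in the transport step. Your sliding argument needs $F_p$ to be a smooth real codimension-one submanifold along the connecting path $\sigma_a$, with a genuine tubular neighbourhood foliated by transversals each meeting $F_p$ exactly once. Propositions~\ref{pe}, \ref{encircle} and~\ref{more} do not give this: they show only that $U\cap F_p$ is \emph{contained} in the real hyperplane $\Im Z_\phi(\alpha)=0$, not that it is cut out by real-linear equations. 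The condition defining $F_p$ (the exact count of recurrent and saddle trajectories) is a global dynamical one; locally $F_p$ is just a closed subset of a hyperplane, possibly with holes and with no controlled singular structure, so there is no basis for the claim that its "singular locus is of smaller dimension" or for routing $\sigma_a$ through a smooth part. A transversal arc through a point of the hyperplane near $\sigma_a$ might miss $F_p$ entirely or meet it in a set that varies discontinuously, so the foliation argument does not produce a well-defined homotopy.

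The paper avoids this entirely with the $S^1$-action. Having built the path $\delta$ in $F_p$ from $\phi$ to $\psi$, it forms $\delta_\pm$ by rotating $\delta$ through small angles $\pm\theta$; the remark following Proposition~\ref{more} guarantees that $e^{\pm i\pi\theta}\cdot\phi'\in B_{p-1}$ for every $\phi'\in U\cap F_p$ and $0<\theta<r$, so $\delta_\pm$ lie entirely in $B_{p-1}$ on opposite sides of the wall, and the rectangle swept out by the rotations lies in $B_p$ and provides the homotopy rel endpoints replacing the crossing at $\phi$ by one at $\psi$. No regularity of $F_p$ is required. To repair your argument you should replace the tubular-neighbourhood foliation by these rotated paths; as written, the step does not go through.
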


\begin{pf}
Let us inductively assume that $\beta$ has been deformed so as to lie in $B_p$ for some $p>2$. By Propositon \ref{more} we can cover $\beta$ by open subsets in which $F_p\subset B_p$ is contained in a real hyperplane. We can  then wiggle it a little so that it meets $F_p$ at  a finite number of points  $\phi_i$. We now show how to deform $\beta$ so as to reduce the number $k$ of these points. Repeating the argument, we can  deform $\beta$ to lie in $B_{p-1}$. The result then follows by induction.

To eliminate a point $\phi=\phi_i$ we first  use Proposition \ref{moremore} to construct a path $\delta$ in $F_p$  connecting  $\phi$ to a point $\psi$ where $B_{p-1}$ is locally connected.  Consider paths $\delta_\pm$ obtained by small rotations of $\delta$ in  opposite directions. By  Propositon \ref{more} these can be assumed to  lie entirely in $B_{p-1}$. Inserting these paths into $\beta$ we obtain a homotopic path which crosses $F_p$ at the point $\psi$ instead of $\phi$. Since $B_{p-1}$ is locally connected near $\psi$ we can then deform $\beta$ further and so eliminate one of its intersections with $F_p$.
\end{pf}

The   assumption on the polar type in Propostions \ref{moremore} and \ref{TamePath} is necessary, as we explain in the following remark. 

\begin{remark}
\label{rus}Suppose that the polar type is  $m=(2)$ and consider the holomorphic function
\[Z_\phi(\beta_p)^2\colon \Quad(g,m)\to \C^*.\]
We claim  that this maps the subset $B_2$ into the complement of $\R_{>0}\subset \C^*$. Thus  for paths in $B_2$ the function $Z_\phi(\beta_p)^2$ does not wind around the origin. But by rotating a differential, it is easy to construct paths in $\Quad(g,m)$ for which this function does wind around the origin. Thus it follows that Proposition \ref{TamePath}, and hence also Proposition \ref{moremore}, are false in this case.

To prove the claim note that if $\phi\in B_2$ and  $Z_\phi(\beta_p)\in\R$ then the unique pole  $p$ is contained in a degenerate ring domain $A$. The boundary of  $A$ must then be a single closed saddle trajectory, and the third trajectory leaving the zero on the boundary cannot be a saddle trajectory, or a separating trajectory, or recurrent. This  gives a contradiction.
\end{remark}


\subsection{More on ring-shrinking}

We assume in this section that if $g=1$ then the polar type is not $m=(2)$. 
Suppose that  $\phi_+\in \Quad(g,m)$ is a 0-generic differential with more than one saddle trajectory.  As in the proof of Proposition \ref{moremore}, it follows that $\phi_+$  has a unique ring domain $A$, which is moreover strongly non-degenerate, and we can shrink  $A$ to obtain a  differential $\phi$ with a closed curve $\gamma$ formed of a union of either 2 or 3 non-closed saddle trajectories $\gamma_i$. 

Note that the $\gamma_i$ are the only saddle trajectories for $\phi$. 
Let us  write $\alpha_i=\Hat{\gamma}_i\in \hs$.
Examining Figure \ref{Fig:Hatprop}, it is easily seen that we can order the  $\gamma_i$ so that the
complete set of finite-length  trajectories for $\phi_+$ in the two cases  is as follows:
\begin{itemize}
\item[(J1)] a single ring domain $A$ of class $\alpha=\alpha_1+\alpha_2$ whose boundary components are closed saddle trajectories of the same class;
\smallskip

\item[(J2)] a single ring domain $A$ of class $\alpha=\alpha_1+\alpha_2+\alpha_3$, one of whose boundary components is a closed saddle trajectory of the same class, the other being a union of two non-closed saddle trajectories of equal classes $\alpha_1+\alpha_2$ and $\alpha_3$.
\end{itemize} 
The labelling of the $\gamma_i$ is completely determined if we insist that \begin{equation}
\label{plus} \Im Z_{\phi_+}(\alpha_1)/Z_{\phi_+}(\alpha_2)>0,\end{equation}
and we shall always follow this convention. Note that  in the case (J2) there is a relation $\alpha_1+\alpha_2=\alpha_3$.

Proposition \ref{stay} implies that  for any differential sufficiently close to $\phi$ there are saddle connections deforming each of the saddle trajectories $\gamma_i$.
We shall need the following statement later.

\begin{prop}
\label{jugprop}
Given  a class $\beta\in \hs$,  there is a neighbourhood 
$\phi\in U\subset \Quad(g,m)$ with the following property: if $\phi_-\in U$ satisfies \begin{equation}
\label{minus}\Im Z_{\phi_-}(\alpha_1)/Z_{\phi_-}(\alpha_2)<0,\end{equation}
 and $\gamma$ is a saddle trajectory  for $\phi_-$ with hat-homology class $\beta$, then  $\gamma=\gamma_i$,  for some $i$, and hence $\beta=\alpha_i$.
\end{prop}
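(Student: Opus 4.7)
My strategy is to start from a hypothetical saddle trajectory $\gamma$ of $\phi_-$ of class $\beta$, transport $\gamma$ back to $\phi$ via Proposition~\ref{persist} to obtain a geodesic $\gamma'$ of $\phi$, and use a near-equality in the triangle inequality together with the wall-crossing condition~\eqref{minus} to force $\beta=\alpha_i$ and $\gamma=\gamma_i(\phi_-)$.

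First I shrink $U$ so that $|Z_{\phi_-}(\beta)|\leq L$ for a fixed $L$.  By Lemma~\ref{fini}, the saddle connections of $\phi$ of length $\leq L$ form a finite set; considering the two possible orientations of each, exactly $k$ of the resulting oriented arcs---namely the $\gamma_i$ traversed positively---have period in $\R_{>0}$, and all other oriented saddle connections have $\arg Z_\phi(\hat\sigma)$ bounded away from $0$ modulo $2\pi$ by some $\theta_0>0$.  I shrink $U$ further so that $|Z_\phi(\beta)-Z_{\phi_-}(\beta)|$ is small compared with $\theta_0$ and with the minimum length in the finite set.

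Now suppose $\gamma$ is a saddle trajectory of $\phi_-$ of class $\beta$, so $Z_{\phi_-}(\beta)\in\R_{>0}$.  Transporting $\gamma$ back to $\phi$ via Proposition~\ref{persist} produces a geodesic $\gamma'$ of $\phi$ of class $\beta$, which decomposes as a concatenation $\gamma' = \sigma_{j_1}\cdots\sigma_{j_m}$ of straight-arc components meeting at zeros, each oriented compatibly with the traversal inherited from $\gamma$.  Then $Z_\phi(\beta) = \sum_r Z_\phi(\hat\sigma_{j_r})$ while $\mathrm{length}(\gamma') = \sum_r |Z_\phi(\hat\sigma_{j_r})|$. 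Both the period and the length vary continuously with the differential, and at $\phi_-$ the straight arc $\gamma$ satisfies $\mathrm{length}(\gamma) = |Z_{\phi_-}(\beta)|$; consequently
\[
|Z_\phi(\beta)| \;\leq\; \sum_r |Z_\phi(\hat\sigma_{j_r})| \;=\; \mathrm{length}(\gamma')
\]
is a near equality, with difference tending to $0$ as $\phi_-\to\phi$.  Near equality in the triangle inequality forces all $\arg Z_\phi(\hat\sigma_{j_r})$ to cluster close to $\arg Z_\phi(\beta)$, which itself lies close to $0$. By the choice of $\theta_0$, each $\arg Z_\phi(\hat\sigma_{j_r})$ must then equal $0$ exactly, so each $\sigma_{j_r}$ is one of the positively-oriented $\gamma_{i_r}$ and $\beta = \alpha_{i_1}+\cdots+\alpha_{i_m}$.

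If $m=1$, then $\beta=\alpha_i$ and, by Remark~\ref{outy}(a) together with the uniqueness clause of Proposition~\ref{persist}, $\gamma$ coincides with the unique straight-arc deformation of $\gamma_i$ to $\phi_-$, giving the desired conclusion.  The main obstacle is the remaining case $m\geq 2$: a non-trivial concatenation $\gamma_{i_1}\cdots\gamma_{i_m}$ at $\phi$ would need to straighten into the single straight arc $\gamma$ at $\phi_-$. I would handle this by a local analysis at each interior simple zero $z$ of $\gamma'$. In the distinguished $w$-coordinate of Section~\ref{crit}, two of the three horizontal rays at $z$ are used by the concatenation; straightening at $\phi_-$ corresponds to $z$ moving, to first order, to one specific side of the chord joining its neighbouring endpoints, and this direction is prescribed by the infinitesimal variation of the periods $Z(\alpha_i)$ across the wall. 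Exactly as in the dichotomy between cases (J1) and (J2), condition~\eqref{plus} selects the direction on which the concatenation straightens---realising the boundary of the ring domain of $\phi_+$ as a closed saddle trajectory---while condition~\eqref{minus} selects the opposite direction, on which the concatenation remains genuinely broken at $z$. This contradicts the straightness of $\gamma$ at $\phi_-$ and rules out $m\geq 2$. This first-order local computation is the technical heart of the argument.
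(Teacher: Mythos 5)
Your overall strategy is sound and, at its core, the same as the paper's: both arguments reduce to showing (i) that any saddle connection of class $\beta$ for a nearby differential satisfying \eqref{minus} must come from a concatenation of the $\gamma_i$, and (ii) that a concatenation with $m\geq 2$ pieces cannot straighten into a single straight arc on the \eqref{minus} side of the wall. For step (i) your route is genuinely different in flavour: you transport $\gamma$ back to $\phi$ and use near-equality in the triangle inequality together with the finiteness (Lemma \ref{fini}) of short saddle connections and the fact that the $\gamma_i$ are the only horizontal ones, whereas the paper argues by contradiction with a sequence $\phi_n\to\phi$ and extracts, via Theorem \ref{finini}, a convergent subsequence of homotopic saddle connections whose limit has length equal to $|Z_\phi(\beta)|$ and is therefore a union of the $\gamma_i$. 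Both work; yours is more quantitative, the paper's avoids having to fix the constant $\theta_0$.

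The genuine gap is in step (ii), which you explicitly defer as ``the technical heart'': you never actually prove that under \eqref{minus} the concatenations $\gamma_i\cup\gamma_{i+1}$ remain broken. Note that this case is not vacuous --- in case (J2) one has $\alpha_3=\alpha_1+\alpha_2$, so a class such as $\beta=\alpha_3$ really is represented both by $\gamma_3$ and by the concatenation $\gamma_1\cup\gamma_2$, and the whole content of the proposition for that class is that the latter does not straighten. Your plan of computing, to first order, which side of the chord the zero moves to would require carefully tracking orientations and signs at each zero, and you have not carried it out. The paper closes this step without any explicit first-order computation: the local model at a simple zero (Figure \ref{Fig:LocalNearZero}) shows that the union of two saddle connections meeting at a zero is a geodesic on exactly one side of the wall $\Im Z_\phi(\alpha_1)/Z_\phi(\alpha_2)=0$ (one of the two flat angles at the zero is exactly $\pi$ at the wall and crosses through $\pi$ as one crosses it); on the side \eqref{plus} the unions are manifestly \emph{not} geodesics, because the length-minimising representatives in those homotopy classes cross the open ring domain $A$ of $\phi_+$; hence on the side \eqref{minus} each union \emph{is} a geodesic, and by uniqueness of geodesic representatives (Theorem \ref{finini}(e), applicable since all $m_i\geq 2$) the unique geodesic in the homotopy class of $\gamma$ is the broken concatenation, contradicting the assumption that $\gamma$ is a straight arc. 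Adopting this global identification of the ``broken'' side would let you close your argument without the unexecuted local computation.
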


\begin{proof}
Consider first the abstract situation in which two saddle trajectories $\gamma_1,\gamma_2$ for a differential $\phi$ meet at a  zero $z$. Assume that the $\gamma_i$ have non-proportional hat-homology classes $\alpha_i$,  and consider differentials on either side of the wall \[\Im Z_\phi(\alpha_1)/Z_\phi(\alpha_2)=0.\]
 As above there are saddle connections deforming each  $\gamma_i$. Consider the union $\gamma_1\cup \gamma_2$ near the zero $z$. Local calculations (see Figure \ref{Fig:LocalNearZero}) show that on one side of the wall this path is a geodesic, whereas on the other side it is not, since there is a shorter path which bypasses the zero $z$.

Consider now the differential $\phi$ obtained by shrinking the ring domain in $\phi_+$. The walls \[\Im Z_\phi(\alpha_i)/Z_\phi(\alpha_j)=0\] for $i\neq j$ all coincide. On the side of this wall defined by \eqref{plus},  none of the unions $\gamma_i\cup \gamma_{i+1}$ is a geodesic, since  the shortest paths in these homotopy classes  cross the ring domain. It follows that on the side of the wall defined by \eqref{minus} each of these unions is  a geodesic.

Suppose for a contradiction that we can find a sequence of differentials $\phi_i$ satisfying \eqref{minus},  each with a saddle connection $C_i$ of class $\beta$, and which tend to $\phi$.  The length of the saddle connections $\ell_{\phi_i}(C_i)=|Z_{\phi_i}(\beta)|$ is bounded, so by Theorem \ref{finini}, passing to a subsequence we can assume that the $C_i$ are all homotopic, and converge to a curve $C$.

By continuity, we now have $\ell_\phi(C)= |Z_\phi(\beta)|$. This   implies that $C$  is a union of saddle trajectories for $\phi$, that  is, a  union of the $\gamma_i$.    But as we just argued, the unique $\phi_i$-geodesic representative in this  homotopy class \emph{is}  the corresponding union of $\gamma_i$, and hence can only be a saddle connection if it is one of the $\gamma_i$.
\end{proof}


\subsection{Juggles}
\label{jug}
We conclude this section with a few brief remarks about the relationship between the ring-shrinking operation of the last few sections and the notion of a `juggle' appearing in  Gaiotto-Moore-Neitzke's paper \cite{GMN2}. This material will not be used later and can be safely skipped.

Suppose that $\phi_+\in \Quad(g,m)$ has a non-degenerate ring domain $A$. The closed trajectories of $A$ have  a certain hat-homology class $\alpha\in \hs$.  Let $\delta\in \hs$ be the class of a  saddle connection in $A$ joining  zeroes of $\phi_+$  lying on different boundary components. Considering  lines of suitable rational slope in the universal cover of $A$ shows that for all $k\in \Z$ there are  saddle connections for $\phi_+$ with hat-homology class $\delta + k \alpha$. In particular, the spectrum $\Theta_{\phi_+} \subset S^1$ of phases $\theta$ for which $e^{i\pi\theta}\cdot \phi_+$ has a saddle trajectory has an accumulation point at $\theta=0$.

By taking differentials $e^{i\pi\theta}\cdot \phi_+$ with  $\theta$ varying near $0$ we can define a path  in $\Quad(g,m)$ with saddle-free endpoints  which crosses infinitely many of the real codimension one walls  that are the local connected components of $F_2$.  We refer to such a path as a juggle path. In Section \ref{wkb} we will associate ideal triangulations to saddle-free differentials; the  triangulations associated to the end-points of our path  will then be related by a particular kind of infinite composition of flips, referred to in \cite{GMN2} as a juggle. The  ring-shrinking move of Proposition \ref{moremore} has the effect of removing the accumulation point at $\theta=0$ in the spectrum $\Theta_\phi\subset S^1$. This allows us to replace certain juggle paths by paths which meet only finitely many walls.

\smallskip
\begin{figure}[ht]
\begin{center}
\includegraphics[scale=0.4]{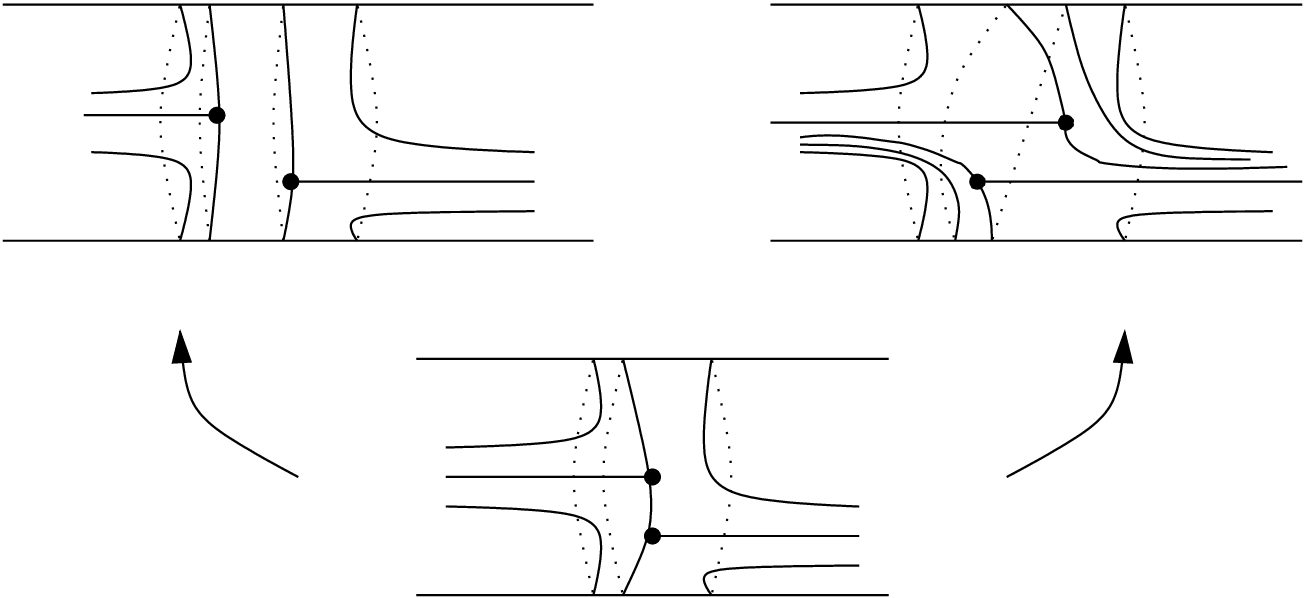}
\end{center}
\caption{Local perturbations of a differential with two saddle trajectories of equal phase; the left perturbation has a ring domain, the right  does not.\label{Fig:JuggleWall}}
\end{figure}

Let us consider  the case when the boundary components of $A$ are both closed saddle trajectories, and the differential $\phi_+$ contains no other finite-length trajectories.
After shrinking we obtain a differential $\phi$ with a closed curve made up of two saddle trajectories $\gamma_1,\gamma_2$ with hat-homology classes $\alpha_1,\alpha_2$ satisfying $\alpha=\alpha_1+\alpha_2$.  The trajectory structure of differentials near $\phi$ satisfying $\Im Z_\phi(\alpha)=0$ is determined by the wall
\[\Im Z_\phi(\gamma_1)/Z_\phi(\gamma_2)=0.\]
Differentials on the $\phi_+$ side of this wall  have a ring domain; differentials on the other side  are saddle-free.   The relevant geometry is illustrated in  Figure \ref{Fig:JuggleWall}.

The representation theory relevant to  juggles  is that of the Kronecker quiver  (see also Example \ref{eg2}).\begin{equation*}
\xymatrix@C=.4em{\bullet^1 \ar^{a_1}@/^/[rrrr] \ar_{a_2}@/_/[rrrr]&&&& \bullet^2   }
\end{equation*}
 Let $\A$ be the category of representations of this quiver, and let $S_1, S_2$ be the vertex simple objects, appropriately ordered.
 Stability conditions on $\A$ satisfying \[\Im Z(S_1)/Z(S_2)>0\] have unique stable objects of dimension vectors  $(n,n+1)$ and $(n+1,n)$  for all $n\geq 0$, and also a moduli space  of stable objects of dimension vector $(1,1)$ which is isomorphic to $\PP^1$. In particular, the set of phases of stable objects has an accumulation point.
 On the other hand, if \[\Im Z(S_1)/Z(S_2)<0\] then the only stable objects are the objects $S_i$ themselves. The operation of ring-shrinking is the analogue of moving from a stability condition with $\Im Z(S_1)/Z(S_2)>0$ to one where $\Im Z(S_1)/Z(S_2)=0$. This has the effect of removing the accumulation point in the spectrum of stable phases.


\section{Colliding zeroes and poles: the spaces $\Quad(\S,\M)$}
The spaces of quadratic differentials appearing in our main Theorems do not have fixed polar type; rather the zeroes are allowed to collide with the double poles. This means that we are dealing with spaces which are  unions of strata of the form $\Quad(g,\m)$. It is convenient to  label these spaces by diffeomorphism classes of marked bordered surfaces. For definitions concerning such surfaces see the Introduction or Section \ref{tri} below.

\subsection{Union of strata}
\label{union}

A GMN differential $\phi$ on a compact Riemann surface $S$ determines a marked bordered surface $(\S,\M)$ by the following construction.   To define the surface $\S$ we take the  underlying smooth surface of $S$ and perform  an oriented real blow-up  at each pole of $\phi$ of order $ >2$.  The marked points $\M$ are then the poles of $\phi$ of order $\leq 2$, considered as points of the interior of $\S$, together with the points on the boundary of $\S$ corresponding to the distinguished tangent directions of Section \ref{crit}.

By a quadratic differential on a marked bordered surface $(\S,\M)$ we mean a pair $(S,\phi)$, consisting of a compact Riemann surface $S$ and a GMN differential $\phi$,  whose associated marked bordered surface is diffeomorphic to $(\S,\M)$. We let $\Quad(\S,\M)$ denote the space of equivalence classes of such pairs. 

A marked bordered surface $(\S,\M)$ is determined up to diffeomorphism by the genus of $\S$, the number of punctures, and an unordered  collection of positive integers  encoding the number of marked points on each boundary component. In more concrete terms then, we have
\begin{equation}
\label{dec}
\Quad(\S,\M)=\bigcup_{(g,m)}\Quad(g,m),\end{equation}
where the union is over pairs $(g,m)$, where $g=g(\S)$ is the genus of $\S$,  and  there is one $m_i\in\{1,2\}$ for each puncture $p\in \bP$, and one $m_i=k_i+2$ for each boundary component containing $k_i$ marked points.

Let $(g,m)$ be the unique pair appearing in the decomposition \eqref{dec} for which all $m_i\geq 2$. In the proof of Proposition \ref{rain} we considered a vector bundle 
\begin{equation}
\label{paul}\mathcal{H}(g,m)\lra \Mod(g,d)/\Sym(m),\end{equation}
whose  fibre  over a marked curve $(S,(p_i))$  is the space of global sections of the line bundle $\omega_S^{\tensor 2}(\sum_i m_i p_i)$. The space $\Quad(\S,\M)$ is the open subset of $\mathcal{H}(g,m)$  consisting of  sections  with simple zeroes which are disjoint from the points $p_i$ for which $m_i> 2$. As in the proof of Proposition \ref{rain} it is therefore either empty, or a complex orbifold of dimension $n$.

Recall that a GMN differential is called \emph{complete} if it has no simple poles. 

\begin{lemma}
\label{new}
The subset of complete differentials is an open subset
\begin{equation}
\label{stratum}\Quad(\S,\M)_0=\Quad(g,m)\subset \Quad(\S,\M)\end{equation}
whose complement is a normal crossings divisor.
\end{lemma}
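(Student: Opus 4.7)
The plan is to realize the complement of $\Quad(\S,\M)_0$ inside $\Quad(\S,\M)$ as a transverse union of smooth hypersurfaces $D_p$, one per puncture $p \in \bP$, each cut out by the vanishing of a natural evaluation map extracting the leading order-$2$ coefficient at $p$.

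First I would work inside the ambient vector bundle $\mathcal{H}(g,m) \to \Mod(g,d)/\Sym(m)$ of \eqref{paul}, where $(g,m)$ is the polar type in \eqref{dec} with all $m_i \geq 2$; since $\Quad(\S,\M)$ is open in $\mathcal{H}(g,m)$, the question reduces to understanding how the pole order can drop at a puncture. Passing to the ordered cover $\Mod(g,d)$ to avoid the $\Sym(m)$-action, for each puncture $p$ with $m_p = 2$ there is a line bundle $L_p$ whose fibre at a marked curve $(S,(p_j))$ is the quotient
\[
L_p|_{(S,(p_j))} \;=\; \omega_S^{\tensor 2}(D)|_p \,\big/\, \omega_S^{\tensor 2}(D-p)|_p, \qquad D = \textstyle\sum_j m_j p_j,
\]
together with an evaluation-of-leading-term map of vector bundles $\mathrm{ev}_p\colon \mathcal{H}(g,m) \to L_p$. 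By construction $\mathrm{ev}_p(\phi) = 0$ if and only if $\phi$ has pole of order at most one at $p$, so $D_p := \mathrm{ev}_p^{-1}(0)$ is precisely the locus where the pole at $p$ drops; it descends to a well-defined closed subset of $\Quad(\S,\M)$.

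The key input is fibrewise surjectivity of $\mathrm{ev}_p$. This follows from the same Serre duality computation used in the proof of Proposition \ref{rain}: the short exact sequence
\[
0 \to \omega_S^{\tensor 2}(D-p) \to \omega_S^{\tensor 2}(D) \to L_p|_{(S,(p_j))} \to 0
\]
identifies the cokernel of $\mathrm{ev}_p$ at the given fibre with a subspace of $H^1(S,\omega_S^{\tensor 2}(D-p))$, which vanishes because $H^0(S,\omega_S(D-p)^\vee) = 0$ under our standing hypotheses. Surjectivity implies $D_p$ is a smooth divisor in $\mathcal{H}(g,m)$. Running the same argument with $D - \sum_{p \in T} p$ for an arbitrary non-empty $T \subseteq \bP$ shows that the combined evaluation $\bigoplus_{p \in T}\mathrm{ev}_p$ is also fibrewise surjective, so $\bigcap_{p \in T} D_p$ is smooth of codimension $|T|$ and the family $\{D_p\}_{p \in \bP}$ meets with normal crossings.

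Finally, a point of $\Quad(\S,\M)$ lies in $\Quad(\S,\M)_0$ precisely when it has a double pole at every puncture, i.e.\ lies in no $D_p$; thus $\Quad(\S,\M)\setminus\Quad(\S,\M)_0 = \bigcup_{p \in \bP}(D_p \cap \Quad(\S,\M))$, which is closed (yielding openness of $\Quad(\S,\M)_0$), and restricting transverse smooth divisors to an open subset preserves the normal-crossings condition. The main obstacle is the uniformity of the Serre duality vanishing: the borderline low-degree genus-zero cases in which $H^1(S, \omega_S^{\tensor 2}(D - \sum_{p\in T} p))$ could fail to vanish correspond precisely to the degenerate polar types already excluded by the GMN hypotheses (e.g.\ type $(2,2)$ in genus zero), so these issues do not arise in our setting.
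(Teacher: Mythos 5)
Your proof is correct and follows essentially the same route as the paper: both arguments cut out the incomplete locus by evaluation of the leading coefficient at each double pole and verify transversality via the same cohomological input ($H^1(S,\omega_S^{\tensor 2}(D-\sum_{p\in T}p))=0$ by Serre duality/Riemann--Roch, with the borderline genus-zero types already excluded by the GMN hypotheses). The only difference is packaging --- you phrase the submersion condition as fibrewise surjectivity of intrinsic evaluation maps $\mathrm{ev}_p\colon\mathcal{H}(g,m)\to L_p$, whereas the paper uses local trivialisations to produce holomorphic functions $r_p$ and exhibits a local section of $(r_p)_{p}$ --- which is a cosmetic distinction.
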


\begin{pf}
Locally on the universal curve over $\Mod(g,d)$ we can trivialise the line bundle $\omega_S^{\tensor 2}(\sum_i m_i p_i)$. Working locally on $\mathcal{H}(g,m)$ we can 
therefore associate to each point  an unordered collection of  complex numbers $\{r_p:p\in \PP\}$ obtained by evaluating the defining section $\phi$ at the marked points $p_i$ for which $m_i\leq 2$. The resulting locally-defined functions $r_p$ are holomorphic on $\mathcal{H}(g,m)$, and the complement of the open stratum  \eqref{stratum} is precisely the vanishing locus of the product of these functions.

Suppose that a point $\phi\in \Quad(\S,\M)$ has $s\geq 1$ simple poles. Then the locally-defined map \[r\colon \Quad(\S,\M)\to \C^s\]
  given by the  functions $r_p$ corresponding to the simple poles of $\phi$ is a submersion at $\phi$. Indeed, using Riemann-Roch,  for each simple pole $p$ of $\phi$ we can find sections of $\omega_S^{\tensor 2}(\sum_i m_i p_i)$ which vanish at all the other simple poles of $\phi$ but not at $p$. Adding linear combinations of such sections to $\phi$ shows that $r$ has a locally-defined section. It follows from this that the complement of the open stratum \eqref{stratum} is a normal crossings divisor.
\end{pf}


\subsection{Signed differentials}
\label{tofin}

Fix a marked bordered surface $(\S,\M)$. 
Although the hat-homology groups $\hs$  form a  local system over the orbifold $\Quad(\S,\M)_0,$ this is not true over the larger orbifold $\Quad(\S,\M)$, since by Lemma \ref{train}, at differentials where a  zero has collided with a double pole the rank of the hat-homology group  drops by one.

A stronger statement is that the local system of hat-homology groups   over $\Quad(\S,\M)_0$ cannot  be extended to a local system on $\Quad(\S,\M)$. The reason is that parallel transport around a differential with a simple pole at a point $p$ changes the sign of  the residue class $\beta_p$ (see the proof of Lemma \ref{se} below).

By a \emph{signed quadratic differential} on $(\S,\M)$ we mean a differential \[(S,\phi)\in \Quad(\S,\M)\] together with a choice of sign of the residue $\res_p(\phi)$ at each puncture $p\in\bP$.  Note that by \eqref{gotcha} this is equivalent to choosing a square-root of the function  $r_p$ of the last paragraph. The set of such signed differentials therefore forms  a smooth complex orbifold 
equipped with a finite map
\begin{equation*}\label{noel}\Quad^{\pm}(\S,\M)\to \Quad(\S,\M)\end{equation*} branched precisely over the complement of the incomplete locus.
We write $\Quad^\pm(\S,\M)_0$ for the open subset of $\Quad^\pm(\S,\M)$ consisting of signed differentials whose underlying differential is complete.

\begin{center}
\begin{figure}[ht]
\includegraphics[scale=0.3]{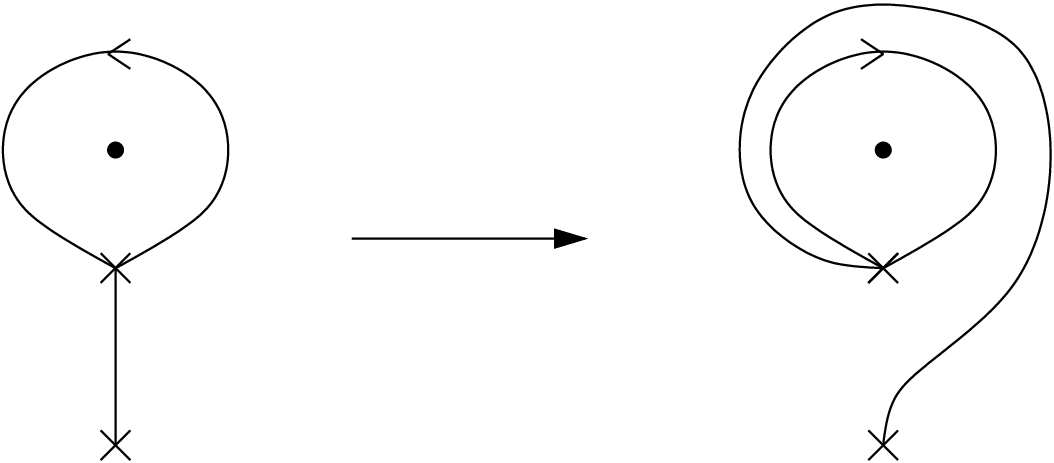}
\caption{The local monodromy as a zero encircles a double pole.\label{Fig:MonodromyOrder2}}
\end{figure}
\end{center}

\begin{lemma}
\label{se}
The local system of hat-homology groups  $\hs$ pulled back to the  {\'e}tale cover  $\Quad^\pm(\S,\M)_0\to \Quad(\S,\M)_0$ extends to a local system on $\Quad^\pm(\S,\M)$.
\end{lemma}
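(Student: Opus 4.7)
The strategy is to compute the local monodromy of the hat-homology local system around each irreducible component of the boundary divisor of $\Quad(\S,\M)_0\subset\Quad(\S,\M)$, show that it is an involution, and then use the fact that the cover $\Quad^\pm(\S,\M)\to\Quad(\S,\M)$ is ramified of index two over precisely this divisor, so that the pulled-back system has trivial monodromy and consequently extends.

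By Lemma \ref{new}, the boundary $\Quad(\S,\M)\setminus\Quad(\S,\M)_0$ is a normal crossings divisor whose irreducible components $D_p$ are indexed by the punctures $p\in\bP$ and cut out by the holomorphic functions $r_p$ introduced in the proof of that lemma. A generic point of $D_p$ corresponds to a differential $\phi$ with a simple pole at $p$ alone, so the monodromy around $D_p$ is computed by a one-parameter family in which a single zero $w(t)=\epsilon e^{2\pi it}$ orbits a fixed double pole at the origin of a local coordinate $z$. Writing $\phi_t = (z-w(t))\,g(z,t)/z^2\cdot dz^{\tensor 2}$ with $g$ holomorphic and $g(0,0)\neq 0$, the spectral cover is locally the Riemann surface of $\sqrt{(z-w(t))g(z,t)}/z$; its two sheets over $z=0$ are distinguished by the sign of $\sqrt{-w(t)g(0,t)}$, which flips as $t$ runs from $0$ to $1$. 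The geometric monodromy is therefore a half-twist on $\Hat S^\op$ exchanging the two preimages $\Hat p^\pm\in\pi^{-1}(p)$ and supported in a small bidisk containing them. On $\hs$ this half-twist acts by $\beta_p\mapsto-\beta_p$; its square is the Dehn twist around the boundary of the bidisk, a curve representing the invariant class $s_p+t_p$ from the proof of Lemma \ref{mov}, and this Dehn twist acts trivially on $\hs$ because the intersection pairing vanishes between invariant and anti-invariant classes. Hence the local monodromy around $D_p$ is a non-trivial involution.

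The cover $\pi\colon\Quad^\pm(\S,\M)\to\Quad(\S,\M)$ is obtained by extracting square roots of the functions $r_p$, and so near a generic point of $D_p$ it is simply branched of index two along the corresponding divisor component $D_p'=\pi^{-1}(D_p)_{\mathrm{red}}$. A small loop in $\Quad^\pm(\S,\M)_0$ around a generic point of $D_p'$ therefore projects to a loop winding twice around $D_p$, and its monodromy on the pulled-back local system is the square of the involution above and hence trivial. Since the boundary divisor of $\Quad^\pm(\S,\M)$ remains normal crossings, the standard extension criterion for local systems then yields the required extension to all of $\Quad^\pm(\S,\M)$. The principal obstacle is the local spectral-cover computation identifying the geometric monodromy as a half-twist with action $\beta_p\mapsto-\beta_p$; once this sign is pinned down, the rest follows formally from passing to the ramified double cover.
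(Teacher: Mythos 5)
Your proof is correct, and it has the same overall architecture as the paper's: compute the local monodromy around each branch of the incomplete divisor, show it has order two on $\hs$, and conclude that it dies on $\Quad^\pm(\S,\M)$, which is ramified to order exactly two along that divisor. Where you genuinely differ is in how the order-two claim is justified. The paper writes down the full Gauss--Manin transformation --- $\alpha_q\mapsto\alpha_q+\beta_p$ on the class of a saddle connection ending at the colliding zero $q$, together with $\beta_p\mapsto-\beta_p$ --- and observes that this matrix squares to the identity; the computation of the $\alpha_q$-component (Figure 27 and the accompanying footnote) is the only nontrivial content of that argument. You instead identify the geometric monodromy as a half-twist exchanging the two preimages $\hat{p}^{\pm}$ of the double pole and note that its square is a Dehn twist along a $\tau$-invariant (indeed separating) curve in the class $s_p+t_p$, which kills the anti-invariant homology for intersection-theoretic reasons; this bypasses the action on $\alpha_q$ altogether, which is a real simplification. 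Two remarks for precision. First, the identification of the monodromy with a half-twist supported in a small bidisk is legitimate only because the branch point $\hat{q}$ lying over the colliding zero is \emph{not} a puncture of $\hS^\op$, so isotopies may move it; with $\hat{q}$ marked, the monodromy is the lift of a point-push and is not supported away from $\hat{q}$. Second, this half-twist is nevertheless not the identity on classes such as $\alpha_q$: the lift of an arc ending at $q$ passes through $\hat{q}$ and separates $\hat{p}^+$ from $\hat{p}^-$ inside the preimage of a disc around the pole, hence meets every arc joining them, so the half-twist sends $\alpha_q$ to $\alpha_q\pm\beta_p$, in agreement with the paper's formula. Neither point affects your argument, since the triviality of the square uses only that the boundary of the bidisk is separating and invariant.
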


\begin{pf}
We must compute the monodromy of the hat-homology local system  around each component of the boundary divisor consisting of non-complete differentials. 
Consider a differential  $\phi_0$ lying on this divisor, having a single simple pole $p_0$. Nearby complete differentials $\phi$ will have a corresponding double pole $p$ and a simple zero $q$ which have collided to produce $p_0$.

The hat-homology group of $\phi$ is spanned by the hat-homology classes of saddle connections. Saddle connections of $\phi$ not ending at  $q$   correspond canonically to saddle connections of $\phi_0$ not ending at $p_0$,  and their hat-homology classes are therefore unaffected by the local monodromy around $\phi_0$.
Consider  the class  $\alpha_q$ of a  saddle connection ending at $q$, and let $\beta_p$ be  the residue class at $p$. 
The local monodromy of the Gauss-Manin connection\footnote{To check the sign change for the residue class $\beta_p$, it may be helpful to consider the family of differentials $(z-a) dz^{\otimes 2} / z^2$ of residue $4\pi i \sqrt{a}$, as $a$ encircles the origin.} acts on the classes $(\alpha_q,\beta_p)$ by the transformation  \begin{equation}
\label{finito}\alpha_q\mapsto \alpha_q+ \beta_p, \quad \beta_p\mapsto -\beta_p,\end{equation}
see Figure \ref{Fig:MonodromyOrder2}. 
This transformation  has order 2 and hence becomes trivial when pulled-back to the double cover determined by a choice of sign of $Z_\phi(\beta_p)$. \end{pf}

\subsection{Extended hat-homology group}
 Let us consider the quotient orbifold
\begin{equation}
\label{jim}\Quadorb(\S,\M)=\Quad^\pm(\S,\M)/\,\Z_2^{\oPP},\end{equation}
where $\Z_2^{\oPP}$  acts in the obvious way on the signings. Note that this quotient is to be understood in the category of spaces over $\Quad(\S,\M)$, since the  punctures $\PP$ form a non-trivial local system over this space. Practically speaking,  we can locally trivialise this local system $\PP$ on $\Quad(\S,\M)$, define local quotients by the group $\Z_2^{\oPP}$, and then glue these  together to form the global quotient \eqref{jim}.
Note that there is an open inclusion \[\Quad(\S,\M)_0\subset \Quadorb(\S,\M).\]
The only difference between the spaces $\Quad(\S,\M)$ and $\Quadorb(\S,\M)$ is some extra orbifolding along the incomplete locus.

The local system of Lemma \ref{se} decends to the orbifold $\Quadorb(\S,\M)$. The \emph{extended hat-homology group} $\hse$  of a GMN differential $\phi$ is defined to be the fibre of this  local system  at $\phi$.  This group   coincides with the usual hat-homology group $\hs$ precisely if $\phi$ is complete. In general $\hse$ comes equipped with a skew-symmetric pairing and canonically defined residue classes $\beta_p$, one for each simple or even order pole of $\phi$. This data is obtained by parallel transport from a nearby complete  differential.

\begin{lemma}
\label{rich}
For any GMN differential $\phi_0$ there is a canonical group homomorphism
\[q\colon \Hat{H}^e(\phi_0)\to \Hat{H}(\phi_0)\]
whose kernel is spanned over $\mathbb{Q}$ by the residue classes $\beta_p$ corresponding to the simple poles of $\phi_0$.
\end{lemma}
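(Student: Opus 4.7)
The plan is to build $q$ by comparing $\phi_0$ to a nearby complete differential $\phi$ using a smooth family of spectral covers, and to extract the kernel from a long exact sequence.

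Suppose $\phi_0$ has simple poles at $p_1, \dots, p_s$, fix a signing to lift it to a point $\tilde\phi_0\in\Quad^\pm(\S,\M)$, and choose a nearby complete signed differential $\tilde\phi$ (available by Lemma \ref{new}). Parallel transport in the extended hat-homology local system of Lemma \ref{se} along a short path from $\tilde\phi_0$ to $\tilde\phi$ gives a canonical identification $\Hat H^e(\phi_0)\isom \Hat H(\phi)$, and under this each extended residue class $\beta_{p_i}$ corresponds to the ordinary residue class $\beta_{p_i^\phi}$ of $\phi$ at the nearby double pole $p_i^\phi$ into which $p_i$ has smoothed.  To define $q$ geometrically, I observe that the spectral covers of the differentials in a neighbourhood $B\subset\Quad^\pm(\S,\M)$ of $\tilde\phi_0$ fit together into a family $\hat{\mathcal S}\to B$ whose total space is a smooth complex surface, since the defining equation $l\tensor l=\varphi$ from Section \ref{go} has everywhere non-vanishing differential whenever $\varphi$ has simple zeroes. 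The projection is a proper holomorphic submersion, hence a $C^\infty$ fibre bundle by Ehresmann's theorem, so a path from $\tilde\phi_0$ to $\tilde\phi$ yields a diffeomorphism $F\colon \Hat S_\phi\isom\Hat S_0$ intertwining the covering involutions, canonical up to isotopy.

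The key observation is that $F$ restricts to a diffeomorphism $\Hat S_\phi^\op\isom U$ onto an open subsurface $U\subset\Hat S_0^\op$.  Indeed, the preimages of infinite critical points of $\phi$ flow in the family either to preimages of infinite critical points of $\phi_0$ (for poles of $\phi$ persisting with order $\geq 2$ in $\phi_0$) or, for each simple pole $p_i$ of $\phi_0$, to a pair of points on $\Hat S_0$ that collides with the branch point $\hat p_i$ above $p_i$. Since simple poles are finite critical points, $\hat p_i\in\Hat S_0^\op$, so $U=\Hat S_0^\op\setminus\{\text{two points near each }\hat p_i\}$, and I define
\[
q\colon \Hat H^e(\phi_0)\isom \Hat H(\phi)=H_1(\Hat S_\phi^\op;\Z)^-\xrightarrow{\,F_*\,}H_1(U;\Z)^-\lra H_1(\Hat S_0^\op;\Z)^-=\Hat H(\phi_0),
\]
the final arrow being induced by the open inclusion. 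For the kernel I apply the long exact sequence of the pair $(\Hat S_0^\op, U)$; since $H_2(\Hat S_0^\op)=0$ and excision identifies $H_\bullet(\Hat S_0^\op, U)$ with $\bigoplus_{i=1}^s H_\bullet(D_i, D_i\setminus\{\text{2 pts}\})$, where $D_i$ is a small disc around $\hat p_i$, and each such pair has $H_1=0$ and $H_2\iso\Z^{\oplus 2}$, the sequence reduces to
\[
0\lra\bigoplus_{i=1}^s \Z^{\oplus 2}\lra H_1(U;\Z)\lra H_1(\Hat S_0^\op;\Z)\lra 0.
\]
The covering involution $\tau$ fixes each $\hat p_i$ and swaps the two nearby points, so it acts as the transposition on each $\Z^{\oplus 2}$. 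Taking $\tau$-antiinvariant parts over $\QQ$ (an exact functor since $\QQ[\tau]$ is semisimple) yields
\[
0\lra\bigoplus_{i=1}^s \QQ\cdot\beta_{p_i}\lra \Hat H^e(\phi_0)\tensor\QQ\xrightarrow{q\tensor\QQ}\Hat H(\phi_0)\tensor\QQ\lra 0,
\]
because the antiinvariant line in each $\Z^{\oplus 2}$ is generated by the difference of loops around the two nearby points, which corresponds under $F_*$ to $\beta_{p_i^\phi}\in \Hat H(\phi)$, and hence to $\beta_{p_i}\in \Hat H^e(\phi_0)$.

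The main subtlety is verifying canonicity: one must check that $q$ is independent of the signing, of the nearby complete $\tilde\phi$, and of the chosen path from $\tilde\phi_0$ to $\tilde\phi$. This reduces to the observation that all such choices can be made in a contractible neighbourhood of $\tilde\phi_0$ in $\Quad^\pm(\S,\M)$, on which both the local system of Lemma \ref{se} and the fibre bundle $\hat{\mathcal S}\to B$ are canonically trivialisable, so that the induced map on homology depends only on the homotopy class of data, which is unique.
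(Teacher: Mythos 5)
Your construction is essentially the paper's: the paper also works with the family of spectral covers over a neighbourhood of $\phi_0$, defines $q$ via the inclusion of $\hS^{\op}_\phi$ into the surface obtained by removing only the \emph{persistent} infinite critical points (your $U$, which the paper packages as a subsurface $\hS'\subset\hS_\phi$ whose anti-invariant $H_1$ forms a local system with fibre $\Hat{H}(\phi_0)$ at $\phi_0$), and identifies the kernel with the span of the loops around the filled-in punctures exactly as in Lemma \ref{mov}. Your Ehresmann-trivialization phrasing is a legitimate variant, provided the trivialization is chosen to carry the preimages of the persistent infinite critical points of $\phi$ onto those of $\phi_0$; the paper's formulation via $\hS'$ avoids having to say this.

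One step as written is wrong in a case the lemma covers: you assert $H_2(\hS_0^{\op};\Z)=0$, but if $\phi_0$ has \emph{only} simple poles (the finite-area case, which is a perfectly good GMN differential) then $\Crit_\infty(\phi_0)=\emptyset$, so $\hS_0^{\op}=\hS_0$ is compact and $H_2=\Z$. Your long exact sequence then reads $\Z\to\Z^{\oplus 2s}\to H_1(U)\to H_1(\hS_0)\to 0$, with the fundamental class mapping to the diagonal element $(1,1,\dots,1)$. The conclusion survives because this class is $\tau$-invariant and therefore dies when you pass to anti-invariants over $\QQ$, so the anti-invariant part of the kernel of $H_1(U)^-\to H_1(\hS_0^{\op})^-$ is still spanned by the differences $\beta_{p_i}$; but the sentence as stated needs this extra line. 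A smaller point: your closing canonicity argument (all choices lie in a contractible neighbourhood) does not by itself cover independence of the \emph{signing}, since changing the signing changes the lift $\tilde\phi_0$ rather than a choice made near a fixed lift; independence there comes from the fact that $\Hat{H}^e(\phi_0)$ is by definition the fibre of the descended local system on $\Quadorb(\S,\M)$, which is the same implicit appeal the paper makes.
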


\begin{pf}
Consider  the family of spectral covers $\hS\to S$ defined by differentials $\phi$ in    some small neighbourhood $\phi_0\in U \subset \Quadorb(\S,\M)$. These covers vary holomorphically because the divisor $E$ of formula \eqref{g} varies holomorphically, and $\varphi$ is a holomorphically varying  section.
Note however that the open surface $\hS^\op$ changes discontinuously in general, as it must, since the rank of the hat-homology group drops at differentials with simple poles.

More precisely, when a zero of the differential $\phi$ collides with a double pole $p$, the  infinite critical point $p$ becomes a finite critical point of $\phi_0$  which is moreover a branch-point of the corresponding spectral cover. Thus the two punctures in the surface $\hS$ lying over $p$ which are removed when defining $\hS^\op$   collide and get filled in.

Define a subsurface $\hS'\subset \hS$ by removing from $\hS$ the inverse images of those infinite critical points of $\phi$ which remain infinite critical points for the differential $\phi_0$. 
 The homology groups $H_1(\hS';\Z)^-$ form a local system over $U$ whose fibre at $\phi_0$ coincides with $H_1(\hS^\op;\Z)^-$.
 Over the complete locus, the inclusion $\hS^\op\subset \hS'$ defines a map of local systems
\[q\colon H_1(\hS^\op;\Z)^-\to H_1(\hS';\Z)^-.\]
The same analysis we used to prove  Lemma \ref{mov} shows that the kernel of $q$ is spanned over $\QQ$ by the residue classes $\beta_p$ corresponding to the simple poles of $\phi$. Specialising the map $q$ to the fibres at $\phi_0$ then gives the result.
\end{pf}


 \subsection{Blowing up simple poles}
\label{busp}

In this section we explain a surgery which, whilst not required in the proofs of the main theorems of the paper, helps explain the geometry as zeroes collide with double poles and one passes between different strata in $\Quad(\S,\M)$. 

The surgery involves `blowing up' a simple pole and inserting a metric cylinder (i.e. a disk with differential $r\,dz^{\otimes 2}/z^2$ for some $r\in \bR_{<0}$).
Although as topological surfaces the complement of the inserted cylinder differs from the original surface by a real blow-up, metrically the surfaces are related by slitting a finite length of trajectory and opening up the slit into a boundary component, as indicated in Figure \ref{Fig:LocalModelBlowUp}.   \begin{center}
\begin{figure}[ht]
\includegraphics[scale=0.5]{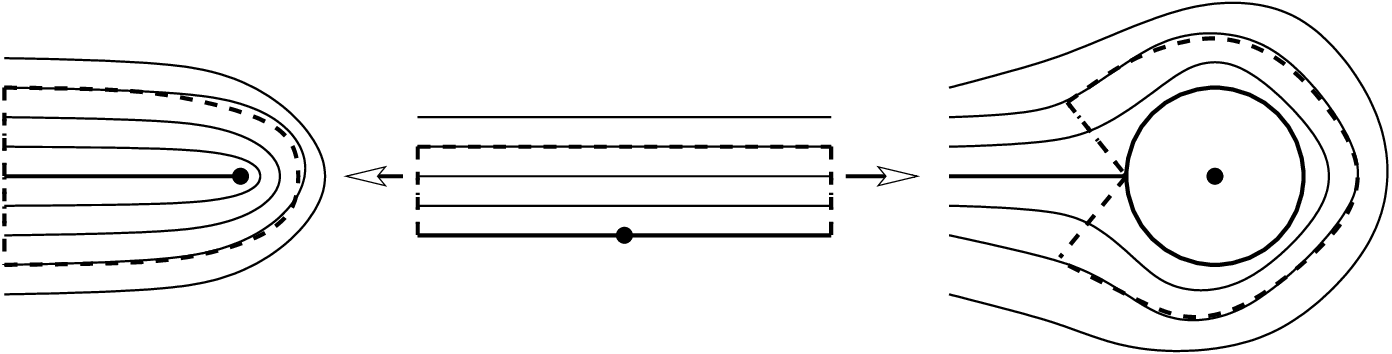}
\caption{Replacing a simple pole with a degenerate ring domain.\label{Fig:LocalModelBlowUp}}
\end{figure}
\end{center}

\begin{prop}
\label{iviv}
Let $\phi$ be a GMN differential on $S$ with $s$ simple poles $p_i$. Let $r_i\in \bR_{>0} $ be  sufficiently small.  Then there is a uniquely-defined complete GMN differential with double poles at the $p_i$, centred on degenerate ring domains with parameters $-r_{i}$, and equivalent on the complement of the closures of those ring domains  to $(S\backslash \cup_i \gamma_i, \phi)$, where $\gamma_i$ is the unique horizontal trajectory of $\phi$ of length $r_{i}$ with one end-point at $p_i$. \end{prop}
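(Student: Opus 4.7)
The surgery is confined to pairwise disjoint neighbourhoods of the simple poles, so it suffices to work with a single simple pole $p=p_i$ and the trajectory $\gamma=\gamma_i$ of $\phi$-length $r=r_i$. My plan is to cut $S$ along $\gamma$, verify that the resulting surface with boundary has a gluable boundary in the sense of Section \ref{cp}, and then glue in a degenerate ring domain using Lemma \ref{Lem:GlueBorders}. From the local normal form at a simple pole (Section \ref{crit}), the distinguished double-cover coordinate $w=\sqrt{t}$ makes $\phi$ pull back to the regular differential $dw^{\otimes 2}$ near $p$, with $p$ corresponding to the cover point $w=0$. For $r$ smaller than the $\phi$-distance from $p$ to the nearest other critical point, the trajectory $\gamma$ is well-defined and lifts to the straight horizontal segment $[-r,r]$ through $w=0$; its other endpoint $q$ is a regular point of $\phi$.

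Cutting $S$ along $\gamma$ produces a surface with boundary whose boundary is a single closed curve of $\phi$-length $2r$, as seen from the cut $[-r,r]$ in the $w$-cover with the two endpoints $\pm r$ identified to the single point $q$. This boundary has local model $\mathbb{V}_0$ at every point except $q$, where the two sides of the slit meet with interior angle $2\pi$ in the distinguished coordinate, matching the $\mathbb{V}_1$ model of a sector around a simple zero. The crucial geometric input here is that a simple pole carries a cone singularity of angle exactly $\pi$ in the $\phi$-metric: this is precisely what is needed for the two sides of the slit to straighten out smoothly at $p$ rather than meeting in a cusp. Hence the cut surface has a gluable boundary.

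Let $A$ be a disk carrying the degenerate ring-domain differential whose parameter is chosen so that $\partial A$ is a closed horizontal trajectory of length $2r$ (this is the quantity labelled $-r_i$ in the statement, under the relevant normalisation). Since $\partial A$ is smooth, $A$ has a $\mathbb{V}_0$-type gluable boundary. Applying Lemma \ref{Lem:GlueBorders} glues $A$ to the cut surface along their matching-length boundaries, producing a Riemann surface $S'$ with a meromorphic quadratic differential $\phi'$. By the equivalence \eqref{meme}, the $\mathbb{V}_1\cup\mathbb{V}_0$ identification at the image of $q$ creates a simple zero of $\phi'$ lying on the boundary of the inserted ring domain. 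Uniqueness of $(S',\phi')$ up to equivalence follows because the $S^1$-rotational freedom in Lemma \ref{Lem:GlueBorders} is absorbed by the rotational automorphism $z\mapsto e^{i\theta}z$ of the ring domain, which preserves the differential $r\,dz^{\otimes 2}/z^2$. The main technical obstacle is the local analysis in the previous paragraph: recognising that the absence of a corner at $p$ hinges precisely on the cone angle being $\pi$ at a simple pole, while the presence of the $\mathbb{V}_1$ vertex at $q$ correctly predicts the new simple zero on the boundary of the inserted ring domain.
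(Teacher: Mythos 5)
Your argument is correct and follows essentially the same route as the paper: slit $S$ along the length-$r_i$ horizontal trajectory emanating from each simple pole, observe that the resulting boundary is gluable, and insert a metric cylinder using Lemma \ref{Lem:GlueBorders}. The paper delegates the local verification to Figure \ref{Fig:LocalModelBlowUp}, whereas you make explicit the two cone-angle computations that justify it (angle $\pi$ at the simple pole, so the opened slit is a smooth $\mathbb{V}_0$ boundary point; angle $2\pi$ at the far endpoint $q$, giving the $\mathbb{V}_1$ model and hence the new simple zero after gluing), and you also supply the uniqueness argument via the rotational symmetry of the inserted cylinder, which the paper's proof leaves implicit.
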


\begin{proof}
If the $r_{i}$ are sufficiently small, the trajectory $\gamma_i$ is embedded in the surface and does not contain any critical points other than $p_i$. The existence part of the statement is then depicted in Figure \ref{Fig:LocalModelBlowUp}. The dashed rectangle has boundaries on the horizontal and vertical foliations for the relevant differentials; it lies in a co-ordinate chart in the central picture, and is mapped conformally in its interior to the two outer pictures, which define  surfaces with quadratic differentials which are equivalent in the component exterior to the rectangle's boundary arc. The existence of the differential on the surface on the right, obtained by gluing a cylinder and a surface with geodesic boundary containing a simple zero, is an application of Lemma \ref{Lem:GlueBorders}. 
\end{proof}

If the original differential has finite area, there are closed geodesics for a dense set of phases \cite[Theorem 25.2]{Strebel}; however, the only such geodesics which survive to the surgered surface are those which are disjoint from the length $r_i$ segments of the  trajectories emanating from the simple poles.   This is compatible with the fact that the spectrum of phases of  closed geodesics after surgery is closed in $S^1$. 

\begin{Remark}
\label{forw}
Let us use the notation $\Quad(g,(1^d))$ for the space of GMN differentials $(S,\phi)$ with $S$ of genus $g$, and $\phi$ having $d$ simple poles. Similarly let $\Quad_r(g,(2^d))$ denote the space of GMN differentials $(S,\phi)$ with $S$ of genus $g$ and $\phi$ having $d$ double poles, each of residue $\pm r$. The construction of Proposition \ref{iviv} gives an injective map
\[B\colon \Quad_r(g,(2^d)) \to \Quad(g,(1^d))\]
which moreover commutes with the locally-defined period maps on both sides. It is not hard to convince oneself that $B$ is in fact a local homeomorphism. This reduces the question of whether the period map is a local isomorphism to the case of differentials with at least one infinite critical point.
\end{Remark}

A closely related model is obtained by opening up a length $l$ segment of an irrational foliation on a torus $S = T^2$ to obtain a recurrent surface with one boundary component, the boundary made of two equal length saddle trajectories; one can then glue in a degenerate ring domain centred on a double pole as above to obtain another foliation on a closed torus.   The closed geodesics in the new (infinite area) surface correspond to the $(p,q)$-curves on $T^2$ which are disjoint from the original straight arc of length $l$. It is easy to check that only finitely many $(p,q)$-curves have this property, hence the surgery collapses the spectrum of closed trajectories from a dense subset of the circle to a finite subset.


\subsection{Extended period map}
\label{epm}

Let $(\S,\M)$ be a marked bordered surface,  and let \[\Quad(g,m)=\Quad(\S,\M)_0\subset \Quad(\S,\M)\]
 be the corresponding open stratum of complete differentials. Fix a free abelian group $\Gamma$ of rank $n$ given by \eqref{n}.
By an \emph{extended  framing} of a point $\phi\in \Quadorb(\S,\M)$ we mean an isomorphism of groups 
\[\theta\colon \Gamma\to  \hse.\]
Defining the space $\Quad^\Gamma(\S,\M)$ of extended framed differentials in the obvious way,  we obtain an unbranched  cover  \[\Quad^\Gamma(\S,\M)\to \Quadorb(\S,\M).\]
Over the locus $\Quad(\S,\M)_0$ of complete differentials, the resulting space coincides with the space $\Quad^\Gamma(g,m)$ considered before.

Lemma \ref{fatman} shows that the generic automorphism group of the orbifold $\Quad(\S,\M)$  is trivial except when $(\S,\M)$ is one of  
\begin{itemize}
\item[(i)] an unpunctured disc with 3 or 4 points on its boundary;\smallskip
\item[(ii)]  an annulus with one marked point on each boundary component;\smallskip
\item[(iii)] a closed torus with a single puncture;
\end{itemize}
corresponding to polar types $(5)$, $(6)$ and $(3,3)$ in genus $g=0$, and polar type $(2)$ in genus $g=1$. As explained before, in all these cases the orbifold $\Quad(\S,\M)$ also has a non-trivial generic automorphism group.

\begin{prop}
\label{reff}
Assume that $(\S,\M)$ is not one of the 4 exceptional surfaces listed above. Then the space $\Quad^\Gamma(\S,\M)$ is a complex manifold. The period map extends to a local isomorphism of complex manifolds
\[\pi\colon \Quad^\Gamma(\S,\M)\to \Hom_\Z(\Gamma,\C).\]
\end{prop}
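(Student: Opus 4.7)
Theorem~\ref{perper} already gives the conclusion over the open stratum $\Quad(\S,\M)_0$ of complete differentials. The task therefore is to extend the local isomorphism statement across the boundary divisor where simple poles appear. Since the framing cover $\Quad^\Gamma(\S,\M)\to\Quadorb(\S,\M)$ is unramified, it is enough to work locally on the signed cover $\Quad^\pm(\S,\M)$, on which the extended hat-homology local system $\hse$ is genuine (Lemma~\ref{se}), and to verify that an extended framing $\theta\colon\Gamma\to\hse$ turns the period map into a local biholomorphism near every point, including non-complete ones.

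\textbf{Residue coordinates near the boundary.} Fix $\phi_0\in\Quad^\pm(\S,\M)$ with simple poles $p_1,\dots,p_s$. By Lemma~\ref{new} there are holomorphic functions $r_{p_1},\dots,r_{p_s}$ on a neighbourhood of $\phi_0$ in $\Quad(\S,\M)$ whose joint vanishing locus is the normal-crossings boundary divisor and which form a submersion. On the signed cover the sign choice singles out a continuous branch of $\sqrt{r_{p_i}}$, so by \eqref{gotcha} the residue
\[
\zeta_i(\phi) := Z_\phi(\beta_{p_i}) = \pm 4\pi i\sqrt{r_{p_i}}
\]
is a well-defined holomorphic function on a neighbourhood of $\phi_0$ in $\Quad^\pm(\S,\M)$. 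The residue classes $\beta_{p_1},\dots,\beta_{p_s}\in\hse(\phi_0)$ are linearly independent, so using the framing $\theta$ I complete them to a $\Z$-basis $\beta_{p_1},\dots,\beta_{p_s},\gamma_{s+1},\dots,\gamma_n$ of $\hse(\phi_0)$ and parallel-transport this basis via the extended local system to obtain holomorphic period functions $Z_\phi(\gamma_j)$ for $j>s$ on the same neighbourhood.

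\textbf{The extended period is a local biholomorphism.} Consider the holomorphic map
\[
\Phi\colon \phi\longmapsto\bigl(\zeta_1(\phi),\dots,\zeta_s(\phi),Z_\phi(\gamma_{s+1}),\dots,Z_\phi(\gamma_n)\bigr)\in\C^n,
\]
which, up to composition with the linear isomorphism $\Hom_\Z(\Gamma,\C)\cong\C^n$ defined by the basis above, coincides with the extended period map. Away from the boundary divisor the map $\Phi$ is a local biholomorphism by Theorem~\ref{perper}, since there $\zeta_i=Z_\phi(\beta_{p_i})$ together with the $Z_\phi(\gamma_j)$ are the periods of a $\Z$-basis of $\hs$. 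At $\phi_0$ itself, restrict $\Phi$ to the stratum $\{\zeta_1=\dots=\zeta_s=0\}$: by the local product structure of the normal-crossings divisor this stratum is precisely the corresponding lower-rank component $\Quad(g',m')_{\text{complete}}$, and the restriction of $\Phi$ to it is the period map of that stratum paired against the images $q(\gamma_{s+1}),\dots,q(\gamma_n)$ under the quotient of Lemma~\ref{rich}. These images span $\hsinput{0}$, so by Theorem~\ref{perper} applied on the boundary stratum, the restriction of $\Phi$ is a local isomorphism onto $\{0\}\times\C^{n-s}$. Since the functions $\zeta_1,\dots,\zeta_s$ cut out the boundary components transversally, an application of the holomorphic inverse function theorem — decomposing the Jacobian of $\Phi$ into its block along the boundary stratum and the diagonal block $\partial\zeta_i/\partial\zeta_j=\delta_{ij}$ transverse to it — shows that $\Phi$ is a local biholomorphism at $\phi_0$. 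This simultaneously proves that $\Quad^\Gamma(\S,\M)$ is a complex manifold near every point: no non-trivial automorphism of an extended framed differential can fix the period, and under the standing exclusion of the four exceptional surfaces Lemma~\ref{fatman} guarantees that such non-trivial generic automorphisms do not occur either on the open stratum (by Theorem~\ref{perper}) or on any boundary stratum (whose polar type $(g',m')$ is again non-exceptional by inspection of the list in Lemma~\ref{fatman}).

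\textbf{Main obstacle.} The delicate point is the behaviour at a deepest point of the normal-crossings locus, where several simple poles coexist and the rank of $\hs$ drops by $s\geq 1$. Without the passage to the signed cover the residue $Z_\phi(\beta_{p_i})$ is only defined up to sign, so $\zeta_i$ is not a holomorphic function on $\Quad(\S,\M)$ itself; the whole point of the $\Z_2^{\oplus\bP}$-orbifolding in the definition of $\Quadorb(\S,\M)$ is to remember a square-root of $r_{p_i}$ and thereby produce a transverse coordinate. Verifying that the resulting $\zeta_i$ indeed serve as such coordinates, and that the tangential directions are controlled by the period map on the boundary stratum via Lemma~\ref{rich}, is the technical heart of the argument.
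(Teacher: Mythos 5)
Your route is essentially the paper's: pass to the signed cover where the residue $Z_\phi(\beta_{p_i})=\pm 4\pi i\sqrt{r_{p_i}}$ becomes a genuine holomorphic coordinate transverse to the incomplete divisor (Lemmas \ref{new} and \ref{se}), and combine the transversality of these residue coordinates with Theorem \ref{perper} applied stratum by stratum via the quotient map $q$ of Lemma \ref{rich}. Your block-triangular Jacobian is just an explicit rendering of the paper's observation that a tangent vector killed by $d\pi$ must be tangent to its stratum (since the residues cut out the strata) and then vanishes because the period map of the stratum is a local isomorphism.

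There is, however, a genuine gap in your final step. You claim that every boundary stratum of a non-excluded surface has polar type outside the exceptional list of Lemma \ref{fatman}, ``by inspection''. This is false: the $3$-punctured and $4$-punctured spheres are \emph{not} among the four surfaces excluded from the statement, yet their stratifications contain the polar types $(1,1,2)$ and $(1,1,1,1)$ respectively, both of which occur in Lemma \ref{fatman} and are explicitly excluded from Theorem \ref{perper}. On these strata your argument fails in two places: first, there \emph{do} exist non-trivial automorphisms of the underlying differentials acting trivially on $\hs$ (Examples \ref{spec}(a) and (c)), so the manifold claim cannot be deduced from Theorem \ref{perper}; second, you cannot invoke Theorem \ref{perper} to make the tangential block of your Jacobian invertible. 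The repair, which is exactly the content of the last paragraph of the paper's proof, is to observe that these generic automorphisms permute the simple poles and hence the residue classes $\beta_p$, so they act non-trivially on the extended group $\hse$ and do not survive to the framed space; and to check separately (by the same gluing argument as in Theorem \ref{perper}, or directly) that the period map on these two exceptional strata is nonetheless a local isomorphism --- the exclusion in Theorem \ref{perper} was forced only by the failure of the framed space over those strata to be a manifold, not by any failure of the period map.
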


\begin{pf}
Assume first that $(\S,\M)$ is not a sphere with 3 or 4 punctures.
Suppose that a point of $\Quad^\Gamma(\S,\M)$ has a non-trivial automorphism. This means that the underlying differential $\phi$ has a non-trivial automorphism which acts trivially on the extended hat-homology group $\hse$. It follows from Lemma \ref{rich} that this automorphism also acts trivially on the hat-homology group $\hs$. But we proved in Theorem \ref{perper} that  no such automorphisms exist. Thus $\Quad^\Gamma(\S,\M)$ is a manifold.

The extended period map $\pi$ is defined in the obvious way: the period of a differential $\phi$ defines a map $Z_\phi\colon \hs\to \C$ which  induces a group homomorphism $Z_\phi\colon \hse\to \C$ by composing with the map $q$ of Lemma \ref{rich}. To show that $\pi$ is a local isomorphism, suppose that a nonzero tangent vector $v$ to $\Quad^\Gamma(\S,\M)$ at some point $\phi$ lies in the kernel of the derivative of $\pi$. Then, since the strata of $\Quad(\S,\M)$ are determined by the vanishing of the periods $Z_\phi(\beta_p)$,  it follows that $v$ is tangent to the stratum containing $\phi$. But the period map is a local isomorphism on each stratum by Theorem \ref{perper}, so this gives a contradiction.

In the case when $(\S,\M)$ is a 3 or 4 punctured sphere, the above proof is incomplete, because  the  two polar types $(1,1,2)$ and $(1,1,1,1)$ in genus $g=0$ were excluded from Theorem \ref{perper}, since the corresponding spaces $\Quad(g,m)$ have a non-trivial generic automorphism group. In both of these cases  the generic automorphisms identified in Example \ref{spec} act non-trivially on the extended hat-homology group, since they permute the simple poles, and hence the corresponding residue classes $\beta_p$. The fact that the period map is a local isomorphism on the corresponding strata of  $\Quad(\S,\M)$ can be  proved exactly as in Theorem \ref{perper}, or just checked directly. 
\end{pf}



\subsection{Degenerations}
\label{deg}

We finish this first part of the paper with two technical results which will be used later in the proofs of our main Theorems. The first one 
 will allow us to extend our correspondence between differentials and stability conditions over the incomplete locus in $\Quad(\S,\M)$.

\begin{prop}
\label{leaving}
Take a framed  differential  $\phi_0\in  \Quad^\Gamma(\S,\M)$. Then for any $\epsilon>0$ there is a neighbourhood $\phi_0\in U\subset \Quad^\Gamma(\S,\M)$  such that for any differential $\phi\in U$, and any  class $\gamma\in \Gamma$ represented by a non-closed saddle connection in $\phi$, there is an inequality
\[|Z_{\phi}(\gamma)-Z_{\phi_0}(\gamma)|<\epsilon\,| Z_{\phi_0}(\gamma)|.\]
\end{prop}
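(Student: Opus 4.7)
The plan is to use the persistence of non-closed saddle connections (Proposition~\ref{persist} together with Remark~\ref{outy}(b)) combined with a direct comparison of the two period integrals.

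First I would shrink $U$ so that the family of Riemann surfaces admits a smooth trivialization of the kind constructed in Section~\ref{perm}, extended across the incomplete divisor by allowing a zero of $\phi$ that collides with a double pole to vary with $\phi$; so that the extended framing identifies $\hse \cong \Gamma$ consistently via Gauss--Manin transport; and so that the period map $\pi$ is injective on $U$ by Proposition~\ref{reff}. Given a non-closed saddle connection $\delta$ in $\phi$ representing the class $\gamma$, Remark~\ref{outy}(b) produces a continuous family of geodesics $\delta_t$, for $\phi_t$ varying along a path from $\phi$ to $\phi_0$, with limit $\delta_0$ a $\phi_0$-geodesic. Its endpoints are finite critical points of $\phi_0$: either zeros of $\phi_0$, or simple poles $p_0 \in \S$ arising from the collision of a zero of $\phi$ with a double pole of $\phi_0$. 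For $U$ sufficiently small, no zero of $\phi_0$ other than the endpoints of $\delta_0$ lies on $\delta_0$, so $\delta_0$ is in fact a straight arc---that is, a saddle connection of $\phi_0$---and hence $|Z_{\phi_0}(\gamma)| = \ell_{\phi_0}(\delta_0) > 0$.

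Second, since $\delta$ and $\delta_0$ are freely homotopic through the family $\delta_t$, their lifts to the spectral cover represent the same class in $\hse$, so
\[
\int_\delta \sqrt{\phi_0} \;=\; \int_{\delta_0} \sqrt{\phi_0} \;=\; Z_{\phi_0}(\gamma).
\]
Combining this with $Z_\phi(\gamma) = \int_\delta \sqrt{\phi}$ gives the key identity
\[
Z_\phi(\gamma) - Z_{\phi_0}(\gamma) \;=\; \int_\delta \bigl(\sqrt{\phi} - \sqrt{\phi_0}\bigr).
\]
A pointwise refinement of Lemma~\ref{Ry} then produces, for any $\epsilon' > 0$, a neighborhood $U$ on which $|\sqrt{\phi} - \sqrt{\phi_0}| < \epsilon' \, |\sqrt{\phi_0}|$ on nonzero tangent vectors to $\S$ away from the critical set, whence $|Z_\phi(\gamma) - Z_{\phi_0}(\gamma)| \leq \epsilon' \, \ell_{\phi_0}(\delta)$. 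Since $\delta_t \to \delta_0$ uniformly, we have $\ell_{\phi_0}(\delta) = (1 + o(1)) \, \ell_{\phi_0}(\delta_0) = (1 + o(1)) \, |Z_{\phi_0}(\gamma)|$ as $\phi \to \phi_0$, and choosing $\epsilon'$ sufficiently small compared to $\epsilon$ gives the required strict inequality.

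The main obstacle is making the pointwise bound $|\sqrt{\phi} - \sqrt{\phi_0}| < \epsilon' |\sqrt{\phi_0}|$ genuinely uniform on the integration domain, particularly near the incomplete divisor: when a zero of $\phi$ collides with a double pole of $\phi_0$ to form a simple pole $p_0$, the spectral cover undergoes a non-trivial change of topology in the limit. One should therefore work in local coordinates on a small ramified double cover of a neighborhood of $p_0$, in which both $\sqrt{\phi}$ and $\sqrt{\phi_0}$ extend holomorphically and can be directly compared in a single chart. A secondary subtlety is obtaining uniform control on $\delta_0$ being a straight arc in $\phi_0$, which amounts to a uniform lower bound on the distance between saddle connections of $\phi$ (for $\phi \in U$) and any zero of $\phi_0$ not lying at an endpoint. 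This in turn rests on the three-fold branching of trajectories at each zero described in Section~\ref{crit}, together with a compactness argument ruling out arbitrarily small detours of $\delta_0$ around such zeros.
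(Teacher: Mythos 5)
Your overall strategy---writing $Z_\phi(\gamma)-Z_{\phi_0}(\gamma)$ as $\int_\delta(\sqrt{\phi}-\sqrt{\phi_0})$ along the given saddle connection $\delta$ of $\phi$ and controlling the integrand by a pointwise refinement of Lemma \ref{Ry}---is exactly the paper's. But there are two problems, one of which is the crux of the matter.

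The essential gap is in your treatment of what you call the main obstacle. You correctly locate the difficulty (uniformity of $|\sqrt{\phi}-\sqrt{\phi_0}|<\epsilon'\,|\sqrt{\phi_0}|$ near the incomplete divisor), but your proposed repair does not work: on a double cover ramified at the simple pole $p_0$ of $\phi_0$, the form $\sqrt{\phi_0}$ does extend holomorphically, but $\sqrt{\phi}$ does \emph{not}---the nearby double pole $p(\phi)$ of $\phi$ is not a ramification point of that cover, so $\sqrt{\phi}$ still has poles over it, and the ratio $|\sqrt{\phi}|/|\sqrt{\phi_0}|$ genuinely blows up there. No choice of chart fixes this; the pointwise bound is simply false on a (shrinking) neighbourhood of $p(\phi)$. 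What saves the argument, and what the paper's proof turns on, is Lemma \ref{trap}: each infinite critical point has a trapping neighbourhood that no non-closed saddle connection can enter, so the closure $F$ of the set of pairs $(\phi,y)$ with $y$ on a non-closed saddle connection of $\phi$ avoids the infinite critical points, and the pointwise bound then holds on $F$ after shrinking $U$. Note this is also the only place the hypothesis ``non-closed'' enters; your argument never uses it, and the statement is false for closed saddle connections encircling a colliding double pole (there $Z_{\phi_0}(\gamma)=Z_{\phi_0}(\beta_p)=0$ while $Z_\phi(\gamma)\neq 0$), which is a sure sign something is missing.

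Secondly, the detour through the limiting geodesic $\delta_0$ is unnecessary and not quite correct. The $\phi_0$-geodesic limit of saddle connections of $\phi$ need not be a straight arc: when $\phi_0$ lies on a wall it is typically a concatenation of several saddle connections of $\phi_0$ meeting at a zero, in which case $\ell_{\phi_0}(\delta_0)>|Z_{\phi_0}(\gamma)|$ strictly and your final estimate $\ell_{\phi_0}(\delta)=(1+o(1))\,|Z_{\phi_0}(\gamma)|$ fails. The correct bookkeeping uses that $\delta$ is straight for $\phi$, so $\ell_\phi(\delta)=|Z_\phi(\gamma)|$ exactly; the pointwise bound gives $\ell_{\phi_0}(\delta)\leq(1-\epsilon')^{-1}|Z_\phi(\gamma)|$, and one bootstraps from $|Z_\phi(\gamma)-Z_{\phi_0}(\gamma)|\leq\tfrac{\epsilon'}{1-\epsilon'}|Z_\phi(\gamma)|$ to the stated inequality. (Your use of homotopy invariance of the period to identify $\int_\delta\sqrt{\phi_0}$ with $Z_{\phi_0}(\gamma)$ via the extended hat-homology local system is fine, and is implicitly needed in the paper's argument as well.)
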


\begin{pf}
We can assume that all differentials $\phi\in U$ are on a fixed underlying smooth surface $\S$, with finite critical points at fixed points $x_i\in \S$. However we must allow the double poles of $\phi$ to move, so that they can collide with the zeroes. We can assume that if $\phi$ has a simple pole at $x_i$ then so does $\phi_0$. 

Consider the subset of $ U\times \S$ consisting of pairs $(\phi,y)$ with $y\in \S$ lying on a non-closed saddle connection for $\phi$, and let $F$ be its closure. Then $F$ contains no points of the form $(\phi,p_j)$ with $p_j$ an infinite critical point  of $\phi$, because any such point is contained in a trapping neighbourhood containing no non-closed  saddle connections. Thus, shrinking $U$ if necessary, we have a bound
\[|\sqrt{\phi}-\sqrt{\phi_0}\,| <\epsilon\, |{\sqrt{\phi_0}}|\]
for all points of $F$.
Integrating this along a non-closed saddle connection  for $\phi$ gives the result.
\end{pf}

The next result is  a kind of completeness result for the space $\Quad(\S,\M)$. It will be used later to prove that the image of the   map we construct from differentials to stability conditions is closed.
We say that a saddle connection $\gamma$  for a GMN differential $\phi$ is  \emph{degenerate}  if it is closed, and is moreover freely homotopic in $S^\op$ to a small loop around a double pole of $\phi$.\footnote{ Note that a saddle connection $\gamma$ of phase $\theta$ is nothing but a saddle trajectory for the rotated differential $e^{-i\pi\theta}\cdot \phi$, and that $\gamma$ is degenerate precisely if this saddle trajectory is the boundary of a degenerate ring domain.}

 \begin{prop}
\label{mum}
Consider a sequence of framed, complete  differentials \[\phi_n\in \Quad^\Gamma(\S,\M)_0, \quad n\geq 1,\]   whose periods $Z_{\phi_n}\colon \Gamma \to \C$ converge. Suppose moreover that there is a universal constant $L>0$ such that any  non-degenerate saddle connection  for $\phi_n$ has length $\geq L$.
 Then some subsequence of the points $\phi_n$ converges to a limit in $\Quad^\Gamma(\S,\M)$.
\end{prop}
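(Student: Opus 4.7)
The plan is to reduce, via the horizontal strip decomposition, to a situation with fixed combinatorial data, and then to apply Proposition \ref{ivans} to translate convergence in $\Quad^\Gamma(\S,\M)$ into convergence of a finite list of periods in $\overline{\h}$.

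First I would use Lemma \ref{opener} to choose phases $\theta_n \in S^1$ with $\theta_n \to 0$ such that each rotated differential $e^{i\pi\theta_n} \cdot \phi_n$ is saddle-free; because rotation preserves the $\phi$-metric, the length bound $L$ on non-degenerate saddle connections persists, and it is enough to prove the conclusion for the rotated sequence. The horizontal strip decomposition of a saddle-free differential on $(\S,\M)$ has exactly $n$ strips by Lemma \ref{bassist}, together with a bounded number of half-planes, so there are only finitely many possible combinatorial types; after extraction one can assume the combinatorial type is constant, and a further extraction ensures that under the framings the standard saddle classes $\alpha_{h_i}$ correspond to a fixed basis $(e_i)$ of $\Gamma$. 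Proposition \ref{ivans} then identifies each $\phi_n$ with a tuple $(z_i^{(n)}) = (Z_{\phi_n}(\alpha_{h_i})) \in \h^n$, and the period convergence hypothesis becomes $z_i^{(n)} \to z_i^* \in \overline{\h}$ for each $i$.

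The core step is to analyze the limits $z_i^*$ using the identity $\ell_{\phi_n}(\ell_{h_i}) = |z_i^{(n)}|$. If every $z_i^* \in \h$, then Proposition \ref{ivans} directly reconstructs a limit in $\Quad^\Gamma(\S,\M)_0$. If $z_i^* \in \R \setminus \{0\}$, the two separating trajectories bordering strip $h_i$ coalesce in the limit into a saddle trajectory of length $|z_i^*|>0$. Finally, if $z_i^* = 0$ the standard saddle connection $\ell_{h_i}$ shrinks to zero length; because the length bound $L$ would otherwise be violated, $\ell_{h_i}$ must be a closed saddle connection freely homotopic to a small loop around a double pole, in which case the zero on $\partial h_i$ collides with that pole to produce a simple pole in the limit. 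In all three cases the pieces can be reassembled into a limit framed differential in $\Quad^\Gamma(\S,\M)$ via the gluing construction of Section \ref{glu}, together with the inverse of the blow-up surgery of Proposition \ref{iviv} in the third case.

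The main obstacle will be the case $z_i^* = 0$: I must check both that the resulting limit complex structure extends across a colliding zero--pole pair as a meromorphic differential with only a simple pole, and that the framing extends continuously past the complete locus. The first point follows by running the local model of Proposition \ref{iviv} in reverse and applying the extremal length argument of Section \ref{glu} to show the complex structure extends holomorphically with the correct singularity. For the second, $\Quad^\Gamma(\S,\M)$ is by construction an unbranched cover of $\Quadorb(\S,\M)$ over which the extended hat-homology forms a local system by Lemma \ref{se}; after passing to a further subsequence to fix consistent signs of the residues at the newly-created simple poles, the framings converge in this local system and determine a framing of the limit differential, completing the proof.
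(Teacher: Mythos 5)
Your reduction has a genuine gap at the extraction step. You claim that after passing to a subsequence the horizontal strip decomposition \emph{and} the correspondence between the strips and a basis of $\Gamma$ can be taken constant. The first half is fine (finitely many decompositions up to diffeomorphism), but the second is not: there are infinitely many bases of $\Gamma$, and the hypotheses do not force the standard saddle classes $\theta_n^{-1}(\alpha_{h_i})$ to range over a finite set. The juggle phenomenon of Section \ref{jug} is exactly the obstruction: if the $\phi_n$ accumulate at a differential with a non-degenerate ring domain of class $\alpha$, the saddle-free chambers near that point are separated by walls $\Im Z(\delta+k\alpha)=0$ accumulating at $\Im Z(\alpha)=0$, and rotations $e^{i\pi\theta_n}\cdot\phi_n$ with $\theta_n\to 0$ will generically land in chambers whose standard saddle classes include $\delta+k_n\alpha$ with $k_n\to\infty$. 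Your length bound $L$ does not rule this out, since those saddle connections have lengths $|Z(\delta)+kZ(\alpha)|\to\infty$. Without a fixed combinatorial frame, Proposition \ref{ivans} cannot be applied along the sequence and the rest of the argument does not get started. A secondary problem is that Proposition \ref{ivans} and the gluing of Section \ref{glu} are only established for $\Im(z_i)>0$; your cases $z_i^*\in\R\setminus\{0\}$ and $z_i^*=0$ require extending the gluing construction to strips of zero height and to the zero--pole collision, neither of which is covered by the results you cite (the map $\theta_{w,z}$ and the extremal-length bound \eqref{spring} degenerate as $\Im(z_i)\to 0$).

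For contrast, the paper's proof avoids all of this by importing compactness from elsewhere: the bundle $\mathcal{H}(g,m)$ extends over the Deligne--Mumford compactification (stable curves being Gorenstein), so after projectivising one extracts a limit $(S,\phi)$ there; convergence of the periods kills the rescaling, and the two hypotheses are then used purely negatively, to exclude the boundary: a node or a colliding cluster of zeroes produces either a hat-homology class whose period diverges (contradicting convergence) or a non-degenerate saddle connection whose length tends to zero (contradicting the bound $L$). If you want to salvage your strip-decomposition approach you would need an a priori bound on the classes of the standard saddle connections along the sequence, which is essentially a support-property statement and is not easier than the proposition itself.
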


\begin{pf}
 
 Using the fact that  stable curves are  Gorenstein it is easy to see  that the vector bundle  \eqref{paul}  extends to a bundle
  \[\bar{\mathcal{H}}(g,m)\lra \bar{\Mod}(g,d)/\Sym(m)\]  
 over the Deligne-Mumford compactification.
 The projectivisation of this bundle   is a compact space, and so, passing to a subsequence, and potentially rescaling the $\phi_n$, we can assume that the differentials $\phi_n$  have a limit $(S,\phi)\in \bar{\mathcal{H}}(g,m)$. The hypothesis that the  periods $Z_{\phi_n}$ converge then implies that the rescaling must have been unneccesary.

Note that our assumption implies that if $\gamma$ is a path in the surface $S_n$ which  either connects two finite critical points of $\phi_n$,  or is closed and not homotopic to a small loop around a double pole, then the length of $\gamma$ in the $\phi_n$-metric is at least $L$. Indeed, by Theorem \ref{finini}, the curve $\gamma$ has a minimal geodesic  in its homotopy class and this is a union of saddle connections.

Suppose that the limit curve $S$ has a node $p$, and that the limit section $\phi$ is non-vanishing at $p$. Note that  the induced quadratic differential on the normalization has a double pole at the inverse image of $p$. Consider a curve  connecting two zeroes on $S_n$ lying on opposite sides of the neck which shrinks to the node $p$. Then  as $n\to \infty$ the period of the corresponding hat-homology class diverges to infinity, which contradicts the fact that the periods $Z_{\phi_n}$ converge.

Suppose instead that $S$ is a  stable curve with a node $p$, and that the section $\phi$ vanishes at $p$. Then consider a closed curve $\gamma$ on $S_n$ encircling the neck, homotopic to the vanishing cycle. Either $\gamma$ is non-separating, or there is more than one marked point on each side of $\gamma$, so   $\gamma$ cannot be homotopic to a small loop around a double pole. Then as $n\to \infty$ the length in the $\phi_n$-metric of $\gamma$ tends to zero, which again gives a contradiction. 

Thus we conclude that the limit curve $S$ is non-singular. 
 Suppose that the limit differential $\phi$ is defined by a section of $\omega_S^{\otimes 2} (\sum_i m_ip_i)$ which has a zero of some order $k\geq 1$ at a point $p\in S$. This means that $k$ simple zeroes of the $\phi_n$ have collided in the limit. If $p=p_i$ is a marked point then we set $m=m_i$, and otherwise we set $m=0$. 
 
Suppose first that $k-m\leq-2$ so that $p$ is  an infinite critical point of $\phi$. Take a path $\gamma_n$ which connects two  zeroes $z_1,z_2$  of $\phi_n$, both of which tend to $p$. Then the period of the corresponding hat-homology class tends to infinity, contradicting the assumptions. If $k=1$ so that there is only one zero $z$ which tends to $p$,  then we must have $m\geq 3$, and we can take  $\gamma_n$ to be a small loop around $p$ based at $z$.

On the other hand, if $k-m>-2$ then $p$ is a finite critical point of $\phi$. If also $k\geq 2$, then  take a path $\gamma_n$ connecting two zeroes of $\phi_n$ which both tend to $p$. The length of this path will tend to zero as $n\to \infty$ which gives a contradiction as before.
 We conclude that  $\phi$ has simple zeroes distinct from the points $p_i$ of order $m_i>2$. This is precisely the condition that $\phi\in \Quad(\S,\M)$.
\end{pf}






\section{Quivers and stability conditions}
\label{quivstab}

This section consists of fairly well-known background material on t-structures and tilting, quivers with potential and their mutations, and  stability conditions. 

\subsection{Introduction}
Let $\D$ be a $k$-linear triangulated category of finite type. We denote the shift functor by $[1]$ and use the notation
\[\Hom^i_\D(A,B):= \Hom_\D(A,B[i]).\]
The finite type condition is the statement that  for all objects $A,B\in \D$ 
\[\dim_k \bigoplus_{i\in\Z} \Hom_\D^i(A,B) <\infty.\]
The Grothendieck group $K(\D)$ then carries 
the Euler bilinear form 
\[\chi(\blank,\blank)\colon K(\D)\times K(\D)\to \Z\]
 defined by the formula
\[\chi(E,F)=\sum_{i\in\Z} (-1)^i \dim_k \Hom_\D^i(E,F).\]
Beginning in Section \ref{catsurf} we shall focus on the particular properties of the categories $\D=\D(\S,\M)$ appearing in our main Theorems, but 
the present section consists of general theory, and the only properties of $\D$  that will be important are
\begin{itemize}
\item[(i)]$\D$ admits a bounded t-structure whose heart $\A\subset \D$ is of finite length and has a finite number $n$ of simple objects up to isomorphism; \smallskip

\item[(ii)] $\D$ is a \CY category, meaning that there are functorial isomorphisms
\[\Hom^i_\D(A,B)\isom \Hom_\D^{3-i}(B,A)^* \text{ for all objects } A,B\in \D.\]
\end{itemize}

Note that (i) implies that $K(\D)\isom\Z^{\oplus n}$ is free of finite rank, and (ii) implies that the Euler form is skew-symmetric.

The main point of this section is to expand on the following two statements:

\begin{itemize}
\item[(a)] Associated to any  triangulated category $\D$   there is a complex manifold $\Stab(\D)$ of dimension $n$ parameterizing certain structures on $\D$ known as stability conditions. When $\D$ satisfies condition (i), a large open  subset of $\Stab(\D)$ can be described as a union of cells, one for each  bounded t-structure with finite-length heart. The  way these cells are glued together along their boundaries  is controlled by an abstract operation called tilting.
\\
\item[(b)]
A large class of triangulated categories $\D$ satisfying both  conditions (i) and (ii) can be defined using   quivers with potential  via the Ginzburg algebra construction. The abstract tilting operation referred to in (a) can then be described concretely in terms of  mutations of quivers with potential.
\end{itemize}


\subsection{Hearts and tilting}
\label{out}

Let $\D$ be a triangulated category. We shall be concerned with bounded t-structures on $\D$. 
Any such t-structure  is determined by its heart $\A\subset \D$, which is a full abelian subcategory. We  use the term  \emph{heart} to mean the heart of a bounded t-structure.
A heart  will be called \emph{finite-length} if it is   artinian and noetherian as an abelian category.

We say that a pair of  hearts $(\A_1,\A_2)$ in $\D$ is a \emph{tilting pair}  if the equivalent conditions
\[\A_2\subset \langle \A_1,\A_1[-1] \rangle, \quad \A_1\subset \langle\A_2[1],\A_2\rangle\]
are satisfied.\footnote{The angular brackets here signify the \emph{extension-closure} operation: given full subcategories $\A,\B\subset \D$, the extension-closure $\CC=\langle \A,\B\rangle \subset \D$ is the smallest full subcategory of $\D$ containing both $\A$ and $\B$, and such that if $X\to Y\to Z\to X[1]$ is a triangle in $\D$ with $X,Z\in \CC$ then $Y\in \CC$ also.} We also say that $\A_1$ is a \emph{left tilt} of $\A_2$, and that $\A_2$ is a \emph{right tilt} of $\A_1$. Note that $(\A_1,\A_2)$ is  a tilting pair precisely if  so is $(\A_2[1],\A_1)$.

If $(\A_1,\A_2)$ is a tilting pair in $\D$, then  the subcategories \[\TT=\A_1\cap \A_2[1], \quad  \FF=\A_1\cap \A_2\] form a torsion pair $(\TT,\FF)\subset \A_1$. Conversely, if $(\TT,\FF)\subset \A_1$ is a torsion pair, then the subcategory $\A_2=\langle \FF, \TT[-1]\rangle$ is a heart, and the pair $(\A_1,\A_2)$ is a tilting pair.

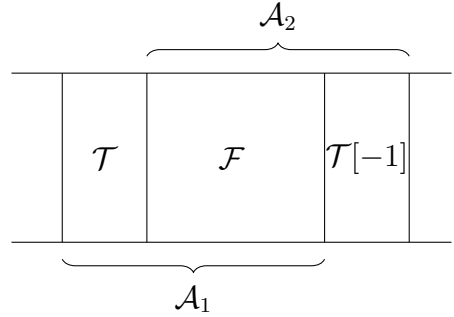
\begin{figure}
\begin{center}
\begin{tikzpicture}[scale=0.45]
\draw (0,4)--(13,4);
\draw (0,9)--(13,9);
\draw (1.5,4)--(1.5,9);
\draw (4,4)--(4,9);
\draw (9.25,4)--(9.25,9);
\draw (11.75,4)--(11.75,9);
\draw (2.75,6.5) node {$\TT$};
\draw (6.5,6.5) node {$\FF$};
\draw (10.5,6.5) node {$\TT[-1]$};
\draw [decorate, decoration={brace,amplitude=5pt}] (4,9.5)--(11.75,9.5)
node [midway, above=6pt] {$\A_2$};
\draw [decorate, decoration={brace,amplitude=5pt}] (9.25,3.5)--(1.5, 3.5)
node [midway, below=6pt] {$\A_1$};
\end{tikzpicture}
\caption{A tilting pair.}
\end{center}
\end{figure}

A special case of the tilting construction will be particularly important.
Suppose that $\A$ is a finite-length heart and $S\in\A$ is a simple object. 
 Let
$\langle S \rangle\subset \A$ be the full subcategory consisting of objects $E\in\A$ all
of whose simple factors are isomorphic to $S$. Define full subcategories
\[ S^\perp=\{E\in \A:\Hom_{\A}(S,E)=0\}, \quad \smash{^\perp}{S}=\{E\in\A:\Hom_{\A}(E,S)=0\}.\]
One can either view $\langle S \rangle$ as
the torsion part of a torsion pair on $\A$, in which case the torsion-free
part is $S^\perp$,
or as the torsion-free part, in which case the torsion part is $\smash{^\perp}{S}$.
We can then define tilted
 hearts
\[ \mu^-_S (\A)=\langle S[1],\smash{^\perp}{S}\rangle,\qquad \mu^+_S (\A) = \langle S^\perp, S[-1]\rangle,\]
which we refer to as the left and right tilts of the heart $\A$ at the simple $S$.
They fit into tilting pairs $(\mu^-_S (\A),\A)$ and $(\A,\mu^+_S (\A))$. Note the relation
\begin{equation}
\label{jen}\mu^+_{S[1]}\circ \mu^-_S(\A)=\A.\end{equation}

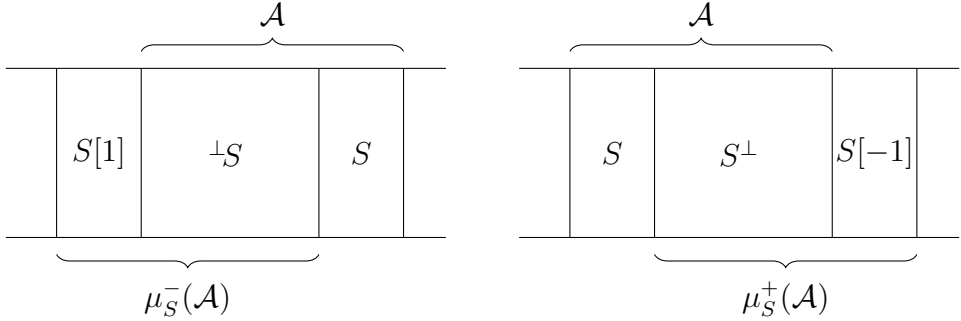
\begin{figure}
\begin{center}
\begin{tikzpicture}[scale=0.45]
\draw (0,4)--(13,4);
\draw (0,9)--(13,9);
\draw (1.5,4)--(1.5,9);
\draw (4,4)--(4,9);
\draw (9.25,4)--(9.25,9);
\draw (11.75,4)--(11.75,9);
\draw (2.75,6.5) node {$S[1]$};
\draw (6.5,6.5) node {$^\perp\!S$};
\draw (10.5,6.5) node {$S$};
\draw [decorate, decoration={brace,amplitude=5pt}] (4,9.5)--(11.75,9.5)
node [midway, above=6pt] {$\A$};
\draw [decorate, decoration={brace,amplitude=5pt}] (9.25,3.5)--(1.5, 3.5)
node [midway, below=6pt] {$\mu_S^-(\A)$};
\end{tikzpicture}
\quad\quad
\begin{tikzpicture}[scale=0.45]
\draw (0,4)--(13,4);
\draw (0,9)--(13,9);
\draw (1.5,4)--(1.5,9);
\draw (4,4)--(4,9);
\draw (9.25,4)--(9.25,9);
\draw (11.75,4)--(11.75,9);
\draw (2.75,6.5) node {$S$};
\draw (6.5,6.5) node {$S^\perp$};
\draw (10.5,6.5) node {$S[-1]$};
\draw [decorate, decoration={brace,amplitude=5pt}] (1.5,9.5)--(9.25,9.5)
node [midway, above=6pt] {$\A$};
\draw [decorate, decoration={brace,amplitude=5pt}] (11.75,3.5)--(4, 3.5)
node [midway, below=6pt] {$\mu_{S}^+(\A)$};
\end{tikzpicture}
\caption{Left and right tilts of a heart.}\label{fig:mut-hearts}
\end{center}
\end{figure}

The \emph{tilting graph} of $\D$ is the graph $\Tilt(\D)$ whose vertices are finite-length hearts, and  in which two vertices are joined by an edge if the corresponding hearts are related by a tilt in a simple object. There is a natural action of the group of triangulated autoequivalences $\Aut(\D)$ on this graph.

If $\A\subset \D$ is a finite-length heart we denote by $\Tilt_\A(\D)\subset \Tilt(\D)$ the connected component containing $\A$. We say that the hearts in $\Tilt_\A(\D)$ are  \emph{reachable} from  $\A$. We say that an autoequivalence $\Phi\in \Aut(\D)$ is \emph{reachable} from $\A$ if its action on $\Tilt(\D)$  preserves the connected component $\Tilt_\A(\D)$. These autoequivalences form a subgroup $\Aut_\A(\D)\subset \Aut(\D)$.

We say that  a finite-length heart $\A\subset \D$ is  \emph{infinitely tiltable} if the graph $\Tilt_\A(\D)$ is  $2n$-regular,  where $n$ is the rank of $K(\D)$. This means that the tilting process can  be continued indefinitely at all simple objects, and in both directions, without leaving the class of finite-length hearts.


\subsection{Tilting in the \CY case}
\label{tiltcy}

Suppose now that $\D$ is a  triangulated category with the \CY property.  To ensure the existence of the twist functors appearing below we should also assume that $\D$ is algebraic in the sense of Keller \cite[Section 3.6]{dg}.

Associated to a finite-length heart $\A\subset \D$ there is a quiver  $Q(\A)$, 
 whose vertices are indexed by the isomorphism classes of simple objects $S_i\in \A$ and which has
\[n_{ij}=\dim_k \Ext^1_\A(S_i,S_j)\]
 arrows connecting vertex $i$ to vertex $j$.
We call  a finite-length  heart $\A\subset \D$  \emph{non-degenerate} if it is infinitely-tiltable and  if, for every heart $\B\subset \D$ reachable from $\A$, the quiver $Q(\B)$ has no loops or oriented 2-cycles.

The quiver $Q(\A)$ associated to a finite-length heart $\A\subset \D$ has no loops precisely if 
the simple objects of $\A$ are all spherical  in the sense of \cite{ST}. Any spherical object  $S\in \D$ defines an  autoequivalence  $\Tw_{S}\in \Aut(\D)$ called a spherical twist. It has the property that  for any  object $E\in \D$ there is a triangle
\[\Hom^\blob_\D(S,E)\tensor S\lra E\lra \Tw_S(E).\]

Suppose now that $\A\subset \D$ is a non-degenerate finite-length heart with $n$ simple objects up to isomorphism.  
 Taken together, the spherical twists in these simple objects   generate a subgroup
\[\Sph_\A(\D)=\langle \Tw_{S_1},\dots ,\Tw_{S_n}\rangle\subset \Aut(\D).\]
The following result is well-known, but for the reader's convenience we include a sketch proof. A more careful treatment  can be found for example in \cite{yuqiu}.
\begin{prop}
\label{rpt}

\begin{itemize}
\item[(a)] for every simple object $S\in \A$ there is a relation \[\Tw_{S} (\mu^-_{S}(\A))=\mu^+_{S}(\A)\subset \D;\]
\item[(b)] if $\B$ is reachable from $\A$, then $\Sph_\A(\D)=\Sph_\B(\D)$.
\end{itemize}
\end{prop}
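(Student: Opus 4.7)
My plan is to handle the two parts separately, with (b) reduced to (a) by induction on the tilting graph.

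For part (a), the plan is to verify $\Tw_S(\mu^-_S(\A)) = \mu^+_S(\A)$ on the two classes of generators of the extension closure $\mu^-_S(\A) = \langle S[1], {}^\perp S \rangle$. First, I apply the defining triangle of $\Tw_S$ to $S$ itself and use the spherical identity $\End^\bullet_\D(S) = k \oplus k[-3]$ (valid because non-degeneracy of $Q(\A)$ precludes self-loops at $S$ and $\D$ is CY3) to compute $\Tw_S(S) = S[-2]$, so $\Tw_S(S[1]) = S[-1] \in \mu^+_S(\A)$. Second, for $F \in {}^\perp S$ I analyze the defining triangle $\Hom^\bullet_\D(S,F) \otimes S \to F \to \Tw_S(F)$: CY3 duality $\Hom^i(S,F) \cong \Hom^{3-i}(F,S)^*$ together with $F, S \in \A$ and $\Hom_\A(F,S) = 0$ shows $\Hom^\bullet(S,F)$ is concentrated in degrees $\{1,2\}$, and a cohomological analysis of the triangle with respect to the t-structure $\mu^+_S(\A)$ places $\Tw_S(F)$ in $\mu^+_S(\A)$. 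Since $\Tw_S$ is a triangulated autoequivalence, it preserves extension closures, giving $\Tw_S(\mu^-_S(\A)) \subseteq \mu^+_S(\A)$; equality follows because both sides are hearts of bounded t-structures and no such heart is properly contained in another.

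For part (b), I would induct on the distance from $\A$ to $\B$ in $\Tilt_\A(\D)$, reducing to a single tilt $\B = \mu^+_S(\A)$ (the $\mu^-_S$ direction is analogous via \eqref{jen}). The simples of $\B$ are $T_0 = S[-1]$ together with $T_1, \ldots, T_{n-1}$, each paired with a simple $S_i$ ($i \geq 1$) of $\A$ by the combinatorics of mutation. The twist at $T_0$ equals $\Tw_S = \Tw_{S_0}$, since shifting a spherical object does not affect its twist functor. For each $i \geq 1$, the non-degeneracy hypothesis (no 2-cycles in $Q(\A)$ at the vertex $S$) forces vanishing of at least one of $\Ext^1_\A(S, S_i)$ and $\Ext^1_\A(S_i, S)$; the cohomology computation of part (a) then identifies $T_i$ either with $S_i$ itself (when $\Ext^1(S,S_i) = 0$) or with $\Tw_S(S_i)$ (when $\Ext^1(S_i,S) = 0$). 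The standard conjugation identity $\Tw_{\Phi(X)} = \Phi \circ \Tw_X \circ \Phi^{-1}$, valid for any triangulated autoequivalence $\Phi$, yields $\Tw_{T_i} = \Tw_{S_i}$ or $\Tw_{T_i} = \Tw_S \Tw_{S_i} \Tw_S^{-1}$, and in either case $\Tw_{T_i} \in \Sph_\A(\D)$. Thus $\Sph_\B(\D) \subseteq \Sph_\A(\D)$, and the reverse inclusion follows by symmetry.

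The main obstacle is the cohomological analysis in part (a) verifying that $\Tw_S(F)$ lies in $\mu^+_S(\A)$ for general $F \in {}^\perp S$, not merely for $F$ a simple of $\A$: arbitrary objects can have nontrivial Ext groups with $S$ in several degrees, and tracking the image under $\Tw_S$ through the new t-structure requires careful use of the long exact cohomology sequence, with non-degeneracy entering essentially to control $\Hom^2(S,F) \cong \Hom^1(F,S)^*$ via the absence of 2-cycles. Once this cohomological computation is in hand, the identification of the mutated simples $T_i$ in part (b) is essentially a specialization of the same calculation.
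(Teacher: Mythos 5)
Your reduction of part (a) to the generators $S[1]$ and ${}^\perp S$ of the extension closure $\mu^-_S(\A)=\langle S[1],{}^\perp S\rangle$ is a legitimate strategy, but the key step --- that $\Tw_S(F)\in\mu^+_S(\A)$ for every $F\in{}^\perp S$ --- is exactly where your argument breaks down. The assertion that $\Hom^\bullet_\D(S,F)$ is concentrated in degrees $1$ and $2$ is false: the hypothesis $F\in{}^\perp S$ says $\Hom_\A(F,S)=0$, which via CY$_3$ duality kills only $\Hom^3_\D(S,F)$, while the vanishing of $\Hom^0_\D(S,F)=\Hom_\A(S,F)$ would require $F\in S^\perp$, the \emph{other} perpendicular. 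It fails whenever $S$ embeds in $F$ without occurring as a quotient; e.g.\ for the non-split extension $F$ of $S'$ by $S$ in the CY$_3$ category of the $A_2$ quiver one has $F\in{}^\perp S$ but $\Hom_\A(S,F)=k$. Once a degree-$0$ contribution is present, the cone analysis you defer to is genuinely delicate: one must check that the evaluation map $\Hom_\A(S,F)\otimes S\to F$ is injective (so that $\Tw_S(F)$ has no $\A$-cohomology in degree $-1$), and, harder, that $H^0_\A(\Tw_S(F))$ --- an extension of $\Hom^1_\D(S,F)\otimes S$ by the cokernel of evaluation --- admits no nonzero maps from $S$, i.e.\ lies in $S^\perp$. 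You flag this analysis as the main obstacle but do not carry it out, and the one concrete claim offered in its support is wrong; so as written part (a) has a genuine gap.

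The paper's proof never leaves the simple objects, which is precisely what makes it painless: ordering the simples so that $\Ext^1(S_j,S_i)=0$ for $j<i$ and $\Ext^1(S_i,S_j)=0$ for $j>i$ (possible because $Q(\A)$ has no loops or $2$-cycles), it identifies $\Tw_{S_i}(S_j)$ for $j<i$ as the universal extension of $\Ext^1_\A(S_i,S_j)\otimes S_i$ by $S_j$, checks it is simple and lies in $S_i^\perp$, and thus lists all simples of both $\mu^\pm_{S_i}(\A)$; part (a) follows because a finite-length heart is determined by its simples. For simple $F$ the relevant Hom-complex really is concentrated in a single degree, so none of the difficulties above arise. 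Your part (b) --- induction along the tilting graph together with $\Tw_{\Phi(X)}=\Phi\circ\Tw_X\circ\Phi^{-1}$ and $\Tw_{S[1]}=\Tw_S$ --- is exactly the paper's argument and is fine, but as you set it up it inherits the gap, since you invoke ``the cohomology computation of part (a)'' to identify the new simples. Repairing (a) by restricting the computation to simples, as the paper does, fixes both parts.
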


\begin{pf}Take a simple object $S\in \A$.  Since $Q(\A)$ has no oriented 2-cycles, we can order the simple objects of $\A$ so that $S=S_i$, and  $\Ext^1(S_j,S_i)=0$ for $j<i$ and $\Ext^1(S_i,S_j)=0$ for $j>i$. Then for all  $j<i$, the object $\Tw_{S_i}(S_j)$ is the universal extension
\[0\lra S_j\lra \Tw_{S_i}(S_j)\lra \Ext^1_\A(S_i,S_j)\tensor S_i\lra 0.\]
This clearly lies in  $S_i^{\perp}\subset \mu^+_{S_i}(\A)$, and is easily checked to be simple. In this way one sees that the simple objects of $\mu^+_{S_i}(\A)$ are
\[(\Tw_{S_i}(S_1),\dots, \Tw_{S_i}(S_{i-1}),S_i[-1], S_{i+1}, \dots ,S_n).\]
By a similar argument, or using $\mu^-_{S_i[-1]} \,\mu_{S_i}^+(\A)=\A$, it follows that the simple objects in $\mu^-_{S_i}(\A)$ are
\[(S_1,\dots, S_{i-1},S_i[1], \Tw_{S_i}^{-1}(S_{i+1}),\dots, \Tw_{S_i}^{-1}(S_{n})).\]
Property (a) is  then clear, using the identity $\Tw_{S_i}(S_i)=S_i[-2]$, and the fact that a finite-length heart is determined by its simple objects.
Property (b) follows from the  identity
\[\Tw_{\Tw_{S_i}(S_j)}=\Tw_{S_i}\circ \Tw_{S_j}\circ \Tw^{-1}_{S_i},\]
and the fact that $\Tw_{S_i[1]}=\Tw_{S_i}$.
\end{pf}

%

\subsection{Quivers with potential}
\label{ginz}

Suppose  that $\D$ is a triangulated category with the \CY property.
In Section \ref{tiltcy} we associated a quiver $Q(\A)$  to a finite-length heart $\A\subset \D$ encoding the dimensions of the extension spaces between the simple objects.  The next result shows that after   making the choice of  a potential on $Q(\A)$ one can reverse this process.

 For all notions regarding quivers with potential we refer to \cite[\SS 2--5]{DWZ} and \cite[\SS 2]{KY}. In particular, we recall that a \emph{potential} on a quiver $Q$ is a formal linear combination of oriented cycles in $Q$, and that a potential is called \emph{reduced} if it is a sum of cycles of length $\geq 3$.

\begin{thm}
\label{eyes}
Associated to a quiver with reduced potential $(Q,W)$ there is a \CY triangulated category $\D(Q,W)$ of finite type over $k$, with a  bounded  t-structure whose heart  \[\A=\A(Q,W)\subset \D(Q,W)\] is of finite-length and has associated quiver $Q(\A)$ isomorphic to $Q$.
\end{thm}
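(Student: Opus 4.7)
My plan is to build $\D(Q,W)$ via the complete Ginzburg dg-algebra construction, following Ginzburg and Keller--Yang, and then verify each required property in turn.

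First I would form the complete Ginzburg dg-algebra $\Gamma = \Gamma(Q,W)$. Recall this is the completed path algebra of the graded quiver $\bar Q$ obtained from $Q$ by adjoining, for each arrow $a\colon i\to j$, a reversed arrow $a^*\colon j\to i$ placed in degree $-1$, and a loop $t_i$ in degree $-2$ at every vertex $i$. The differential is determined by $d(a)=0$, $d(a^*)=\partial_a W$, and $d(t_i)=e_i\bigl(\sum_a [a,a^*]\bigr)e_i$. I would then let $\D(Q,W)$ be the full triangulated subcategory of the derived category of dg $\Gamma$-modules consisting of those $M$ with $\bigoplus_{i\in\Z}\dim_k H^i(M)<\infty$. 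This is a $k$-linear triangulated category of finite type over $k$ essentially by construction.

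Next I would invoke Keller's result that $\Gamma(Q,W)$ is a topologically homologically smooth, bimodule $3$-Calabi--Yau dg-algebra, and that this 3-CY structure descends to a Serre functor $[3]$ on the subcategory $\D(Q,W)$ of modules of finite total cohomology. This gives property (ii). (This is the main technical step to quote, and is the only place where reduced-ness of $W$ and the structure of the Ginzburg algebra really matter; I would treat it as a black box citation to \cite{Ginzburg,KY,dg}.) The non-positive grading of $\Gamma$ provides the standard truncation t-structure on $\D(\Gamma)$, and restricting this gives a bounded t-structure on $\D(Q,W)$ whose heart $\A(Q,W)$ consists of those dg-modules quasi-isomorphic to their $H^0$, i.e.\ finite-dimensional modules over the completed Jacobi algebra $J(Q,W)=H^0(\Gamma)$. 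The category of finite-dimensional modules over any $k$-algebra is automatically of finite length, so $\A(Q,W)$ is finite-length.

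Finally I would identify the simple objects and compute the Ext-quiver. Since $J(Q,W)$ is a basic complete pseudocompact algebra whose semisimple quotient is $\prod_i k\cdot e_i$ (indexed by the vertices of $Q$), its simple modules $S_i$ are in bijection with the vertices $i$ of $Q$. It then remains to verify
\[
\dim_k\Ext^1_{\A(Q,W)}(S_i,S_j)\;=\;\#\{\text{arrows }i\to j\text{ in }Q\}.
\]
The cleanest way I would do this is via the standard Koszul-type projective resolution of $S_j$ provided by the bar construction on the Ginzburg algebra: the free $\Gamma$-resolution of $e_j J(Q,W)$ has its first syzygies controlled by the arrows of $\bar Q$ of degree $0$ out of $j$, and pairing with $S_i$ via $\Hom_\Gamma(-,S_i)$ picks out exactly the arrows $i\to j$ of $Q$ in cohomological degree $1$. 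This identifies $Q(\A(Q,W))$ with $Q$, as required.

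The only genuine obstacle is the Calabi--Yau property in step two; everything else is formal given the Ginzburg construction. The Ext computation in the last step uses that $W$ is reduced (so no length-$\leq 2$ cycles occur in $W$, hence no $a^*$-terms interfere with degree-$0$ arrows), and this is precisely what guarantees the bijection between arrows of $Q$ and a basis of $\Ext^1_{\A(Q,W)}(S_i,S_j)$.
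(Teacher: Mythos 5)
Your proposal follows essentially the same route as the paper: define $\D(Q,W)$ inside the derived category of the complete Ginzburg dg-algebra, quote Keller--Yang for the \CY property and the standard t-structure whose heart is finite-dimensional modules over the complete Jacobi algebra, and identify the Ext-quiver of the simples with $Q$ using that the Jacobi relations come from cyclically differentiating a reduced potential and hence involve only paths of length $\geq 2$. The paper's proof is just a terser version of this, citing \cite[Lemma 5.2, Lemma 7.16, Theorem 7.17]{KY} for the steps you spell out.
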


\begin{pf}
Define the category $\D(Q,W)$ to be the subcategory of the derived category of the complete Ginzburg algebra $\Pi(Q,W)$ consisting of objects with finite-dimensional cohomology. It has the \CY property by \cite[Lemma 7.16, Theorem 7.17]{KY}.

The category $\D(Q,W)$ has a standard bounded  t-structure \cite[Lemma 5.2]{KY} 
whose heart $\A(Q,W)$ is equivalent to the category of finite-dimensional modules for the complete Jacobi algebra $J(Q,W)=H^0(\Pi(Q,W))$. The algebra $J(Q,W)$  is the quotient of the complete path algebra of $Q$ by the relations obtained by cyclically differentiating the potential $W$. In particular, its simple modules are naturally in bijection with the vertices of $Q$, and the spaces of extensions between them are based by the arrows in $Q$.
\end{pf}

The combinatorial incarnation of the process of tilting at a simple module is called \emph{mutation}. It acts on right-equivalence classes of quivers with potential. Roughly speaking, two potentials on a quiver $Q$ are said to be \emph{right-equivalent} if they differ by  an automorphism of the completed path algebra which fixes the zero length paths; for the full definition see \cite[\SS 4]{DWZ} or \cite[\SS 2.1]{KY}.   Right-equivalent potentials give rise to isomorphic complete Ginzburg algebras \cite[Lemma 2.9]{KY}, and hence equivalent  categories $\D(Q,W)$.

  Suppose that $(Q,W)$ is a reduced quiver with potential, and fix a vertex $i$ of $Q$. The mutation $(Q',W')=\mu_i(Q,W)$ is another reduced quiver with potential, well-defined up to right-equivalence, and depending only on the right-equivalence class of $(Q,W)$. The vertex sets of $Q$ and $Q'$ are naturally identified, and the operation $\mu_i$ is an involution. We refer the reader to \cite[\SS 5]{DWZ} or \cite[\SS 2.4]{KY} for the relevant definitions.

For our purposes, the importance of mutations of quivers with potential is the following result of Keller and Yang \cite[Thm. 3.2, Cor. 5.5]{KY}. 

\begin{thm}
\label{keller_yang}
Let $(Q,W)$ be a quiver with reduced potential, such that $Q$ has no loops or oriented 2-cycles. Let $i$ be a vertex of $Q$ and  set \[(Q',W')=\mu_i(Q,W).\]
Then there is a canonical pair of $k$-linear triangulated equivalences 
\[\Phi_{\pm}\colon \D(Q',W')\lra \D(Q,W)\]
which induce tilts in the simple object $S_i\in \A(Q,W)$ in the sense that  \[\Phi_\pm(\A(Q',W'))=\mu^{\pm}_{S_i}(\A(Q,W))\subset \D(Q,W),\]
and which moreover induce the natural bijections on simple objects.
\end{thm}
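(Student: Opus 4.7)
The statement is due to Keller--Yang, so the plan is to sketch their argument via silting mutation of the Ginzburg dg-algebra.

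The plan is to construct $\Phi_{\pm}$ as derived tensor products against explicit tilting bimodules, and then compute the image of the standard heart. Recall that $\D(Q,W)$ is the full subcategory of finite-dimensional modules inside the derived category of the complete Ginzburg dg-algebra $\Pi=\Pi(Q,W)$, and that the indecomposable summands $P_j=e_j\Pi$ of $\Pi$ form a silting collection in the perfect derived category $\Dperf(\Pi)$, whose duals (under the Calabi--Yau pairing) are a Koszul resolution of the finite-length simple modules $S_j$. The simples $S_j$ are $3$-spherical, and the quiver of extensions among them recovers $Q$.

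First I would define the silting mutation at the vertex $i$. For the \emph{right} (positive) mutation, set $P_j^+=P_j$ for $j\neq i$, and define $P_i^+$ by the triangle
\[
P_i\lra\bigoplus_j\Hom_\Pi^{\blob}(P_i,P_j)^{*}\otimes P_j\lra P_i^+
\]
built from all outgoing arrows at $i$; for the left mutation $P^-$ one uses the dual triangle with incoming arrows. Standard silting theory then gives two triangulated equivalences
\[
\Phi_{\pm}\colon \Dperf\bigl(\End^{\blob}_{\Pi}(P^{\pm})\bigr)\isom \Dperf(\Pi),
\]
sending the free rank-one module to $P^{\pm}$, and these restrict to equivalences on full subcategories of finite-dimensional modules.

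The second and main step is to identify the dg-algebra $\End^{\blob}_\Pi(P^{\pm})$ with $\Pi(Q',W')$, where $(Q',W')=\mu_i(Q,W)$. Computing ordinary $\Hom$'s between the $P_j^{\pm}$ directly recovers the mutated quiver $Q'$ on the nose (with new arrows arising from paths of length two through $i$, and with $2$-cycles at $i$ appearing as potential relations); this matches the combinatorial description of quiver mutation. The subtle part is matching the full dg-structure, which encodes the potential: one must show that the $A_\infty$-products among the new generators reproduce the cyclic derivatives of the mutated potential $W'$ as defined by Derksen--Weyman--Zelevinsky. This is the real technical obstacle, and Keller--Yang carry it out by an explicit change of variables on the completed path algebra combined with Ginzburg's construction being functorial in right-equivalence. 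The reduction to a reduced potential, required because $(Q',W')$ may have $2$-cycles before reduction, is handled by the splitting theorem for quivers with potential in \cite{DWZ}.

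Finally, I would read off the action of $\Phi_{\pm}$ on standard hearts. The image $\Phi_{\pm}(\A(Q',W'))$ is the extension closure of the images of the simples $S_j'\in\A(Q',W')$, and a direct calculation using the defining triangles for $P_i^{\pm}$ shows that these images are precisely the simple objects of $\mu^{\pm}_{S_i}(\A(Q,W))$ computed in Proposition~\ref{rpt}: $\Phi_+$ sends $S_i'$ to $S_i[-1]$ and $S_j'$ ($j\neq i$) to the universal extension of $S_i^{\oplus}$ by $S_j$, with the dual statement for $\Phi_-$. Since a finite-length heart is determined by its simple objects, this gives $\Phi_{\pm}(\A(Q',W'))=\mu^{\pm}_{S_i}(\A(Q,W))$, and the compatibility of the bijection on simples with the vertex labelling of $Q'$ is automatic from the construction. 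The canonicity of $\Phi_{\pm}$ (up to natural isomorphism) follows from the uniqueness of the silting mutation triangles.
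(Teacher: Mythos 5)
The paper gives no proof of this statement at all---it is quoted verbatim from Keller--Yang \cite[Thm.~3.2, Cor.~5.5]{KY}---and your sketch is an accurate outline of their actual argument: mutation of the silting object $\bigoplus_j P_j$ in $\Dperf(\Pi)$, the (genuinely hard) identification of the mutated dg endomorphism algebra with $\Pi(Q',W')$ up to right-equivalence and reduction via the splitting theorem of \cite{DWZ}, and the computation of the images of the simples, which matches Proposition \ref{rpt}. The only quibble is notational: the approximation triangle defining $P_i^{\pm}$ must be built from the finite-dimensional arrow spaces at the vertex $i$ rather than the full dg complexes $\Hom^{\blob}_{\Pi}(P_i,P_j)$, as your parenthetical ``built from all outgoing arrows'' correctly indicates.
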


For the last part of this statement, recall that there is a natural bijection between the simple objects of $\A$ and those of $\mu^\pm_{S_i}(\A)$ which was made explicit in the proof of Proposition \ref{rpt}. There is also a natural bijection between the vertices of the quivers $(Q,W)$ and $\mu_i(Q,W)$. The claim is that these bijections are compatible with the canonical bijections between the vertices of the quivers $Q,Q'$  and the simple objects in the corresponding standard hearts.

A quiver with potential $(Q,W)$ is called \emph{non-degenerate} \cite[\SS 7]{DWZ} if any sequence of mutations of $(Q,W)$ results in a quiver with potential having no loops or oriented 2-cycles. Theorem \ref{keller_yang} shows that this condition  is equivalent to the statement that the standard heart $\A(Q,W)\subset \D(Q,W)$ is non-degenerate in the sense of Section \ref{tiltcy}.


\subsection{Stability conditions}
\label{stabsumm}
Here we summarize the basic properties of spaces of stability conditions. We refer the reader to \cite{Bridgeland,Bridgeland2} for more details on this material. Let us fix a  triangulated category $\D$, and assume for simplicity  that the Grothendieck group $K(\D)\isom \Z^{\oplus n}$ is free of  finite rank.

 A \emph{stability condition} $\sigma=(Z,\P)$ on   $\D$
consists of
a group homomorphism
$Z\colon K(\D)\to\C$ called the \emph{central charge},
and full additive
subcategories $\P(\phi)\subset\D$ for each $\phi\in\R$,
which together satisfy the following axioms:
\begin{itemize}
\item[(a)] if $E\in \P(\phi)$ then $Z(E)\in \R_{>0}\cdot e^{i\pi\phi}\subset \C,$ \smallskip
\item[(b)] for all $\phi\in\R$, $\P(\phi+1)=\P(\phi)[1]$,\smallskip
\item[(c)] if $\phi_1>\phi_2$ and $A_j\in\P(\phi_j)$ then $\Hom_{\D}(A_1,A_2)=0$,\smallskip
\item[(d)] for each nonzero object $E\in\D$ there is a finite sequence of real
numbers
\[\phi_1>\phi_2> \dots >\phi_k\]
and a collection of triangles
\[
\xymatrix@C=.5em{
0_{\ } \ar@{=}[r] & E_0 \ar[rrrr] &&&& E_1 \ar[rrrr] \ar[dll] &&&& E_2
\ar[rr] \ar[dll] && \ldots \ar[rr] && E_{k-1}
\ar[rrrr] &&&& E_k \ar[dll] \ar@{=}[r] &  E_{\ } \\
&&& A_1 \ar@{-->}[ull] &&&& A_2 \ar@{-->}[ull] &&&&&&&& A_k \ar@{-->}[ull] 
}
\]
with $A_j\in\P(\phi_j)$ for all $j$.
\end{itemize}

The semistable objects $A_j$ appearing in the filtration of axiom (d) are unique up to isomorphism, and are called the \emph{semistable factors} of $E$. We set
\[\phi^+(E)=\phi_1, \quad \phi^-(E)=\phi_k, \quad m(E)=\sum_{i} |Z(A_i)|\in \R_{>0}.\]
The real number $m(E)$ is called the \emph{mass} of $E$.
It follows from the definition that the subcategories $\P(\phi)\subset \D$ are  abelian categories; the objects of $\P(\phi)$ are said to be \emph{semistable of phase $\phi$}, and the simple objects of $\P(\phi)$ are said to be \emph{stable of phase $\phi$}.  For any interval $I\subset \R$ there is a full subcategory $\P(I)\subset \D$ consisting of objects whose semistable factors have phases in $I$. 

We shall always assume that our stability conditions $\sigma=(Z,\P)$ satisfy the \emph{support property} of \cite{KS}, namely that for some norm $\|\cdot\|$ on $K(\D)\tensor\R$ there is a constant $C>0$ such that
\begin{equation}
\label{support}\|\gamma\|< C\cdot |Z(\gamma)|\end{equation}
for all classes $\gamma\in K(\D)$  represented by $\sigma$-semistable objects in $\D$.
  As explained in \cite[Prop. B.4]{BM}, this is equivalent to assuming that they are \emph{full} and \emph{locally-finite} in the terminology of \cite{Bridgeland, Bridgeland3}.   We let  $\Stab(\D)$ denote the set of all such stability conditions on $\D$.

There is a natural topology on $\Stab(\D)$ induced by the metric
 \begin{equation}
 \label{dist}
d(\sigma_1,\sigma_2) = \sup_{0\neq E\in\D}\left\{
  |\phi_{\sigma_2}^- (E) - \phi_{\sigma_1}^-(E)|, \  |\phi_{\sigma_2}^+
  (E) - \phi_{\sigma_1}^+(E)|, \ \left|\log
    \frac{m_{\sigma_2}(E)}{m_{\sigma_1}(E)}\right|  
\right\}\in[0,\infty].
\end{equation}
The following result is proved in \cite{Bridgeland}.

\begin{thm}
\label{basic}
The space $\Stab(\D)$ has the structure of a complex manifold, such that the forgetful map
\[\pi\colon \Stab(\D)\lra \Hom_{\Z}(K(\D),\C)\]
taking a stability condition to its central charge,
is a local isomorphism.
\end{thm}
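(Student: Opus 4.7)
The plan is to construct a local inverse for $\pi$ at every point $\sigma=(Z,\P)\in\Stab(\D)$, thereby simultaneously endowing $\Stab(\D)$ with a complex manifold structure and proving that $\pi$ is a local isomorphism. Fix such $\sigma$, and suppose it satisfies the support property \eqref{support} with a norm $\|\cdot\|$ and constant $C>0$. Given a group homomorphism $Z'\colon K(\D)\to\C$ sufficiently close to $Z$ in $\Hom_\Z(K(\D),\C)$, the goal is to produce a unique $\sigma'=(Z',\P')\in\Stab(\D)$ lying close to $\sigma$ in the metric \eqref{dist}.

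First I would fix $\eta\in(0,\tfrac14)$ and a neighbourhood $U\ni Z$ small enough that, by \eqref{support}, every $Z'\in U$ satisfies
\[|Z'(E)-Z(E)|<\sin(\pi\eta)\cdot|Z(E)|\]
for all $\sigma$-semistable $E$. Equivalently, the $Z'$-phase of each $\sigma$-semistable $E$ is well defined and lies within $\eta$ of its $\sigma$-phase. Next, for each $\phi\in\R$ I would construct $\P'(\phi)$ by refining the quasi-abelian subcategory $\P((\phi-\eta,\phi+\eta))\subset\D$: the functional $Z'$ induces a well-defined slope on this category, and the support property guarantees finiteness of descending chains in this slice, hence the existence of Harder--Narasimhan filtrations with respect to $Z'$. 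Define $\P'(\phi)\subset\P((\phi-\eta,\phi+\eta))$ to consist of the $Z'$-semistable objects of phase exactly $\phi$. The $\sigma'$-HN filtration of an arbitrary object $E\in\D$ is obtained by taking its $\sigma$-HN filtration and refining each factor using these finer $Z'$-filtrations.

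After verifying the axioms for $\sigma'$---the Hom-orthogonality of differently-phased semistables follows because each lies in a narrow slice $\P((\phi-\eta,\phi+\eta))$ of width less than $1-2\eta$, so their phases cannot cross---I would show that $\sigma'$ inherits the support property with a mildly weakened constant, hence genuinely lies in $\Stab(\D)$. Uniqueness of $\sigma'$ with prescribed central charge $Z'$ close to $Z$ is then immediate: any competitor's HN factors are forced into the same quasi-abelian slices and must coincide with those constructed above. Continuity of the resulting map $U\to\Stab(\D)$ in the metric \eqref{dist} is built into the construction via the uniform control on phase displacement, so $\pi$ is a local homeomorphism and the complex structure on $\Hom_\Z(K(\D),\C)$ transports to $\Stab(\D)$.

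The main obstacle is the construction and termination of Harder--Narasimhan filtrations with respect to $Z'$ inside each finite-width quasi-abelian slice $\P((\phi-\eta,\phi+\eta))$: one must rule out infinite descending chains of subobjects whose $Z'$-phases accumulate, and the only available tool is the support property, which bounds $\|\gamma\|$ in terms of $|Z(\gamma)|$. Propagating this control through the perturbation---verifying that $\sigma'$ itself satisfies \eqref{support} with a controlled constant on the same neighbourhood $U$---is the central technical content, and is the reason that $U$ is defined by a genuine inequality on $\|Z'-Z\|$ rather than as an arbitrary topological open set.
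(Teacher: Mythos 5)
The paper does not actually prove this statement: it is quoted verbatim from the reference \cite{Bridgeland}, and the text immediately preceding the theorem says only that the result is proved there. Your outline is essentially the argument of that reference (Theorem 7.1 of \cite{Bridgeland}): perturb the central charge so that phases of $\sigma$-semistables move by less than $\eta$, define $\P'(\phi)$ as the $Z'$-semistable objects inside the thin quasi-abelian slice $\P((\phi-\eta,\phi+\eta))$, use finite-length of these slices (which in the present paper's setting follows from the support property, cf.\ the proof of Proposition \ref{lastnew} and \cite[Prop.\ B.4]{BM}) to get Harder--Narasimhan filtrations for the new slope, and check that the support property survives with a slightly worse constant. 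So the approach is the right one and matches the source.

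One step is stated too glibly to be correct as written: you say the $\sigma'$-HN filtration of an arbitrary $E$ is obtained by refining each factor of its $\sigma$-HN filtration. If two consecutive $\sigma$-factors have phases $\phi_1>\phi_2$ with $\phi_1-\phi_2<2\eta$, the $Z'$-phases of their refinements can interleave, so the naive concatenation need not have strictly decreasing phases. The fix, as in the cited proof, is to first coarsen the $\sigma$-HN filtration by grouping consecutive factors whose phases lie in a common interval of width $<1-2\eta$, and then apply the $Z'$-HN filtration once inside each such quasi-abelian slice; only for factors whose phases are far apart does one concatenate directly. With that amendment, and the uniqueness argument via \cite[Lemma 6.4]{Bridgeland} which you correctly invoke in spirit, your proposal reproduces the standard proof.
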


There are two commuting group actions on $\Stab(\D)$ that will be important later. The group of triangulated autoequivalences $\Aut(\D)$ acts on $\Stab(\D)$ in a rather obvious way: an autoequivalence $\Phi\in\Aut(\D)$ acts by
\[\Phi\colon (Z,\P)\mapsto (Z',\P'), \quad Z'(E)=Z(\Phi^{-1}(E)), \quad \P'=\Phi(\P).\] 
There is also an action of  the universal cover of the group  $\operatorname{GL^+}(2,\R)$ of orientation-preserving linear automorphisms of $\R^2$. This  action does not change the subcategory $\P$, but acts by post-composition on the central charge, viewed as a map to $\C=\R^2$, with a corresponding adjustment of the grading on $\P$. This action is not free in general, but there is a  subgroup isomorphic to $\C$ which does act freely: an element $t\in \C$ acts by
\[t\colon (Z,P)\mapsto (Z',\P'), \quad Z'(E)=e^{-i\pi t}\cdot Z(E), \quad \P'(\phi)=\P(\phi+\operatorname{Re}(t)).\]
Note that for any integer $n\in \Z$, the action of the multiple shift functor $[n]$ coincides with the action of $n\in \C$.

Later we will need the following more precise version of Theorem \ref{basic}.

\begin{prop}
\label{lastnew}
 Fix a real number $0<\epsilon\ll 1$. Given a stability condition  $\sigma=(Z,\P)\in \Stab(\D)$, and a group homomorphism $W\colon K(\D)\to \C$ satisfying
\[|W(E)-Z(E)|< \epsilon \cdot |Z(E)|\]
for all $\sigma$-stable objects $E\in \D$,  there is a unique stability condition $\sigma'\in \Stab(\D)$ with central charge $W$ such that $d(\sigma,\sigma')<\frac{1}{2}$.
\end{prop}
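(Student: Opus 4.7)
The plan is to reduce the statement to the classical deformation theorem of Bridgeland by first upgrading the hypothesis from stable to semistable objects, and then to use a short uniqueness argument within the distance-$1/2$ ball.

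First I would show that the estimate $|W(E)-Z(E)|<\epsilon\cdot |Z(E)|$, which is assumed only for $\sigma$-stable objects, holds automatically for \emph{all} $\sigma$-semistable objects. The support property \eqref{support} guarantees that each abelian category $\P(\phi)$ is of finite length, so any semistable $F\in \P(\phi)$ has a finite Jordan--H\"older filtration with stable factors $E_1,\dots,E_k\in \P(\phi)$. Writing $Z(E_i)=r_i e^{i\pi\phi}$ with $r_i>0$ we get $|Z(F)|=\sum_i r_i=\sum_i|Z(E_i)|$, and the triangle inequality gives
\[
|W(F)-Z(F)|\leq \sum_{i=1}^k|W(E_i)-Z(E_i)|<\epsilon\sum_{i=1}^k|Z(E_i)|=\epsilon\cdot |Z(F)|.
\]
The crucial point is that the stable factors of a semistable object are collinear in $\C$, so no constant is lost when summing.

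Second, once the bound holds on all semistable objects, we are in the standard setting of Bridgeland's deformation theorem (the precise quantitative result underlying Theorem \ref{basic}; see \cite[Thm.~7.1]{Bridgeland}). For $\epsilon$ sufficiently small this yields an actual stability condition $\sigma'=(W,\P')\in \Stab(\D)$ whose central charge is $W$, and with an explicit bound $d(\sigma,\sigma')\leq C\epsilon$ for a universal constant $C$. Choosing $\epsilon$ small enough from the outset ensures $d(\sigma,\sigma')<1/2$, giving existence.

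Finally, for uniqueness in the ball $\{\tau:d(\sigma,\tau)<1/2\}$, suppose $\sigma''$ is another stability condition with central charge $W$ and $d(\sigma,\sigma'')<1/2$. Then $d(\sigma',\sigma'')<1$, so for every nonzero $E\in \D$ the phases $\phi^\pm_{\sigma'}(E),\phi^\pm_{\sigma''}(E)$ differ by less than $1$, while the masses agree since the central charges coincide. Applied to objects stable in $\sigma'$, the relation $Z(E)=re^{i\pi\phi}$ determines the phase modulo $2$, so the strict inequality forces $\phi_{\sigma'}(E)=\phi_{\sigma''}(E)$. Thus every $\sigma'$-stable object is also semistable for $\sigma''$ of the same phase, and by symmetry the two stability conditions have the same stable objects of the same phases. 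Since a stability condition is determined by its central charge together with this data, $\sigma'=\sigma''$.

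The main subtlety is really the quantitative invocation of Bridgeland's deformation theorem: the theorem is usually stated with a $\sin(\pi\epsilon')$-type tolerance on \emph{semistable} objects, and one must verify the hypothesis translates as claimed. Since the statement only asserts existence for $0<\epsilon\ll 1$, there is ample slack, and the first paragraph above supplies the needed passage from stable to semistable objects without losing the constant. The uniqueness argument, by contrast, is a soft consequence of the definition of the metric \eqref{dist}.
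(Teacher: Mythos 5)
Your overall route is the same as the paper's: reduce to Bridgeland's deformation theorem for existence and to a metric argument for uniqueness. The existence half is fine, and your first paragraph (passing from stable to semistable objects via the Jordan--H{\"o}lder filtration and the collinearity of the central charges of the stable factors) is a genuinely needed reduction that the paper leaves implicit, since \cite[Thm.~7.1]{Bridgeland} is stated with the hypothesis on semistable objects.

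The uniqueness half, however, has a real gap as written. First, the claim that ``the masses agree since the central charges coincide'' is false in general: $m_\tau(E)=\sum_i|W(A_i)|$ depends on the Harish--Chandra--type decomposition of $E$ into $\tau$-semistable factors, not only on $W(E)$, so two stability conditions with the same central charge can a priori assign different masses to a non-semistable object. Second, and more importantly, your argument only shows that \emph{if} a $\sigma'$-stable object $E$ happens to be $\sigma''$-semistable, then its $\sigma''$-phase equals its $\sigma'$-phase (since the common central charge pins the phase mod $2$ and $d(\sigma',\sigma'')<1$ pins it within distance $1$). It does not establish that $E$ \emph{is} $\sigma''$-semistable: one could have $\phi^-_{\sigma''}(E)<\phi^+_{\sigma''}(E)$ with both values within distance $1$ of $\phi_{\sigma'}(E)$. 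Closing this gap requires the Hom-vanishing argument of \cite[Lemma~6.4]{Bridgeland} (comparing the extremal HN factors of $E$ in $\sigma''$ with $E$ itself as an object of $\P_{\sigma'}$), which is exactly what the paper cites for uniqueness. So you should either reproduce that argument or simply apply the triangle inequality to get $d(\sigma',\sigma'')<1$ and invoke that lemma directly.
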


\begin{pf}
 In fact we can take any $0<\epsilon<\frac{1}{8}$. The support property implies that for any interval $I\subset \R$ of length $<1$, the quasi-abelian categories $\P(I)$ have finite-length. The existence part then follows  from the results of \cite[Section 7]{Bridgeland}. The uniqueness is a consequence of \cite[Lemma 6.4]{Bridgeland}.
\end{pf}


\subsection{Walls and chambers}
\label{wallchamber}
In this section we give some basic results on the wall-and-chamber decomposition of the space of stability conditions. These are well-known, but the proofs in the general setting are not available in the literature. As in the last section, we fix a triangulated category $\D$ and assume that $K(\D)\isom \Z^{\oplus n}$ is free of finite rank. 

\begin{prop}
\label{fe}
Fix an object $E\in \D$. Then
\begin{itemize}
\item[(a)] the set of $\sigma\in \Stab(\D)$ for which $E$ is $\sigma$-stable is open. 
\item[(b)]the set of $\sigma\in \Stab(\D)$ for which $E$ is $\sigma$-semistable is closed.
\end{itemize}
\end{prop}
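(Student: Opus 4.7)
For part (b), the plan is to invoke directly the continuity of the two functions $\sigma\mapsto \phi^+_\sigma(E)$ and $\sigma\mapsto \phi^-_\sigma(E)$, which is immediate from the definition of the metric in \eqref{dist}.  A nonzero object $E$ is $\sigma$-semistable precisely when $\phi^+_\sigma(E)=\phi^-_\sigma(E)$, so the semistable locus is the vanishing locus of a continuous function and is therefore closed.

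For part (a), suppose $E$ is $\sigma_0$-stable of phase $\phi_0$.  First I would fix a small $\epsilon>0$ and restrict to the open neighbourhood of $\sigma_0$ on which $|\phi^{\pm}_\sigma(E)-\phi_0|<\epsilon/2$, so that any HN factor of $E$ with respect to $\sigma$, and any stable factor of such an HN factor, lies in the quasi-abelian category $\P_\sigma((\phi_0-\epsilon,\phi_0+\epsilon))$.  By the support property \eqref{support} this category is of finite length, and moreover its simple objects $A$ satisfy $\|[A]\|<C\cdot|Z_\sigma([A])|\leq C\cdot m_\sigma(E)$.  Since $m_\sigma(E)$ is continuous in $\sigma$, shrinking the neighbourhood if necessary, the classes of possible stable factors lie in a \emph{fixed finite subset} $\Lambda\subset K(\D)$, uniformly for $\sigma$ in this neighbourhood.

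Now argue by contradiction.  Suppose there is a sequence $\sigma_n\to\sigma_0$ along which $E$ fails to be stable; then $E$ admits either a strict short exact sequence $0\to A_n\to E\to B_n\to 0$ in $\P_{\sigma_n}(\phi_{\sigma_n}(E))$, or a nontrivial HN filtration.  In either case $E$ has, with respect to $\sigma_n$, a proper stable subfactor of some class $\gamma_n\in\Lambda$ with phase tending to $\phi_0$.  Passing to a subsequence, we may assume $\gamma_n=\gamma\neq[E]$ is constant.  The plan is then to use Proposition~\ref{lastnew}, together with the local-finiteness of $\P_\sigma(I)$ on any compact set of stability conditions, to extract a limiting $\sigma_0$-semistable object of class $\gamma$ and phase $\phi_0$ fitting into a destabilising triangle for $E$.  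This contradicts the assumed $\sigma_0$-stability of $E$, so $E$ remains stable on a neighbourhood of $\sigma_0$.

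The main obstacle will be the last step: producing a genuine $\sigma_0$-destabilising morphism as a limit of the $\sigma_n$-destabilising morphisms.  The routine parts (continuity of $\phi^\pm$, boundedness of classes via the support property, finiteness of the relevant stable classes) are straightforward; the subtle point is that a nonzero morphism $A_n\to E$ at stability condition $\sigma_n$ does not \emph{a priori} converge to a nonzero morphism at $\sigma_0$.  One addresses this by working inside the finite-length category $\P_\sigma((\phi_0-\epsilon,\phi_0+\epsilon))$, where the set of isomorphism classes of semistable objects of class $\gamma$ and phase in a small interval is controlled in a neighbourhood of $\sigma_0$, and then invoking the uniqueness clause of Proposition~\ref{lastnew} to match up objects across nearby stability conditions.
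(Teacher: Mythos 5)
Your part (b) is the paper's argument verbatim: $E$ is $\sigma$-semistable precisely when $\phi^+_\sigma(E)=\phi^-_\sigma(E)$, and both functions are continuous by the very definition of the metric \eqref{dist}, so the locus is closed. For part (a), however, there is a genuine gap, and it is exactly the one you flag yourself: your plan hinges on extracting, from a sequence of $\sigma_n$-destabilising subfactors $A_n$ of $E$ of constant class $\gamma$, a limiting $\sigma_0$-semistable object of class $\gamma$ sitting in a destabilising triangle for $E$. The tools you cite cannot produce it. Proposition \ref{lastnew} is an existence-and-uniqueness statement for deforming a stability condition to a nearby central charge and says nothing about limits of subobjects; and the set of isomorphism classes of stable objects of fixed class and phase is not finite in general -- this paper's own Theorem \ref{th} exhibits $\PP^1$-families of them -- so you cannot pass to a subsequence along which $A_n$ is a single fixed object and then apply part (b) to it. There is no topology on the objects of $\D$ in which such a limit could otherwise be taken.

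The paper's proof removes the need for any limit of objects by reversing the quantifiers. For $\sigma'=(Z',\P')$ in a small ball $B_r(\sigma_0)$ one has the inclusions $\P(\phi)\subset\P'(\phi-r,\phi+r)\subset\P(\phi-2r,\phi+2r)$, where $\P$ is the slicing of $\sigma_0$; hence \emph{every} triangle $A\to E\to B$ witnessing failure of stability at \emph{any} nearby $\sigma'$ has both $A$ and $B$ lying in the single fixed subcategory $\P(\phi-2r,\phi+2r)$. The support property at $\sigma_0$ then bounds the classes $[A]$ of all such candidates, uniformly over $\sigma'$, by one finite set; since a destabilising $A$ is $\sigma'$-semistable, whether it destabilises is a condition only on $\arg Z'([A])$ against $\arg Z'([E])$; stability of $E$ at $\sigma_0$ forces the strict inequality $\phi(A)<\phi(E)$ for each of the finitely many classes; and finitely many strict inequalities on the central charge persist after shrinking $r$. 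If you reorganise your argument this way -- controlling all potential destabilisers at all nearby points at once via the inclusion of slicings, rather than following a particular sequence of them to a limit -- the proof closes.
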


\begin{pf}

For part (a), take a stability condition $\sigma=(Z,\P)$ and an object $E\in \P(\phi)$ which is $\sigma$-stable. Choose $0<r\ll 1$ and consider  the open ball $B_r(\sigma)$  of radius $r$ centered at $\sigma$, with respect to the metric  \eqref{dist}.  By definition of this metric, for $\sigma'=(Z',\P')\in B_r(\sigma)$ there are inclusions
\[\P(\phi)\subset \P'(\phi-r,\phi+r)\subset \P(\phi-2r,\phi+2r).\]
Thus $E$ fails to be stable in $\sigma'$ precisely if there is a triangle $A\to E\to B$ whose objects all lie in $\P(\phi-2r,\phi+2r)$ and for which $\phi'(A)\geq \phi'(E)$.

The support property implies that  for any $M>0$ there are only finitely  many classes $\alpha\in K(\D)$ satisfying $|Z(\alpha)|<M$ for which there exist  $\sigma$-semistable objects of class $\alpha$. It follows that the set of classes $\alpha\in K(\D)$ of  objects $A$ as above is finite. Since $E$ is stable in $\sigma$ we must have $\phi(A)<\phi(E)$  for each such subobject, and so, reducing $r$ if necessary, we can assume that these phase inequalities continue to hold for all $\sigma'\in B_r(\sigma)$.  It follows that $E$ is stable for all stability conditions in $ B_r(\sigma)$.

Part (b) is immediate:
the object $E$ is semistable in $\sigma$ precisely if   $\phi_\sigma^+(E)=\phi_\sigma^-(E)$. By the definition of the metric \eqref{dist} this is  a closed condition. 
  \end{pf}

Let us now fix a class $\gamma\in K(\D)$ and consider stability for objects of this class. Let $\alpha\in  K(\D)$ be another class  which is not proportional to $\gamma$.  We   define \[W_\gamma(\alpha)\subset \Stab(\D)\] to be the subset of stability conditions $\sigma=(Z,\P)$ satisfying the  following condition:
for some  $\phi\in \R$ there is an inclusion $A\subset E$ in the category $\P(\phi)$ such that $A$ and $E$ have classes $\alpha$ and $\gamma$  respectively.
Locally, the subset $W_\gamma(\alpha)$ is contained in the  real codimension one submanifold of $\Stab(\D)$ defined by the condition $ Z(\alpha)/Z(\gamma)\in \R_{>0}$.

\begin{lemma}
If $B\subset \Stab(\D)$ is a compact subset then the set of classes $\alpha$ for which the subset $W_\gamma(\alpha)$ intersects $B$ is finite.
\end{lemma}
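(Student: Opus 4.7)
Suppose $\sigma = (Z, \P) \in W_\gamma(\alpha) \cap B$. By definition, there exists $\phi \in \R$ and an inclusion $A \hookrightarrow E$ in the abelian category $\P(\phi)$ with $[A] = \alpha$ and $[E] = \gamma$. In particular $A$ is $\sigma$-semistable of class $\alpha$, and since $Z(\alpha)$, $Z(\gamma - \alpha)$ and $Z(\gamma)$ all lie on the ray $\R_{>0}\cdot e^{i\pi\phi}$, one has $|Z_\sigma(\alpha)| \leq |Z_\sigma(\gamma)|$. The function $\sigma \mapsto |Z_\sigma(\gamma)|$ is continuous on $\Stab(\D)$, hence bounded on the compact set $B$ by some $M > 0$. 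So every $\alpha$ with $W_\gamma(\alpha)\cap B \neq \emptyset$ is realised as the class of some $\sigma$-semistable object (for some $\sigma \in B$) satisfying $|Z_\sigma(\alpha)| \leq M$.

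The main step is to establish a uniform version of the support property: there exists a norm $\|\cdot\|$ on $K(\D)\tensor \R$ and a constant $C > 0$, both depending only on $B$, such that
\[\|\beta\| \leq C\cdot |Z_\sigma(\beta)|\]
for all $\sigma \in B$ and all classes $\beta$ of $\sigma$-semistable objects. Assuming this, we get $\|\alpha\| \leq CM$ for every $\alpha$ in question, and since $K(\D)\cong \Z^{\oplus n}$ is a lattice in $K(\D)\tensor \R$, only finitely many lattice points lie in this bounded region.

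To prove the uniform support property I would argue locally and then use compactness of $B$. Fix $\sigma_0 \in B$; the support property \eqref{support} at $\sigma_0$ gives a constant $C_0$ and norm $\|\cdot\|$. For $\sigma$ lying in a small ball $B_r(\sigma_0)$ in the metric \eqref{dist}, any $\sigma$-semistable object $A$ of phase $\phi$ lies in $\P_{\sigma_0}(\phi-r, \phi+r)$, so its $\sigma_0$-Harder--Narasimhan factors $A_i$ are $\sigma_0$-semistable with phases in this narrow window. Each satisfies $\|[A_i]\| \leq C_0\,|Z_{\sigma_0}([A_i])|$, and because the complex numbers $Z_{\sigma_0}([A_i])$ lie in a sector of angular width $2\pi r$, one has
\[\sum_i |Z_{\sigma_0}([A_i])| \leq (\cos \pi r)^{-1}\cdot |Z_{\sigma_0}([A])|\]
as soon as $r < \tfrac{1}{2}$. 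Combining these inequalities gives $\|[A]\| \leq (C_0/\cos \pi r)\,|Z_{\sigma_0}([A])|$, and comparing $Z_\sigma$ with $Z_{\sigma_0}$ on $B_r(\sigma_0)$ (using that the central charges depend continuously on $\sigma$) yields a bound of the required form with a constant depending only on $\sigma_0$ and $r$. Covering $B$ by finitely many such balls and taking the largest constant proves the claim.

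The genuinely non-routine step is the uniform support property; the collinearity observation $|Z_\sigma(\alpha)| \leq |Z_\sigma(\gamma)|$ and the lattice-point count are straightforward once it is in place.
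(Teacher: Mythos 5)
Your proof is correct, and its skeleton matches the paper's: the observation that a destabilising subobject $A\subset E$ in $\P(\phi)$ forces $|Z(\alpha)|\leq |Z(\gamma)|$, followed by a finiteness statement for classes of semistable objects of bounded central charge over $B$, obtained from the support property plus compactness. Where you differ is in how the uniformity over $B$ is obtained. The paper gets it essentially for free from the definition of the metric \eqref{dist}: the term $\left|\log(m_{\sigma_2}(E)/m_{\sigma_1}(E))\right|$ means the mass of a fixed object varies by a bounded multiplicative factor over the compact set $B$, so ``finitely many classes of mass $<M$'' at one base point transfers to all of $B$ at once. You instead prove a uniform support property by a local sector estimate and a finite cover; this works, and your inequality $\sum_i|Z_{\sigma_0}([A_i])|\leq (\cos\pi r)^{-1}|Z_{\sigma_0}([A])|$ is precisely the statement that $m_{\sigma_0}(A)$ is comparable to $|Z_{\sigma_0}([A])|$ when $A$ is $\sigma$-semistable for nearby $\sigma$ --- i.e.\ you are re-deriving by hand what the mass term of the metric encodes. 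One small point to tighten: the final comparison of $|Z_{\sigma_0}([A])|$ with $|Z_{\sigma}([A])|$ does not follow from continuity alone for an arbitrary class (a class can be annihilated by one central charge and not the other); you should either restrict to classes of $\sigma$-semistable objects and invoke the mass term of the metric once more, or absorb the error $\|Z_\sigma-Z_{\sigma_0}\|\cdot\|[A]\|$ using the support constant you have just established, shrinking the ball so that this error is a small fraction of $\|[A]\|$. Either fix is routine, so this is a presentational gap rather than a mathematical one.
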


\begin{pf}
 The support property for a fixed stability condition $\sigma$ implies that for any given $M>0$ there are only finitely many classes $\alpha\in K(\D)$  represented by objects of mass $<M$ in $\sigma$. On the other hand, the definition  of the metric \eqref{dist} shows that the masses of objects of $\D$ vary by a uniformly bounded amount in $B$, so the same is true if we allow  $\sigma$ to vary in $ B$. Using compactness again we can assume that $M$ is large enough that $|Z(\gamma)|<M$ for all points in $B$. But if $\sigma\in W_\gamma(\alpha)\cap B$ then there is an inclusion $A\subset E$ in some $\P(\phi)$, and it follows that $A$  has mass $<M$, and hence has one of  finitely-many classes.
 \end{pf} 
 Consider the  complement of the closures \[\CC_\gamma= \Stab(\D)\setminus \bigcup_{\alpha\not\sim \gamma} \bar{W}_\gamma(\alpha)\]
where the union is over classes $\alpha$ which are not proportional to $\gamma$.
This   is the complement of a locally-finite union of closed subsets, hence is open.

We refer to the subsets $W_\gamma(\alpha)$ as \emph{walls} for the class $\gamma$, and the connected components of $\CC_\gamma$  will be called  \emph{chambers}. The following result shows that the question of whether a given object $E\in \D$ of class $\gamma$ is stable or semistable has a constant answer for stability conditions in a fixed chamber.

\begin{prop}
\label{chamberprop}
Let $U\subset \CC_\gamma$ be a chamber. If an object $E$ of class $\gamma$ is (semi)stable for some stability condition $\sigma\in U$ then the same is true for all $\sigma\in U$.
\end{prop}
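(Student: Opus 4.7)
The plan is to show that both subsets
\[
U^{\mathrm{ss}}(E)=\{\sigma\in U:E\text{ is }\sigma\text{-semistable}\},\qquad U^{\mathrm{s}}(E)=\{\sigma\in U:E\text{ is }\sigma\text{-stable}\}
\]
are clopen in $U$; since $U$ is connected, each is then either empty or all of $U$. Half of this is automatic from Proposition \ref{fe}: $U^{\mathrm{ss}}(E)$ is closed and $U^{\mathrm{s}}(E)$ is open, so only openness of the first and closedness of the second remain.

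The guiding observation is that the chamber $U\subset\CC_\gamma$ avoids walls $\bar W_\gamma(\alpha)$ only for classes $\alpha$ that are not proportional to $\gamma$. Consequently, if $\sigma\in U$ and $A\subset E$ is a proper nonzero subobject in some heart $\P_\sigma(\phi)$ with class $\alpha$, then $\sigma\in W_\gamma(\alpha)$ forces $\alpha$ to be rationally proportional to $\gamma$, say $\alpha=(p/q)\gamma$ with $0<p/q<1$. Since this makes $Z_\sigma(\alpha)=(p/q)Z_\sigma(\gamma)$ at every $\sigma$, any such subobject automatically has the same phase as $E$ at every stability condition in $\Stab(\D)$, not just at the given one.

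To establish openness of $U^{\mathrm{ss}}(E)$, I would take $\sigma_0\in U^{\mathrm{ss}}(E)$ and use that the (locally finite, hence finite-length) abelian category $\P_{\sigma_0}(\phi_0)$ admits a Jordan--H\"older filtration $0=E_0\subset\cdots\subset E_r=E$ with $\sigma_0$-stable factors $S_i$. Applying the observation to each proper subobject $E_i\subset E$ shows that every class $[E_i]$, and therefore every $[S_i]$, is proportional to $\gamma$. By Proposition \ref{fe}(a) each $S_i$ remains stable in some open neighbourhood $N_i\ni\sigma_0$, and on $N=U\cap\bigcap_i N_i$ the phase-rigidity from the previous paragraph places all the $S_i$ in a common category $\P_\sigma(\psi)$; iteratively extending using the defining triangles $E_{i-1}\to E_i\to S_i$ then shows $E\in\P_\sigma(\psi)$, so $N\subset U^{\mathrm{ss}}(E)$.

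To establish closedness of $U^{\mathrm{s}}(E)$, I would argue by contradiction: assume $\sigma_n\to\sigma_0$ in $U$ with $E$ being $\sigma_n$-stable for every $n$ but only strictly $\sigma_0$-semistable. Pick a simple (hence $\sigma_0$-stable) proper subobject $A\hookrightarrow E$ in $\P_{\sigma_0}(\phi_0)$; by the key observation its class $\alpha$ is proportional to $\gamma$ with a proportionality constant in $(0,1)$. Proposition \ref{fe}(a) gives $\sigma_n$-stability of $A$ for large $n$, and phase-rigidity puts both $A$ and $E$ in $\P_{\sigma_n}(\phi_n)$. The inclusion $A\hookrightarrow E$ is a nonzero morphism in $\D$, and since $\P_{\sigma_n}(\phi_n)$ is a full subcategory of $\D$ it remains a nonzero morphism there between two simple objects of this abelian category, hence an isomorphism; this forces $\alpha=\gamma$, the desired contradiction. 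The only subtle step is checking that the inclusion morphism retains the correct categorical status in the distinct abelian category $\P_{\sigma_n}(\phi_n)$, but this is immediate from fullness and the nonvanishing of the morphism in $\D$ itself; so I do not expect any genuine obstacle.
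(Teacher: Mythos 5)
Your proof is correct and follows essentially the same route as the paper's: the key point in both is that inside the chamber $U$ every subobject of $E$ in a category $\P(\phi)$ has class proportional to $\gamma$, so by Proposition \ref{fe}(a) the stable (Jordan--H\"older) factors persist nearby with phases locked together. The paper packages this as openness of the three sets where $E$ is stable, strictly ``pseudostable'' and unstable, whereas you phrase the stable case as a Schur's-lemma contradiction via a persisting stable subobject; this is only a cosmetic repackaging of the same mechanism.
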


\begin{pf}
We say that an object $E\in \D$ is pseudostable in a  stability condition $\sigma$ if it is semistable and the classes in $K(\D)$ of its stable factors are all proportional. The set of points $\sigma\in \Stab(\D)$ for which a given object $E\in \D$ is pseudostable is open: indeed, by Lemma \ref{fe}(a), if $E$ is pseudostable for $\sigma$ then the stable factors of $E$ remain stable in some open neighbourhood of $\sigma$, and their phases remain equal since they have proportional classes. The set of points where $E$  is unstable is also open, by Lemma \ref{fe}(b), since it is the complement of the points where $E$ is semistable.

Suppose now that $E\in \D$ has class $\gamma$.  If $E$ is semistable but not pseudostable then $\sigma$ must lie on a wall $W_\gamma(\alpha)$.  Thus, the subset of points of the chamber $U$ for which $E$ is semistable is both open and closed. Since $U$ is connected, this subset must be either empty or the whole of $U$.

Assume now that $E$ is semistable for all $\sigma\in U$. As above this implies that $E$ is pseudostable at each $\sigma\in U$. The set of $\sigma\in U$ for which $E$ is stable is then open, by Lemma \ref{fe}(a), and its complement, the set of points for which  $E$ is strictly pseudostable is also open, by the argument given above. Hence, if $E$ is stable for some stability condition in $U$, then it is stable for all of them.  \end{pf}


\subsection{Stability conditions from t-structures}
\label{chambers}

Let $\D$ be a triangulated category. Any stability condition $\sigma=(Z,\P)$ on $\D$ has an associated  heart \[\A=\P((0,1])\subset \D.\] It is the  extension closure of the subcategories $\P(\phi)$ for $0<\phi\leq 1$.  All nonzero objects of $\A$ are mapped by $Z$ into the semi-closed upper half plane
\[\H=\{r\exp(i\pi\phi):r\in\R_{>0}\text{ and }0<\phi\leq 1\}\subset\C.\]
Conversely, given a heart $\A\subset \D$, and a group homomorphism $Z\colon K(\A)\to\C$ with this property, then providing some finiteness conditions are satisfied, there is a unique stability condition on $\D$ with  heart $\A$ and central charge $Z$.

In particular, if $\A\subset \D$ is a finite-length heart  with $n$ simple objects $S_i$ up to isomorphism, the
subset $\Stab(\A)\subset \Stab(\D)$ consisting of stability conditions with heart $\A$ is mapped bijectively by $\pi$ onto the subset
\[\{Z\in \Hom_{\Z}(K(\D),\C): Z(S_i)\in \H\},\]
 and is therefore homeomorphic to $\H^n$.

The following  result shows that tilting controls the way the subsets $\Stab(\A)$, for different hearts $\A\subset \D$, are glued together in $\Stab(\D)$.

\begin{lemma}
\label{wolf}
Let $\A\subset \D$ be a finite-length heart, and suppose that \[\sigma=(Z,\P)\in\Stab(\D)\] lies on a unique boundary component of the region $\Stab(\A)\subset \Stab(\D)$, so that  $\Im Z(S_i)=0$ for a unique simple object $S_i$. Assume that the tilted hearts $\mu^\pm_{S_i}(\A)$ are also finite-length.
Then there is a neighbourhood $\sigma\in U\subset \Stab(\D)$ such that one of the following holds
\begin{itemize}
\item[(i)] $Z(S_i)\in\R_{<0}$,  and $U\subset \Stab(\A)\sqcup \Stab(\mu^+_{S_i}(\A))$,

 \item[(ii)] $Z(S_i)\in\R_{>0}$,  and $U\subset \Stab(\A)\sqcup \Stab(\mu^-_{S_i}(\A))$.
\end{itemize}
\end{lemma}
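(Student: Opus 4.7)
My approach is to use Proposition \ref{lastnew} as a uniqueness tool and to construct candidate stability conditions via the homeomorphisms $\Stab(\A) \cong \H^n$ and $\Stab(\A') \cong \H^n$, where $\A'$ denotes one of the two tilts $\mu_{S_i}^{\pm}(\A)$. By the relation \eqref{jen}, case (ii) follows from case (i) applied to the heart $\mu_{S_i}^-(\A)$, so I focus on case (i): assume $Z(S_i) \in \R_{<0}$, so that $S_i$ has phase $1$ and $\sigma \in \Stab(\A)$. Shrinking $U$, I may apply Proposition \ref{lastnew}: every $\sigma' \in U$ is the unique stability condition whose central charge $Z'$ sits in the $d$-ball $V = \{\tau : d(\sigma,\tau) < 1/2\}$ about $\sigma$.

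The analysis splits on the sign of $\Im Z'(S_i)$. When $\Im Z'(S_i) \geq 0$, the value $Z'(S_i)$ lies in $\H$ (since the closed negative real ray belongs to $\H$), while the remaining $Z'(S_j)$ stay in the interior of $\H$ by continuity. Via the homeomorphism $\pi|_{\Stab(\A)} \colon \Stab(\A) \to \H^n$, I obtain $\widetilde\sigma \in \Stab(\A)$ with $\pi(\widetilde\sigma) = Z'$; this $\widetilde\sigma$ lies in $V$ once $U$ is small, because the homeomorphism is continuous in both directions. Proposition \ref{lastnew} then forces $\sigma' = \widetilde\sigma \in \Stab(\A)$.

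The more delicate case is $\Im Z'(S_i) < 0$, where I work with $\A' := \mu_{S_i}^+(\A)$. A class-level observation is that any simple $T$ of $\A'$ other than $S_i[-1]$ lies in $S_i^\perp \cap \A$; writing $[T] = \sum_j n_j [S_j]$ in $K(\A)$ with $n_j \geq 0$, at least one coefficient $n_j$ with $j \neq i$ must be strictly positive, since otherwise $T \in \langle S_i \rangle \cap S_i^\perp = 0$. Consequently $Z(T)$ has strictly positive imaginary part, and for $U$ small both $Z'(T)$ and $Z'(S_i[-1]) = -Z'(S_i)$ lie in the interior of $\H$. The bijection $\Stab(\A') \cong \H^n$ then furnishes $\widetilde\sigma \in \Stab(\A')$ with $\pi(\widetilde\sigma) = Z'$.

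The main obstacle, and the technical heart of the argument, is to verify that $\widetilde\sigma \in V$ so that Proposition \ref{lastnew} may conclude $\sigma' = \widetilde\sigma$. Since $\A$ and $\A'$ differ only by shifting the subcategory $\langle S_i \rangle$ by $[-1]$, the $\sigma$- and $\widetilde\sigma$-HN-filtrations of a fixed object $E \in \D$ differ essentially in how their $\langle S_i \rangle$-components are treated: at $\sigma$ these contribute factors of phase exactly $1$, while at $\widetilde\sigma$ the same classes are realised by copies of $S_i[-1]$ of small positive phase $\arg Z'(S_i[-1])/\pi$. A direct quantitative comparison of the resulting phases and masses through the defining metric \eqref{dist} should yield $d(\sigma, \widetilde\sigma) \to 0$ uniformly as $U$ shrinks, completing the argument.
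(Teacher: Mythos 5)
Your overall strategy --- build a candidate stability condition $\widetilde\sigma$ with heart $\A$ or $\mu^+_{S_i}(\A)$ directly from the central charge $Z'$ via the bijections $\Stab(\A)\isom\H^n$, then invoke the uniqueness clause of Proposition \ref{lastnew} --- is legitimate, and your preparatory steps are correct: the simples of $\mu^+_{S_i}(\A)$ are indeed $S_i[-1]$ together with simples lying in $S_i^\perp$, and your Jordan--H{\"o}lder argument that the latter have strictly positive $\Im Z$ (hence $\Im Z'>0$ after shrinking $U$, uniformly over the finitely many simples) is exactly right. Two smaller caveats: the reduction of case (ii) to case (i) via \eqref{jen} presupposes that in case (ii) the point $\sigma$ itself has heart $\mu^-_{S_i}(\A)$, which is an instance of the very problem being solved; and the case $\Im Z'(S_i)\geq 0$ leans on the assertion that $\pi|_{\Stab(\A)}$ is a homeomorphism onto $\H^n$ including its boundary stratum, which is close in content to the lemma itself.

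The genuine gap is the step you yourself flag as the technical heart: the verification that $d(\sigma,\widetilde\sigma)<\tfrac12$ when $\widetilde\sigma$ has the tilted heart. This is not a ``direct quantitative comparison''. The HN filtration of a fixed object with respect to $\widetilde\sigma$ is not obtained from its $\sigma$-filtration by perturbing phases --- it can have entirely different factors --- and the containments that come for free from the tilting pair, namely $\mu^+_{S_i}(\A)\subset\P([0,1))$ and $\A\subset\widetilde{\P}((0,1+\psi])$ with $\psi$ small, only bound the phase discrepancies $|\phi^{\pm}_{\widetilde\sigma}(E)-\phi^{\pm}_{\sigma}(E)|$ by quantities approaching $1$, which is not enough to place $\widetilde\sigma$ in the uniqueness ball. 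Controlling the $\widetilde\sigma$-phases of the HN factors of a $\sigma$-semistable object requires the inductive ``thin subcategory'' analysis that constitutes the proof of the deformation theorem \cite[Theorem 7.1]{Bridgeland}; in other words, your final step is essentially equivalent to that theorem rather than a consequence of it. The proofs the paper points to (\cite[Prop.~2.4]{Br3}, \cite[Lemma 5.5]{Bridgeland2} --- the paper itself gives no argument beyond this citation) avoid any such estimate by arguing in the opposite direction: starting from the deformed stability condition $\sigma'=(Z',\P')$ supplied by Proposition \ref{lastnew}, one uses $d(\sigma,\sigma')<\epsilon$ together with the persistence of stability of the finitely many objects $S_j$ (cf.\ Proposition \ref{fe}(a)) to show that $S_j$ for $j\neq i$, $S_i[-1]$, and then all of $S_i^\perp$ lie in $\P'((0,1])$; since $\mu^+_{S_i}(\A)$ and $\P'((0,1])$ are both hearts of bounded t-structures and one contains the other, they coincide. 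Reorganising your argument along those lines would close the gap.
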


\begin{pf}
This is stated without proof in \cite[Lemma 5.5]{Bridgeland2}. A special case is proved in \cite[Proposition 2.4]{Br3}, and the general case is proved in exactly the same way.
\end{pf}

The subsets $\Stab(\A)\subset \Stab(\D)$ form a different system of walls and chambers in $\Stab(\D)$. The walls consist of points where the subcategory $\P(0)$ contains nonzero objects. To distinguish them from the walls considered in the last subsection they are often referred to as walls of type II. Note that they do not depend on a choice of class $\alpha\in K(\D)$. 

Suppose that $\A\subset \D$ is a finite-length heart. It follows from Lemma \ref{wolf} that there is a single connected component $\Stab_\A(\D)\subset \Stab(\D)$ containing all stability conditions whose hearts lie in the connected component $\Tilt_\A(\D)\subset \Tilt(\D)$. Note however that it is not usually the case  that $\Stab_\A(\D)$ is the union of the chambers $\Stab(\B)$ for hearts $\B$ reachable from $\A$ (see \cite{woolf} for a detailed discussion of this point).

An autoequivalence of $\D$ lying in the subgroup $\Aut_\A(\D)\subset \Aut(\D)$ of autoequivalences  reachable from $\A$ necessarily preserves the connected component $\Stab_\A(\D)$. The converse is false:  the existence of the $\C$-action shows that the shift functor $[1]$  fixes all connected components of $\Stab(\D)$, but it is not generally true that $[1]$ is reachable.

It is easy to see that a triangulated autoequivalence $\Phi$  acts trivially on  $\Stab_\A(\D)$ precisely if it fixes a heart $\A\subset \D$ and furthermore fixes pointwise the isomorphism classes of its simple objects. This is equivalent to the condition that $\Phi$ acts trivially on the connected component $\Tilt_\A(\D)$. We say that such autoequivalences are \emph{negligible} with respect to the heart $\A$.





\section{Surfaces and triangulations}
\label{surf}
The particular examples of \CY categories considered in this paper  will be defined using quivers with potential associated to triangulations of marked bordered surfaces.  Unfortunately, the  non-degenerate ideal triangulations  appearing  in Section \ref{fir} will not be sufficient for our purposes. Indeed,  to understand the space of stability conditions on our categories, we will need to understand all  hearts that are reachable from the standard heart; whilst some of these hearts correspond to  non-degenerate ideal triangulations,  others correspond to more exotic objects introduced in \cite{FST} called tagged triangulations. 


\subsection{Ideal triangulations}

\label{tri}

Here we give a brief summary of the relevant definitions concerning  triangulations of marked bordered surfaces. The reader can find a more careful treatment in  the paper  of Fomin, Shapiro and Thurston \cite{FST}.

A \emph{marked bordered surface} is defined to be a pair $(\S,\M)$  consisting of a  compact, connected oriented surface with boundary, and a finite non-empty set $\M\subset \S$ of marked points such that each boundary component of $\S$ contains at least one marked point. Marked points in the interior of $\S$ are called \emph{punctures}; the set of punctures is denoted $\bP\subset \M$.

An \emph{arc}  in   $(\S,\M)$ is a smooth path $\gamma$ in  $\S$ connecting  points of $\M$, whose interior lies in the open subsurface $\S\setminus (\M\cup \partial \S)$, and which has no self-intersections in its interior.  We moreover insist that $\gamma$ should not be  homotopic, relative to its endpoints, to a single point, or to a path in $\partial \S$ whose interior contains no points of $\M$. Two arcs are considered to be equivalent if they are related by a homotopy through such arcs.

An \emph{ideal triangulation} of  $(\S,\M)$ is defined to be a maximal collection of equivalence classes of  arcs for which it is possible to find representatives whose interiors are pairwise disjoint. We refer to the arcs as the \emph{edges} of the triangulation.
An example of an ideal triangulation of a disc with 5 marked points on its boundary is depicted in Figure \ref{below}; note that it has just two edges.  To get something more closely approximating the intuitive notion of a triangulation of the surface one should add arcs in $\partial \S$ connecting the points of $\M$.

\begin{figure}[ht]
\begin{center}
\includegraphics[scale=0.25]{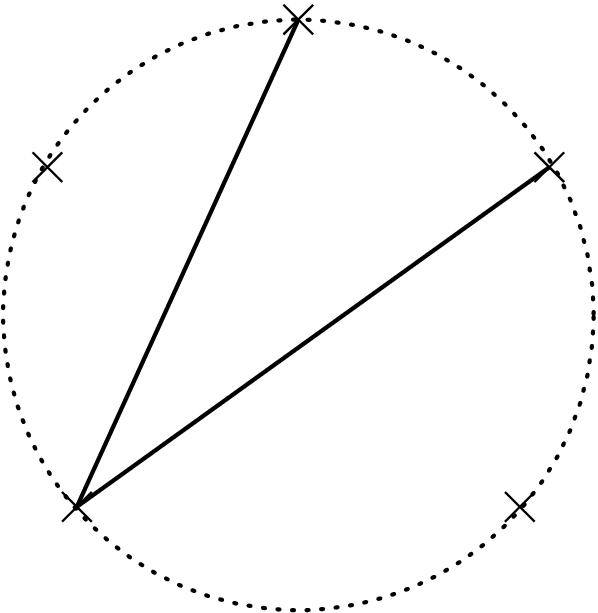}
\end{center}
\caption{A triangulation of a disc with 5 marked points.}\label{below}
\end{figure}

A \emph{face} or \emph{triangle} of an ideal triangulation $T$ is the closure in $\S$ of a connected component of the complement of all arcs of $T$.   A triangle  is called \emph{interior} if its intersection with $\partial \S$ is contained in $\M$.  Each  interior triangle  is topologically a disc,  containing either two or three distinct edges of the triangulation.

 \begin{figure}[ht] \begin{center}
\includegraphics[scale=0.3]{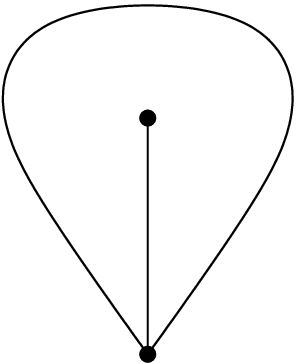}
\caption{A self-folded triangle.}
\end{center} \end{figure}
 
An interior triangle with just two distinct edges is called a \emph{self-folded triangle}; such a triangle  has a \emph{self-folded edge} and an \emph{encircling edge}.  The \emph{valency} of a  puncture $p\in \bP$ with respect to a  triangulation $T$ is the number of half-edges of $T$ that are incident with it; a puncture has valency 1 precisely if it is contained in a self-folded triangle.

   For various  technical reasons, when dealing with triangulations of marked bordered surfaces $(\S,\M)$, we shall always  make the following

\begin{assumption}
\label{asstwo}
We assume that $(\S,\M)$ is not one of the following surfaces
\begin{itemize}
\item[(a)] a sphere with $\leq 5$ marked points;
\item[(b)] an unpunctured disc with  $\leq 3$ marked points on the boundary;
\item[(c)] a disc with a single puncture and one marked point on its boundary.
\end{itemize}
\end{assumption}

In the cases of a sphere with $\leq 2$ punctures, or an unpunctured disc with $\leq 2$ marked points, there are no ideal triangulations, and so the theory described below is vacuous. In the cases of an unpunctured disc with 3 marked points, and the surface of case (c), there is a unique ideal triangulation and the theory is trivial and rather degenerate; see Examples \ref{beggar} and \ref{begg}.

The case of a three-punctured sphere is special in that there is an ideal triangulation consisting of two self-folded triangles meeting along a common edge; this plays havoc with the definition of a tagged triangulation below and for this reason it is better to deal with this case directly:  see Section \ref{threepunctures}. 
Finally, the cases of  spheres with 4 or 5 punctures are definitely interesting, but we  have to exclude them because the crucial  results of Section \ref{sunny} have not been established for these surfaces.

A marked bordered   surface   $(\S,\M)$ is determined up to diffeomorphism by its genus $g$, the number of punctures $p$, and a collection of integers $k_i\geq 1$ encoding the number of marked points on each boundary component.
Any ideal triangulation of such a surface has the same number of edges, namely
\[n = 6g-6+3p+\sum_i (k_i+3).\]


\subsection{Flips and pops}
\label{flops}

 Let $(\S,\M)$ be a marked bordered surface satisfying Assumption \ref{asstwo}.
 A \emph{signed triangulation} of $(\S,\M)$ is a pair $(T,\epsilon)$ consisting of an ideal triangulation $T$, and a function \[\epsilon\colon \bP\to \{\pm 1\}.\]
 By a \emph{pop} of a signed triangulation $(T,\epsilon)$ we mean the operation of changing the sign $\epsilon(p)$ associated to a puncture $p\in \PP$ of valency one. Note that any such puncture lies in the centre of a self-folded triangle of $T$.
 
 The popping operation generates an equivalence relation on signed triangulations, in which two signed triangulations $(T_i,\epsilon_i)$ are  equivalent precisely if the underlying triangulations $T_i$ are the same, and the signings $\epsilon_i$  differ only at punctures  $p\in \PP$ of valency one. 
  
It turns out that the  equivalence classes  for this relation can be explicitly represented by a combinatorial gadget called a \emph{tagged triangulation}. We will explain this in Section \ref{tagg} below, but for now we simply  define a tagged triangulation to be an equivalence class of signed triangulations.

Let us introduce notation
$\Tri(\S,\M),  \Tri_\pm(\S,\M),  \Tri_{\bowtie}(\S,\M)$
 for the sets of ideal, signed and tagged  triangulations of $(\S,\M)$ respectively. There is a diagram of maps
\begin{equation}\label{diagr}\xymatrix{ 
&\Tri_{\pm}(\S,\M)\ar[d]_{q} \\
\Tri(\S,\M) \ar[r]^{i} \ar[ur]^{j}&\Tri_{\bowtie}(\S,\M)}\end{equation}
where $q$ is the obvious quotient map, and the  arrows $i$ and $j$ are embeddings obtained by  considering an ideal triangulation as a signed, and hence a tagged triangulation,  using the signing $\epsilon\equiv +1$.

 Two ideal triangulations $\T_1$ and $\T_2$ are  related by a \emph{flip} if  they are distinct, and there are edges $e_i\in \T_i$  such that $\T_1\setminus \{e_1\}=\T_2\setminus \{e_2\}$.
Note that the edges $e_1$ and $e_2$ are  necessarily non-self-folded.
Conversely, if $e$ is a non-self-folded edge of an ideal triangulation $T$, it is contained in exactly two triangles of $T$, and  there is a unique ideal triangulation which is the flip of $T$ along $e$. \begin{figure}[ht]
\begin{center}
\includegraphics[scale=0.4]{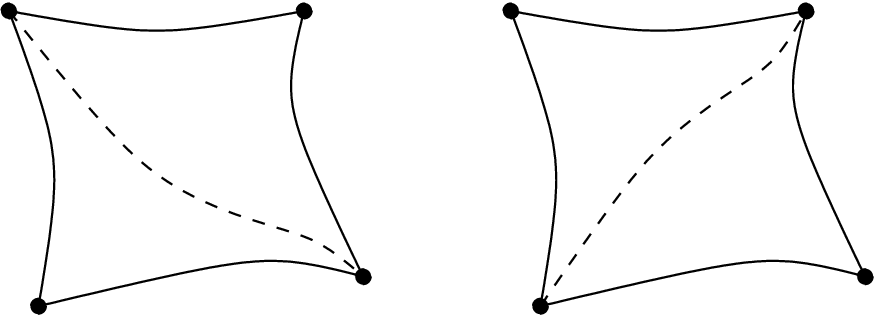}
\end{center}
\caption{Flip of a triangulation.\label{flippy}}
\end{figure}
The flipping  operation extends to signed triangulations in the obvious way: we flip the underlying triangulation, keeping the signs constant.
We  say that two tagged triangulations are related by a flip if they  can be represented by signed triangulations which differ by a flip.

The sets  appearing in the diagram \eqref{diagr} can be considered as graphs, with two (ideal, signed, tagged) triangulations being connected by an edge if they differ by a flip.  The maps in the diagram then become maps of graphs. 
The important point is that, unlike the graph $\Tri(\S,\M)$ of ideal triangulations, the graph $\Tri_{\bowtie}(\S,\M)$ of tagged triangulations  is $n$-regular.

The basic explanation for this regularity is as follows. When a triangulation $T_1$ contains a self-folded triangle $\Delta$, we cannot flip the self-folded edge $f$  of $\Delta$, so the number of flips that can be performed on $T_1$ is less than  the total number of edges $n$. On the other hand,  if we choose a signing $\epsilon\colon \PP\to \{\pm 1\}$, and consider  the signed triangulation $(T_1,\epsilon_1)$ up to the above equivalence relation, then when we flip the encircling edge $e$ of $\Delta$,  the puncture $p$ contained in $\Delta$ has valency 2 in the new triangulation $T_2$, and so there  are  two inequivalent possible choices for the sign $\epsilon_2(p)$. 

It is well-known that any two ideal triangulations of $(\S,\M)$ are related by a finite chain of flips; thus the graph $\Tri(\S,\M)$ is always connected \cite[Prop. 3.8]{FST}.
The graph $\Tri_{\bowtie}(\S,\M)$ is also connected, except for the case when $(\S,\M)$ is a closed surface with a single puncture $p\in \PP$: in that case   $\Tri_{\bowtie}(\S,\M)$ has two connected components corresponding to the two possible choice of signs $\epsilon(p)$ \cite[Prop. 7.10]{FST}.


\subsection{Tagged triangulations}
\label{tagg}
We now explain why the set $\Tri_{\bowtie}(\S,\M)$ of tagged triangulations we defined above coincides with the standard version as defined by Fomin, Shapiro and Thurston  \cite{FST}.  This material will not be used in the rest of the paper, and is only logically necessary to justify the above assertions  that the graph $\Tri_{\bowtie}(\S,\M)$ is connected and $n$-regular.

Let $(\S,\M)$ be a marked, bordered surface $(\S,\M)$ satisfying Assumption \ref{asstwo}. 
A \emph{tagged arc} in $(\S,\M)$ is an arc as defined above, each end of which has been labelled by one of two labels:  plain or tagged.
Fix a function $\epsilon\colon \bP\to \{\pm 1\}$. Given an ordinary arc $e$, there is a  corresponding tagged arc $t_\epsilon(e)$ defined by the following rule:
 \begin{itemize}
\item[(a)]
 If $e$ is not a loop enclosing a once-punctured disc, the underlying arc of $t_{\epsilon}(e)$ is just $e$, and an end of $e$ is labelled tagged precisely if it lies at  a puncture $p\in \bP$ with $\epsilon(p)=-1$. \\

\item[(b)]If $e$ is a loop based at $m\in \M$, enclosing a disc which contains a single puncture $p\in \bP$, then the underlying arc of $t_\epsilon(e)$ is the arc connnecting $p$ to $m$ inside the disc.  We label the edge adjacent to $m$ tagged precisely if $m$  is a puncture with $\epsilon(m)=-1$, and the edge adjacent to $p$ tagged precisely if $\epsilon(p)=+1$.
\end{itemize}

By \cite[Lemma 9.3]{FST},
 a  tagged triangulation in the standard sense considered there is precisely  a set of tagged arcs of the form $t_\epsilon(T)$ for some signed triangulation $(T,\epsilon)$.  The following result shows  that these tagged triangulations are in bijection with the equivalence classes of signed triangulations considered in the last section.

\begin{lemma}
 \label{willy}
Suppose that $(T_1,\epsilon_1)$ and $(T_2,\epsilon_2)$ are signed triangulations. Then  $t_{\epsilon_1}(T_1)=t_{\epsilon_2}(T_2)$ if and only if $T_1=T_2$, and the signings $\epsilon_1, \epsilon_2$ differ only at punctures of valency one.
\end{lemma}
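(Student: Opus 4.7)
The key structural fact is that the asymmetry between rules (a) and (b) in the definition of $t_\epsilon$ is tailored so that, in each self-folded triangle of $T$, the images of the two constituent edges become a conjugate pair. More precisely, let $\Delta$ be a self-folded triangle, with self-folded edge $\ell$ (an arc from some $m\in\M$ to the enclosed puncture $p$) and encircling edge $\gamma$ (a loop at $m$ around $p$). Rule (a) applied to $\ell$ gives a tagged arc with underlying path $\ell$ whose tagging at $p$ is ``tagged iff $\epsilon(p)=-1$''. Rule (b) applied to $\gamma$ gives a tagged arc with the same underlying path $\ell$, whose tagging at $m$ matches that of $t_\epsilon(\ell)$, but whose tagging at $p$ is ``tagged iff $\epsilon(p)=+1$''; the two taggings at $p$ are therefore always opposite. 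Flipping $\epsilon(p)$ swaps the two tagged arcs in the pair, leaving the set $\{t_\epsilon(\ell),t_\epsilon(\gamma)\}$ unchanged. My plan is to run the whole proof off this swap.

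For the reverse implication, I assume $T_1=T_2=T$ and that the signings agree off the valency-one punctures of $T$. For each edge $e$ of $T$ I verify $t_{\epsilon_1}(e)=t_{\epsilon_2}(e)$ by cases. Any puncture endpoint of an arc of $T$ has positive valency, and the valency-one punctures are exactly the enclosed punctures of the self-folded triangles; for each such triangle the swap observation above shows that the pair $\{t_\epsilon(\ell),t_\epsilon(\gamma)\}$ is preserved when $\epsilon(p)$ is flipped. All other arc endpoints sit at punctures of valency at least two, where the signings agree by hypothesis. Combining these facts across all edges yields $t_{\epsilon_1}(T)=t_{\epsilon_2}(T)$.

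For the forward implication, I will reconstruct $T_i$ and recover the $\epsilon_i$ on the non-valency-one punctures directly from $\mathcal{T}=t_{\epsilon_1}(T_1)=t_{\epsilon_2}(T_2)$. Within $\mathcal{T}$, I identify the ``conjugate pairs'': pairs of tagged arcs sharing a common underlying path and differing in their tagging at exactly one puncture endpoint. By \cite[Lemma 9.3]{FST} (which characterises sets of tagged arcs of the form $t_\epsilon(T)$), each such conjugate pair must arise from a self-folded triangle in some ideal triangulation. I replace each conjugate pair by the pair (loop at the base point, tail arc) of the corresponding self-folded triangle, and each tagged arc not in a conjugate pair by its underlying arc; the resulting set of ordinary arcs is then the underlying ideal triangulation. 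Since this recipe depends only on $\mathcal{T}$, one has $T_1=T_2=:T$. At each puncture $q$ of $T$ of valency at least two, rule (a) forces the tagging at $q$ of every tagged arc in $\mathcal{T}$ ending at $q$ to take the common value ``tagged iff $\epsilon_i(q)=-1$''; this pins down $\epsilon_1(q)=\epsilon_2(q)$. Hence $\epsilon_1$ and $\epsilon_2$ can only disagree at valency-one punctures of $T$, as required.

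The main obstacle is the structural identification of the conjugate pairs in $\mathcal{T}$ as precisely the images of the self-folded triangles of the underlying ideal triangulation; once this is granted, the remaining verifications reduce to direct inspection of the formulas defining $t_\epsilon$. This identification is where \cite[Lemma 9.3]{FST} enters essentially: one needs to know both that a tagged triangulation can contain two tagged arcs with the same underlying path only in the conjugate-pair configuration, and conversely that every self-folded triangle gives rise to such a pair. Granted this classification, the proof is a systematic but routine case analysis.
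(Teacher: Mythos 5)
Your proof is correct and follows essentially the same route as the paper's: the ``if'' direction via the observation that flipping the sign at a valency-one puncture merely swaps the two taggings of the common underlying arc coming from a self-folded triangle, and the ``only if'' direction by reconstructing the ideal triangulation (and the signing at punctures of valency $\geq 2$) from the set of tagged arcs, using the fact that two tagged arcs in $t_\epsilon(T)$ share an underlying arc only in the self-folded configuration, with the encircling edge determined by the puncture at which the taggings differ. No issues.
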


\begin{pf}
Any vertex of valency one  lies in the interior of a self-folded triangle, and it is clear from the definition of $t_\epsilon(T)$ that the resulting collection of tagged arcs does not distinguish between the two choices of sign at such a vertex (the enclosing and folded edge of the self-folded triangle get mapped to two taggings of the same arc; changing the sign just exchanges these two). 

The converse follows easily from the following observations. Suppose that $\eta\in t_\epsilon(T)$ is a tagged arc, with underlying arc $f$. Consider tagged arcs $\zeta\in t_\epsilon(T)$ which have the same underlying arc $f$. If there is no such  $\zeta$ then we must have $\eta=t_\epsilon(f)$, and the arc $f$  is not contained in a self-folded triangle of $T$. If there is such a $\zeta$, then there is a self-folded triangle $\Delta$ in $T$ with self-folded edge $f$, and encircling edge $e$, such that $\{\eta,\zeta\}=\{t_\epsilon(e),t_\epsilon(f)\}$.  Moreover, the encircling edge $e$ is completely determined, because the puncture inside $\Delta$ is the one at which $\eta$ and $\zeta$ have different markings. 
\end{pf}

We should also check that our definition of when two tagged triangulations differ by a flip coincides with the standard one.
 Namely, in \cite{FST}, two tagged triangulations $\tau_1$ and $\tau_2$ are said to be related by a flip if they are distinct, and there are tagged arcs $\eta_i\in \tau_i$ such that $\tau_1\setminus \{\eta_1\}=\tau_2\setminus \{\eta_2\}$.

\begin{lemma}
 Two tagged triangulations differ by a flip in the above sense, precisely if they can be represented by signed triangulations differing by a flip.
\end{lemma}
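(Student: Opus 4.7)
The plan is to prove the two directions separately. The forward direction (signed triangulations differing by a flip yield tagged triangulations differing by a flip) is essentially a direct computation, while the reverse direction requires first adjusting the signing so that the tagged arc being flipped corresponds to a non-self-folded edge.

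For the forward direction, suppose $(T_1,\epsilon)$ and $(T_2,\epsilon)$ are signed triangulations differing by a flip with $e_i \in T_i$ satisfying $T_1\setminus\{e_1\}=T_2\setminus\{e_2\}$. Since the tagging assignment $t_\epsilon$ depends only on an arc and on $\epsilon$, not on the ambient triangulation, $t_\epsilon(f)$ is the same tagged arc whether $f$ is viewed as an edge of $T_1$ or of $T_2$, and hence
\[
t_\epsilon(T_2) \;=\; \bigl(t_\epsilon(T_1)\setminus\{t_\epsilon(e_1)\}\bigr)\cup\{t_\epsilon(e_2)\}.
\]
To conclude I would verify that $t_\epsilon(e_1) \neq t_\epsilon(e_2)$ by a brief case analysis on whether the $e_i$ are loops enclosing once-punctured discs. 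In the generic case the underlying arcs of the two tagged arcs are already distinct; in the exceptional case where they happen to coincide (which can occur when the flip moves a loop to a radial arc inside it, or from one loop to another across a self-loop), rules (a) and (b) defining $t_\epsilon$ force the taggings at the shared interior puncture to be opposite.

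For the reverse direction, given tagged triangulations with $\tau_1\setminus\{\eta_1\}=\tau_2\setminus\{\eta_2\}$, I would take any signed representative $(T_1,\epsilon_1)$ of $\tau_1$ and choose the unique edge $e \in T_1$ with $t_{\epsilon_1}(e)=\eta_1$. If $e$ is non-self-folded we can flip it directly. Otherwise $e$ is the self-folded edge of a self-folded triangle with encircling edge $e'$ and interior puncture $p$ of valency one; popping the sign at $p$ produces an equivalent signed representative $(T_1,\epsilon_1')$ of $\tau_1$ under which $t_{\epsilon_1'}(e')=\eta_1$, with $e'$ non-self-folded and hence flippable. After this preliminary pop I flip the flippable edge to obtain a signed triangulation $(T_2,\epsilon_1)$ (or $(T_2,\epsilon_1')$), and by the already-established forward direction $t_{\epsilon_1}(T_2)$ is a tagged triangulation of the form $(\tau_1\setminus\{\eta_1\})\cup\{\eta_2'\}$ for some tagged arc $\eta_2'$.

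The main obstacle is the final identification $\eta_2'=\eta_2$. This does not follow from a direct calculation; I would instead invoke the uniqueness of tagged flips from \cite{FST}, which asserts that for any tagged triangulation $\tau_1$ and any tagged arc $\eta_1\in\tau_1$ there is at most one tagged arc $\eta'$ such that $(\tau_1\setminus\{\eta_1\})\cup\{\eta'\}$ is again a tagged triangulation. Since both $\eta_2$ and $\eta_2'$ enjoy this property, they must coincide, and the resulting pair of signed triangulations provides the desired flip-related representatives.
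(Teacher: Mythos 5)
Your proposal is correct and follows essentially the same route as the paper: the forward direction by observing that $t_\epsilon$ acts arc-by-arc (the paper cites its Lemma on when $t_{\epsilon_1}(T_1)=t_{\epsilon_2}(T_2)$ to get distinctness), and the converse by choosing a signed representative in which $\eta_1$ corresponds to a non-self-folded (hence flippable) edge and then invoking the uniqueness of tagged flips from Fomin--Shapiro--Thurston. Your explicit justification of that representative choice via a preliminary pop is a detail the paper leaves implicit, but it is the right one.
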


\begin{pf} 
One implication is clear, since if two signed triangulations differ by a flip then by Lemma \ref{willy}, so do the associated tagged triangulations. For the converse, suppose that $\tau_1\setminus \{\eta_1\}=\tau_2\setminus \{\eta_2\}$. Then we can write $\tau_1=t_\epsilon(T_1)$ for some signed triangulation $(T_1,\epsilon)$ in such a way that $\eta_1=t_\epsilon(e_1)$ for some non-self-folded edge $e_1$. Flipping this edge gives a different signed triangulation $T_2$ satisfying $T_1\setminus\{e_1\}=T_2\setminus\{e_2\}$. It follows that $\tau_1\setminus\{\eta_1\}=t_\epsilon(T_2)\setminus t_\epsilon(e_2)$. But the flip of a  tagged triangulation in a  tagged arc is unique \cite[Theorem 7.9]{FST}.
Thus we have $\tau_2=t_\epsilon(T_2)$ and $\eta_2=t_\epsilon(e_2)$.
\end{pf}


\subsection{Edge lattice and quiver}
\label{edgelattice}

Let $(\S,\M)$ be a marked bordered surface satisfying Assumption \ref{asstwo}. The \emph{edge lattice}  of an ideal triangulation $T$ of $(\S,\M)$ is defined to be the free abelian group $\Gamma(T)$  on the edges of $T$. We denote by $[e]$ the basis element corresponding to the edge $e\in T$; thus
\[\Gamma(T)=\bigoplus_{e\in T}\Z\cdot  [e].\]
For distinct edges $e,f\in T$, we define $c(e,f)$ 
to be the number of  triangles of $T$ in which $e$ and $f$ appear as adjacent edges in clockwise order.
There is  a  skew-symmetric form \[\langle-,-\rangle \colon \Gamma(T)\times \Gamma(T)\to \Z\]
 given by the formula\[\langle [e],[f]\rangle=c(f,e)-c(e,f).\]  Note that if $e$ and $f$ are the encircling and self-folded edges of a self-folded triangle then
\begin{equation}
\label{deg}c(e,f)=1=c(f,e), \quad \langle [e],[f]\rangle =0.\end{equation}
It will be convenient for  later purposes to define $c(e,e)=-2$ for all edges $e$, although of course this has no effect on the form $\langle-,-\rangle$.

 We will also need a modified basis $\{e\}$ for the group $\Gamma(T)$ defined as follows: 
\begin{itemize}
\item[(a)]
if $e\in T$ is not an edge of a self-folded triangle then  $\{e\}=[e]$; \smallskip
\item[(b)]
 if $e$ and $f$ are respectively the encircling and self-folded edges of a self-folded triangle,  then
  $\{e\}=[e]$ and $\{f\}=[e]+[f]$.
 \end{itemize}
 We will  see some more intrinsic interpretations of the edge lattice  $\Gamma(T)$ later:  as a Grothendieck group  with its Euler form (Lemma \ref{hope}), and as a homology group with an intersection form (Lemma \ref{move}).   We will also give some   explanation for the strange-looking definition of the basis $\{e\}$ (Section \ref{sing}).
 
Define a map $\kappa\colon T\to T$  by setting $\kappa(f)=f$ unless $f$ is a self-folded edge of a self-folded triangle, in which case $\kappa(f)=e$ is the encircling edge of the same triangle. For distinct edges $e$ and $f$ define
\[n(e,f)=\max\big(0,\langle [\kappa(f)],[\kappa(e)]\rangle\big)\geq 0.\]
Note that there is a relation
\begin{equation}\label{grass}\langle \{e\},\{f\}\rangle=\langle [\kappa(e)],[\kappa(f)]\rangle =n(f,e)-n(e,f)\end{equation}for all $e,f\in T$.
This is easily checked by noting that when $f$ is a self-folded edge, the basis element $[f]$ lies in  the kernel of the form $\langle -,-\rangle$.

To any ideal triangulation $T$ we can now associate  a quiver $Q(T)$ whose vertices are the edges of $T$, and with $n(e,f)$ arrows from vertex $e$ to vertex $f$. 
By its definition it has no loops or 2-cycles. In the case of a non-degenerate ideal triangulation of a closed surface it reduces to the quiver considered in Section \ref{fir}.

\begin{remarks}
\label{remy}
\itemize
\item[(a)]
 If $T$ has a self-folded triangle with edges $e$ and $f$, then since $\kappa(e)=\kappa(f)$,  there is an involution of the quiver $Q(T)$ exchanging the vertices corresponding to these two edges. \smallskip

\item[(b)]  If $e$ and $f$ are distinct  non-self-folded edges,  then  it is easily checked that $c(e,f)$ and $c(f,e)$ are both nonzero precisely if $e$ and $f$ meet at a puncture of valency 2. Thus if a pair of edges $e$, $f$ are such that $\kappa(e)$ are  $\kappa(f)$ are distinct, and do not meet at a vertex of valency 2, then  $n(e,f)=c(\kappa(e),\kappa(f))$.
\end{remarks}


\subsection{Ordered versions}
\label{ordvers}

Sometimes in what follows it will be clearer to work with ordered versions of our basic combinatorial objects: triangulations, quivers, t-structures etc.  In this section we gather the necessary definitions; these mostly  proceed along the obvious lines. 

An ordered ideal triangulation is an ideal triangulation equipped with an ordering of its edges. Similarly, one can consider ordered signed triangulations. By a   pop of an ordered signed triangulation  we mean the operation which changes the sign $\epsilon(p)$ associated to a puncture $p\in \PP$ of valency 1,  and which also changes the ordering of the  triangulation by transposing  the two edges of the self-folded triangle containing $p$. Two ordered signed triangulations are considered equivalent if they differ by a finite sequence of  such pops. By an ordered tagged triangulation we mean an equivalence class of ordered signed tagged triangulations.  The map $t_\epsilon$ of Section \ref{tagg} respects  this equivalence relation, and it follows that we can realise ordered tagged triangulations as  ordered collections of tagged arcs.

 Two ordered  triangulations are related by a flip if  the underlying  triangulations are related by a flip, and if the orderings of their edges are compatible with the obvious bijection between the edges of the two triangulations. Similarly, one can consider flips of ordered signed triangulations.
Two ordered tagged triangulations are related by a flip if they can be represented by ordered signed triangulations that are related by a flip.

An ordered  quiver is a quiver equipped  with a fixed ordering of its vertices. An ordered triangulation $T$ has an associated ordered quiver $Q(T)$.  Remark \ref{remy}(a) shows that the ordered quiver associated to an ordered triangulation is invariant under transposing the order of the two edges of  a self-folded triangle, and it follows that every ordered tagged triangulation  also has an associated ordered quiver. 

Finally, suppose that $\D$ is  a \CY triangulated category. By an ordering of a finite-length heart $\A\subset \D$ we mean an ordering of the  simple objects of $\A$. The associated quiver $Q(\A)$ is then also  ordered in the obvious way. As we explained in the proof of Proposition \ref{rpt},  for any simple object $S\in \A$, there is a canonical bijection between the simple objects of the heart $\A$ and those of the tilted heart  $\mu^\pm_{S}(\A)$. We say that two ordered hearts $\A,\B\subset \D$ are related by a tilt in a simple object,  if the hearts $\A,\B$ are related by such a tilt, and if the orderings on $\A,\B$ are compatible with this canonical bijection.

 We denote the  graphs of ordered ideal, signed, tagged triangulations by \[\Triord(\S,\M),\quad \Triord_{\pm}(\S,\M), \quad \Triord_{\bowtie}(\S,\M)\] respectively. The maps in \eqref{diagr} induce maps of the ordered versions in the obvious way.
Similarly, given a non-degenerate heart $\A\subset \D$ we use the notation
\[\Tilt_\A^\ord(\D), \quad \Ex_\A^\ord(\D)\]
for the graphs of ordered reachable finite-length hearts, and the quotient of this graph by the group $\Sph_\A(\D)$. We note that these graphs will not be connected in general.


\subsection{Mapping class group}

 By a diffeomorphism of a marked bordered surface $(\S,\M)$ we mean a diffeomorphism of $\S$ which fixes the subset $\M$, although possibly permuting its elements.
The \emph{mapping class group}  $\MCG(\S,\M)$ is the group of all orientation-preserving diffeomorphisms of $(\S,\M)$ modulo  those which are homotopic to the identity  through diffeomorphisms of $(\S,\M)$.

The mapping class group  clearly acts on the graphs of (ideal, signed, tagged) triangulations of the surface $(\S,\M)$, since the edges of such triangulations consist of homotopy classes of arcs.

\begin{prop}
\label{weiwen}
Suppose that $(\S,\M)$ is a marked bordered surface which satisfies Assumption \ref{asstwo} and which is not one of the following 3 surfaces
\begin{itemize}
\item[(a)] a once-punctured disc with 2 or 4 marked points on the boundary;\smallskip
\item[(b)]  a 
 twice-punctured disc with 2 marked points on the boundary.
 \end{itemize}
Then two ideal triangulations  of $(\S,\M)$ differ by an element of $\MCG(\S,\M)$ precisely if the associated quivers   are isomorphic.  
\end{prop}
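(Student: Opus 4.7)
The forward implication is immediate: if $f\in \MCG(\S,\M)$ carries $T_1$ to $T_2$, then naturality of the assignment $T\mapsto Q(T)$ from Section \ref{edgelattice} produces an induced isomorphism $Q(T_1)\isom Q(T_2)$.

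For the converse, the plan is to reconstruct an ideal triangulation $T$ of $(\S,\M)$ from its quiver $Q(T)$ up to the action of $\MCG(\S,\M)$, and to identify the excluded surfaces as precisely those on which this reconstruction fails. First I would use the structure of $Q(T)$ to recover the set of interior triangles: each non-self-folded interior triangle contributes a clockwise $3$-cycle of arrows in $Q(T)$, and by Remark \ref{remy}(b) every arrow of $Q(T)$ arises in this way except when it comes from an edge adjacent to a self-folded triangle or from a pair of edges meeting at a puncture of valency $2$. Combining the face data with the adjacency data gives an abstract $2$-complex, and an Euler characteristic calculation together with the surface classification would show that this complex triangulates a surface of the same topological type as $(\S,\M)$, inducing the desired diffeomorphism.

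Two local ambiguities then need to be dealt with. The first is self-folded triangles: the encircling and self-folded edges of such a triangle share the same neighbours in $Q(T)$ and are swapped by an involution of the quiver, as noted in Remark \ref{remy}(a). However, this swap is realised by the diffeomorphism of $(\S,\M)$ which performs a half-twist around the enclosed puncture, so it disappears after quotienting by $\MCG(\S,\M)$. The second is the valency-$2$ configuration of Remark \ref{remy}(b), where two non-self-folded edges meeting at a puncture of valency $2$ satisfy $c(e,f)=c(f,e)=1$ and therefore contribute no net arrows to $Q(T)$; here distinct local triangulation patterns can yield locally indistinguishable quiver data, and global information must be used to break the ambiguity.

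I expect this second ambiguity to be the main obstacle, and to be the reason for the three excluded surfaces. On a sufficiently large surface the global Euler characteristic and the combinatorial count of edges, triangles, and punctures of each valency should pin down the reconstruction uniquely. But on a once-punctured disc with $2$ or $4$ boundary marked points, and on a twice-punctured disc with $2$ boundary marked points, one can concretely exhibit pairs of inequivalent triangulations whose quivers coincide---for instance by replacing a self-folded triangle at one puncture with a valency-$2$ configuration at another, and vice versa---so these cases must genuinely be excluded. The technical core of the proof will therefore be a finite case analysis, organised by the number of punctures of low valency, verifying that outside of those three exceptional surfaces and under Assumption \ref{asstwo}, the quiver $Q(T)$ together with the topological type of $(\S,\M)$ rigidly determines $T$ up to the action of the mapping class group.
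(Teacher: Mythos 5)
Your forward implication is fine, and your overall strategy (reconstruct $T$ from $Q(T)$ up to diffeomorphism, with the excluded surfaces being exactly the failures) is the right shape. But as written the proposal has a genuine gap at its core: the entire difficulty of the converse is concentrated in the step you defer to "a finite case analysis, organised by the number of punctures of low valency," and that step is not actually finite in any naive sense --- the surfaces range over all genera and all numbers of marked points, so one needs a structural mechanism to reduce the global reconstruction problem to finitely many local configurations. This is precisely what the block-decomposition machinery of Fomin--Shapiro--Thurston \cite{FST} provides, and the classification of the quivers admitting more than one block decomposition (hence of the ambiguous surfaces) is the content of an entire separate paper of Gu \cite{Gu}. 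The paper's proof simply quotes \cite[Prop.~14.1]{FST} together with Gu's classification; your proposal would need to reprove both, and the Euler-characteristic count you gesture at does not by itself rule out two genuinely different triangulations of the same surface sharing a quiver.

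There is also a problem with your first reconstruction step. Three-cycles of $Q(T)$ are not in bijection with triangles of $T$: for a non-degenerate triangulation, every puncture of valency $3$ also contributes a $3$-cycle $C(p)$ to the quiver, and as an abstract quiver one cannot distinguish the "face" $3$-cycles from the "puncture" $3$-cycles. Moreover, because $Q(T)$ is defined via the skew-symmetrised counts $c(f,e)-c(e,f)$, the arrows coming from two triangles sharing a pair of edges at a valency-$2$ puncture cancel entirely, so the face data you want to read off is genuinely destroyed, not merely obscured. Your proposal acknowledges the valency-$2$ ambiguity but treats it as a local nuisance to be broken by "global information," whereas it is exactly the source of the block-decomposition formalism and of Gu's sporadic examples. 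In short: correct plan, correct identification of the obstacles, but the proof of the hard direction is missing rather than sketched.
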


\begin{pf}
One implication is clear: if two triangulations differ by an orientation-preserving  diffeomorphism then they have the same combinatorics and hence the same associated quivers.

For the converse, 
suppose that two surfaces $(\S_i,\M_i)$ have ideal triangulations $T_i$. 
In \cite[Section 13]{FST} it is explained how to decompose the quivers $Q(T_i)$  into certain blocks. In the proof of \cite[Proposition 14.1]{FST} it is shown that if each quiver $Q(T_i)$ has a unique block decomposition, then the quivers $Q(T_i)$  are isomorphic precisely if there is an orientation-preserving  diffeomorphism between the  surfaces $(\S_i,\M_i)$ taking one triangulation $T_i$ to the other.  
Weiwen Gu \cite{Gu} has classified all quivers which have more than one block decomposition. The only examples corresponding to combinatorially-distinct triangulations of the same surface occur when $(\S,\M)$ is one of the three cases listed in the statement of the Proposition, or a sphere with $3$ or $4$ punctures. These last two cases are already excluded by Assumption \ref{asstwo}.
\end{pf}


The mapping class group of $(\S,\M)$  acts on the set $\PP\subset \M$ of punctures in the obvious way, and we define the \emph{signed mapping class group} to be the corresponding semi-direct product
\begin{equation}
\label{semi}
\MCG^\pm(\S,\M)=\MCG(\S,\M)\ltimes \Z_2^{\PP}.\end{equation}
This group acts on the set of  signed triangulations, with the $\Z_2^{\PP}$ part acting by changing the signs $\epsilon(p)\in \{\pm 1\}$ associated to the punctures.  This action clearly descends to an action on tagged triangulations.
It is an immediate consequence of Proposition \ref{weiwen} that the quivers associated to two signed or tagged triangulations of $(\S,\M)$ are isomorphic  precisely if they differ by an element of the signed mapping class group.


\subsection{Free action on ordered triangulations}

One reason to introduce  ordered triangulations is the following  result.

\begin{prop}
\label{freely}
Suppose that $(\S,\M)$ satisfies Assumption \ref{asstwo} and is not one of the following 3 surfaces:
\begin{itemize}
\item[(i)] an unpunctured disc with 4 points on its boundary;\smallskip
\item[(ii)]  an annulus with one marked point on each boundary component;\smallskip
\item[(iii)] a closed torus with a single puncture.
\end{itemize}
Then the action of the mapping class group $\MCG(\S,\M)$  on the set $\Triord(\S,\M)$ of ordered ideal triangulations   is free. Similarly, the actions of the signed mapping class group  $\MCG^\pm(\S,\M)$ on the sets $\Triord_\pm(\S,\M)$ and $\Triord_{\bowtie}(\S,\M)$  are free.
\end{prop}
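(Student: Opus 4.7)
The plan is to reduce all three claims to the statement about ordered ideal triangulations and then establish that case by a combinatorial rigidity argument. Suppose $\phi\in\MCG(\S,\M)$ fixes an ordered ideal triangulation $T$. After choosing a representative diffeomorphism and applying a standard isotopy extension argument, we may assume $\phi$ preserves $T$ as a CW-structure on $\S$ and sends each edge to itself setwise. The induced combinatorial automorphism of $T$ then fixes every edge, and we split into two cases according to the action of $\phi$ on the vertex set $\M$.

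Case 1, where $\phi$ fixes every vertex $p\in\M$. At each such $p$, the differential $d\phi|_p$ is an orientation-preserving linear map permuting the cyclic sequence of tangent directions of incident edges, and since each edge is setwise preserved, the tangent direction of any incident edge is sent either to itself or to its opposite. Orientation-preservation then forces $d\phi|_p$ to be either the identity or rotation by $\pi$. If $d\phi|_p=\mathrm{id}$ at some vertex, one propagates along each edge (the behaviour at the two endpoints of a fixed edge must be compatible) to conclude $d\phi=\mathrm{id}$ at every vertex; then $\phi$ is isotopic to the identity on the $1$-skeleton rel $\M$, and Alexander's trick applied face-by-face yields an isotopy to the identity on $\S$. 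Otherwise $d\phi|_p=-\mathrm{id}$ everywhere, so $\phi$ is a non-trivial orientation-preserving involution reversing every edge while fixing every vertex. A Riemann--Hurwitz count applied to the quotient, together with the requirement that every marked point be a fixed point of the involution, forces $(\S,\M)$ to be the once-punctured torus with $\phi$ the hyperelliptic involution, which is case (iii).

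Case 2, where $\phi$ does not fix some vertex $p$, say $\phi(p)=q\neq p$. Then every edge incident to $p$ is also incident to $\phi(p)=q$, so all such edges connect $p$ and $q$. Running the argument for every non-fixed vertex and using the connectedness of $\S$, the marked points of $\M$ must partition into $\phi$-fixed ones and $\phi$-swapped pairs, with edges between different pairs forced to be extremely limited. Combining this with Assumption \ref{asstwo} and a direct enumeration of surfaces realising such a configuration leaves only two possibilities: the unpunctured quadrilateral with $\phi$ rotation by $180^\circ$ (case (i)), and the annulus with one marked point on each boundary component, with $\phi$ the rotation exchanging the two boundaries (case (ii)). This completes the proof for $\Triord(\S,\M)$.

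The signed and tagged cases follow quickly. If $(\phi,\sigma)\in\MCG^\pm(\S,\M)$ fixes an ordered signed triangulation $(T,\epsilon)$, then $\phi$ fixes $T$ as an ordered ideal triangulation, so $\phi=1$, and then $\sigma\cdot\epsilon=\epsilon$ gives $\sigma=1$. For the tagged case, note that a pop of ordered signed triangulations couples each sign change at a valency-one puncture $p$ with the transposition of the encircling and self-folded edges of the self-folded triangle at $p$; since these two edges are inequivalent as homotopy classes of arcs, no diffeomorphism can interchange them. Hence if $(\phi,\sigma)$ fixes a class represented by $(T,\epsilon)$, then $\phi$ must fix $T$ as an ordered ideal triangulation, forcing $\phi=1$; and no non-trivial composition of pops leaves the ordering of $T$ invariant, so $\sigma=1$ as well. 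The main obstacle is the combinatorial case analysis classifying edge-preserving orientation-preserving involutions of triangulated surfaces; the edge-reversing branch of Case 1 requires a careful Euler-characteristic-based enumeration of involutions all of whose punctures are fixed points, while the vertex-swapping branch of Case 2 requires ruling out configurations with multiple swapped pairs using global constraints on the triangulation.
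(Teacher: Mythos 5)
Your reduction of the signed and tagged cases to the ideal case is sound, and your observation that a pop's transposition pairs a loop (the encircling edge) with a non-loop (the self-folded edge), so that no diffeomorphism can realise it, is a legitimate alternative to the paper's device of first flipping away all self-folded triangles. The problem is in the main case analysis for $\Triord(\S,\M)$, where your organising principle (the action on vertices) misses the one idea that makes the classification finite. The paper splits instead according to the action on \emph{triangles}: if $g$ fixes every edge and every triangle, then it preserves edge orientations and is isotopic to the identity; if instead $g(\Delta)\neq\Delta$ for some triangle $\Delta$, then every edge of $\Delta$ is fixed and lies in at most two triangles, so $g^2(\Delta)=\Delta$ and $\S$ is covered by the two triangles $\Delta$ and $g(\Delta)$. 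This immediately bounds the surface to have exactly two faces, and the three exceptional cases fall out according to whether the closures of $\Delta$ and $g(\Delta)$ share $1$, $2$ or $3$ edges. Nothing in your argument produces such a bound.

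Concretely, both branches of your proof lean on this missing lemma. In the edge-reversing branch of your Case 1, a Riemann--Hurwitz count together with ``all marked points are fixed'' does not force the once-punctured torus: the hyperelliptic involution of a genus-two surface with one marked point at a Weierstrass point satisfies both constraints, and what actually rules it out is that an edge-reversing map must swap the two triangles adjacent to each edge, forcing exactly two faces (hence $\chi=0$), which is the triangle argument again. In your Case 2, the claimed ``direct enumeration of surfaces realising such a configuration'' is the entire content of the proposition and is not carried out; your constraints (all edges at a moved vertex $p$ join $p$ to $\phi(p)$) do not a priori exclude, say, a higher-genus surface with two swapped punctures and many edges between them, nor do they handle marked points incident to no edge at all (as happens for two of the four vertices of the quadrilateral, where the unique edge is a diagonal). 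Without the two-triangle bound the enumeration is not finite, so as written the proof has a genuine gap at its crux.
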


\begin{pf}
For the case of ideal triangulations, we must show  that an orientation-preserving diffeomorphism $g$ of $(\S,\M)$ which fixes the edges of  such a  
triangulation $T$ is homotopic to the identity, through diffeomorphisms of $(\S,\M)$. 

Suppose first that  $g$ fixes every  triangle of $T$.  Since $g$ then induces an orientation-preserving diffeomorphism of  each  triangle, it follows that $g$ preserves the orientation of each edge of $T$. Moreover, since every triangle contains at least one edge,  we see that $g$ preserves each connected component of $\partial \S\setminus \M$. Now $\Diff(I, \partial I)$ is  
contractible, and the edges  are disjoint in their  
interiors,  so we can isotope $g$ so that it fixes all edges of $T$, and all  components of $\partial \S$  pointwise.  The result then follows from the fact  \cite[Theorem B]{Smale}  that the group  $\Diff(D^2, \partial D^2)$ is contractible. 

Suppose instead that there is some   triangle $\Delta$ such that $g(\Delta)\neq \Delta$.   Then the edges of $\Delta$ coincide with those of $g^i(\Delta)$ for all $i\in \Z$.  Since   any edge occurs in the boundary of at most 2 triangles, it follows that $g^2(\Delta)=\Delta$. Moreover,   the surface $\S$ is completely covered by the   two triangles $\Delta$ and $g(\Delta)$, since passing through an edge of $\Delta$ takes us  into $g(\Delta)$ and vice versa.
We then obtain the three possibilities listed, according to whether the closures of the triangles $\Delta$ and $g(\Delta)$ meet in 1,2 or 3 edges.

The extension to signed triangulations is obvious. For the case of tagged triangulations, suppose that an element of $g\in \MCG^\pm(\S,\M)$ fixes an ordered tagged triangulation $\tau$, which we view as an equivalence-class of signed triangulations $(T,\epsilon)$. Note that the action of $g$ on $\Tri^\ord_\pm(\S,\M)$  commutes with the flipping operation. Thus we can reduce to the case when $T$ has no self-folded triangles. Then $g$ must fix the signed triangulation $(T,\epsilon)$, since this is the only signed triangulation in the equivalence-class $\tau$. Hence $g$ is the identity. \end{pf}

We note that the excluded cases in Proposition \ref{freely} are essentially the same as those in Proposition \ref{reff} (the case of an unpunctured disc with 3 marked points would appear above were it not already excluded by Assumption \ref{asstwo}). This is not a coincidence: generic automorphisms of the space $\Quad(\S,\M)$ correspond to automorphisms of $(\S,\M)$ which preserve a horizontal strip decomposition together with an ordering of the horizontal strips. Any such automorphism will also preserve an ordered version of the WKB triangulation of Section \ref{wkb}.

\section{The category associated to a surface}
\label{catsurf}
In this section we introduce the particular examples of \CY categories  that appear in our main Theorems. They are indexed by diffeomorphism classes of marked bordered surfaces $(\S,\M)$. We also provide the combinatorial underpinning of our main theorems, by giving a precise correspondence between tagged triangulations of $(\S,\M)$  and   t-structures on the corresponding category $\D(\S,\M)$. Throughout this section we rely heavily on the work of D. Labardini-Fragoso.


 \subsection{Some results of Labardini-Fragoso}
 \label{sunny}

Let $(\S,\M)$ be a marked bordered surface satisfying Assumption \ref{asstwo}. Let $T$ be  an ideal triangulation of $(\S,\M)$.
Labardini-Fragoso \cite{LF1} defined a reduced potential $W(\T)$ on the quiver $Q(T)$  introduced in Section \ref{edgelattice}, depending also on some nonzero scalar constants $x_p\in k\setminus\{0\}$, one for each puncture $p\in \bP$. We shall always take these scalars to be defined by a signing; thus we take a signed triangulation $(T,\epsilon)$ and consider the  quiver with potential $(Q(T),W(T,\epsilon))$ obtained by setting $x_p=\epsilon(p)$.

In the case when $T$ is non-degenerate, the resulting potential reads
\begin{equation}
\label{bonn}W(\T,\epsilon)=\sum_{f} T(f) - \sum_{p} \epsilon(p) C(p),\end{equation}
where $T(f)$ and $C(p)$ are the  cycles in $Q(T)$ defined in Section \ref{fir}.
In the presence of  punctures of valency $\leq 2$ the recipe becomes more complicated.
The explicit form of the  potential  will not be important for what follows, and we refer to \cite[Section 3]{LF1} for details. What will be important are the invariance properties under flips and pops which we now describe.

 The case of flips is dealt with by the following result of Labardini-Fragoso \cite[Theorem 30]{LF1}. To be absolutely clear we state it for ordered triangulations.

\begin{thm}
\label{lab}
Let $(\S,\M)$ be a marked bordered surface satisfying Assumption \ref{asstwo}.
Suppose that two ordered signed triangulations $(\T_i,\epsilon_i)$ of $(\S,\M)$  are
  related by a flip in a non-self-folded edge $e$. Then, up to right-equivalence, the ordered  quivers with potential $(Q(T_i),W(T_i,\epsilon_i))$ are related by a mutation at the corresponding vertex. 
\end{thm}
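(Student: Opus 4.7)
The result is attributed to Labardini-Fragoso, so my plan mirrors his strategy in \cite{LF1}. The approach is local: although the quiver with potential $(Q(T),W(T,\epsilon))$ depends globally on the whole triangulation, both the flip operation and the DWZ-mutation at the vertex $e$ only modify things in a controlled neighbourhood of the quadrilateral $Q_e$ formed by the two triangles of $T$ sharing $e$. First I would set things up by fixing a signed triangulation $(T_1,\epsilon)$ containing $e$ as a non-self-folded edge, letting $(T_2,\epsilon)$ denote the flipped signed triangulation (same signing, since flipping does not move punctures), and writing down the ordered quivers with potential $(Q_i,W_i)=(Q(T_i),W(T_i,\epsilon))$ on both sides together with the DWZ premutation $\tilde{\mu}_e(Q_1,W_1)$ in the sense of \cite[\S5]{DWZ} and \cite[\S2]{KY}.

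The main step is a case analysis on the local configuration of $Q_e$ inside $(\S,\M)$, since Labardini's potential \eqref{bonn} is written in a uniform manner only when the four edges of $Q_e$ are pairwise distinct and non-self-folded; in the presence of self-folded triangles incident to $Q_e$, or punctures of small valency, the terms $T(f)$ and $\epsilon(p)C(p)$ appearing in \eqref{bonn} have to be replaced with the more elaborate cycles of \cite[\S3]{LF1}. So I would enumerate the possible local pictures: (i) generic case (the four outer edges distinct, all adjacent punctures of valency $\geq 3$); (ii) a single puncture of valency 2 at a corner of $Q_e$, in which two of the outer edges coincide; (iii) one of $Q_e$'s triangles is glued to a self-folded triangle whose encircling edge lies in $Q_e$; (iv) and further degenerations when several of these phenomena occur simultaneously. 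For each case, I would write out $(Q_2,W_2)$ and $\tilde{\mu}_e(Q_1,W_1)$ arrow-by-arrow and cycle-by-cycle, then cancel trivial $2$-cycles in the premutation to obtain the reduced mutation $\mu_e(Q_1,W_1)$.

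The quiver side is straightforward: one checks the arrow count $n(f,g)$ from \eqref{grass} transforms under the DWZ rule, which is essentially a bookkeeping exercise on clockwise-adjacency counts $c(-,-)$ in the two triangulations. The genuinely hard part is the potential side, because one must not merely identify $W_2$ and the reduction of $\tilde{\mu}_e(W_1)$ as formal linear combinations of cycles, but rather produce an explicit \emph{right-equivalence}, i.e.\ a $k$-linear automorphism of the complete path algebra fixing the idempotents and sending one potential to the other. Here I would follow the DWZ algorithm: after premutation one obtains a potential of the form $W'_{\mathrm{triv}}+W'_{\mathrm{red}}$, where $W'_{\mathrm{triv}}$ is a sum of $2$-cycles, and construct a change of variables eliminating each $2$-cycle by substituting new generators for the arrows involved. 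The signs $\epsilon(p)$ and the Labardini cycles around each puncture should match up automatically because the flip at a non-self-folded edge does not change the valencies at most punctures, and at the (at most) four punctures whose valency does change, the reduction of $\tilde{\mu}_e(W_1)$ produces exactly the cycles required to form the new $C(p)$ terms with the correct sign.

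The principal obstacle, and where the bulk of the proof-work lies, is the degenerate cases (iii) and (iv), where a self-folded triangle sits next to $Q_e$ or where the flip creates or destroys a self-folded triangle in $T_2$. In those configurations the cycles in $W_i$ have non-generic shape, the quiver $Q_i$ acquires double arrows between the encircling and self-folded edges (killed by \eqref{deg}), and the right-equivalence eliminating the $2$-cycles from the premutation must be chosen with care so as to preserve the potential's specific combinatorial structure near the self-folded triangle. Once all the local cases are checked, the global statement follows because the rest of the potential $W_1$ involves cycles disjoint from the interior arrows at vertex $e$, and these pass unchanged through both the flip and the mutation. Finally, the compatibility with the chosen orderings of edges (equivalently, of the vertices of $Q_i$) is immediate from the definition of an ordered flip in Section \ref{ordvers} together with the canonical vertex-bijection between $Q(T_1)$ and $\mu_e Q(T_1)$.
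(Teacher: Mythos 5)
The paper does not prove this statement: it is quoted verbatim as an external result, namely \cite[Theorem 30]{LF1}, and the body of the paper treats it as a black box. So there is no internal proof to compare your attempt against; the only meaningful comparison is with Labardini-Fragoso's original argument, and your outline does accurately describe its architecture — locality of both the flip and the DWZ mutation to the quadrilateral $Q_e$, a case analysis on the configurations involving self-folded triangles and valency-$2$ punctures adjacent to $Q_e$, and the construction of explicit right-equivalences eliminating the trivial $2$-cycles of the premutation.

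That said, as a proof your proposal has a genuine gap: every step where the actual mathematics lives is announced rather than carried out. You enumerate the cases (i)--(iv) but do not write down $(Q_2,W_2)$ or $\tilde\mu_e(Q_1,W_1)$ in any of them, and you do not exhibit a single right-equivalence. This matters because right-equivalence of potentials is not a formal identity of linear combinations of cycles — one must produce an automorphism of the complete path algebra, and in the degenerate configurations (a self-folded triangle glued to $Q_e$, or a flip that creates or destroys one) the required substitutions are delicate and interact with the signs $\epsilon(p)$; this is precisely where the bulk of \cite{LF1} is spent, and it cannot be waved through with ``should match up automatically.'' Two smaller points: the claim that the outer cycles of $W_1$ ``pass unchanged'' needs an argument when a $C(p)$-cycle passes through the vertex $e$ (it is modified by the mutation, not untouched); and your remark that $Q_i$ ``acquires double arrows between the encircling and self-folded edges'' is off — by the paper's definition via $\kappa$ and equation \eqref{deg} there are no arrows at all between those two vertices. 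In short: the plan is the right plan, but the proposal contains no checkable content for the hard cases, so it does not constitute a proof.
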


We now move on to the case of pops.  As we remarked in Section \ref{ordvers},   the popping symmetry of Remark \ref{remy}(a) implies that an ordered  tagged triangulation  has an associated ordered quiver.  The following result \cite[Theorem 6.1]{LF4} extends this statement to quivers with potential.

\begin{thm}
\label{blah}
Let $(\S,\M)$ be a marked bordered surface satisfying Assumption \ref{asstwo}. Suppose that two ordered signed triangulations $(T,\epsilon_i)$ of $(\S,\M)$ are related by a pop. Then the associated ordered quivers with potential $(Q(T),W(T,\epsilon_i)$ are right-equivalent.
\end{thm}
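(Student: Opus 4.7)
The plan is to reduce the statement to a purely local computation and then exhibit an explicit right-equivalence. Since a pop changes only the sign $\epsilon(p)$ at a single puncture $p$ of valency one, and $p$ is contained in a unique self-folded triangle $\Delta$ with self-folded edge $f$ and encircling edge $e$, only that part of the data near $\Delta$ is relevant. The underlying quiver $Q(T)$ is literally the same for $(T,\epsilon_1)$ and $(T,\epsilon_2)$: indeed the function $\kappa$ of Section \ref{edgelattice} treats $e$ and $f$ symmetrically and the adjacency counts $n(-,-)$ do not see the signing. In particular the ordered pop, which also transposes the labels of $e$ and $f$, preserves the ordered quiver (by Remark \ref{remy}(a)). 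Thus it suffices to produce an automorphism of the completed path algebra $\widehat{kQ(T)}$ fixing the idempotents (the vertex trivial paths) that sends $W(T,\epsilon_1)$ to a cyclic rotation of $W(T,\epsilon_2)$.

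First I would isolate the terms of the Labardini-Fragoso potential which actually depend on $\epsilon(p)$. All face cycles $T(g)$ for $g\neq\Delta$, and all puncture cycles $C(q)$ for $q\neq p$, are unaffected, as is the contribution of $\Delta$ itself as a face. The only $\epsilon(p)$-dependent summand is $\epsilon(p)\,C(p)$, together with whatever auxiliary terms the recipe of \cite{LF1} produces at the valency-one puncture (these take an explicit closed form involving the two arrows between the vertices $e$ and $f$ and the two arrows connecting them to the vertex coming from the third edge of the triangle containing $e$). Thus the task reduces to constructing an automorphism which acts as the identity outside a small piece of quiver supported on $\Delta$ and its immediate neighbours.

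Next I would write down the candidate right-equivalence $\Phi$. On all arrows of $Q(T)$ not incident to $e$ or $f$, set $\Phi$ equal to the identity. On the local arrows the guess is built from two ingredients: the popping symmetry of Remark \ref{remy}(a) that swaps the vertices $e$ and $f$, and a sign rescaling on a carefully chosen subset of the arrows entering or leaving these two vertices, designed to absorb the overall sign flip in the coefficient of $C(p)$. One then checks, cycle by cycle, that under $\Phi$ the cycles appearing in $W(T,\epsilon_1)$ map to the cycles appearing in $W(T,\epsilon_2)$ with the correct coefficients (up to cyclic rotation, which is permitted in $W$). Because the completed path algebra is involved, the verification is only needed modulo arbitrarily high order, which may require inductively correcting $\Phi$ by higher-order terms via the standard device in \cite[\SS 4]{DWZ}.

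The main obstacle, and the reason this is non-trivial, lies in the extra summands in $W(T,\epsilon)$ produced by Labardini-Fragoso's recipe at a self-folded triangle: beyond the naive $\sum T(f)-\sum\epsilon(p)C(p)$ of \eqref{bonn}, there are additional terms that mix the two 2-cycle arrows between $e$ and $f$ with the cycles around $p$ and around the outer vertex of $\Delta$. The bookkeeping to show that a single choice of sign-rescaling makes all these summands transform coherently is delicate, and this is precisely where the result requires the detailed form of the potential rather than any formal quiver-theoretic argument. I would expect to spend most of the work verifying this matching, likely organising it as a splitting of $W(T,\epsilon)$ into a part supported entirely on arrows of $Q(T)$ incident to $\{e,f\}$ and a remainder fixed by $\Phi$, and then reducing to a finite identity in the truncated path algebra up to order equal to the largest cycle length involved.
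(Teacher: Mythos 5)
This statement is not proved in the paper at all: it is quoted verbatim from the literature, namely \cite[Theorem 6.1]{LF3} (Cerulli Irelli and Labardini-Fragoso), so there is no internal argument to compare against. Your proposal is an attempt to prove the cited result from scratch, and while the overall strategy --- reduce to the self-folded triangle $\Delta$ containing $p$, use the quiver involution of Remark \ref{remy}(a) swapping the vertices $e$ and $f$ to handle the transposition of the ordering, and then construct an explicit automorphism of the complete path algebra absorbing the sign change in the coefficient of the cycle at $p$ --- is the right one in spirit and close to what the reference actually does, the proposal as written does not constitute a proof.

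The gap is that the entire mathematical content is deferred. You never write down the automorphism $\Phi$ on the arrows incident to $e$ and $f$, and you never carry out the cycle-by-cycle verification; you only say that you ``would'' do so and that the bookkeeping is ``delicate'' and ``precisely where the result requires the detailed form of the potential.'' That is an accurate diagnosis of where the difficulty lies, but identifying the hard step is not the same as doing it. In particular, the Labardini-Fragoso potential at a self-folded triangle is not of the naive form \eqref{bonn}: the valency-one puncture $p$ contributes through a term weighted by $x_p^{-1}=\epsilon(p)$ involving the two parallel arrows between the vertices $e$ and $f$ and the arrows of the triangle adjacent to the encircling edge, and one must check that a single sign rescaling (combined with the vertex swap) carries every such term of $W(T,\epsilon_1)$ to the corresponding term of $W(T,\epsilon_2)$ up to cyclic equivalence, possibly after the inductive higher-order corrections you allude to from \cite[\SS 4]{DWZ}. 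Until that computation is exhibited, the proposal is a plausible plan rather than a proof, and the honest course --- which is what the paper itself takes --- is to cite \cite[Theorem 6.1]{LF3}.
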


We  conclude that every ordered tagged triangulation  has an associated ordered quiver with potential, well-defined up to right-equivalence. If we think of a tagged triangulation  $\tau$ as a collection of tagged arcs as in Section \ref{tagg}, then we can say that  there is an associated quiver with potential $(Q(\tau),W(\tau))$, well-defined up to right-equivalence, whose   vertices are in natural bijection with these tagged arcs.

To avoid all technical difficulties  we shall mostly work with the following class of surfaces. We return to some of the exceptional cases in Section \ref{therest}.

\begin{defn}
\label{amenable}
We say that a marked bordered surface $(\S,\M)$ is \emph{amenable} if 
\begin{itemize}
\item[(a)] $(\S,\M)$ satisfies Assumption \ref{asstwo};\smallskip
\item[(b)] $(\S,\M)$ is not one of the 3 surfaces listed in Proposition \ref{weiwen};\smallskip
\item[(c)] $(\S,\M)$ is not one of the 3 surfaces listed in Proposition \ref{freely};\smallskip
\item[(d)] $(\S,\M)$ is not a closed surface with a single puncture.
\end{itemize}
\end{defn}

Note for example that $(\S,\M)$ is amenable if $g(\S)>0$ and $|\M|>1$.

Recall from Section \ref{ginz} the definition of a non-degenerate quiver with potential.

\begin{thm}
 Suppose  that the marked bordered surface $(\S,\M)$ is amenable. Then the quiver with potential $(Q(T),W(T,\epsilon))$ associated to  any signed triangulation of $(\S,\M)$ is non-degenerate.
\end{thm}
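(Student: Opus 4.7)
The plan is to prove non-degeneracy by induction on the length of a mutation sequence, reducing to a single statement: the mutation of $(Q(T),W(T,\epsilon))$ at any vertex is right-equivalent to the quiver with potential of some other signed triangulation. Since the quiver $Q(T')$ associated to any ideal triangulation has no loops or 2-cycles by its very construction in Section \ref{edgelattice}, such a statement immediately yields the required property at every step of any mutation sequence.

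First I would fix an ordering of the edges of $T$ (non-degeneracy is independent of this choice), and consider an arbitrary vertex $v$ of $Q(T)$ corresponding to an edge $e\in T$. If $e$ is not a self-folded edge, then $e$ admits a flip, producing a signed triangulation $(T',\epsilon)$, and Theorem \ref{lab} directly identifies $\mu_v(Q(T),W(T,\epsilon))$ with $(Q(T'),W(T',\epsilon))$ up to right-equivalence. The harder subcase is when $e$ is the self-folded edge of some self-folded triangle $\Delta$ containing an interior puncture $p$ of valency one. In this subcase $e$ cannot be flipped within the ideal framework, so one invokes the pop at $p$: per the convention in Section \ref{ordvers}, performing a pop transposes the two edges of $\Delta$ in the ordering, so after the pop the vertex $v$ now corresponds to the encircling edge $e'$ of $\Delta$, which is non-self-folded. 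By Theorem \ref{blah}, $(Q(T),W(T,\epsilon))$ and $(Q(T),W(T,\epsilon'))$ are right-equivalent as ordered quivers with potential, where $\epsilon'(p)=-\epsilon(p)$. Flipping $e'$ in $(T,\epsilon')$ yields some signed triangulation $(T'',\epsilon')$, and a second application of Theorem \ref{lab} identifies $\mu_v(Q(T),W(T,\epsilon'))$ with $(Q(T''),W(T'',\epsilon'))$ up to right-equivalence. Composing gives the desired identification in the self-folded subcase.

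The induction can then be set up as follows: assume that after $k$ mutations we have a quiver with potential right-equivalent to $(Q(T_k),W(T_k,\epsilon_k))$ for some signed triangulation. The $(k{+}1)$-st mutation at some vertex $v$ is realized, by the case analysis above, as a flip (possibly preceded by a pop), yielding $(Q(T_{k+1}),W(T_{k+1},\epsilon_{k+1}))$. By construction of the quivers associated to ideal triangulations in Section \ref{edgelattice}, $Q(T_{k+1})$ has no loops or 2-cycles. Hence every quiver with potential in the mutation class of $(Q(T),W(T,\epsilon))$ is free of loops and 2-cycles, which is exactly the non-degeneracy condition.

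The main obstacle is the self-folded subcase, since only for tagged (not ideal) triangulations is flipping always possible; the bookkeeping that makes this work is the combination of Theorem \ref{lab} with the ordered popping convention together with Theorem \ref{blah}. Everything else is a formal induction once one accepts that a single mutation stays within the class of quivers with potential of signed triangulations. Note that the extra conditions (b)--(d) in Definition \ref{amenable} are not needed here; the argument uses only Assumption \ref{asstwo}, which is Labardini-Fragoso's standing hypothesis in both Theorems \ref{lab} and \ref{blah}.
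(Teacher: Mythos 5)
Your proof is correct and is essentially the paper's argument: the paper's (much terser) proof likewise observes that the quiver with potential depends only on the tagged triangulation up to right-equivalence, that every tagged flip is realized by a pop followed by an ideal flip, and then concludes from the fact that no $Q(T)$ has loops or 2-cycles; your write-up just makes the self-folded case and the induction explicit. Your closing remark is also accurate — the argument only uses Assumption \ref{asstwo} (which is what Theorems \ref{lab} and \ref{blah} require), and conditions (b)--(d) of amenability play no role here.
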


\begin{pf}Up to right-equivalence the quiver with potential  $(Q(T),W(T,\epsilon))$ associated to a signed triangulation depends only on the corresponding tagged triangulation.  Since tagged triangulations can be flipped in any edge, and such flips can be expressed as  flips of signed triangulations to which Theorem \ref{lab} applies, we conclude that every mutation of $(Q(T),W(T,\epsilon))$ is of the same type. The result follows, since by definition, none of the quivers $Q(T)$ has  loops or oriented 2-cycles.
\end{pf}


\subsection{Definition of the category}
Let $(\S,\M)$ be an amenable marked bordered surface. In this section we introduce the associated \CY triangulated category $\D(\S,\M)$, well-defined up to $k$-linear triangulated equivalence.

Given a  signed triangulation  $(T,\epsilon)$ of  $(\S,\M)$, let us write 
  \[\D(T,\epsilon)=\D(Q(T),W(T,\epsilon))\]for  the  \CY category defined by the quiver with potential considered  in the last subsection. 
This category  comes equipped with a standard  heart  \[\A(T,\epsilon)\subset \D(T,\epsilon),\] whose simple objects  $S_e$ are  indexed by the edges $e$ of the  triangulation $T$.

Combining  Theorem \ref{keller_yang} and Theorem \ref{lab}  immediately gives

\begin{thm}
\label{fl}
Suppose that two signed triangulations $(T_i,\epsilon_i)$ of $(\S,\M)$ differ by a flip in an edge $e$. Then  there is a canonical pair of $k$-linear  triangulated equivalences 
\[\Phi_\pm\colon \D(T_1,\epsilon_1)\to \D(T_2,\epsilon_2),\]
satisfying $\Phi_\pm(\A(T_1,\epsilon_1))=\mu_{S_e}^\pm(\A(T_2,\epsilon_2))$ and inducing the natural bijection on simple objects. 
\end{thm}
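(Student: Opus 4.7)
The plan is a direct concatenation of the two cited results, so the proof will be essentially a matter of assembly rather than of fresh mathematics. Let me sketch the assembly.

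First, by Theorem \ref{lab}, the ordered quivers with potential $(Q(T_i), W(T_i,\epsilon_i))$ are right-equivalent, up to a mutation at the vertex $v$ of $Q(T_1)$ corresponding to the flipped edge $e$. Here the ordering is used only so that the natural bijection between the vertices of $Q(T_1)$ and $Q(T_2)$ (identifying each non-flipped edge with itself, and the flipped edges with each other) matches the bijection of vertices inherent in the definition of the mutation operation. Writing $(Q', W') = \mu_v(Q(T_1), W(T_1,\epsilon_1))$, we therefore obtain a right-equivalence $(Q', W') \simeq (Q(T_2), W(T_2,\epsilon_2))$, which in turn induces a $k$-linear triangulated equivalence $\Psi\colon \D(Q',W') \to \D(T_2, \epsilon_2)$ sending the standard heart to the standard heart and preserving the canonical bijection on simples.

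Second, Theorem \ref{keller_yang} supplies a canonical pair of $k$-linear triangulated equivalences
\[
\Phi_\pm^{KY}\colon \D(Q', W') \lra \D(T_1, \epsilon_1)
\]
which satisfy $\Phi_\pm^{KY}(\A(Q',W')) = \mu_{S_v}^\pm(\A(T_1,\epsilon_1))$ and induce the natural bijection on simples. Composing, I would define
\[
\Phi_\pm := \Psi \circ (\Phi_\pm^{KY})^{-1}\colon \D(T_1,\epsilon_1) \lra \D(T_2,\epsilon_2).
\]
By construction this is a $k$-linear triangulated equivalence, and it sends the heart $\mu_{S_e}^\pm(\A(T_1,\epsilon_1))$ (which is the Keller–Yang image of $\A(Q',W')$) to $\A(T_2, \epsilon_2)$. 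Rearranging, $\Phi_\pm$ sends the standard heart $\A(T_1,\epsilon_1)$ to $\mu_{S_e}^\mp(\A(T_2,\epsilon_2))$; the sign conventions have to be tracked carefully using the involutive identity \eqref{jen}, namely $\mu^+_{S[1]}\circ \mu^-_S(\A)=\A$, to match the statement as written.

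The only real content is bookkeeping: one must check that the natural bijection on simples supplied by Theorem \ref{keller_yang} is compatible with the natural bijection between the vertices of $Q(T_1)$ and $Q(T_2)$ coming from the flip of triangulations. This is immediate away from the flipped vertex (where both bijections are the identity on the unchanged edges), and at the flipped vertex itself both bijections send the flipped edge to the flipped edge; hence the two compositions $\Phi_\pm$ induce the asserted natural bijection on simples. I do not expect any genuine obstacle: the content of the theorem is entirely encoded in the two cited results, and the proof is of the form ``combine Theorem \ref{lab} with Theorem \ref{keller_yang}''.
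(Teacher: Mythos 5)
Your proposal is correct and is essentially the paper's own argument: the paper gives no separate proof, stating only that Theorem \ref{fl} follows immediately by combining Theorem \ref{lab} with Theorem \ref{keller_yang}. The only (harmless) difference is that you mutate $(Q(T_1),W(T_1,\epsilon_1))$ rather than applying Keller--Yang directly with $(Q,W)=(Q(T_2),W(T_2,\epsilon_2))$ and $(Q',W')\simeq(Q(T_1),W(T_1,\epsilon_1))$ — the latter yields the stated formula $\Phi_\pm(\A(T_1,\epsilon_1))=\mu_{S_e}^\pm(\A(T_2,\epsilon_2))$ with no sign rearrangement needed, whereas your composite requires the relabeling $\Phi_\pm:=\Psi\circ(\Phi^{KY}_\mp)^{-1}$, which you correctly flag.
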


Similarly, Conjecture \ref{conj}, which holds by the assumption that $(\S,\M)$ is amenable, implies

\begin{thm}
\label{pop}
Suppose that two signed triangulations $(\T,\epsilon_i)$ of $(\S,\M)$  are
  related by a pop at a puncture $p\in \PP$. Then there is a $k$-linear triangulated  equivalence \[\Psi\colon \D(T,\epsilon_1)\to \D(T,\epsilon_2)\]
which identifies the standard hearts, and exchanges the two simple objects $S_e$ and $S_f$ corresponding to the 
two edges of the self-folded triangle containing $p$.
\end{thm}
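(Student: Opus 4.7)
The plan is to derive $\Psi$ directly from Labardini-Fragoso's right-equivalence result Theorem~\ref{blah}, following exactly the same template used to deduce Theorem~\ref{fl} from Theorems~\ref{keller_yang} and~\ref{lab}. Recall from Section~\ref{ordvers} that a pop of an \emph{ordered} signed triangulation changes both the sign $\epsilon(p)$ and transposes the two edges $e,f$ of the self-folded triangle containing $p$ in the ordering. With this convention in place, Theorem~\ref{blah} provides a right-equivalence between the two ordered quivers with potential, and it is this extra bookkeeping of orderings that will be responsible for the vertex swap in the statement of Theorem~\ref{pop}.

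Unwinding the ordered version of right-equivalence, we obtain an algebra automorphism $\phi$ of the completed path algebra $\widehat{kQ(T)}$ whose action on vertex idempotents is the transposition $v_e \leftrightarrow v_f$ (with every other idempotent fixed), and which sends $W(T,\epsilon_1)$ to an element cyclically equivalent to $W(T,\epsilon_2)$. By the functoriality of the complete Ginzburg dg-algebra construction \cite[Lemma 2.9]{KY}, the map $\phi$ induces an isomorphism of the corresponding Ginzburg algebras $\Pi(Q(T),W(T,\epsilon_1))\isom \Pi(Q(T),W(T,\epsilon_2))$. Passing to derived categories and restricting to modules with finite-dimensional cohomology yields the desired $k$-linear triangulated equivalence
\[
\Psi\colon \D(T,\epsilon_1)\to \D(T,\epsilon_2).
\]

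It remains to verify the two stated properties. Because $\phi$ fixes all vertex idempotents up to the transposition $v_e\leftrightarrow v_f$, it preserves the augmentation ideal; hence it preserves the standard t-structure defined in the proof of Theorem~\ref{eyes}, so $\Psi$ carries $\A(T,\epsilon_1)$ to $\A(T,\epsilon_2)$. On the level of simple modules, $\phi$ identifies the vertex simple at $v_g$ with the vertex simple at $\phi(v_g)$, so $\Psi(S_e)=S_f$, $\Psi(S_f)=S_e$, and $\Psi(S_g)=S_g$ for every remaining edge $g\in T$, as claimed.

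The argument is entirely parallel to the one used for flips, and contains no genuine obstacle: the deep content is already packaged inside Theorem~\ref{blah}. The only point requiring care is the translation from ``right-equivalence of ordered quivers with potential'' to the explicit algebra automorphism $\phi$ with prescribed effect $v_e\leftrightarrow v_f$ on idempotents; this translation is forced by the definition of a pop of an ordered signed triangulation together with the standard convention from \cite{DWZ,KY} that right-equivalences fix zero-length paths up to the chosen identification of vertex sets.
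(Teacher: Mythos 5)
Your proposal is correct and follows essentially the same route as the paper: the paper's entire proof is the remark that Theorem \ref{blah} implies the statement, and your argument simply makes explicit the intended mechanism, namely that the ordered right-equivalence (which, by the definition of a pop of an ordered signed triangulation, identifies the vertex sets via the transposition $e\leftrightarrow f$) induces an isomorphism of complete Ginzburg algebras and hence a triangulated equivalence preserving the standard hearts and swapping $S_e$ with $S_f$. The only caveat worth noting is that a right-equivalence in the sense of \cite{DWZ,KY} fixes the vertex idempotents, so the automorphism you describe is really the composite of such a right-equivalence with the quiver involution of Remark \ref{remy}(a) exchanging the vertices $e$ and $f$; this does not affect the argument.
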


Since the graph $\Tri_{\bowtie}(\S,\M)$ is connected, these two results show that, up to $k$-linear triangulated equivalence, the category $\D(T,\epsilon)$ depends only on the surface $(\S,\M)$ and not on the chosen signed triangulation.
 Thus we can associate to the surface $(\S,\M)$ a \CY triangulated category
  \[\D=\D(\S,\M).\]
 In fact it will be important to make a slightly stronger statement, as we now explain.
 Each category $\D(T,\epsilon)$ comes with a distinguished connected component of its tilting graph, namely the one containing the standard heart. Moreover,  if $(T_i,\epsilon_i)$ are two signed triangulations of  $(\S,\M)$ then by composing the equivalences of Theorems \ref{fl} and \ref{pop} we obtain   equivalences $\D(T_1,\epsilon_1)\isom \D(T_2,\epsilon_2)$ which identify these connected components. Thus the category $\D$ comes  equipped with a distinguished connected component \[\Tilt_\triangle(\D)\subset \Tilt(\D).\]
 Adapting the general notation from Section \ref{quivstab}, we write \[\Reach(\D)\subset \Aut(\D)\] for the group of
autoequivalences of $\D$ which preserve this connected component; such autoequivalences will be called \emph{reachable}.  We write \[\Nil(\D)\subset \Reach(\D)\] for the autoequivalences which act trivially on $\Tilt_\triangle(\D)$; we call these
autoequivalences \emph{negligible}.

Negligible autoequivalences fix the simple objects of all the hearts  in our distinguished component $\Tilt_\triangle(\D)$. They will also act trivially on the corresponding distinguished connected component $\Stab_\triangle(\D)$. For this reason, it is useful  to  consider the quotient group
\[ \uReach(\D)=\Reach(\D)/\Nil(\D)\]
which acts effectively on these spaces.

\begin{remark}
\label{completed}
We could instead consider defining a category $\D^\dagger(\S,\M)$ by using uncompleted Ginzburg algebras, rather than the complete ones we are using here. It seems likely that if $(\S,\M)$ is  amenable  the resulting category would be equivalent to $\D(\S,\M)$. 
As evidence for this, note that in the case when $\S$ has non-empty boundary the natural map $J^\dagger(Q,W)\to J(Q,W)$ from the uncompleted Jacobi algebra to the complete version is an isomorphism \cite[Theorem 5.7]{LF3}.  
When the surface is not amenable this  statement can definitely fail. For example, when $(\S,\M)$ is a closed torus with a single puncture the algebra $J^\dagger(Q,W)$ is infinite-dimensional, whereas $J(Q,W)$  is  finite-dimensional \cite[Example 8.2]{LF11}. See also Example \ref{threepunctures} below for the case of the three-punctured sphere.
\end{remark}


\subsection{T-structures and autoequivalences}
\label{au}

 Let $(\S,\M)$ be an amenable  marked bordered surface, and let   $\D=\D(\S,\M)$ be the associated \CY triangulated category. In this section we study the distinguished connected component $\Tilt_\triangle(\D)$ of the tilting graph of $\D$, and the corresponding group $\Reach(\D)$ of  reachable autoequivalences. 

Recall from Section \ref{tiltcy} that there is a subgroup   \[\Sph_\triangle(\D)\subset \Reach(\D),\]
 generated by the twist functors $\Tw_{S_i}$ in the simple objects of any heart $\A\in \Tilt_\triangle(\D)$. We write
 \[\uTwist(\D)\subset \uReach(\D),\]
 for the corresponding subgroup of $\uReach(\D)$.
  The group $\Sph_\triangle(\D)$  acts on the tilting graph $\Tilt(\D)$, and  Proposition \ref{rpt} implies that this action preserves the connected component $\Tilt_\triangle(\D)$. We call the quotient graph \[\Ex_\triangle(\D)=\Tilt_\triangle(\D)/\Sph_\triangle(\D)\]
the \emph{heart exchange graph} of $(\S,\M)$. There is also an ordered version $\Ex^\ord_\triangle(\D)$ defined in the obvious way.

The following result gives the basic link between  triangulations of the surface $(\S,\M)$ and t-structures in the corresponding  category $\D(\S,\M)$. 

\begin{thm}
\label{kell}
There are isomorphisms of graphs
\[ \Tri_{\bowtie}(\S,\M) \isom  \Ex_\triangle(\D), \quad \Tri_{\bowtie}^\ord(\S,\M) \isom  \Ex^\ord_\triangle(\D). \]
\end{thm}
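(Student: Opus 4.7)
The plan is to construct a map $F \colon \Tri^\ord_\bowtie(\S,\M) \to \Ex^\ord_\triangle(\D)$ and show it is an isomorphism of graphs, from which the unordered statement follows by forgetting orderings. Define $F$ on vertices as follows: an ordered signed triangulation $(T,\epsilon)$ determines the ordered standard heart $\A(T,\epsilon) \subset \D(T,\epsilon) \simeq \D$, and by Theorem \ref{pop} (after transposing the two edges of each self-folded triangle to match the popping convention of Section \ref{ordvers}), this ordered heart is invariant under pops. Hence the recipe descends to ordered tagged triangulations; composing with the quotient $\Tilt^\ord_\triangle(\D) \to \Ex^\ord_\triangle(\D)$ produces $F$. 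To see that $F$ is a graph homomorphism, observe that each flip of tagged triangulations lifts to a flip of signed representatives in some non-self-folded edge $e$, and Theorem \ref{fl} then exhibits the two associated standard hearts as related by a tilt at $S_e$.

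Next I would verify local bijectivity of $F$ on the star of each vertex. The source is $n$-regular, since every ordered tagged triangulation has exactly one flip per tagged arc. At a vertex of $\Ex^\ord_\triangle(\D)$ one a priori has $2n$ tilts — left and right at each of the $n$ simples — but Proposition \ref{rpt}(a) identifies $\mu^-_S(\A)$ with $\Tw_S^{-1}(\mu^+_S(\A))$, so there are at most $n$ neighbours after passing to the quotient. Since mutation at distinct vertices of a non-degenerate quiver with potential produces distinct mutated quivers, the $n$ flips of an ordered tagged triangulation produce $n$ non-isomorphic quivers at the adjacent vertices; hence the edges out of $\tau$ map bijectively to the edges out of $F(\tau)$.

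Surjectivity then follows because $\Tilt_\triangle(\D)$ is by definition the connected component of the standard hearts under tilts, and every tilt is in the image of $F$: tilts in simples corresponding to non-self-folded edges are realised by flips of tagged triangulations via Theorem \ref{fl}, and tilts in simples corresponding to self-folded edges $f$ are obtained after first popping to a signed representative in which $f$ becomes non-self-folded. Since $F$ is a graph map that hits one vertex and is locally surjective, it surjects onto the entire connected component.

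The main obstacle, and the technical heart of the proof, is global injectivity. Suppose $F(\tau_1) = F(\tau_2)$, so that $\A(\tau_1) = \Phi \cdot \A(\tau_2)$ for some $\Phi \in \Sph_\triangle(\D)$, with matching orderings of simples. Since the ordered quiver-with-potential of a finite-length heart in $\D$ is a complete invariant (Theorem \ref{eyes} together with the Ginzburg algebra construction of Section \ref{ginz}), the ordered quivers-with-potential $(Q(\tau_i),W(\tau_i))$ are right-equivalent. By Proposition \ref{weiwen} applied to amenable $(\S,\M)$, the underlying ideal triangulations agree up to the mapping class group $\MCG(\S,\M)$; by Proposition \ref{freely}, the action of $\MCG$ on ordered triangulations is free, and an orientation-preserving diffeomorphism fixing the quiver-with-potential must therefore be isotopic to the identity. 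Hence the underlying ordered ideal triangulations are literally equal, and a term-by-term comparison of the Labardini-Fragoso potentials recalled in Section \ref{sunny} shows that the signings $\epsilon_1$, $\epsilon_2$ agree except possibly at valency-one punctures, which means $\tau_1 = \tau_2$ as tagged triangulations. Combined with the preceding steps, this yields the claimed isomorphism.
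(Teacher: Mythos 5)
The paper does not prove this theorem directly: it deduces it by composing two deep external results, namely the identification of the heart exchange graph $\Ex_\triangle(\D)$ with the cluster exchange graph (Cerulli Irelli--Keller--Labardini-Fragoso--Plamondon together with Keller's \emph{On cluster theory and quantum dilogarithm identities}), and the identification of the cluster exchange graph with $\Tri_{\bowtie}(\S,\M)$ (Fomin--Shapiro--Thurston and Fomin--Thurston). Your attempt to give a direct construction is a genuinely different route, but it has two gaps, and they sit exactly where the cited machinery is doing the work.

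First, your map $F$ is not well defined as stated. An ordered signed triangulation $(T,\epsilon)$ gives a heart in \emph{its own} category $\D(T,\epsilon)$; to land in the fixed category $\D$ you must transport it along an equivalence $\D(T,\epsilon)\isom\D$, and the only such equivalences available (composites of the Keller--Yang functors $\Phi_\pm$ and the pop equivalences $\Psi$ along a chain of flips back to the base triangulation) depend on the chosen chain. Two chains differ by a loop in the flip graph, and you need to know that every such loop lifts to a loop in $\Tilt_\triangle(\D)/\Sph_\triangle(\D)$ --- i.e.\ that the resulting autoequivalence of $\D$ lies in $\Sph_\triangle(\D)$ and not merely in $\Reach(\D)$. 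This path-independence is precisely the nontrivial content supplied by the cluster exchange graph comparison; it is not automatic from Theorems \ref{fl} and \ref{pop}.

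Second, the injectivity argument does not close. From $F(\tau_1)=F(\tau_2)$ you can extract (via $\dim\Ext^1$ between simples) that the ordered quivers $Q(\tau_i)$ are isomorphic --- note that the hearts do not obviously determine the \emph{potentials} up to right-equivalence; Theorem \ref{eyes} runs in the opposite direction --- and Proposition \ref{weiwen} then gives only that $\tau_1$ and $\tau_2$ differ by some $g\in\MCG^\pm(\S,\M)$. Freeness of the action (Proposition \ref{freely}) says this $g$ is unique; it does not say $g=\mathrm{id}$. To conclude $\tau_1=\tau_2$ you would need to know that no nontrivial signed mapping class acts on a reachable heart through an element of $\Sph_\triangle(\D)$, which is essentially the exactness of the sequence in Theorem \ref{clareshort} --- a result the paper proves \emph{using} Theorem \ref{kell}, so invoking it here would be circular. (A smaller slip: it is false that mutation at distinct vertices of a quiver always yields distinct ordered quivers --- consider the $A_2$ quiver --- though local injectivity can be rescued by comparing simples rather than quivers.) In short, the surjectivity and graph-homomorphism parts of your argument are fine, but the well-definedness and injectivity both require the cluster-theoretic input the paper leans on.
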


\begin{pf}
 Corollary 3.6 of \cite{CKLP} and Theorem 5.6 of \cite{K} together imply that the heart exchange graph $\Ex_\triangle(\D)$ is isomorphic to the cluster exchange graph. On the other hand,  the cluster exchange graph 
was shown to be isomorphic to the  tagged triangulation graph $\Tri_{\bowtie}(\S,\M)$ in \cite[Theorem 7.11]{FST} and \cite{FT}. Both of these isomorphisms are constructed in such a way that they lift to maps of the corresponding ordered graphs, and it follows that these maps are also isomorphisms.
\end{pf}

We note that the isomorphisms  of Theorem \ref{kell} have the property that if a tagged triangulation is represented by a signed triangulation $(T,\epsilon)$, then the corresponding heart $\A\subset \D$ is the image of the standard heart $\A(T,\epsilon)$ under an equivalence
\[\D(T,\epsilon)\isom \D.\]
The obvious generalization to the ordered versions also holds.
We can use this result to prove\footnote{The method of proof was explained to us by Alastair King.} a result on the structure of the   group $\uReach(\D)$.

\begin{thm}
\label{clareshort}
Assume that $(\S,\M)$ is amenable. Then
there is a short exact sequence
\[1\lra {\uTwist}(\D)\lra {\uAll}(\D)\lra \MCG^\pm(\S,\M)\lra 1.\]
\end{thm}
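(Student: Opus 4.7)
The plan is to construct a homomorphism $\rho \colon \uAll(\D) \to \MCG^\pm(\S,\M)$ with kernel $\uTwist(\D)$ and image all of $\MCG^\pm(\S,\M)$, using Theorem \ref{kell} as the bridge between the categorical and combinatorial worlds. First one checks that $\Sph_\triangle(\D)$ is normal in $\Reach(\D)$: any $\Phi \in \Reach(\D)$ conjugates a generator $\Tw_{S}$ to $\Tw_{\Phi(S)}$, and since $\Phi(S)$ is a simple object of $\Phi(\A) \in \Tilt_\triangle(\D)$ by definition of $\Reach$, the result lies again in $\Sph_\triangle(\D)$. Hence $\uTwist(\D)$ is normal in $\uAll(\D)$, and the quotient acts on the ordered exchange graph $\Ex^\ord_\triangle(\D) \cong \Tri^\ord_{\bowtie}(\S,\M)$, preserving both the flip structure and the ordered quiver-with-potential attached to each vertex.

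To define $\rho$, fix a base ordered heart $\A_0 \in \Tilt^\ord_\triangle(\D)$, corresponding via Theorem \ref{kell} to an ordered tagged triangulation represented by some signed triangulation $(T_0,\epsilon_0)$. Given $[\Phi] \in \uAll(\D)$, the ordered heart $\Phi(\A_0)$ corresponds to another ordered signed triangulation $(T_1,\epsilon_1)$, and the equivalence $\Phi$ induces a right-equivalence of the associated ordered quivers-with-potential. Amenability of $(\S,\M)$ and Proposition \ref{weiwen}, together with the fact that the $\Z_2^{\bP}$-factor of $\MCG^\pm(\S,\M)$ acts on signings, will yield a unique $g \in \MCG^\pm(\S,\M)$ taking $(T_0,\epsilon_0)$ to a signed representative of the same tagged triangulation as $(T_1,\epsilon_1)$; uniqueness comes from the freeness in Proposition \ref{freely}. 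One then sets $\rho([\Phi]) = g$ and checks that this is a homomorphism independent of the choice of $\A_0$ by $\MCG^\pm$-equivariance of the identification of Theorem \ref{kell}.

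For surjectivity, given $g \in \MCG^\pm(\S,\M)$, the signed triangulations $(T_0,\epsilon_0)$ and $g\cdot(T_0,\epsilon_0)$ have identical ordered quivers-with-potential (both built by the same Labardini--Fragoso recipe on combinatorially identical data). Composing the canonical identifications $\D \simeq \D(T_0,\epsilon_0)$ and $\D \simeq \D(g(T_0),g(\epsilon_0))$ with this right-equivalence produces $\Phi_g \in \Reach(\D)$ realising $g$. For the kernel, if $\rho([\Phi])=1$ then $\Phi(\A_0)$ and $\A_0$ lie in the same $\Sph_\triangle(\D)$-orbit of $\Tilt^\ord_\triangle(\D)$, so $\Phi = \Psi \circ \Phi_0$ for some $\Psi \in \Sph_\triangle(\D)$ and $\Phi_0 \in \Reach(\D)$ fixing $\A_0$ as an ordered heart. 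Such a $\Phi_0$ fixes every simple of $\A_0$, hence commutes with every simple tilt, hence by induction fixes every vertex and every simple along the connected graph $\Tilt^\ord_\triangle(\D)$; in other words $\Phi_0 \in \Nil(\D)$, so $[\Phi]=[\Psi]\in \uTwist(\D)$.

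The main obstacle is the combinatorial enhancement of Proposition \ref{weiwen} from ideal to tagged triangulations with potential: given two signed triangulations with right-equivalent ordered quivers-with-potential, one must extract a unique $g \in \MCG^\pm(\S,\M) = \MCG(\S,\M)\ltimes \Z_2^{\bP}$ relating them. One first applies Proposition \ref{weiwen} to the underlying ideal triangulations to find an element of $\MCG(\S,\M)$ matching them, then uses the $\Z_2^{\bP}$-factor to reconcile the signings, employing Theorem \ref{blah} to absorb the popping ambiguity at punctures of valency one. This bookkeeping, together with the amenability restrictions (which cut out precisely the ambiguous cases from Propositions \ref{weiwen} and \ref{freely}), is what the proof will need to grind out in detail.
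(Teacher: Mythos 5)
Your architecture is essentially the paper's argument in a different packaging: the paper avoids the base-point chase by identifying both $\MCG^\pm(\S,\M)$ and $\uAll(\D)/\uTwist(\D)$ as the groups of automorphisms of the ordered triangulation/exchange graphs commuting with the transitive action of $G_n=\Sym_n\ltimes(\Z_2*\cdots*\Z_2)$ and preserving the quiver maps $p$ and $q$, with Proposition \ref{weiwen} identifying the orbits with the fibres and Proposition \ref{freely} giving freeness. Your kernel computation and your normality check for $\Sph_\triangle(\D)$ are fine.

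There is, however, one genuine gap, and it sits exactly where you wave at it: the surjectivity step. You assert that $(T_0,\epsilon_0)$ and $g\cdot(T_0,\epsilon_0)$ have ``identical ordered quivers-with-potential (both built by the same Labardini--Fragoso recipe on combinatorially identical data).'' This is true for $g$ in the $\MCG(\S,\M)$ factor, but false as stated for the $\Z_2^{\bP}$ factor: the data $(T,\epsilon)$ and $(T,\epsilon')$ are \emph{not} combinatorially identical, and the potential genuinely depends on the signing --- already in the non-degenerate case \eqref{bonn} the coefficient of the cycle $C(p)$ is $\epsilon(p)$, so flipping the sign at a puncture of valency $\geq 2$ changes $W$. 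Theorem \ref{blah}, which you invoke to ``absorb the popping ambiguity,'' only covers sign changes at punctures of valency one. What is actually needed is the statement that $(Q(T),W(T,\epsilon))$ is independent of the signing $\epsilon$ up to right-equivalence and an overall rescaling of the potential; this is a nontrivial result of Labardini-Fragoso et al.\ (\cite[Prop.\ 10.2, 10.4]{LF3} and the forthcoming \cite{forth}), which the paper cites at precisely this point. Without it you cannot produce the equivalence $\D(T_0,\epsilon_0)\isom\D(g(T_0),g(\epsilon_0))$ preserving standard hearts, so $\rho$ need not be surjective onto the $\Z_2^{\bP}$ part. Note the same input is also needed to make your map $\rho$ interact correctly with arbitrary sign changes, so this is not a cosmetic omission.
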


\begin{pf}
Consider the $n$-fold free product group
\[F_n=\Z_2 * \cdots * \Z_2 = \langle \mu_1,\cdots,\mu_n: \mu_i^2=1\rangle.\]
There is an obvious action of the symmetric group $\Sym_n$ permuting the generators $\mu_i$, and we  also consider  the semi-direct product
\[G_n=\Sym_n\ltimes F_n.\]
The  set  $\Tri_{\bowtie}^\ord(\S,\M)$  of ordered, tagged triangulations has a natural   action of the group $G_n$:
the  generator $\mu_i$ acts by flipping the $i$th edge of an ordered  triangulation, and the  group $\Sym_n$ acts by permuting the ordering of the edges. This action is transitive, because, with our assumptions on $\S$,  the graph of unordered tagged triangulations $\Tri_{\bowtie}(\S,\M)$ is connected.

In a similar way, the set $\Ex^\ord_\triangle(\D)$ of ordered reachable hearts has a transitive action of the group $G_n$. This time the generator $\mu_i$ acts by  tilting an ordered heart at the $i$th simple object. Note that the left and right tilts co-incide on the exchange graph by Proposition \ref{rpt}(a), and the relation  $\mu_i^2=1$  follows from the relation \eqref{jen}. 

Consider the set $\mathcal{Q}_n$  of right-equivalence classes of  ordered quivers with $n$ vertices. It carries an action of the group $G_n$, where the  generator $\mu_i$ acts by mutation at the $i$th vertex. We now have a commutative diagram of  $G_n$-equivariant maps of sets  \begin{equation}\label{al}\xymatrix@C=1.5em{ \Tri_{\bowtie}^\ord(\S,\M) \ar[dr]_{p} \ar[rr]^{\theta} && \Ex^\ord_\triangle(\D)\ar[dl]^{q} \\
&\mathcal{Q}_n}\end{equation}
where  $p$ sends an ordered tagged triangulation  $(T,\epsilon)$ to the associated ordered quiver  $Q(T)$,  the map $q$ sends an ordered heart $\A\subset \D$ to the associated  quiver $Q(\A)$, and $\theta$ is the bijection of Theorem \ref{kell}.

 The signed mapping class group $\MCG^\pm(\S,\M)$ acts freely on   $\Tri_{\bowtie}^\ord(\S,\M)$ by Proposition \ref{freely} and obviously commutes with the $G_n$-action. Proposition \ref{weiwen} shows that the orbits for this action are precisely the fibres of $p$.  It then follows from the transitivity of the $G_n$-action that $\MCG^\pm(\S,\M)$  can be identified with the group of automorphisms of the set $\Tri_{\bowtie}^\ord(\S,\M)$ which commute with the $G_n$-action and preserve the map $p$. 
 
 By the definition of a negligible autoequivalence, the group $\uReach(\D)$ acts freely on the graph $\Tilt^\ord_\triangle(\D)$ of ordered reachable hearts. Quotienting $\uReach(\D)$ by the normal subgroup $\uTwist(\D)$, we therefore obtain a free action of the group \begin{equation}
 \label{stan}\uReach(\D)/\uTwist(\D)\end{equation} on  the set $\Ex^\ord_\triangle(\D)$,  commuting with the $G_n$-action.
 To complete the proof we must show that the orbits of $\uReach(\D)$ are precisely the fibres of the map $q$.
  
  Suppose that two ordered hearts $\A_i\subset \D$ lie in the same fibre of $q$. Under the bijection $\theta$, these hearts correspond to ordered tagged triangulations $(T_i,\epsilon_i)$ lying in the same fibre of $p$. It follows that they differ by an element  $g\in \MCG^\pm(\S,\M)$. We claim  that the ordered  quivers with potential $(Q(T_i),W(T_i,\epsilon_i))$ are right-equivalent up to scale, meaning that there is a nonzero scalar $\lambda\in k^*$ and a right-equivalence
\[(Q(T_1,),W(T_1,\epsilon_1)) \sim (Q(T_2),\lambda W(T_2,\epsilon_2)).\]
In particular it follows that there are equivalences
\[\D(T_1,\epsilon_1)\isom \D(T_2,\epsilon_2)\]
preserving the standard hearts. The remark following Theorem \ref{kell} then shows that the hearts $\A_i$ differ by an autoequivalence of $\D$,  and the result then follows.

When $g\in \MCG(\S,\M)$ the claim  is obvious since the quiver with potential associated to a signed triangulation depends only on the combinatorial structure of the triangulation. For general $g\in \MCG^\pm(\S,\M)$ the claim follows from  the statement that the quiver with potential $(Q(T),W(T,\epsilon))$ is independent of the signing $\epsilon$, up to scaling and right-equivalence.  A proof of this  statement will  appear in \cite{forth} (see \cite[Prop. 10.4]{LF3}). When $\S$ has non-empty boundary, no scaling is necessary, and the claim follows from \cite[Prop. 10.2]{LF3}.
\end{pf}



\subsection{Grothendieck group}
Let  $(\S,\M)$ be an amenable marked bordered surface. Recall from Section \ref{edgelattice}  the definition of the edge lattice  $\Gamma(T)$ associated to an ideal triangulation  of $(\S,\M)$.

\begin{lemma}
\label{hope}
Let $(T,\epsilon)$ be a signed triangulation of $(\S,\M)$. Then there is an isomorphism of abelian groups
\[\lambda\colon \Gamma(T)\to K(\D(T,\epsilon)),\]
such that for each  edge $e$, the basis element $\{e\}$ is mapped to the class  of the corresponding simple object $S_e$. This map takes  the  form $\langle-,-\rangle$ on $\Gamma(T)$ to the Euler form on $K(\D(T,\epsilon))$.  
\end{lemma}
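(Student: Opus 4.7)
The plan is to first establish that $\lambda$ is a well-defined isomorphism of abelian groups, and then verify compatibility of the bilinear forms on the level of bases.

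For the first step, I would note that the modified collection $\{\{e\}\}_{e\in T}$ is obtained from the standard basis $\{[e]\}_{e\in T}$ by a unitriangular change of coordinates: $\{e\}=[e]$ unless $e$ is the self-folded edge of a self-folded triangle with encircling edge $f$, in which case $\{e\}=[e]+[f]$. Hence $\{\{e\}\}_{e\in T}$ is again a $\Z$-basis for $\Gamma(T)$. On the other side, since the standard heart $\A(T,\epsilon)\subset \D(T,\epsilon)$ is of finite length with simple objects $\{S_e\}_{e\in T}$ indexed by the edges of $T$, the classes $\{[S_e]\}_{e\in T}$ form a $\Z$-basis of $K(\D(T,\epsilon))=K(\A(T,\epsilon))$. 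Setting $\lambda(\{e\})=[S_e]$ therefore defines an isomorphism of free abelian groups.

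For the compatibility of forms, by bilinearity it is enough to verify that $\chi(S_e,S_f)=\langle \{e\},\{f\}\rangle$ for all edges $e,f\in T$. Since $\D(T,\epsilon)$ is CY$_3$, Serre duality gives $\Ext^i_\D(S_e,S_f)\isom \Ext^{3-i}_\D(S_f,S_e)^*$, so
\[
\chi(S_e,S_f)=\dim\Hom(S_e,S_f)-\dim\Ext^1(S_e,S_f)+\dim\Ext^1(S_f,S_e)-\dim\Hom(S_f,S_e).
\]
The Hom-terms contribute $0$ for $e\neq f$ and cancel for $e=f$, so in all cases
\[
\chi(S_e,S_f)=\dim\Ext^1(S_f,S_e)-\dim\Ext^1(S_e,S_f).
\]
The construction of $\D(T,\epsilon)$ via the complete Ginzburg algebra of $(Q(T),W(T,\epsilon))$, together with Theorem \ref{eyes}, identifies $\dim\Ext^1(S_e,S_f)$ with the number of arrows from vertex $e$ to vertex $f$ in $Q(T)$, which by definition is $n(e,f)$. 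Hence $\chi(S_e,S_f)=n(f,e)-n(e,f)$, and this agrees with $\langle \{e\},\{f\}\rangle$ by the identity \eqref{grass}.

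I do not anticipate any real obstacle: the first step is a purely formal change of basis, and the second step reduces to a direct Ext-computation that is handled uniformly by CY$_3$ Serre duality together with the built-in relation between the quiver $Q(T)$ and Ext$^1$'s between simples. The only point that requires care is the degenerate case in which $e,f$ are the two edges of a self-folded triangle: here $\kappa(e)=\kappa(f)$ forces $n(e,f)=n(f,e)=0$, so both sides vanish, consistently with the fact that the basis adjustment $\{f\}=[e]+[f]$ was designed precisely so that $[f]$, which lies in the kernel of $\langle-,-\rangle$, does not disturb the matching with the Euler form.
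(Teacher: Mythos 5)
Your proposal is correct and follows essentially the same route as the paper: identify $K(\D(T,\epsilon))$ with the free abelian group on the simples of the standard heart, use the CY$_3$ Serre duality to see that the Euler form is the skew-symmetrization of the adjacency matrix of $Q(T)$, and conclude via the identity \eqref{grass}. The paper compresses the Ext-computation into one sentence, but the content is identical, and your check of the self-folded case is consistent with \eqref{deg}.
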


\begin{pf}
For any reduced quiver with potential $(Q,W)$, the Grothendieck group of the category $\D(Q,W)$ is identified with that of its canonical heart $\A=\A(Q,W)$, and is therefore the free abelian group on the vertices of $Q$. The \CY condition implies that the Euler form  is given by skew-symmetrising the adjacency matrix of the quiver. The result is then immediate from \eqref{grass}. 
\end{pf}

Suppose that $(T_i,\epsilon_i)$ are signed triangulations of $(\S,\M)$ differing by a flip in an edge $e$.  We then use the natural bijection between the  edges of $T_1$ and $T_2$ to identify these two sets.  Theorem \ref{fl} gives equivalences \[\Phi_{\pm}\colon \D(T_1,\epsilon_1)\isom \D(T_2,\epsilon_2),\] which induce isomorphisms $\phi_{\pm}$ on the Grothendieck groups. We have the following explicit formulae for these maps.

\begin{lemma}
\label{lo}
Define maps $F_\pm$ by the  commutative diagram
 \begin{equation*}\xymatrix@C=1.5em{ \Gamma(T_1) \ar[d]_{\lambda_1} \ar[rr]^{F_\pm} && \Gamma(T_2)\ar[d]^{\lambda_2} \\
K(\D(T_1,\epsilon_1)) \ar[rr]^{\operatorname{\phi_\pm}} &&K(\D(T_2,\epsilon_2)) }\end{equation*}
where the $\lambda_i$ are the maps of Lemma \ref{hope}. Then for all  edges $f$  we have
 \begin{equation}
\label{cc}
F_+(\{f\})=\{f\}+n(e,f) \, \{e\},\quad F_-(\{f\})=\{f\}+n(f,e) \, \{e\},\end{equation}
 where  $n(-,-)$ is computed in  the  triangulation $T_2$, and we set $n(e,e)=-2$.
\end{lemma}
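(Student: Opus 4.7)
The proof is essentially a computation combining Theorem \ref{fl} with the recipe for simples of a tilted heart from the proof of Proposition \ref{rpt}. The plan is as follows. Write $S_f \in \A(T_1,\epsilon_1)$ and $S'_f \in \A(T_2,\epsilon_2)$ for the simples indexed by the edge $f$, so that $\lambda_1(\{f\}) = [S_f]$ and $\lambda_2(\{f\}) = [S'_f]$. By Theorem \ref{fl}, $\Phi_\pm$ identifies $\A(T_1,\epsilon_1)$ with $\mu^{\pm}_{S'_e}(\A(T_2,\epsilon_2))$ and induces the natural bijection on simple objects, so $\Phi_\pm(S_f)$ is the simple at vertex $f$ of the tilted heart. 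Hence $F_\pm(\{f\}) = \lambda_2^{-1}([\Phi_\pm(S_f)])$, and the lemma reduces to computing the class of this tilted simple.

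Since $(\S,\M)$ is amenable, the quiver $Q(T_2)$ has no loops or oriented $2$-cycles, so for each $f \neq e$ exactly one of $\Ext^1(S'_e,S'_f)$, $\Ext^1(S'_f,S'_e)$ vanishes, and we may order the vertices so that $e$ plays the role of the distinguished vertex $i$ in the proof of Proposition \ref{rpt}. That proof identifies the simple at vertex $f \neq e$ of $\mu^{+}_{S'_e}(\A(T_2,\epsilon_2))$ as either $S'_f$ itself or the universal extension $\Tw_{S'_e}(S'_f)$ fitting into
\[0 \to S'_f \to \Tw_{S'_e}(S'_f) \to \Ext^1(S'_e, S'_f) \tensor S'_e \to 0.\]
In either case this simple has class $[S'_f] + \dim\Ext^1(S'_e,S'_f) \cdot [S'_e] = [S'_f] + n(e,f)\,[S'_e]$. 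For $f = e$ the simple is $S'_e[-1]$, of class $-[S'_e]$, which matches the same formula under the convention $n(e,e)=-2$. Applying $\lambda_2^{-1}$ gives the asserted formula for $F_+$.

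The formula for $F_-$ follows by the symmetric argument, or more economically by combining Proposition \ref{rpt}(a), in the form $\mu^{-}_{S'_e}(\A) = \Tw^{-1}_{S'_e}(\mu^{+}_{S'_e}(\A))$, with the standard description of the induced map on $K(\D(T_2,\epsilon_2))$, namely $[E] \mapsto [E] + \chi(S'_e, E)\,[S'_e]$. The \CY-$3$ duality together with the no-$2$-cycle condition gives $\chi(S'_e, S'_f) = n(f,e) - n(e,f)$, and combining this with the $F_+$ computation shows uniformly that the simple at vertex $f$ of $\mu^{-}_{S'_e}(\A(T_2,\epsilon_2))$ has class $[S'_f] + n(f,e)\,[S'_e]$, still with $n(e,e) = -2$ handling $f = e$. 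The whole argument is thus a routine Grothendieck-group bookkeeping on top of the tilted-heart recipe of Proposition \ref{rpt}; I do not foresee a serious obstacle, since the no-loop and no-$2$-cycle hypotheses needed there are guaranteed by the amenability of $(\S,\M)$.
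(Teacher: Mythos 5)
Your proposal is correct and follows essentially the same route as the paper: Theorem \ref{fl} reduces the computation to the classes of the simples of the tilted heart $\mu^{\pm}_{S_e}(\A(T_2,\epsilon_2))$, which are read off from the universal-extension description in the proof of Proposition \ref{rpt}, and the $F_-$ formula is then obtained from $F_+$ via Proposition \ref{rpt}(a) and the action of $\Tw_{S_e}$ on the Grothendieck group (the paper phrases this as a reflection in $\{e\}$ with respect to $\langle-,-\rangle$ using \eqref{grass}, which is exactly your $\chi(S'_e,S'_f)=n(f,e)-n(e,f)$ computation).
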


\begin{pf}
According to Theorem \ref{fl}, the simple objects of the canonical heart $\A(T_1,\epsilon_1)$ are mapped by the functor $\Phi_{\pm}$ to the simple objects of the tilted heart $\mu_{S_e}^{\pm}(\A(T_2,\epsilon_2))$. The simple  objects of  the mutated heart $\mu_{S_e}^{+}(\A(T_2,\epsilon_2))$ were listed in the proof of Proposition \ref{rpt}. The first formula of \eqref{cc} then follows because  the extension groups between the simple objects in $\A(T_2,\epsilon_2)$ are based by the arrows in the quiver $Q(T_2)$.

To prove the second formula in \eqref{cc} note  that by \eqref{grass} it  differs from the first by a reflection in the element $\{e\}$ with respect to the form $\langle -,-\rangle$. By Lemma \ref{hope} this corresponds under the isometry $\lambda_2$ to the action of the twist functor $\Tw_{S_e}$ on $K(\D(T_2,\epsilon_2))$. The result therefore follows from Proposition \ref{rpt}(a).
\end{pf}

We have a similar result for pops. Suppose that $(T,\epsilon_i)$ are signed triangulations of $(\S,\M)$ differing by a pop at a puncture $p\in \PP$.   Theorem  \ref{pop} gives an equivalence \[\Psi\colon \D(T,\epsilon_1)\isom \D(T,\epsilon_2),\] which induces an isomorphism $\psi$ on the Grothendieck groups. 
\begin{lemma}
\label{popk}
Define a map $F$ by the  commutative diagram
 \begin{equation*}\xymatrix@C=1.5em{ \Gamma(T) \ar[d]_{\lambda_1} \ar[rr]^{F} && \Gamma(T)\ar[d]^{\lambda_2} \\
K(\D(T,\epsilon_1)) \ar[rr]^{\operatorname{\psi}} &&K(\D(T,\epsilon_2)) }\end{equation*}
where  the $\lambda_i$  are the maps of Lemma \ref{hope}.
Then the map $F$ exchanges the two elements $\{e\}$ and $\{f\}$  corresponding to the edges of the self-folded triangle containing $p$, and fixes all other elements of this basis. 
\end{lemma}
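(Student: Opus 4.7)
The plan is to observe that this lemma is essentially a direct unwinding of definitions, combining Theorem \ref{pop} with Lemma \ref{hope}. The puncture $p$ has valency one, so it lies at the center of a self-folded triangle whose encircling and self-folded edges are precisely the $e$ and $f$ in the statement. By Theorem \ref{pop}, the equivalence $\Psi \colon \D(T,\epsilon_1) \to \D(T,\epsilon_2)$ identifies the standard hearts $\A(T,\epsilon_1)$ and $\A(T,\epsilon_2)$ in such a way that the simple object $S_g$ of $\A(T,\epsilon_1)$ is sent to $S_g$ in $\A(T,\epsilon_2)$ for all edges $g \neq e,f$, while $S_e$ and $S_f$ are exchanged.

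Applying the induced map $\psi$ on Grothendieck groups therefore gives $\psi([S_g]_1) = [S_g]_2$ for $g \neq e,f$, together with $\psi([S_e]_1) = [S_f]_2$ and $\psi([S_f]_1) = [S_e]_2$, where the subscripts indicate which of the categories $\D(T,\epsilon_i)$ the class lives in. By Lemma \ref{hope}, the isomorphism $\lambda_i$ sends the basis element $\{g\} \in \Gamma(T)$ to the class $[S_g]_i$ for every edge $g$. Composing with $\lambda_2^{-1}$ and $\lambda_1^{-1}$ respectively then yields $F(\{g\}) = \{g\}$ for $g \neq e,f$, and $F(\{e\}) = \{f\}$, $F(\{f\}) = \{e\}$, as required.

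There is no real obstacle here; the only thing to be careful of is that the basis $\{g\}$ appearing in the statement is not the naive edge basis $[g]$ but the modified basis defined in Section \ref{edgelattice}, and that the map $\lambda_i$ is expressly defined so that $\{g\} \mapsto [S_g]$. Once these identifications are in place the conclusion is immediate from the explicit description of $\Psi$ on simples provided by Theorem \ref{pop}.
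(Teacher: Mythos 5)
Your proposal is correct and is exactly the argument the paper intends: the paper's proof of this lemma is simply ``Immediate from Theorem \ref{pop}'', and your unwinding --- $\Psi$ preserves the standard hearts, exchanges $S_e$ and $S_f$ while fixing the remaining simples, and $\lambda_i$ sends $\{g\}$ to $[S_g]$ by Lemma \ref{hope} --- is precisely the intended chain of identifications. Your remark about distinguishing the modified basis $\{g\}$ from the naive basis $[g]$ is the right point of care, and nothing further is needed.
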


\begin{pf}
Immediate from Theorem \ref{pop}.
\end{pf}


\subsection{An unpleasant Lemma}

Let $(\S,\M)$ be an amenable marked bordered surface. Suppose that
 $(T_i,\epsilon)$ are two signed triangulations related by  a flip in an edge $e$.  The purpose of this section is to write the maps $F_\pm$  of Lemma \ref{lo} in terms of the original basis elements $[e]$ of the lattices $\Gamma(T_i)$. As before, we use the natural bijection between the edges of $T_1$ and $T_2$ to identify these two sets. 
 
\begin{lemma}
\label{plop}
The maps $F_\pm$  of Lemma \ref{lo} satisfy
\begin{equation}
\label{aa}
F_+([f])=[f]+ c(e,f) \, [e],\quad F_-([f])=[f]+ c(f,e) \, [e],\end{equation}  where $c(-,-)$ is computed in the triangulation $T_2$, and we set $c(e,e)=-2$. \end{lemma}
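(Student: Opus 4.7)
The plan is to derive \eqref{aa} from \eqref{cc} by converting between the two bases of $\Gamma(T)$. Writing $\delta(g) = 1$ if $g$ is self-folded and $0$ otherwise, we have $\{g\} = [g] + \delta(g)\,[\kappa(g)]$, so the change of basis is unipotent and supported on self-folded edges. Since a self-folded edge is never flippable, the edge $e$ is not self-folded in either $T_1$ or $T_2$, hence $\{e\}_i = [e]$. Applying $F_+$ to the expansion $[f] = \{f\}_1 - \delta_1(f)\{\kappa_1(f)\}_1$ and then re-expanding $\{f\}_2$ as $[f] + \delta_2(f)[\kappa_2(f)]$, the formula \eqref{cc} reduces to an identity that can be checked by case analysis.

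The key observation is that because the flip modifies only the two triangles of $T_1$ adjacent to $e$, the self-folded status of $f$ can change only when $e$ serves as the encircling edge of the relevant self-folded triangle. This gives three non-generic configurations: (a) $f$ is self-folded in $T_2$ but not $T_1$, in which case necessarily $\kappa_2(f) = e$; (b) $f$ is self-folded in $T_1$ but not $T_2$, forcing $\kappa_1(f) = e$, so that the inner puncture of the former self-folded triangle acquires valency $2$ in $T_2$ with both $e$ and $f$ incident; (c) $f$ is self-folded in both $T_i$ with the same encircling edge $\ne e$, untouched by the flip. In the generic case ($f$ not self-folded in either $T_i$ and $e, f$ not meeting at a valency-$2$ puncture), \eqref{cc} becomes \eqref{aa} directly via the identity $n_2(e, f) = c_2(e, f)$ from Remark \ref{remy}(b). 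In each non-generic case the verification is a short computation in the basis $[-]$, using that $n_2(e, f) = 0$ whenever one of $e$ or $f$ is self-folded (since \eqref{grass} gives $\langle[\kappa], [\kappa]\rangle = 0$), together with the boundary values $c_2(e, e) = -2$ and $c_2(e, f) = 1$ in the encircling/self-folded configuration. The case $f = e$ uses $c_2(e, e) = -2$ directly, and the argument for $F_-$ is the parallel analysis with the arguments of $n$ and $c$ interchanged.

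The delicate step is case (b): although $n_2(e, f) = 0$ and \eqref{cc} supplies no $[e]$-term, the expansion $[f] = \{f\}_1 - \{e\}_1$ in $T_1$ (since $\kappa_1(f) = e$) combined with the convention $c_2(e, e) = -2$ produces exactly the correction $[e]$ required by $c_2(e, f) = 1$, the latter being forced by the valency-$2$ configuration in $T_2$. Verifying that this cancellation produces the correct sign and multiplicity consistently across all configurations, together with their mirror-images for $F_-$, is the main obstacle.
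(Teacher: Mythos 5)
Your proposal follows essentially the same route as the paper's proof: reduce to $F_+$, pass between the bases $[-]$ and $\{-\}$ via $\{f\}=[f]+\rho(f)[\kappa(f)]$, observe that $e$ is never self-folded, record exactly when the self-folded status of $f$ changes under the flip (in both directions $e$ must be the encircling edge involved), and finish with the same four-case analysis. Your ``delicate'' case, where $f$ is self-folded in $T_1$ with $\kappa_1(f)=e$, is handled correctly: $F_+([f])=F_+(\{f\}_1)-F_+(\{e\}_1)=\{f\}_2+\{e\}_2=[f]+[e]$, matching $c_2(e,f)=1$ at the valency-$2$ puncture.

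One auxiliary claim is wrong, though, and would derail the computation if used literally. You assert that $n_2(e,f)=0$ whenever one of $e$ or $f$ is self-folded, justified by $\langle[\kappa],[\kappa]\rangle=0$. That justification only applies when $\kappa_2(f)=\kappa_2(e)=e$, i.e.\ when $e$ is the encircling edge of $f$'s self-folded triangle. In your case (c) --- $f$ self-folded in both $T_i$ with encircling edge $g\neq e$ --- one has $n_2(e,f)=\max\bigl(0,\langle[g],[e]\rangle\bigr)$, which is in general positive. Plugging $n_2(e,f)=0$ into $F_+([f])=F_+(\{f\})-F_+(\{g\})$ would yield $[f]-n_2(e,g)[e]$ instead of the correct answer $[f]$ (note $c_2(e,f)=0$ here, since the only triangle containing $f$ has edges $f$ and $g$). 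The actual mechanism is a cancellation: $\kappa_2(f)=\kappa_2(g)=g$ forces $n_2(e,f)=n_2(e,g)$, so the two $[e]$-contributions cancel. This is exactly the paper's case (d), which invokes Remark \ref{remy}(b) to identify $n(e,f)$ with $c(e,\kappa_2(f))$ rather than with $0$. The fix is local and does not affect the rest of your argument, but as written that case does not go through.
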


\begin{pf}
As in the proof of Lemma \ref{lo}, it is enough to check the result for $F_+$ since the two formulae differ by a reflection in the element $[e]=\{e\}$.
  Recall  from Section \ref{edgelattice} the map $\kappa$, which sends an edge to itself, unless it is self-folded, in which case it sends it to the corresponding encircling edge. We have the relations
\begin{equation}
\label{bb}\{f\}= [f] + \rho_i(f) \,[\kappa_i(f)], \quad i=1,2, \end{equation}
where $\rho_i(f)=0$ unless edge $f$ is self-folded in the triangulation $T_i$, in which case it is equal to 1. 
 By definition, the edge $e$ is not self-folded in either triangulation, so $\rho_i(e)=0$ and $\kappa_i(e)=e$.

We note two basic facts which we will use in the proof. Firstly, if $f$ is self-folded in $T_1$ then $f$ fails to be self-folded in
$T_2$ if and only if $e=\kappa_1(f)$. Secondly, if $f$ is not self-folded in $T_1$ then it  is self-folded in $T_2$ precisely if $e$ and $f$ meet in $T_1$ at a puncture of valency 2. 

The formula of the statement certainly defines some isomorphism; we must just check that it agrees with the formula of Lemma \ref{lo}.
 Substituting \eqref{bb} into \eqref{aa} gives
\[F_+(\{f\})=[f] + \rho_1(f) [\kappa_1(f)] + (c(e,f)+\rho_1(f) c(e,\kappa_1(f)))\,[e].\]
\[=\{f\}-\rho_2(f) [\kappa_2(f)] + \rho_1(f) [\kappa_1(f)] + (c(e,f)+\rho_1(f) c(e,\kappa_1(f)))\,\{e\}.\]
We now claim that
\[-\rho_2(f) [\kappa_2(f)] + \rho_1(f) [\kappa_1(f)] =(\rho_1(f) \delta_{e,\kappa_1(f)}-\rho_2(f) \delta_{e,\kappa_2(f)} )\,\{e\}.\]
To see this, note that both sides are zero unless $f$ is self-folded in one of the $T_i$. If $f$ is self-folded in both, then $e$ is not equal to $\kappa_1(f)$ or $\kappa_2(f)$, and since $\kappa_1(f)=\kappa_2(f)$,  both sides are still zero. If $f$ is self-folded in $T_1$ but not in $T_2$ then necessarily $e=\kappa_1(f)$, and both sides return  $\{e\}$. Similarly, if $f$ is self-folded in $T_2$ but not in $T_1$ then $e=\kappa_2(f)$ and  both sides return  $-\{e\}$.

Thus it remains to show that\[n(e,f) = c(e,f)+\rho_1(f) c(e,\kappa_1(f))+ \rho_1(f) \delta_{e,\kappa_1(f)}-\rho_2(f) \delta_{e,\kappa_2(f)},\]
where $c(-,-)$ and $n(-,-)$ are always computed in the triangulation $T_2$.
Write $m(e,f)$ for the expression on the right. 
If $e=f$ then since $e$ is not self-folded in either triangulation we have $m(e,e)=c(e,e) =n(e,e)=-2$. Thus we assume that $e\neq f$. We proceed by a case-by-case analysis according to whether $f$ is self-folded in each of the two triangulations $T_i$.

Case (a). $\rho_1(f)=0$, $\rho_2(f)=0$. In this case we have $n(e,f)=c(e,f)=m(e,f)$.

Case (b).  $\rho_1(f)=0$, $\rho_2(f)=1$. Then  $e=\kappa_2(f)$ so $n(e,f)=0$ and $m(e,f)=1-1=0$.

Case (c). {$\rho_1(f)=1$, $\rho_2(f)=0$}. Then $e$ and $f$ must meet at a vertex of valency $2$ in $T_2$, so $c(e,f)=c(f,e)=1$ and  $n(e,f)=0$. But then  $\kappa_1(f)=e$, so $m(e,f)=1-2+1=0$. 

Case (d). {$\rho_1(f)=1$, $\rho_2(f)=1$}.  Then $\kappa_1(f)=\kappa_2(f)$ and $e\neq \kappa_2(f)$, so using Remark \ref{remy}(b) we have  $m(e,f)=c(e,\kappa_2(f))=n(e,f)$.
\end{pf}





\section{From differentials to stability conditions}
In this last part of the paper we shall prove our main theorems, by combining the geometry of Sections 2--6 with the algebra and combinatorics of Sections 7--9. In this first section we  explain the basic link between quadratic differentials and stability conditions, following the ideas of  Gaiotto, Moore and Neitzke \cite[Section 6]{GMN2}.

 Our starting point is the observation that a complete and saddle-free differential $\phi\in \Quad(\S,\M)$ determines an ideal triangulation $T(\phi)$ of  the surface $(\S,\M)$ up to the action of the mapping class group $\MCG(\S,\M)$.  We then go on to study how this triangulation changes as we cross between different connected components of the open subset of saddle-free differentials.

\subsection{WKB triangulation}
\label{wkb}

Let $(\S,\M)$ be a marked bordered surface.
Take a  complete and  saddle-free  GMN differential $\phi$ on a Riemann surface $S$ which defines a point of the space $\Quad(\S,\M)_0$. The basic link with the combinatorics of ideal triangulations is the following. 

\begin{lemma}
Taking one generic trajectory  from each horizontal strip of $\phi$ defines an ideal triangulation $T(\phi)$ of the surface $(\S,\M)$, well-defined up to the action of $\MCG(\S,\M)$. \end{lemma}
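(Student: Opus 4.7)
The plan is to exploit the horizontal strip decomposition of a saddle-free complete differential described in Section~\ref{hori} (see also the detailed analysis in Section~\ref{periodshorizontal}), and to identify the collection of chosen generic trajectories with the edges of an ideal triangulation. First I would explain how each horizontal strip $h$ of $\phi$ produces an arc in $(\S,\M)$: any generic trajectory in $h$ is a smooth path in $S$ whose two ends tend to infinite critical points of $\phi$, becoming asymptotic at each end to one of the distinguished tangent directions described in Section~\ref{crit}. Under the blow-up construction which associates the marked bordered surface $(\S,\M)$ to $(S,\phi)$ (see Section~\ref{union}), each such trajectory extends to an arc in $\S$ whose endpoints are exactly the marked points of $\M$ corresponding to those tangent directions (and to the double poles, if any). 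Since $\phi$ is complete, no endpoint is a simple pole, so these are genuine ideal arcs in the sense of Section~\ref{tri}.

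Next I would check that the resulting family of arcs satisfies the disjointness and maximality conditions defining an ideal triangulation. Two generic trajectories lying in distinct horizontal strips cannot meet in the interior, as they would have to cross a separating trajectory, which by definition contains no generic trajectory in its interior; within a single strip our prescription selects exactly one representative. The complement in $\S$ of the chosen arcs, together with $\partial\S$ and the marked points, is swept out by the separating trajectories emanating from zeroes and by half-planes. Using the trivalent structure at each simple zero (three horizontal rays leave each zero) and the local structure at higher order poles, one verifies that each connected component of the complement is a topological disc which meets the chosen arcs in exactly three half-edges at its boundary, i.e.\ an ideal triangle; degenerate strips give rise in the familiar way to self-folded triangles. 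A bookkeeping check (equivalent to the edge count already performed in Lemma~\ref{bassist}) shows that the number of arcs equals $n = 6g-6+3p+\sum_i(k_i+3)$, which is the correct number of edges for an ideal triangulation of $(\S,\M)$.

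Finally I would address the well-definedness up to the action of $\MCG(\S,\M)$. The horizontal strip decomposition is intrinsic to $(S,\phi)$, and within any individual strip $h$, any two generic trajectories are isotopic through generic trajectories, because $h$ is equivalent to a straight horizontal strip in $\C$; hence different choices of generic trajectories yield the same homotopy classes of arcs. The identification of the underlying smooth surface of $S$ with $\S$ (and of the blown-up boundary marked points with $\M$) is canonical only up to an orientation-preserving diffeomorphism fixing $\M$, i.e.\ up to an element of $\MCG(\S,\M)$, so the triangulation is well-defined precisely up to this action.

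The routine verifications are local; the main obstacle is the regional analysis showing that every component of the complement is a (possibly self-folded) triangle. This reduces to combining the classification of global trajectory types recalled in Section~\ref{hor} with the local pictures at zeroes, double poles and higher-order poles of Section~\ref{crit}: a component can contain no closed or recurrent trajectories by Lemma~\ref{prop:NoClosed}, at most one zero (otherwise the trivalent structure forces an extra separating trajectory into its interior), and must contain a unique infinite critical point in its closure, so it is a half-plane or one of the neighbourhoods of a pole carved out by the distinguished tangent directions, yielding a triangular cell.
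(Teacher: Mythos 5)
Your argument follows the paper's proof closely: produce one arc per horizontal strip via the real blow-up, observe disjointness of interiors, count that there are $n$ of them (Lemma \ref{bassist}), and attribute the $\MCG(\S,\M)$-ambiguity to the choice of identification of the underlying surface with $(\S,\M)$ together with the isotopy class of the generic trajectory within each strip. The extra regional analysis you sketch (each complementary component is a possibly self-folded triangle) is not needed once one knows there are $n$ pairwise compatible arcs, since every maximal compatible collection has exactly $n$ elements; the paper relies on the count alone.

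One step deserves a better justification than you give it. The definition of an arc in Section \ref{tri} requires not only that the endpoints lie in $\M$ but also that the path is not contractible rel endpoints, nor homotopic into a piece of $\partial\S$ containing no marked points. Your sentence ``Since $\phi$ is complete, no endpoint is a simple pole, so these are genuine ideal arcs'' only addresses where the endpoints land, not this essentiality condition. The paper closes this by noting that a generic trajectory is a minimal geodesic for the $\phi$-metric, hence cannot be contracted through paths with interior in $S\setminus\Crit_\infty(\phi)$. Your triangle analysis would also yield essentiality, but as written it is only asserted; either carry that analysis out or insert the one-line geodesic argument.
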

 
\begin{pf}
Let us identify $(\S,\M)$ with  the marked bordered surface associated to $(S,\phi)$. This identification   is unique up to the action of the group of orientation-preserving diffeomorphisms of $(\S,\M)$. In each horizontal strip $h$ for $\phi$ choose  a corresponding generic trajectory $g_h$.
Note that if $g_h$   tends to a pole $p$ of order $m+2$, then it approaches $p$ along one of the $m$ distinguished tangent vectors at $p$. It therefore defines a path $\delta_h$ in the surface $\S$ connecting two (not necessarily distinct)  points of $\M$, which we  denote $\delta_h$.  

  The  different $\delta_h$  are clearly non-intersecting in their interiors, and by Lemma \ref{bassist} there are the correct number $n$ of them. The fact that they are arcs corresponds to the statement that the original separating trajectories $g_h$ are not contractible relative to their endpoints through paths with interiors in $S\setminus \Crit_\infty(\phi)$. This follows from the fact that they are minimal geodesics. \end{pf}

The triangulation $T(\phi)$ is called the WKB triangulation in \cite{GMN2}. By definition there is a bijection $e\mapsto h_e$ between the edges of  $T(\phi)$ and the horizontal strips of $\phi$.

\begin{lemma}
\label{iv}
Under the bijection $e\mapsto h_e$  an edge $e$ of $T(\phi)$ is self-folded precisely if the corresponding horizontal strip $h_e$ is degenerate.
\end{lemma}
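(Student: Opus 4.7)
The plan is to work out the local picture of the WKB triangulation near a zero of $\phi$, establish a bijection between zeros of $\phi$ and faces of $T(\phi)$, and then read off the self-folded condition directly.

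First I would set up the local picture at a simple zero $z$ of $\phi$. By Section \ref{crit}, exactly three horizontal trajectories emerge from $z$; since $\phi$ is saddle-free and complete, each of them is a separating trajectory tending to an infinite critical point. They divide a small disc around $z$ into three open sectors $V_1,V_2,V_3$, and each $V_i$ lies in some horizontal strip $h_i=h_i(z)$ of $\phi$ (with possible repetitions among the $h_i$). I would then argue that the face $F_z$ of $T(\phi)$ containing $z$ is a topological triangle bounded exactly by the three arcs $\delta_{h_1},\delta_{h_2},\delta_{h_3}$. Concretely, starting at $z$ and moving into the sector $V_i$, one enters the strip $h_i$; within this half of $h_i$ one can travel freely and cross into adjacent strips or half-planes through separating trajectories (which are not arcs of $T(\phi)$) until one reaches the boundary arc $\delta_{h_i}$. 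This describes $F_z$ as the connected component of $S \setminus \bigcup_h \delta_h$ containing $z$, and produces a bijection between zeros of $\phi$ and faces of $T(\phi)$.

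Next I would translate the self-folded condition on $F_z$. By definition, $F_z$ is a self-folded triangle exactly when two of its three sides coincide as edges of $T(\phi)$, i.e., when two of the sectors $V_i$ at $z$ lie in a common horizontal strip $h$. Since $\phi$ is saddle-free, each boundary component of a horizontal strip contains exactly one zero of $\phi$, so two distinct sectors at $z$ can be contained in the same strip $h$ if and only if $z$ is the unique zero on both boundary components of $h$: this is precisely the definition of $h$ being degenerate, as recorded in Section \ref{ihope}. When this occurs, the two coinciding sides of $F_z$ are both equal to $\delta_h$, making it the self-folded edge of $F_z$, while the third side $\delta_{h'}$ (corresponding to the strip $h'$ containing the third sector at $z$) becomes the encircling edge. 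Running the argument in both directions yields the equivalence claimed in the lemma.

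The only real content is the first step, identifying $F_z$ with its three bounding arcs; the second step is then essentially a tautology. The main potential obstacle is that a degenerate strip is embedded in $S$ in a non-standard way and the reader must be convinced that, as one traces the boundary of $F_z$ starting from $z$ along any of the two sectors of $V_i$ lying in the degenerate strip $h$, one really returns to the same arc $\delta_h$ rather than to a different generic trajectory. This follows from the fact that $\delta_h$ is the unique arc in the interior of $h$, together with the standard description of a horizontal strip as a region equivalent to $\{0<\Im(z)<c\}\subset\C$, which shows that the two halves of $h$ cut out by $\delta_h$ are each adjacent to $\delta_h$ along one side and to one boundary component of $h$ (containing $z$) along the other.
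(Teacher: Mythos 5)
Your argument is correct and is essentially the paper's own proof, which reads the same local picture off Figure \ref{Fig:Degenerate} from the dual viewpoint of a strip and its neighbours rather than a zero and its three sectors. The one imprecision is that a sector at a zero $z$ may lie in a half-plane rather than a horizontal strip (so the face $F_z$ can have a boundary segment of $\S$ as a side, and there is in general no arc $\delta_{h_i}$ attached to every sector); this does not affect the equivalence, since the two sectors relevant to self-foldedness lie in the degenerate strip by hypothesis.
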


\begin{pf}This is clear from Figure \ref{Fig:Degenerate}: the two zeroes in the boundary of a non-degenerate strip $h_e$ are distinct, so there are four neighbouring strips, whose corresponding edges form the two triangles containing $e$; in the case of a degenerate strip $h_f$ there is a unique neighbouring strip, necessarily non-degenerate, corresponding to the unique encirling edge $e$ of the self-folded triangle containing the self-folded edge $f$. 
\end{pf}

 Note that for any puncture $p\in \PP$ the residue $\res_p(\phi)$  is not real, since a double pole with a real residue is contained in a degenerate ring domain, whose boundary consists of  saddle connections. Suppose that we fix a signing of $\phi$ as in Section \ref{tofin}; this consists of a choice of sign for the residue $\res_p(\phi)$ at each puncture $p\in \bP$. The WKB triangulation $T=T(\phi)$    then also has a naturally defined signing $\epsilon=\epsilon(\phi)$: for a puncture $p\in \bP$ we define  $\epsilon(p)\in\{\pm 1\}$ by the condition \begin{equation}
\label{condn}
\epsilon(p) \cdot  \res_p(\phi)\in \h,\end{equation}
where $\h\subset \C$ is the upper half-plane. We  refer to $(T(\phi),\epsilon(\phi))$ as the \emph{signed WKB triangulation} of the signed differential $\phi$.


\subsection{Hat-homology and the edge lattice}
\label{sing}

 Let $(\S,\M)$ be a marked bordered surface, and take a complete and saddle-free differential $\phi\in \Quad(\S,\M)$.
 If $e$ is an edge of the WKB triangulation $T=T(\phi)$ we denote by $\alpha_e=\alpha_{h_e}$ the standard saddle class of the corresponding horizontal strip $h_e$. The edge lattice $\Gamma(T)$  introduced in Section \ref{edgelattice} then has the following geometric interpretation.

\begin{lemma}
\label{move}
 There is an isomorphism of abelian groups\[\mu\colon \Gamma(T)\to \hs,\]
such that for each edge $e\in T$, 
  the basis element $[e]$ is mapped   to the standard saddle class $\alpha_e$ of the corresponding  horizontal strip $h_e$. This map takes the form $\langle-,-\rangle$ to the intersection form.
\end{lemma}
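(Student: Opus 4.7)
The plan is to first produce $\mu$ as an abstract isomorphism of abelian groups, and then verify compatibility of the bilinear forms by a local computation at each zero of $\phi$.

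For the first step, both sides are free abelian: $\Gamma(T)$ is free on the edge basis $\{[e]\}$ by definition, while by Lemma \ref{bassist} the standard saddle classes $\{\alpha_h\}$ for $h$ ranging over horizontal strips form a $\Z$-basis of $\hs$. Since the bijection $e\mapsto h_e$ identifies the edges of $T$ with the horizontal strips, the assignment $\mu([e]) = \alpha_e$ is a well-defined isomorphism of abelian groups.

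For the compatibility with the bilinear forms it suffices, by bilinearity and skew-symmetry, to compute $\alpha_e\cdot \alpha_f$ for each pair of edges and to compare with $\langle [e],[f]\rangle = c(f,e)-c(e,f)$. The key geometric observation is that each saddle connection $\ell_g$ lies in the closure of $h_g$, with interior in the interior of $h_g$; since horizontal strips have disjoint interiors, the arcs $\ell_e$ and $\ell_f$ can only meet at common endpoints, i.e.\ at zeros of $\phi$ lying on both $\partial h_e$ and $\partial h_f$. Thus on the spectral cover, the intersection $\alpha_e\cdot\alpha_f$ decomposes as a finite sum of local contributions, one for each shared zero. Moreover, the separating-trajectory picture (Figure \ref{dualgraph2}) gives a canonical bijection between zeros of $\phi$ and interior triangles of $T(\phi)$: at each zero $z$ three horizontal strips meet, corresponding to the three edges of the triangle around $z$, and by Lemma \ref{iv} a triangle is self-folded precisely when two of those strips coincide. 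Hence pairs of distinct edges $e,f$ sharing an interior triangle correspond precisely to zeros shared by $\ell_e$ and $\ell_f$.

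It then remains to perform the local analysis at a zero $z$. For a non-self-folded interior triangle with distinct clockwise edges $e_1,e_2,e_3$, I would work in the canonical coordinate $w$ of Section~\ref{crit} in which $\phi = \tfrac{9}{4}\,w\,dw^{\otimes 2}$: the three horizontal rays emerge at arguments $0,\tfrac{2\pi}{3},\tfrac{4\pi}{3}$, the three strips are arranged cyclically around $z$, and on the spectral cover $w=u^2$ the point $z$ is a branch point. A short explicit computation, using the orientation convention $\Im Z_\phi(\alpha_{e_i})>0$ from Section \ref{l}, shows that the local contribution to $\alpha_{e_i}\cdot\alpha_{e_j}$ is $\pm 1$ for clockwise-consecutive edges and $0$ otherwise, with the signs arranged so that this contribution equals $c_{\Delta}(e_j,e_i)-c_{\Delta}(e_i,e_j)$, where $c_\Delta$ counts the contribution from this single triangle $\Delta$. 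Summing over all interior triangles gives $\alpha_e\cdot\alpha_f = \langle [e],[f]\rangle$ for all non-self-folded pairs.

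The main obstacle will be the self-folded case. When $\Delta$ is a self-folded triangle with encircling edge $e$ and self-folded edge $f$, the strip $h_f$ is degenerate by Lemma \ref{iv}, so $\ell_f$ is a loop at a single zero $z$ which also lies on $\partial h_e$. Naively the two ``ends'' of $\ell_f$ each meet $\ell_e$ at $z$, but on the spectral cover the two lifts of the loop $\ell_f$ are exchanged by the covering involution $\tau$ (because $\ell_f$ winds once around the double pole inside $\Delta$, at which the cover is unbranched), so the two local contributions come with opposite signs and cancel, giving $\alpha_e\cdot\alpha_f = 0$. This matches \eqref{deg}, where $\langle [e],[f]\rangle = c(f,e)-c(e,f) = 1-1 = 0$. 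With this case checked, $\mu$ is an isometry as required.
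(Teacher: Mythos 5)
Your overall strategy coincides with the paper's: Lemma \ref{bassist} gives the isomorphism of groups, the intersection $\alpha_e\cdot\alpha_f$ localizes at the zeroes shared by $\ell_e$ and $\ell_f$, these zeroes correspond to faces of $T(\phi)$ having both $e$ and $f$ among their edges, and the local count is $\pm 1$ in the non-self-folded case and $0$ in the self-folded case. Two points need attention.

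The substantive one is your sign argument in the self-folded case. You assert that the two lifts $L_1,L_2$ of the loop $\ell_f$ contribute with \emph{opposite} signs because they are exchanged by the covering involution $\tau$. But $\tau$ is an orientation-preserving diffeomorphism of $\hat{S}$ and therefore preserves the intersection pairing; since $\tau_*\alpha_e=-\alpha_e$ and, with the orientations that make $[L_1]+[L_2]$ anti-invariant, $\tau_*[L_1]=-[L_2]$, one gets
\[\alpha_e\cdot[L_1]=\tau_*(\alpha_e)\cdot\tau_*([L_1])=(-\alpha_e)\cdot(-[L_2])=\alpha_e\cdot[L_2],\]
so the symmetry you invoke shows the two contributions are \emph{equal}, not opposite, and would yield $2\,(\alpha_e\cdot[L_1])$ rather than $0$. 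The correct mechanism is that each contribution vanishes separately: $L_i$ is freely homotopic in $\hat{S}^{\circ}$ to a small loop around one of the two preimages of the enclosed double pole $p$, hence is null-homologous in $\hat{S}$ and lies in the radical of the intersection form (this is exactly the exact sequence used in the proof of Lemma \ref{mov}); equivalently $\alpha_f=\pm\beta_p$ and Lemma \ref{mov} gives $\alpha_e\cdot\alpha_f=0$ at once. The paper's phrase ``meets twice with opposite signs'' refers to a direct local count at $\hat{z}$, and the cancellation there occurs within each lift, not between the two lifts. Your conclusion is right, but the stated reason is not, so this step should be replaced by the appeal to Lemma \ref{mov} (or by the honest local perturbation).

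A secondary caution: you localize over \emph{interior} triangles, but $\ell_e$ and $\ell_f$ also meet at zeroes whose face of $T(\phi)$ contains a boundary arc of $\partial\S$ (for an unpunctured disc no face is interior, yet adjacent edges do intersect with sign $\pm 1$, as they must for $Q(T)$ to be of type $A_n$). The localization, and the count $c(e,f)$ it is being compared with, should run over all faces of $T$, as in the paper's own proof; the local computation at each such face is unchanged.
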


\begin{pf}
The map $\mu$ is an isomorphism by Lemma \ref{bassist}, since it takes a basis to a basis.
We must just check the relation \[\alpha_e\cdot  \alpha_f =c(f,e)-c(e,f)\]
for all edges $e$ and $f$ of the triangulation $T$.
Examining Figure \ref{dualgraph2} it is clear that  $\alpha_e$ meets $\alpha_f$ precisely if $e$ and $f$ are two sides of the same face of $T$. The intersection then occurs at the unique point of the spectral cover lying over the zero of the differential contained in this face.

\begin{figure}[ht]
\begin{center}
\includegraphics[scale=0.6]{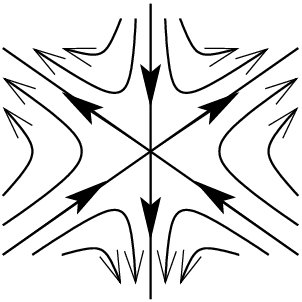}
\end{center}
\caption{The oriented foliation on the spectral cover above a simple zero.\label{Fig:SpectralFoliation}}
\end{figure}

In the case when $e$ and $f$ are not self-folded, a glance at Figure \ref{Fig:SpectralFoliation} shows that the intersection of two such cycles is $\pm 1$ depending on whether $e$ and $f$ occur in clockwise or anticlockwise order.
If  $f$ is  self-folded  and $e$ is the encircling edge of the corresponding self-folded triangle, then $\alpha_e\cdot\alpha_f=0$, since $\alpha_e$ meets $\alpha_f$ twice with opposite signs. Comparing with  \eqref{deg} we see that the claimed relation  holds also in this case.
\end{pf}

In Section \ref{edgelattice} we also considered a modified  basis $\{e\}$ of the edge lattice $\Gamma(T)$, indexed by the edges of $T$. Let us  define classes
\[ \gamma_e=\mu(\{e\})\in \hs,\]
 where $\mu$ is the map of Lemma \ref{move}. We can now give some geometric justification for these classes $\gamma_e$, and hence also the  basis $\{e\}$.

As explained in the proof of Lemma \ref{iv},  any degenerate horizontal strip $h_f$ is enclosed by a non-degenerate strip $h_e$, and taking a generic trajectory from each strip  then gives a self-folded triangle, with self-folded edge $f$ and enclosing edge $e$. Note that   the standard saddle connection in the strip $h_f$ is closed and lifts to a singular curve in $\hS$ which is a bouquet of two circles.
By definition of the basis $\{e\}\in \Gamma(T)$, we have
\[(\gamma_e,\gamma_f)=(\alpha_e,\alpha_e+\alpha_f).\]
These classes are both represented by simple closed curves in $\hS$ obtained by lifting the paths  illustrated by dotted arcs in the two sides of Figure \ref{Fig:Pop}.
   

\subsection{Flips and pops}
\label{wall1}
We can lift the stratification of Section \ref{strat} to the {\'e}tale cover of complete, signed differentials\[\Quad^\pm(\S,\M)_0\to \Quad(\S,\M)_0.\] 
We call the connected components of $B_0$ chambers. The  signed WKB triangulation is constant in each chamber.

The points of the locally-closed subset $F_2=B_2\setminus B_0$ consist of complete,  signed differentials with a single saddle trajectory $\gamma$. We think of the  connected components of $F_2$ as  walls.
 Let us now consider the behaviour of the signed WKB triangulation as we cross such a wall.

 Suppose then that $\phi_0\in F_2\subset \Quad^\pm(\S,\M)_0$ is a complete signed GMN differential lying on a wall, with a unique saddle trajectory $\gamma$.
 By Proposition \ref{more}  we can find $r>0$ such that for all $0<t\leq  r$ the signed differentials \[\phi_\pm(t)=e^{\pm i t} \cdot \phi_0\] are saddle-free and complete.   Consider the WKB triangulations  $T_\pm=\T(\phi_\pm(r))$ with their signings $\epsilon_\pm$. Note that the wall has a natural orientation to it: we make the convention that as we cross from $\phi_-$ to $\phi_+$ the period $Z_\phi(\Hat{\gamma})$ moves in a clockwise direction around the origin.

 There is an isomorphism $F\colon \Gamma(T_-)\to \Gamma(T_+)$ defined by the following commutative diagram
 \begin{equation}\label{flipgeom}\xymatrix@C=1.5em{ \Gamma(T_-) \ar[d]_{\mu_-} \ar[rr]^{F} && \Gamma(T_+)\ar[d]^{\mu_+} \\
\hsinput{-} \ar[rr]^{\operatorname{GM}} &&\hsinput{+}   }\end{equation}
where the bottom arrow is given by  the Gauss-Manin connection. 

\begin{prop} \label{flipandpop}
Take a complete signed GMN differential $\phi_0\in F_2$ with a unique saddle trajectory $\gamma$. There are two possible cases.
\begin{itemize}
\item[(a)] The ends of the saddle trajectory $\gamma$ are distinct. Then the  signed triangulations $(T_\pm,\epsilon_\pm)$ are related by a flip in a non-self-folded edge $e$. Identifying the edges of these triangulations in the standard way, the map $F$ is given by
\[F([f])=[f]+c(e,f) \, [e],\]
for all edges $f$, where $c(-,-)$ is computed in the triangulation $T_+$, and we set $c(e,e)=-2$.
\smallskip

\item[(b)] The saddle trajectory $\gamma$ is closed and forms the boundary of a degenerate ring domain centered on  a double pole $p$ with  real residue. The triangulations $T_-=T_+$ are equal, and the pole $p$ is the centre of a self-folded triangle with encircling and self-folded edges $e$ and $f$ respectively. The signed triangulations  $(T_\pm,\epsilon_\pm)$  are related by a pop at $p$, and the map $F$ exchanges the basis elements $\{e\}$ and $\{f\}$, leaving all other elements of the basis fixed. \end{itemize}
\end{prop}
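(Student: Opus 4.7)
The plan is to perform a local geometric analysis near the unique saddle trajectory $\gamma$, and then match the resulting Gauss-Manin map on hat-homology with the algebraic map on Grothendieck groups coming from the theorems of Labardini-Fragoso and Keller-Yang.

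First I would establish the case dichotomy. Either $\gamma$ has distinct endpoints (case (a)) or $\gamma$ is a closed loop based at a single zero. In the latter situation, a neighbourhood of $\gamma$ is filled by closed trajectories parallel to $\gamma$, forming a ring domain with $\gamma$ on one side. The other boundary component cannot consist of further saddle trajectories, by the uniqueness of $\gamma$, so it must be a double pole with real residue, giving case (b).

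For case (a), I would describe the local picture: each endpoint $z_i$ emits three horizontal trajectories, one being half of $\gamma$; the other two, necessarily separating, terminate at infinite critical points. Together, $\gamma$ and these four separating trajectories bound two horizontal regions (strips or half-planes) meeting along $\gamma$, forming a quadrilateral with the $z_i$ as opposite vertices. For $\phi_\pm$ the trajectory $\gamma$ breaks, and a new horizontal strip $h_e^\pm$ opens along the ``other diagonal''. The WKB triangulations $T_\pm$ thus agree outside this quadrilateral and differ only by a flip at the non-self-folded edge $e$ corresponding to $h_e^\pm$. The signings $\epsilon_\pm$ agree at every puncture, since $\phi_0$ has no degenerate ring domains in case (a), and small rotations keep residues on the same side of the real axis. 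Theorem \ref{lab} then identifies the associated quivers with potential as mutations, and by Theorem \ref{keller_yang} the Keller-Yang equivalence $\Phi_+$ acts on $K$-theory by the formula of Lemma \ref{plop}. The final step is to verify that, under the isomorphisms $\mu_\pm$ of Lemma \ref{move}, the geometric Gauss-Manin map $F$ coincides with $F_+$ of Lemma \ref{plop}, the sign being pinned down by our clockwise convention on $Z_\phi(\Hat{\gamma})$.

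For case (b), I would show that the degenerate ring domain around $p$ deforms for $\phi_\pm$ into a degenerate horizontal strip $h_f^\pm$, with the zero $z$ of $\gamma$ appearing on both sides of its boundary and both ends approaching the pole $p$ (now spiralled into, since the residue is no longer real); the surrounding strip $h_e^\pm$ provides the encircling edge of the resulting self-folded triangle at $p$. This local combinatorial structure is identical for $\phi_-$ and $\phi_+$, so $T_- = T_+$. On the other hand, $\res_p(\phi_\pm) = e^{\pm it/2}\res_p(\phi_0)$ crosses the real axis at $\phi_0$, so condition \eqref{condn} forces $\epsilon_-(p) = -\epsilon_+(p)$, and the signings differ by a pop at $p$. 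Theorem \ref{pop} together with Lemma \ref{popk} implies that the associated autoequivalence $\Psi$ acts on $K$-theory by exchanging $\{e\}$ and $\{f\}$; transferring this through $\mu_\pm$ to the geometric side amounts to verifying the two identities $F(\alpha_f^-) = -\alpha_f^+$ and $F(\alpha_e^-) = \alpha_e^+ + \alpha_f^+$, the first reflecting the orientation reversal of the standard saddle loop in the degenerate strip as the residue at $p$ crosses the real axis, the second reflecting how the encircling strip wraps around $p$ in opposite senses on either side of the wall.

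The principal obstacle in both cases is this compatibility between the geometric Gauss-Manin map on hat-homology and the algebraic map on Grothendieck groups, since the statement of the proposition is purely geometric whereas Lemmas \ref{plop} and \ref{popk} are stated algebraically. Both maps are controlled by explicit linear formulas, so matching them ultimately reduces to a careful bookkeeping of orientations of standard saddle connections and tangent vectors at simple zeroes---straightforward in case (a), where the quadrilateral provides a clean local model, but more delicate in case (b), where the identification of the strips across the wall is complicated by the transition through the degenerate ring domain.
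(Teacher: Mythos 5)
Your case dichotomy, the identification of the flip and the pop, and the treatment of the signings all follow the paper's own route: rotate $\phi_0$ slightly, examine the local trajectory structure near $\gamma$ (the quadrilateral of two strips/half-planes meeting along $\gamma$ in case (a); the degenerate ring domain encased in a horizontal strip in case (b)), and observe that in case (b) the residue $\res_p(\phi)$ crosses the real axis, forcing the pop. That part of the proposal is sound.

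The gap is in how you obtain the explicit formula for $F$. The map $F$ is \emph{defined} purely geometrically by the diagram \eqref{flipgeom}, via the Gauss--Manin connection and the isomorphisms $\mu_\pm$ of Lemma \ref{move}, and the proposition asserts a formula for this geometric map. The paper proves it by directly tracking how the standard saddle classes $\alpha_f$ deform across the wall in the local pictures (Figures \ref{Fig:GenericFlip} and \ref{Fig:Pop}), reading off $\operatorname{GM}(\alpha_f)=\alpha_f+c(e,f)\,\alpha_e$ in case (a) and the exchange of $\{\gamma_e,\gamma_f\}=\{\alpha_e,\alpha_e+\alpha_f\}$ in case (b). You instead invoke Theorems \ref{lab} and \ref{keller_yang} to get the K-theoretic formulas of Lemmas \ref{plop} and \ref{popk}, and then declare that ``the final step is to verify'' that the geometric $F$ coincides with the algebraic $F_+$. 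But that verification \emph{is} the entire content of the proposition: the algebraic formulas tell you nothing about the Gauss--Manin connection until you have independently computed the latter, and computing it is exactly the orientation bookkeeping you defer as ``straightforward'' in case (a). Moreover, the coincidence of the geometric and categorical maps is precisely Proposition \ref{skateboard}(a), whose proof in the paper consists of comparing the independently derived formulas of the present proposition with Lemma \ref{plop}; taking that coincidence as an input here would make the later argument circular. Your case (b) is closer to complete, since you at least isolate the two identities $F(\alpha_e^-)=\alpha_e^++\alpha_f^+$ and $F(\alpha_f^-)=-\alpha_f^+$ and give geometric reasons for each (the sign flip of the class with period in the upper half-plane as $Z(\beta_p)$ rotates through $\R$, and the re-labelling of the encircling strip); but in case (a) the computation that $\alpha_f$ picks up exactly $c(e,f)$ copies of $\alpha_e$ --- with $c$ computed in $T_+$, and with the sign fixed by the clockwise convention --- is never performed, and it is the heart of the proof.
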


\begin{proof}
Case (a)  is illustrated in Figure \ref{Fig:GenericFlip} (the four poles represented by the black dots need not be distinct on the surface, however). 
\begin{figure}[ht]
\begin{center}
\includegraphics[scale=0.5]{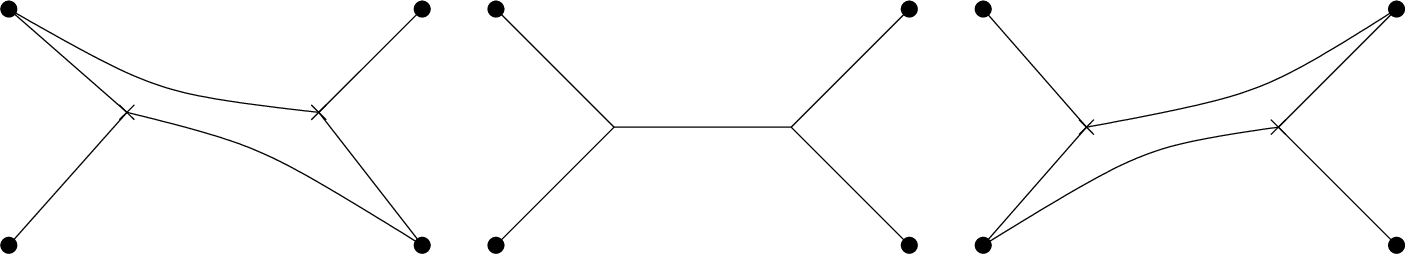}
\end{center}
\caption{The separating trajectories on either side of a flip wall.\label{Fig:GenericFlip}}
\end{figure}
The central picture  represents $\phi_0$ with a single saddle trajectory appearing in the boundary of two neighbouring horizontal strips or half-planes. The wall-crossing is effected by rotating $\phi_0$. Thus to find out what happens to the saddle trajectory, we just need to consider trajectories of small nonzero phase  in the two horizontal strips or half-planes. The result is as illustrated on the two sides of the figure. The associated triangulations $T_\pm$ are related by a flip in a non-self-folded edge $e$, exactly as shown in Figure \ref{flippy}.

Identifying the edges of the two  triangulations $T_\pm$ via the obvious bijection, we see from the picture that the Gauss-Manin connection satisfies
\[\operatorname{GM}(\alpha_f)= \alpha_f + c(e,f) \,\alpha_e\]
for all edges $f$, where $c(-,-)$ is computed in the triangulation $T_+$, and we set $c(e,e)=-2$.

\begin{figure}[ht]
\begin{center}
\includegraphics[scale=0.3]{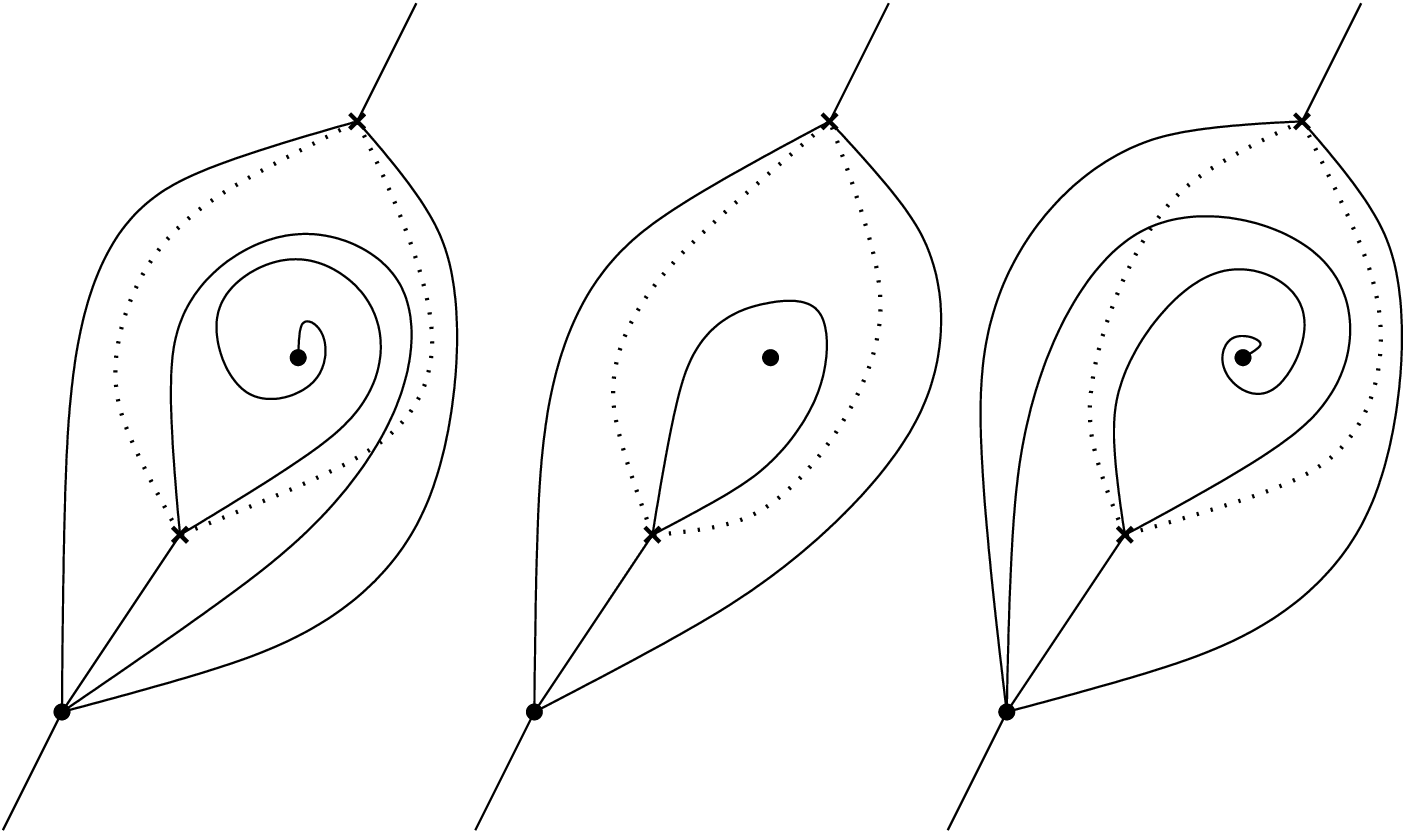}
\end{center}
\caption{The separating trajectories on either side of a pop wall. The hat-homology classes $\gamma_e$ are represented by the lifts of the dotted arcs.\label{Fig:Pop}}
\end{figure}

For (b) note that if $\gamma$ is closed then it is necessarily the boundary of a ring domain, which must be degenerate because there is only one saddle trajectory.  This case is illustrated in Figure \ref{Fig:Pop}. The central picture again represents $\phi_0$ with its degenerate ring domain encased in a horizontal strip. The wall-crossing is effected by rotating the differential, so to find the separating trajectories on either side of the wall it is enough to consider trajectories of small nonzero phase. 
The result on either side of the wall is a degenerate horizontal strip $h_f$, encased in a non-degenerate horizontal strip $h_e$.

The WKB triangulations $T_\pm$ are the same, with a self-folded triangle with self-folded edge $f$ and encircling edge $e$.
The hat-homology class $\Hat{\gamma}=\alpha_f$ is equal to the residue class $\beta_p$, 
where $p$ is the double pole at the centre of the degenerate ring  domain. Since $Z_\phi(\Hat{\gamma})$ crosses the real axis as $\phi$ crosses the wall, the signing $\epsilon(p)$ given by \eqref{condn} changes, and the two signed WKB triangulations on either side of the wall are related by a pop.
The two classes $\{\gamma_e,\gamma_f\}=\{\alpha_e,\alpha_e+\alpha_f\}$ are the same on both sides of the wall, but their labelling by the edges $e,f$ is exchanged (see  the dotted arcs in  Figure \ref{Fig:Pop}). It follows that  $F$ exchanges the two basis elements $\{e\}$ and $\{f\}$ as claimed.\end{proof}


\subsection{Stability conditions from saddle-free differentials}

Let us now assume that our marked bordered surface $(\S,\M)$ is amenable, and  take a saddle-free, complete, signed differential
\[\phi\in B_0\subset \Quad^\pm(\S,\M)_0.\]
 Let $(T,\epsilon)$ denote the  signed WKB triangulation of $\phi$, and consider the   category $\D(T,\epsilon)$ with its canonical heart $\A(T,\epsilon)$. The simple objects $S_e\in \A(T,\epsilon)$  are naturally indexed by the edges of the  triangulation $T$, and so too are the classes $\gamma_e$.

\begin{lemma}
\label{crucaal}
There is  an isomorphism of abelian groups\begin{equation*}\nu\colon K(\D(T,\epsilon))\to \hs,\end{equation*}
taking the class of a simple object $S_e$ to the corresponding class $\gamma_e$, and taking the Euler form to the intersection form.
\end{lemma}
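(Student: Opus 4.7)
The plan is essentially to define $\nu$ as the composition of the two isomorphisms already constructed. Specifically, Lemma \ref{hope} provides an isomorphism $\lambda\colon \Gamma(T) \to K(\D(T,\epsilon))$ sending $\{e\} \mapsto [S_e]$ and intertwining the skew form $\langle-,-\rangle$ on $\Gamma(T)$ with the Euler form on $K(\D(T,\epsilon))$, while Lemma \ref{move} provides an isomorphism $\mu\colon \Gamma(T) \to \hs$ sending $[e]\mapsto\alpha_e$ and intertwining $\langle-,-\rangle$ with the intersection form. Setting
\[\nu = \mu \circ \lambda^{-1}\colon K(\D(T,\epsilon)) \to \hs\]
produces an isomorphism of abelian groups which manifestly takes the Euler form to the intersection form.

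The only thing to verify is that $\nu([S_e]) = \gamma_e$ for every edge $e$ of $T$. But by definition $\lambda^{-1}([S_e]) = \{e\}$ and $\gamma_e = \mu(\{e\})$ (this is precisely the definition of $\gamma_e$ given just after Lemma \ref{move}), so
\[\nu([S_e]) = \mu(\lambda^{-1}([S_e])) = \mu(\{e\}) = \gamma_e,\]
which is exactly what is required.

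There is no substantial obstacle here: the lemma is a formal consequence of the two prior identifications of $\Gamma(T)$ with $K(\D(T,\epsilon))$ and with $\hs$, using the same underlying skew form on the edge lattice. The only non-trivial content was packaged into Lemmas \ref{hope} and \ref{move}; in particular, the reason the simple classes $[S_e]$ match the modified basis elements $\{e\}$ (rather than $[e]$) was already built into Lemma \ref{hope}, which is exactly what arranges the matching with the geometric classes $\gamma_e = \mu(\{e\})$ on the hat-homology side.
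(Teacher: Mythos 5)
Your proof is correct and is exactly the paper's argument: the paper's own proof reads ``This is immediate by combining Lemma \ref{hope} and Lemma \ref{move},'' and your composition $\nu=\mu\circ\lambda^{-1}$ together with the check $[S_e]\mapsto\{e\}\mapsto\gamma_e$ is precisely the intended unpacking of that sentence.
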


\begin{pf}
This is immediate by combining  Lemma \ref{hope} and Lemma \ref{move}.
\end{pf}

The following result gives the basic link between quadratic differentials and stability conditions.

\begin{lemma}
\label{un}
 There is a unique stability condition $\sigma(\phi)\in \Stab \D(T,\epsilon)$
 whose heart is the standard heart $\A(T,\epsilon)\subset \D(T,\epsilon)$, and whose central charge \[Z\colon K(\D(T,\epsilon))\to\C\] corresponds to the period of $\phi$ under the  isomorphism of Lemma \ref{crucaal}.
\end{lemma}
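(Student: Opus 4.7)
The plan is to reduce the statement to the general principle, recalled in Section \ref{chambers}, that a stability condition with a given finite-length heart $\A\subset\D$ is determined by, and exists for, any group homomorphism $Z\colon K(\D)\to\C$ sending each simple object of $\A$ into the semi-closed upper half plane $\H$. Since both uniqueness and the construction of the Harder--Narasimhan filtrations are covered by that general fact, the entire content of the lemma reduces to verifying the single positivity statement $Z_\phi(\gamma_e)\in\H$ for every edge $e$ of $T$.

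First I would set up notation. Using the isomorphism $\nu\colon K(\D(T,\epsilon))\to\hs$ of Lemma \ref{crucaal}, define the candidate central charge
\[
Z=Z_\phi\circ\nu\colon K(\D(T,\epsilon))\to\C,
\]
so that $Z([S_e])=Z_\phi(\gamma_e)$ for each simple $S_e\in\A(T,\epsilon)$. The heart $\A(T,\epsilon)$ is finite-length by Theorem \ref{eyes}, so everything is in place to apply the general machinery recalled in Section \ref{chambers}, provided each $Z([S_e])\in\H$.

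The key verification splits into two cases according to the definition of the basis $\{e\}$ in Section \ref{edgelattice} and the geometric interpretation of Section \ref{sing}. If $e$ is not the self-folded edge of a self-folded triangle, then $\gamma_e=\alpha_e$ is the hat-homology class of the standard saddle connection $\ell_{h_e}$. By the sign convention of Section \ref{ihope}, this class satisfies $\Im Z_\phi(\alpha_e)>0$, so $Z([S_e])$ lies in the open upper half plane. If on the other hand $f$ is a self-folded edge with encircling edge $e$, then $\gamma_f=\alpha_e+\alpha_f$, which is a sum of two elements of the strict upper half plane; it therefore again lies in the strict upper half plane. Hence $Z([S_e])\in\H$ for every simple, and the general principle produces a (unique) stability condition $\sigma(\phi)=(Z,\P)$ with heart $\A(T,\epsilon)$ and central charge $Z$.

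I do not anticipate a serious obstacle here: uniqueness is tautological once the heart and central charge are fixed, and the support property \eqref{support} is automatic because the strict positivity $\min_e \Im Z([S_e])>0$, together with the finite-length condition on $\A(T,\epsilon)$, bounds the $\ell^1$-norm on $K(\D)\tensor\R$ of any class $[E]=\sum n_e[S_e]$ of a semistable object by a uniform multiple of $\Im Z(E)\leq|Z(E)|$. The only mildly delicate point is the bookkeeping of the sign convention for the classes $\alpha_e$ and $\gamma_e$, and this is already recorded in Lemma \ref{bassist} and Section \ref{sing}; the rest is just an appeal to results already stated.
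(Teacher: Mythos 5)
Your proposal is correct and follows essentially the same route as the paper: define $Z=Z_\phi\circ\nu$, observe that each $\gamma_e$ is a positive combination of standard saddle classes $\alpha_h$ whose periods lie in the upper half-plane by the convention of Section \ref{ihope}, and invoke the finite-length property of $\A(T,\epsilon)$ to produce the unique stability condition. Your extra remark verifying the support property is a sensible addition that the paper's proof leaves implicit.
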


\begin{pf}
Define $Z$ via the isomorphism $\nu$  of Lemma  \ref{crucaal}.
Then for each  edge $e$  the corresponding central charge
$Z(S_e)=Z_\phi(\gamma_e)$ lies in the upper half-plane. Indeed, by the definition of the basis $\{e\}$  in Section \ref{edgelattice}, the  classes $\gamma_e=\mu(\{e\})$ are positive linear combinations of the classes $\alpha_e=\mu([e])$, whose periods lie in the upper half-plane by definition. 
 Since the standard heart $\A(\T,\epsilon)$ is of finite length, this is enough to give a stability condition. 
\end{pf}


\subsection{Wall-crossing}

We now describe how the stability conditions $\sigma(\phi)$ defined in the last section behave as the differential $\phi$  crosses walls in $\Quad^\pm(\S,\M)_0$ of the sort considered   in Section \ref{wall1}.
Consider a complete, signed GMN differential \[\phi_0\in F_2\subset \Quad^\pm(\S,\M)_0\]  with a unique saddle trajectory $\gamma$.
Take $r>0$ such that for all $0<t\leq  r$ the signed differentials \[\phi_\pm(t)=e^{\pm i t} \cdot \phi_0\] are saddle-free and complete.
Consider the corresponding signed WKB triangulations  $(T_\pm,\epsilon_\pm)=(T(\phi_\pm(r),\epsilon(\phi_\pm(r))$ and their associated   categories \[\D_{\pm}=\D(T_\pm,\epsilon_\pm)\] with their standard hearts $\A_\pm =\A(T_\pm,\epsilon_\pm)$. For $0<t<r$ we set \[\sigma_\pm(t)=\sigma( \phi_\pm(t))\in \Stab(\D_\pm).\]
The following result shows that these different stability conditions glue together in the appropriate way.

\begin{prop}
\label{skateboard}
There is a canonical equivalence
 $\Psi\colon \D_- \isom\D_+$ with the following two properties
\begin{itemize}
\item[(a)] the  diagram of isomorphisms
\begin{equation*}\xymatrix@C=1.5em{ K(\D_-) \ar[d]_{\nu_-} \ar[rr]^{\Psi} && K(\D_+)\ar[d]^{\nu_+} \\
\hsinput{-} \ar[rr]^{\operatorname{GM}} &&\hsinput{+}   }\end{equation*}
commutes, where the bottom arrow is given by the Gauss-Manin connection, and the  vertical arrows are  the isomorphisms of Lemma \ref{crucaal};\smallskip

\item[(b)] the stability conditions $\Psi(\sigma_-(t))$ and $\sigma_+(t)$ on $\D_+$ become arbitrarily close as $t\to 0$.
\end{itemize}
\end{prop}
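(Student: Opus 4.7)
The plan is to split the argument along the two geometric scenarios of Proposition~\ref{flipandpop}. In the flip case (a), where $(T_-,\epsilon_-)$ and $(T_+,\epsilon_+)$ differ by a flip in a non-self-folded edge $e$, I would take $\Psi$ to be the Keller-Yang equivalence $\Phi_+\colon \D_-\to \D_+$ of Theorem~\ref{fl}, so that $\Psi(\A_-)=\mu^+_{S_e}(\A_+)$. In the pop case (b), I would take $\Psi$ to be the canonical equivalence of Theorem~\ref{pop}, which identifies the standard hearts and exchanges the simple objects $S_e$ and $S_f$ corresponding to the edges of the self-folded triangle containing $p$. In either case, property~(a) reduces to a direct comparison: on the algebraic side, Lemma~\ref{plop} (respectively Lemma~\ref{popk}) computes the action of $\Psi$ on Grothendieck groups in the basis $[e]$ of $\Gamma(T)$; while on the geometric side, Proposition~\ref{flipandpop}(a) (respectively \ref{flipandpop}(b)) gives the Gauss-Manin connection in the same basis. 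The two formulas agree on the nose.

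For property~(b), the convergence of stability conditions, property~(a) implies in both cases that the central charges of $\Psi(\sigma_-(t))$ and $\sigma_+(t)$ converge to a common limit $Z_0$ as $t\to 0$. In the pop case, the hearts of both stability conditions are equal to $\A_+$, so convergence follows directly from the parameterization of stability conditions with fixed finite-length heart by their central charges, as described in Section~\ref{chambers}. In the flip case, the heart of $\Psi(\sigma_-(t))$ is the tilted heart $\mu^+_{S_e}(\A_+)$, distinct from the heart $\A_+$ of $\sigma_+(t)$. The choice $\Psi=\Phi_+$ is made precisely so that $Z_{\sigma_+(t)}(S_e)$ approaches $\R_{<0}$ from the upper half-plane as $t\to 0$; Lemma~\ref{wolf}(i) then produces a common neighbourhood of the limiting stability condition $\sigma_0$ contained in $\Stab(\A_+)\sqcup\Stab(\mu^+_{S_e}(\A_+))$, in which $\sigma_+(t)$ and $\Psi(\sigma_-(t))$ live in adjacent chambers. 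Since their central charges both converge to $Z_0$, Proposition~\ref{lastnew} forces the stability conditions themselves to become arbitrarily close.

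The principal technical difficulty is reconciling conventions so that the choice $\Psi=\Phi_+$ (rather than $\Phi_-$) simultaneously aligns with the Gauss-Manin formula of Proposition~\ref{flipandpop}(a) and with the tilt direction $\mu^+_{S_e}$ predicted by Lemma~\ref{wolf}. This requires tracking, first, the translation between the combinatorial basis $\{e\}$ of $\Gamma(T)$ used in Lemma~\ref{lo} and the geometric basis $[e]$ appearing in Proposition~\ref{flipandpop} (handled by the unpleasant but routine Lemma~\ref{plop}); and second, the orientation of the wall-crossing, under which $Z_\phi(\Hat{\gamma})$ moves clockwise around the origin as $\phi$ traverses from $\phi_-$ to $\phi_+$, and which in turn determines the side of $\R_{<0}$ from which $Z_{\sigma_+(t)}(S_e)$ approaches its limit. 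Once these signs are aligned, the rest of the argument is a matter of assembling the general results of Sections~\ref{quivstab} and~\ref{catsurf}.
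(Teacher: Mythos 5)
Your proposal is correct and follows essentially the same route as the paper: the same case split along Proposition \ref{flipandpop}, the same choices of $\Psi$ (namely $\Phi_+$ from Theorem \ref{fl} for the flip and the equivalence of Theorem \ref{pop} for the pop), the same comparison of Lemmas \ref{lo}/\ref{plop} and \ref{popk} against Proposition \ref{flipandpop} for part (a), and the same use of Lemma \ref{wolf} to glue the chambers $\Stab(\A_+)$ and $\Stab(\mu^+_{S_e}(\A_+))$ for part (b). The only cosmetic difference is that you invoke Proposition \ref{lastnew} for the final convergence step where the paper simply cites injectivity of the period map on the glued region; both are valid.
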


\begin{pf}
According to Proposition \ref{flipandpop} there are two cases, the flip and the pop. 

In the first case, 
the signed triangulations $(T_\pm,\epsilon_\pm)$ are related by a flip in an edge $e$, and we take $\Psi$ to be the equivalence $\Phi_+$ of Theorem \ref{fl}. Part (a) then follows by comparing the formulae of Lemma \ref{lo} and Proposition \ref{flipandpop} using  Lemma \ref{plop}.
To prove (b), note that Theorem \ref{fl} shows that the heart of the stability condition $\Psi(\sigma_-(t))$ is the tilted heart $\mu_{S_e}^+( \A_+)$. By Lemma \ref{wolf} the regions in $\Stab (\D_+)$ consisting of stability conditions with  hearts $\A_+$ and $\mu_{S_e}^+( \A_+)$ are glued together along a common boundary component to make a larger region on which the period map is still injective. Since part (a) shows that the central charges of the two given stability conditions approach one another, the result follows.

In the  case of the pop, the signed triangulations $(T_\pm,\epsilon_\pm)$ differ by a pop, and we take $\Psi$ to be equivalence of Theorem \ref{pop}. Part (a) then follows by comparing the formulae of  Lemma \ref{popk} and Proposition \ref{flipandpop}. To prove (b) note that since $\Psi$ preserves the canonical hearts, all the stability conditions $\sigma_{\pm}(t)$ have the same  heart, and since their central charges approach each other, they become arbitrarily close.  
\end{pf}






\section{Proofs of the main results}
\label{biggy}

In this section we prove our main results.  Throughout $(\S,\M)$ is a fixed marked bordered surface. For the first five sections  we shall assume that $(\S,\M)$ is amenable.  In  Section \ref{therest} we shall examine what can be said without this assumption. 



\subsection{General set-up}
\label{prep}

 Let us fix a free abelian group $\Gamma$ of rank $n$. We consider the space of framed differentials $\Quad^\Gamma(\S,\M)$. A point of this space corresponds to a GMN differential $\phi$ on a Riemann surface $S$, equipped  with a framing of the extended hat-homology group \[\theta\colon \Gamma\isom \hse\] as in Section \ref{epm}. By abuse of notation, we will simply write $\phi\in \Quad^\Gamma(\S,\M)$.
 
 Let us fix a base-point \[\phi_0\in \Quad^{\Gamma}(\S,\M),\]
which we may as well assume  is complete, saddle-free and generic.
Recall that  the space of framed differentials on $(\S,\M)$ is not usually connected,  so we define \[\Quad^{\Gamma}_*(\S,\M)\subset \Quad^{\Gamma}(\S,\M)\] to be the connected component  containing $\phi_0$.

Let us  choose a signing for the differential $\phi_0$, as in Section \ref{tofin}. We claim that any point in $\Quad^\Gamma_*(\S,\M)$ then also has a canonical signing. To see this, note that to specify a signing of a differential $\phi$ is to specify a choice of sign for the residue class $\beta_p\in \hs$ at each double pole $p$ of $\phi$. Given a framing of such a differential, the classes $\beta_p$ correspond  to fixed classes in $\Gamma$. It follows that if we choose a sign for the $\beta_p$ at the base-point $\phi_0$, then this sign propagates throughout $\Quad^\Gamma_*(\S,\M)$.

We want to study stability conditions on  the \CY triangulated category $\D(\S,\M)$. More precisely, let \[(T_0,\epsilon_0)=(T(\phi_0),\epsilon(\phi_0))\]
be the signed  WKB triangulation    associated to the signed differential $\phi_0$, and  define $\D=\D(T_0,\epsilon_0)$. 
 We identify the Grothendieck group $K(\D)$ with  $\Gamma$   using the isomorphism
\begin{equation}\label{notlast}\nu_0^{-1}\circ \theta_0\colon \Gamma\to K(\D),\end{equation}
obtained by composing the framing $\theta_0$ with the inverse of the map of Lemma \ref{crucaal}.

The distinguished connected component $\Tilt_\triangle(\D)\subset \Tilt(\D)$ of the tilting graph of $\D$ is the one containing the standard heart $\A(T_0,\epsilon_0)$. As explained in Section \ref{chambers},  there is a corresponding distinguished connected component \[\Stab_\triangle(\D)\subset \Stab(\D)\] of the space of stability conditions on $\D$.
The group of reachable autoequivalences $\All(\D)\subset \Aut(\D)$  preserves this connected component.  We  define \[\Allo(\D)\subset \All(\D)\] to be the subgroup of reachable autoequivalences which act by the identity on $K(\D)$.

\begin{remark}
Later, as a consequence of Corollary \ref{reach}, we will see that 
\begin{itemize}
\item[(a)]the group  $\All(\D)$ is  precisely the group of autoequivalences preserving the connected component $\Stab_\triangle(\D)$,
\smallskip

\item[(b)] the group $\Allo(\D)$ is non-trivial, and in fact  contains all even powers of the shift functor.
\end{itemize}
\end{remark}

The subgroup  $\Nil(\D)\subset \All(\D)$ of negligible autoequivalences is precisely the subgroup of elements acting trivially on $\Stab_\triangle(\D)$. The quotient group
\[\uAll(\D)=\All(\D)/\Nil(\D)\]
therefore acts effectively.    Negligible autoequivalences fix the simple objects of hearts $\A\in \Tilt_\triangle(\D)$, and hence act trivially on $K(\D)$, so we can also form the quotient  
\[\uAllo(\D)=\Allo(\D)/\Nil(\D).\]

In the next five sections we shall prove the following result.

\begin{thm}
\label{alltogethernow}
 There is an isomorphism of complex orbifolds
\[\Quadorb(\S,\M)\isom \Stab_\triangle(\D)/\uAll(\D).\]
\end{thm}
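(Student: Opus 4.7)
The plan is to construct an explicit map $K\colon \Quad^\Gamma_*(\S,\M)\to \Stab_\triangle(\D)$ from framed differentials to stability conditions, prove it is a local isomorphism whose image is both open and closed (hence all of $\Stab_\triangle(\D)$), and then identify the deck group of $\Quad^\Gamma_*\to \Quadorb$ with $\uAll(\D)$ acting on the target.

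First I would define $K$ on the open subset of complete, saddle-free framed differentials: given such a $\phi$, the signed WKB triangulation $(T(\phi),\epsilon(\phi))$ gives a standard heart $\A(T(\phi),\epsilon(\phi))\subset \D(T(\phi),\epsilon(\phi))$, the framing identifies $\Gamma\cong \hs$ and, via Lemma \ref{crucaal}, with $K(\D(T(\phi),\epsilon(\phi)))$, and Lemma \ref{un} then produces a stability condition. The Keller--Yang equivalences let me transport this into $\Stab_\triangle(\D)$ in a way that is well-defined on each chamber of $B_0$. The key consistency statement is Proposition \ref{skateboard}: as one crosses a wall in $F_2$, the two recipes glue into a single continuous map, because the categorical equivalence across a flip or pop is compatible with the Gauss--Manin transport of periods. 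To promote $K$ from $B_0$ to a map defined on all of $\Quad^\Gamma_*(\S,\M)_0$, I use Proposition \ref{TamePath}: any two points of $B_2$ can be joined by a tame path meeting only walls of codimension one, so the gluing suffices to define $K$ on $B_2\supset B_0$. Points of $\Quad^\Gamma(\S,\M)_0\setminus B_2$ are then filled in by noting that such loci have real codimension $\geq 2$ and that, on small open neighbourhoods, the values of $K$ on nearby chambers must agree by Lemma \ref{wolf}.

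Next I would show $K$ is a local isomorphism. Both $\Quad^\Gamma_*$ and $\Stab(\D)$ carry period maps to $\Hom_\Z(\Gamma,\C)$ (Theorem \ref{perper}/Proposition \ref{reff} and Theorem \ref{basic}), which are local isomorphisms; by construction $K$ intertwines them, hence is itself a local isomorphism on $\Quad(\S,\M)_0$. To extend $K$ over the incomplete locus, I apply Proposition \ref{leaving} to control the periods of stable objects as a zero collides with a double pole, and use the support property together with Proposition \ref{lastnew} to show that the stability conditions extend continuously. The extended $K$ is a local isomorphism on all of $\Quad^\Gamma_*(\S,\M)$ by the same period-map argument combined with Proposition \ref{reff}. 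For surjectivity, the image of $K$ is open (local isomorphism) and I must show it is closed: a sequence $K(\phi_n)\to \sigma\in \Stab_\triangle(\D)$ has bounded periods and, by the support property, the lengths $|Z(\gamma)|$ of stable classes are uniformly bounded below, so non-degenerate saddle connections in the $\phi_n$ have lengths bounded below; Proposition \ref{mum} then provides a subsequential limit $\phi\in \Quad^\Gamma(\S,\M)$ with $K(\phi)=\sigma$. Since $\Stab_\triangle(\D)$ is connected, $K$ is surjective.

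Finally I would identify the fibres. The covering $\Quad^\Gamma_*(\S,\M)\to \Quadorb(\S,\M)$ has deck group generated by (i) the monodromy of the extended hat-homology local system and (ii) the signed mapping class group $\MCG^\pm(\S,\M)$ acting on framings. Under $K$, the mapping class group action corresponds, via Theorem \ref{clareshort}, to the $\MCG^\pm$-quotient of $\uAll(\D)$, while the monodromy action pulls back to the action of spherical twists: indeed crossing a pop wall twice at a given puncture is exactly monodromy around a double pole, and is realised on $\D$ by a composition of twists in simple objects; more generally every element of $\uTwist(\D)$ is realised by a loop of framings. Combining these two statements with the exact sequence
\[1\lra \uTwist(\D)\lra \uAll(\D)\lra \MCG^\pm(\S,\M)\lra 1\]
shows that the deck group of $\Quad^\Gamma_*\to \Quadorb$ maps isomorphically onto $\uAll(\D)$, and that $K$ descends to the claimed isomorphism $\Quadorb(\S,\M)\isom \Stab_\triangle(\D)/\uAll(\D)$. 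The main obstacle, and the step I expect to be most delicate, is verifying that crossings of pop walls and flip walls together generate precisely $\uAll(\D)$ modulo the mapping class group, i.e.\ that every spherical twist in a simple object of a reachable heart is realised by some loop in framings; this requires comparing the geometric monodromy from Lemma \ref{se} with Proposition \ref{flipandpop}(b) across all reachable chambers, and then invoking Proposition \ref{rpt}(b) to conclude.
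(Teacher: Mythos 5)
Your overall architecture (construct $K$ chamber-by-chamber via the WKB triangulation, glue across $F_2$ walls with Proposition \ref{skateboard}, check $K$ intertwines period maps, show the image is open and closed, then descend) matches the paper, but there are two genuine gaps. The first is your claim that $\Quad^\Gamma(\S,\M)_0\setminus B_2$ has real codimension $\geq 2$, so that $K$ extends automatically. This is false: Proposition \ref{more} shows that each stratum $F_p$ with $p>2$ is locally contained in a \emph{real hyperplane} $Z_\phi(\alpha)\in\R$, i.e.\ it is a real-codimension-one wall, and a generic differential can perfectly well lie in a deep stratum (a ring domain can bound a spiral domain carrying many recurrent trajectories). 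Consequently the two limits $\sigma_\pm(\phi_0)=\lim_{t\to 0^+}K(e^{\pm it}\phi_0)$ at a point $\phi_0\in F_p$ need not agree for free, and Lemma \ref{wolf} does not apply since no single tilt describes the wall-crossing there. The paper's resolution is the ``walls have ends'' argument: one shows the agreement question is locally constant on $F_p$ and then uses Proposition \ref{moremore} (which rests on the ring-shrinking construction of Proposition \ref{endofwallmove}) to slide along the wall to a point where $B_{p-1}$ is locally connected. Without this, your inductive extension of $K$ breaks down at $p=3$.

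The second gap is in the closedness of the image. To invoke Proposition \ref{mum} you must bound below the lengths of non-degenerate saddle connections of the $\phi_n$, and you pass from ``masses of stable objects are bounded below'' to this statement with a bare ``so''. That step is exactly the correspondence between finite-length trajectories and stable objects (Theorem \ref{th} and Lemma \ref{done}), which is not a formal consequence of the support property: one must show that every non-degenerate saddle connection produces a (semi)stable object of the corresponding class. The paper spends Sections \ref{sadstab} and the following subsection proving this, via the classification of hat-proportional saddles (Lemma \ref{alport}), ring-shrinking, and the representation theory of the Kronecker and affine $A_2$ quivers. Two smaller points: your $K$ cannot land in $\Stab_\triangle(\D)$ itself, since the transporting equivalence $\Psi$ is only well-defined up to $\Allo(\D)$ (monodromy around loops of chambers is nontrivial), so the natural target is $\Stab_\triangle(\D)/\uAllo(\D)$; and your final descent step, identifying the deck group of $\Quad^\Gamma_*\to\Quadorb$ with $\uAll(\D)$, is harder than necessary and its key claim (every element of $\uTwist(\D)$ arises from a loop of framings) is essentially Corollary \ref{reach}, which is most naturally a \emph{consequence} of the theorem. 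The paper instead proves injectivity of the descended map directly, by showing that two saddle-free differentials with isomorphic WKB quivers, matching labellings and equal periods coincide (Propositions \ref{weiwen} and \ref{ivans}); that route avoids the circularity entirely.
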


This result implies Theorems  \ref{Thm:Main0} and  \ref{Thm:Main1} from the Introduction, except that it doesn't cover the non-amenable  cases (a)--(d) listed after the statement of Theorem  \ref{Thm:Main1}. These exceptional cases will be discussed in Section \ref{therest} below, and some of them are worked out explicitly in Section \ref{applications}.


\subsection{Construction of the map}

In this section
 we  construct a map from framed quadratic differentials to stability conditions.

\begin{prop}
\label{most}
There is  a holomorphic map of complex manifolds $K$ fitting into a commutative diagram
\begin{equation}\label{diag}\xymatrix@C=1.5em{  \Quad^{\Gamma}_*(\S,\M)\ar[rr]^{K} \ar[dr]_\pi &&\Stab_\triangle(\D)/\uAllo(\D) \ar[dl]^\pi\\
&\Hom_\Z(\Gamma,\C)  }\end{equation}
and which commutes with the $\C$-actions on both sides. \end{prop}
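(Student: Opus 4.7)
The plan is to construct $K$ first on the open subset $B_0\cap \Quad^\Gamma_*(\S,\M)$ of complete saddle-free framed differentials, then to extend it across the walls $F_2$, next across the remaining higher-codimension walls, and finally over the incomplete locus. On $B_0$, Lemma \ref{un} already produces a stability condition $\sigma(\phi)\in \Stab(\D(T(\phi),\epsilon(\phi)))$ whose heart is $\A(T(\phi),\epsilon(\phi))$ and whose central charge is the period $Z_\phi$ (transported via the isomorphism $\nu$ of Lemma \ref{crucaal}). To identify $\D(T(\phi),\epsilon(\phi))$ with the base category $\D=\D(T_0,\epsilon_0)$, I would pick a path in $B_2\cap \Quad^\Gamma_*(\S,\M)$ from the base-point $\phi_0$ to $\phi$ (such a path exists by Proposition \ref{TamePath}), subdivide it into sub-paths lying in a single chamber of $B_0$ or crossing exactly one wall of $F_2$, and compose the Keller--Yang equivalences $\Phi_\pm$ of Theorem \ref{fl} (at flip walls) and the pop equivalences $\Psi$ of Theorem \ref{pop} (at pop walls) prescribed by Proposition \ref{flipandpop}. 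Proposition \ref{skateboard} guarantees that the stability conditions obtained in each chamber glue together compatibly across each wall, so the composition delivers a stability condition $\sigma\in \Stab_\triangle(\D)$.

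The next step is to verify that the resulting point of $\Stab_\triangle(\D)$ is independent of all choices, modulo the action of $\uAllo(\D)$. Two tame paths from $\phi_0$ to $\phi$ together form a loop $\beta$ in $\Quad^\Gamma_*(\S,\M)$, and by Proposition \ref{TamePath} this loop may be homotoped to lie in $B_2$, meeting $F_2$ in finitely many wall-crossings. The holonomy of the construction around $\beta$ is a composition of Keller--Yang and pop equivalences, hence an element of $\All(\D)$. By Proposition \ref{skateboard}(a), the induced action on $K(\D)\cong \Gamma$ is the identity, since at each wall the map on Grothendieck groups is identified via the vertical maps $\nu_\pm$ with the Gauss--Manin connection, and the composition of Gauss--Manin parallel transports around a loop in $\Quad^\Gamma_*(\S,\M)$ is trivial (the framing $\theta$ of $\hse$ is globally constant there). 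Thus the holonomy lies in $\Allo(\D)$, and descends to a trivial element of $\uAllo(\D)$. This gives a well-defined map $K\colon B_0\cap \Quad^\Gamma_*(\S,\M)\to \Stab_\triangle(\D)/\uAllo(\D)$; the same gluing construction extends it continuously across $F_2$, and then across the higher strata $F_p$ for $p\geq 3$ by a similar argument using finite codimension of their complement inside $B_p$ and the openness result of Lemma \ref{open}. Holomorphy and the commutativity of \eqref{diag} follow from the fact that on each chamber of $B_0$ the central charge of $\sigma(\phi)$ equals the period $Z_\phi\circ \theta$ by construction, and both period maps are local isomorphisms by Proposition \ref{reff} and Theorem \ref{basic}. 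Compatibility with the $\C$-actions is built in, since rotating $\phi$ by $e^{i\pi\theta}$ rotates the period by $e^{i\pi\theta}$ and therefore rotates the central charge of $\sigma(\phi)$ accordingly.

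Finally, to extend $K$ over the incomplete locus, I would use the degeneration result Proposition \ref{mum} combined with Proposition \ref{leaving}. For a framed differential $\phi_0$ on the boundary divisor, approach it by complete differentials $\phi_n\to \phi_0$ in $\Quad^\Gamma_*(\S,\M)$; the periods $Z_{\phi_n}\to Z_{\phi_0}$ on the extended hat-homology group, and Proposition \ref{leaving} provides uniform control on the central charges of the stable objects. Proposition \ref{lastnew} then produces a unique limiting stability condition $\sigma(\phi_0)\in \Stab_\triangle(\D)$ with the correct central charge, and one checks continuity in $\phi_0$ using that the period map is a local isomorphism on both sides. Holomorphy of the extension follows automatically since both source and target project locally isomorphically to $\Hom_\Z(\Gamma,\C)$ via the period maps.

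The main obstacle is the well-definedness step: one must show that any loop in $\Quad^\Gamma_*(\S,\M)$ produces, via the wall-crossing gluing of Proposition \ref{skateboard}, an autoequivalence of $\D$ that lies in $\Allo(\D)$ and whose class in $\uAllo(\D)$ is trivial. The triviality on $K(\D)$ is forced by framing-preservation, but the reachability and subsequent triviality modulo negligible autoequivalences rest on the tame-path technology of Section \ref{sadred}, specifically Proposition \ref{TamePath}, which is itself the payoff of the elaborate stratum-reduction arguments and the ring-shrinking move of Proposition \ref{endofwallmove}. Everything else in the proposition is then a matter of assembling results already at our disposal.
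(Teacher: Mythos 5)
Your overall architecture matches the paper's: construct $K$ on the saddle-free locus via Lemma \ref{un}, identify categories along tame paths using the wall-crossing equivalences of Proposition \ref{skateboard}, check well-definedness modulo $\uAllo(\D)$, extend stratum by stratum, and finally extend over the incomplete locus via Propositions \ref{leaving} and \ref{lastnew}. However, there is one genuine gap. The strata $F_p$ for $p>2$ are \emph{not} handled by "finite codimension of their complement inside $B_p$ and the openness result of Lemma \ref{open}." Each $F_p$ is locally contained in a \emph{real} hyperplane in period coordinates (Proposition \ref{more}), so it is a wall of real codimension one which locally disconnects $B_{p-1}$ into two pieces. To extend $K$ across such a point $\phi_0\in F_p$ one must show that the two one-sided limits $\sigma_\pm(\phi_0)=\lim_{t\to 0^+}K(e^{\pm it}\cdot\phi_0)$ coincide; they have equal central charges, but since the period map is only a local isomorphism this does not force them to be equal. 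The paper resolves this with Proposition \ref{moremore} ("walls have ends"): the one-sided limits vary continuously along $F_p$, so whether they agree is constant on each connected component of $F_p$, and every such component contains a point near which $B_{p-1}$ is locally connected, forcing agreement there. Your proposal never invokes this, and without it the extension across $F_p$, $p\geq 3$, does not go through. (Note also that this is exactly where the hypothesis excluding polar type $m=(2)$ enters, via Remark \ref{rus}.)

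A second, smaller point: in your well-definedness argument you assert that the holonomy around a loop "descends to a trivial element of $\uAllo(\D)$." This is false in general -- nontrivial elements of $\uAllo(\D)$ (for instance even powers of the shift, cf.\ Corollary \ref{reach}) do arise as such holonomies, which is precisely why the target is the quotient $\Stab_\triangle(\D)/\uAllo(\D)$. Fortunately the claim is also unnecessary: membership of the holonomy in $\Allo(\D)$, which you do establish via reachability and the Gauss--Manin compatibility of Proposition \ref{skateboard}(a), is exactly what is needed for the map into the quotient to be well defined. The paper phrases this more directly by characterising the identification $\Psi$ by its two properties and observing that any two such equivalences differ by an element of $\Allo(\D)$. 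Finally, Proposition \ref{mum} is not needed for the extension over the incomplete locus (that step uses Proposition \ref{leaving}, Lemma \ref{done} and Proposition \ref{lastnew}); it is reserved for showing the image of $K$ is closed in the proof that $K$ is an isomorphism.
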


\begin{pf}
The space on the left of the diagram \eqref{diag} is a manifold by Proposition \ref{reff} (note that the  surfaces listed for which this result fails are not amenable). The space on the right is a manifold because the action of the group $\uAllo(\D)$ on the connected component $\Stab_\triangle(\D)$ is free. To see this, note that if an  autoequivalence $\Phi$ fixes a stability condition $\sigma\in \Stab_\triangle(\D)$ and acts trivially on $K(\D)$, then, because the period map $\pi$ is a local isomorphism, it must act trivially on a neighbourhood of $\sigma$, and hence on the whole connected component $\Stab_\triangle(\D)$. But then it is negligible and hence defines the identity in $\uAllo(\D)$.

The action of $\C$ on the right of \eqref{diag} is the standard one of Section \ref{stabsumm}; the element $t\in \C$ acts at the level of central charges by $Z(E)\mapsto e^{i\pi t}\cdot  Z(E)$. After the event it will follow from Corollary \ref{reach} that $2\in \C$ acts trivially, so that this factors via a $\C^*$ action, but we don't know this yet. The action of $\C$ on the left is the pullback of the standard $\C^*$ action rescaling the quadratic differential, via the map $\C\to \C^*$ defined by $t\mapsto e^{2\pi i t}$. This action lifts to framed differentials by continuity; note that $1\in \C$ acts trivially on the underlying quadratic differential, but multiplies the framing isomorphism by $-1$.

The maps $\pi$ in  the diagram \eqref{diag} are both local isomorphisms.
The map on the left  is the standard period map on framed differentials. The map  on the right  sends a stability condition to its central charge, which we consider as a group homomorphism $Z\colon \Gamma\to \C$ by composing it with the isomorphism \eqref{notlast}. 
Let $t\in \C$ act on the space of central charges $\Hom_\Z(\Gamma,\C)$ via the map $Z(E)\mapsto e^{i\pi t}\cdot  Z(E)$ as above. Then  because periods of quadratic differentials are given by integrals of $\sqrt{\phi}$, both the maps $\pi$ are $\C$-equivariant.

As soon as we know that $K$ is continuous it is automatically holomorphic, and in fact, a local isomorphism. The $\C$-equivariance is also automatic, just because each of the two maps $\pi$ is a local isomorphism. 
We first  define a map 
\[K_0\colon B_0\to \Stab_\triangle(\D)/\uAllo(\D).\]
The key point is to then use the stratification \[B_0\subset B_1\subset B_2\subset \dots\subset B_{k}\subset \dots\subset \Quad_*^\Gamma(\S,\M)_0\subset \Quad_*^\Gamma(\S,\M)\]
 and inductively extend the domain of definition of the map across larger strata. 

 Let $\phi$ be a saddle-free complete framed differential defining a point in $\Quad^\Gamma_*(\S,\M)$.  Recall  that the signed WKB triangulation $(T(\phi),\epsilon(\phi))$ is  well-defined up the action of the mapping class group $\MCG(\S,\M)$. Take an equivalence
\[\Psi\colon \D(T(\phi),\epsilon(\phi))\lra \D(\T(\phi_0),\epsilon(\phi_0))\]
with the following two properties
\begin{itemize}
\item[(i)] $\Psi$ maps  the distinguished connected components of  the tilting graphs of $\D(T(\phi),\epsilon(\phi))$  and $\D(T(\phi_0),\epsilon(\phi_0))$ one to the other;\smallskip

\item[(ii)] the map $\psi$  on Grothendieck groups induced by $\Psi$
makes the following diagram commute
\begin{equation}\label{blob}\xymatrix@C=1.5em{ K(\D(T(\phi),\epsilon(\phi))) \ar[d]_{\nu} \ar[rr]^{\psi} && K(\D(T(\phi_0),\epsilon(\phi_0)))\ar[d]^{\nu_0}\\
\hs&&\hsinput{0}\\
&\Gamma \ar[ul]^{\theta} \ar[ur]_{\theta_0}  }\end{equation}
where $\nu$ and $\nu_0$ are the isomorphisms of Lemma \ref{crucaal}, and $\theta$ and $\theta_0$ are the framing isomorphisms.
\end{itemize}

To see that such an equivalence $\Psi$ exists, connect $\phi_0$ to $\phi$ by some path in $\Quad^\Gamma_*(\S,\M)$. Since the subset of differentials with simple poles is locally cut out by complex hyperplanes  we can   assume that this path lies in $\Quad^\Gamma_*(\S,\M)_0$. By  Corollary \ref{TamePath} and the fact that $\Quad^\Gamma_*(\S,\M)_0$ is a covering space of $\Quad(\S,\M)_0$, we can then deform the path so that it lies in  $B_2$ and has only  finitely many points in $F_2$. Applying Proposition \ref{skateboard}  to each of these  points, and taking the composite of the given equivalences gives a suitable equivalence $\Psi$.

Any two such equivalences $\Psi$  differ by a reachable autoequivalence acting trivially on $K(\D)$,
so we obtain a well-defined map $K_0$ by 
 setting \[K_0(\phi)=\Psi(\sigma(\phi))\in\Stab_\triangle(\D)/\uAllo(\D).\] 
The diagram \eqref{diag}  then commutes by definition.

Our task is now to successively lift $K$ to the inverse images of the various strata $B_i$.
Let us assume inductively that $K$ is defined and continuous on the open subset $B_{p-1}$. As remarked above, $K$ is invariant under small rotations, i.e.
\[K(e^{i\pi\theta}\cdot \phi)=e^{i\pi\theta} \cdot K(\phi) \text{ for }0<|\theta|\ll 1,\]
because both maps $\pi$ of \eqref{diag} are local isomorphisms.
Take a point $\phi_0\in F_{p}$. By Proposition \ref{more} there is  some $r>0$ such that
\begin{equation}
\label{delta}0<|t|<r \implies e^{it}\cdot \phi_0\in B_{p-1}.\end{equation}
 By the $\C$-equivariance property, the limits
\[\sigma_{\pm}(\phi_0) = \lim_{t\to 0^+} K(e^{\pm i t} \cdot \phi_0)\]
both exist, and the diagram \eqref{diag} shows that they have the same central charge. The point is to show that they are equal.

When  $p=2$ this involves extending from saddle-free differentials to differentials lying on a single wall, and the result follows from Proposition \ref{skateboard}.
Thus we can assume that $p>2$. Note that  the stability conditions $\sigma_\pm (\phi_0)$ vary continuously  on $F_{p}$, by the fact that their central charges do, and using the remark following Proposition \ref{more}. Thus the question of whether they are equal has a constant answer on each connected component of $F_p$. The result then follows from Proposition \ref{moremore}.

The final step is to extend $K$ across the incomplete locus. Suppose  that $\phi\in \Quad^\Gamma_*(\S,\M)$ has simple poles, and fix $0<\epsilon<\frac{1}{8}$. Using Proposition \ref{leaving} we can find complete, generic differentials $\psi$ arbitrarily close to $\phi$ , and such that
\[|Z_\phi(\gamma)-Z_\psi(\gamma)|<\epsilon\, |Z_\psi(\gamma)|,\]
for all classes $\gamma\in \Gamma$ represented by a non-closed saddle connection in $\psi$. Lemma \ref{done} below then shows that this inequality holds for all classes represented by stable objects in the stability condition  $K(\psi)$. The deformation result Proposition \ref{lastnew} then shows that  $K$ extends uniquely over $\phi$.
\end{pf}


\subsection{Saddle trajectories and stable objects}
\label{sadstab}
In this section and the next we relate saddle trajectories for a generic GMN differential  to the existence of stable objects in the corresponding stability condition. 
Let \[\phi\in \Quad^\Gamma_*(\S,\M)_0\] is a complete, framed differential, and  let $\sigma=K(\phi)$ be the  corresponding stability condition on $\D$,
well-defined up to the action of the group $\Allo(\D)$.
Note that any  saddle connection $\gamma$ for $\phi$ has a well-defined hat-homology class in $\hs$, which we can view as an element of $\Gamma$ using the framing isomorphism. 
Similarly,  every object $E\in \D$ has a well-defined class in  $\Gamma$ using the identification \eqref{notlast}. 

We shall start with the following simple result which was used in the final step of the proof of Proposition \ref{most} above. 

\begin{lemma}
\label{done}
If $\sigma$ has a stable object $E$ of class $\alpha\in \Gamma$ then $\phi$ has a saddle trajectory of class proportional to $\alpha$.
\end{lemma}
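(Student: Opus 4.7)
The plan is to exploit the $\C$-equivariance of $K$ established in Proposition \ref{most} to reduce to the case where $E$ has phase $0$, so $Z_\phi(\alpha)=Z(E)\in\R_{>0}$. The goal is then to produce a horizontal saddle trajectory of $\phi$ whose hat-homology class lies in $\Q\cdot\alpha$.

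The decisive input is that a complete, saddle-free differential $\psi$ can never give rise to a stability condition with a phase-$0$ stable object. Indeed, $K(\psi)$ has heart $\A(T(\psi),\epsilon(\psi))$ whose simple objects $S_e$ satisfy $Z(S_e)=Z_\psi(\gamma_e)$, and each class $\gamma_e$ is a non-negative integer combination of the standard saddle classes $\alpha_h$ (Section \ref{sing}), each of which has period in the strict upper half-plane by the defining convention of Section \ref{ihope}. Hence every nonzero object of the heart has strictly positive imaginary central charge, and so no stable object (nor any shift of one) can have integer phase. Combining this with Proposition \ref{fe}(a), one gets an open neighborhood $W\subset\Quad^\Gamma_*(\S,\M)$ of $\phi$ on which $E$ remains stable in $K(\psi)$; the real hypersurface $\{\Im Z_\psi(\alpha)=0\}\cap W$ then contains no complete saddle-free differential, so in particular $\phi$ itself cannot be complete saddle-free and, assuming completeness, must already carry at least one saddle trajectory.

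To identify a saddle trajectory of class proportional to $\alpha$, I would approach $\phi$ from a generic, complete, saddle-free chamber on either side of the hypersurface $\{\Im Z_\psi(\alpha)=0\}$. In such a nearby chamber, $E$ lies in the standard heart, so one has $\alpha=\sum_e n_e\gamma_e^{(\psi)}$ with $n_e\in\Z_{\geq 0}$ on the positive side and $n_e\in\Z_{\leq 0}$ on the negative side. Letting $\psi\to\phi$ and tracking the standard saddle connections through the finitely many flips and pops encountered along the way (using Proposition \ref{flipandpop}, and Proposition \ref{jugprop} to control contributions from shrinking ring domains), the class $\alpha$ is forced to be supported on precisely those standard saddle connections that become horizontal in the limit, yielding a saddle trajectory of $\phi$ of class in $\Q\cdot\alpha$. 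The incomplete case is handled separately, by extending the argument across the incomplete locus via the deformation argument used in the final paragraph of the proof of Proposition \ref{most}.

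The hard part will be the tracking step: a careful wall-crossing analysis is required to ensure that the limiting configuration actually produces a single saddle trajectory of class proportional to $\alpha$, rather than, say, a collection of saddle trajectories of unrelated classes whose hat-homology classes merely add up to $\alpha$. This is where the genericity considerations of Section \ref{strat} and the refined statement of Proposition \ref{jugprop} (describing exactly which hat-homology classes can be realized by saddle connections near a given differential) enter essentially; in particular, one must rule out a saddle connection of class $\beta\not\in\Q\cdot\alpha$ masquerading as the relevant contribution, and for this the structure of the horizontal strip decomposition on either side of the wall is what does the work.
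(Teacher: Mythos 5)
Your first step---combining Proposition \ref{fe}(a) with the observation that a complete saddle-free differential produces a heart all of whose nonzero objects have central charge in the strict upper half-plane, so that $\phi$ itself cannot be saddle-free---is essentially the paper's opening move. The problem is the second step, and you have diagnosed it yourself: the ``tracking'' of standard saddle connections through flips and pops is left as the hard part, and as written your argument does not exclude the possibility that the limiting horizontal configuration consists of saddle trajectories of unrelated classes whose hat-homology classes merely sum to something proportional to $\alpha$. So the proposal only establishes that $\phi$ has \emph{some} saddle trajectory, which is not the statement.

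The paper closes this gap with a device you have not used: perturb $\phi$ to a nearby differential $\psi$ which is \emph{generic} in the sense of Section \ref{strat}, while keeping $Z_\psi(\alpha)$ real. Every saddle trajectory $\gamma$ of $\psi$ satisfies $Z_\psi(\Hat{\gamma})\in\R_{>0}$ by the definition of the hat-homology class, so genericity (namely $\R\cdot Z_\psi(\gamma_1)=\R\cdot Z_\psi(\gamma_2)\Rightarrow \Z\gamma_1=\Z\gamma_2$) immediately forces $\Z\Hat{\gamma}=\Z\alpha$ for \emph{any} saddle trajectory of $\psi$; no wall-crossing analysis is required. Since $E$ remains stable in $K(\psi)$ by Proposition \ref{fe}(a), the differential $\psi$ is not saddle-free, hence carries a saddle trajectory of class proportional to $\alpha$ and of length at most $|Z_\psi(2\alpha)|$ (saddle classes have divisibility at most $2$). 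Taking a sequence of such $\psi_i\to\phi$ and invoking the compactness statement of Theorem \ref{finini}---this is where the uniform length bound is essential---one extracts a limit geodesic in $\phi$, which is a union of saddle trajectories of total class proportional to $\alpha$. I would replace your tracking step by this perturb-to-generic-and-take-limits argument; note also that Proposition \ref{jugprop} is tailored to the specific shrunk-ring-domain configuration and cannot be used as a general tool for identifying which hat-homology classes support saddle connections near an arbitrary wall.
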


\begin{proof}
Rotating we can assume that $E$ is of phase 1. Consider a generic differential $\psi$ close to $\phi$ such that $Z_\psi(\alpha)$ remains real. Proposition \ref{fe}(a) shows that the object $E$ remains stable  in the corresponding stability condition $K(\psi)$. By construction therefore $\psi$ cannot be saddle-free, and  has at least one  saddle trajectory $C$, which by genericity must have class proportional to $\alpha$. Since  the class of a saddle connection has divisibility at most 2,  the length of this saddle is at most  $|Z_{\psi}(2\alpha)|$.
 
 Applying  Theorem \ref{finini},  we can now find  a sequence of such differentials $\psi_i$,  converging to $\phi$,  such that the corresponding curves $C_i$ limit to some curve $C$.  Then $C$  is  a geodesic in $\phi$ which must be a union of saddle trajectories. 
\end{proof}

For each class $\alpha\in \Gamma$  we can consider the moduli space $\cM_\sigma(\alpha)$ of $\sigma$-stable objects in $\D$ which have   class $\alpha\in \Gamma$ and phase in the interval $(0,1]$.
It is necessary to constrain the phase, since otherwise all shifts of a given stable object would have to be parameterized.

\begin{lemma}
The moduli space $\cM_\sigma(\alpha)$ is represented by a quasi-projective scheme.
\end{lemma}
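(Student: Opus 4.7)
The plan is to reduce the construction of $\cM_\sigma(\alpha)$ to King's GIT moduli \cite{king} of semistable representations of a finite-dimensional algebra, by replacing $\sigma$ with a nearby stability condition whose heart is one of the standard hearts $\A(T,\epsilon)\subset \D$. The phase restriction $\phi \in (0,1]$ is there precisely so that every object parameterised by $\cM_\sigma(\alpha)$ lies in $\A_\sigma=\P_\sigma((0,1])$; in particular $\cM_\sigma(\alpha)$ is unchanged if we replace $\sigma$ by any other stability condition $\tau$ in the same chamber for the class $\alpha$ (in the sense of Section \ref{wallchamber}), as long as $\P_\tau((0,1])$ contains the same stable objects.

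First I would apply the wall-and-chamber analysis of Section \ref{wallchamber} together with Lemma \ref{wolf} to produce a small perturbation $\tau$ of $\sigma$ inside $\Stab_\triangle(\D)$ with the following two properties: (i) $\tau$ lies in the same class-$\alpha$ chamber as $\sigma$, so that by Proposition \ref{chamberprop} the set of $\tau$-stable objects of class $\alpha$ coincides with the set of $\sigma$-stable objects of class $\alpha$; and (ii) the heart of $\tau$ is one of the standard hearts $\A=\A(T,\epsilon)$ for some signed triangulation $(T,\epsilon)$. Property (ii) can be arranged because $\tau\in \Stab_\triangle(\D)$ is glued locally out of the cells $\Stab(\B)$ indexed by reachable hearts $\B$ via the tilting recipe of Lemma \ref{wolf}, and a generic perturbation lands in the interior of such a cell. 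Having made this replacement, it suffices to represent $\cM_\tau(\alpha)$ as a quasi-projective scheme.

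Next, I would identify the problem with a moduli problem for representations of a finite-dimensional algebra. By Theorem \ref{eyes}, the heart $\A=\A(T,\epsilon)$ is equivalent to the category $\operatorname{mod}\, J(T,\epsilon)$ of finite-dimensional modules over the complete Jacobi algebra $J(T,\epsilon)$. Fixing the class $\alpha\in\Gamma\isom K(\D)$ corresponds (via Lemma \ref{hope}) to fixing a dimension vector $\vec{d}=(d_i)$ at the vertices of $Q(T)$. Any finite-dimensional module of dimension vector $\vec{d}$ is annihilated by a sufficiently high power of the arrow ideal, so it factors through the quotient $J(T,\epsilon)/\mathfrak{m}^N$ for $N=|\vec{d}|+1$, which is a finite-dimensional $k$-algebra. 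The central charge $Z_\tau$ restricted to the simples of $\A$ defines a real linear functional $\theta\in \Hom_\Z(K(\A),\R)$ with $\theta(\alpha)=0$, namely $\theta(-)= -\Re(Z_\tau(-)/Z_\tau(\alpha))$ (or any equivalent slope-defining functional); $\tau$-stability of an object of class $\alpha$ in $\A$ is then equivalent to $\theta$-stability in the sense of King. Finally, King's theorem applied to the finite-dimensional algebra $J(T,\epsilon)/\mathfrak{m}^N$ with stability parameter $\theta$ produces a quasi-projective GIT moduli scheme representing the moduli functor of $\theta$-stable representations of dimension vector $\vec{d}$, which is exactly $\cM_\tau(\alpha)=\cM_\sigma(\alpha)$.

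The most delicate step is the first one: producing $\tau$ with both of the listed properties simultaneously. The point is that one must know that every stability condition $\sigma\in\Stab_\triangle(\D)$ can be perturbed, without crossing a class-$\alpha$ wall, into one of the cells $\Stab(\A(T,\epsilon))$; this uses crucially that $\Stab_\triangle(\D)$ is by construction the component built by gluing these cells along tilting walls, and that the class-$\alpha$ walls intersect each cell boundary transversely in generic position. Once the reduction to a standard heart is achieved, the remaining steps are a standard and essentially formal application of King's construction, together with the observation that finite-dimensional modules of fixed dimension vector over a complete path algebra with relations factor through a finite-dimensional quotient, so no issues arise from the completion.
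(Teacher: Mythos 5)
Your overall strategy coincides with the paper's: reduce to a stability condition whose heart is a standard heart $\A(T,\epsilon)$, identify that heart with modules over the complete Jacobi algebra, and invoke King \cite{king}. But the justification you give for the reduction step is not valid as stated. You claim that a generic small perturbation of $\sigma$ lands in the interior of some cell $\Stab(\A(T,\epsilon))$ because $\Stab_\triangle(\D)$ is ``glued locally out of the cells''; in fact the paper explicitly warns (end of Section \ref{chambers}, citing \cite{woolf}) that $\Stab_\triangle(\D)$ is \emph{not} in general the union of the closures of the chambers $\Stab(\B)$ for reachable hearts $\B$, and Lemma \ref{wolf} only controls boundary points at which a single simple has real central charge. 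So for an arbitrary point of the component your perturbation need not exist. What rescues the argument in context is that the lemma is only applied to $\sigma=K(\phi)$ for a complete differential $\phi$: a small rotation $e^{i\pi\theta}\cdot\phi$ is saddle-free by Lemma \ref{opener}, the corresponding stability condition has standard heart, and rotation changes no (semi)stable objects, so it trivially satisfies both of your conditions (i) and (ii). This is exactly the paper's first move, and you should replace your ``generic perturbation'' by it.

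The genuinely different part of your argument is the treatment of finite-dimensionality, and it is a legitimate alternative. The paper quotes the nontrivial result of Labardini-Fragoso \cite[Cor.\ 12.6]{LF4} that the complete Jacobi algebra $J(T,\epsilon)$ is itself finite-dimensional, and applies King directly. You instead observe that any finite-dimensional module of dimension vector $\vec d$ over the \emph{complete} Jacobi algebra is nilpotent (the arrow ideal $\mathfrak{m}$ is the radical of the complete path algebra, so Nakayama forces $\mathfrak{m}^{|\vec d|}M=0$), hence factors through the finite-dimensional truncation $J/\mathfrak{m}^{N}$, to which King applies. This is correct, more elementary, and independent of the Labardini-Fragoso input; the two module categories in fixed dimension vector are literally the same, so the moduli functors agree. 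Two small points to tidy: the slope functional should be $\theta(-)=-\Im\bigl(Z_\tau(-)/Z_\tau(\alpha)\bigr)$ rather than $-\Re$ (a subobject $A$ destabilizes iff its phase is $\geq$ that of $E$, which for objects of the heart is governed by the imaginary part of the ratio), and King's construction requires a rational character, so one must replace the real functional $\theta$ by a nearby rational one cutting out the same stable and semistable representations in dimension $\vec d$, which is possible since only finitely many subdimension vectors occur.
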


\begin{pf} By rotation, we can assume that $\phi$ is saddle-free and therefore defines some signed triangulation $(T,\epsilon)$. The heart of the stability condition $\sigma$ is then equivalent to the category of finite-dimensional modules for the complete Jacobi algebra of the corresponding quiver with potential. This algebra $J(T,\epsilon)$ is known to be finite-dimensional\footnote{This could also be deduced  from Corollary \ref{reach} below together with  an argument of Nagao \cite[Theorem 5.4]{K}.} \cite[Corollary 12.6]{LF4}. The moduli space $\cM_\sigma(\alpha)$  can therefore be identified with the moduli space of $\theta$-stable representations of $J(T,\epsilon)$  with  fixed dimension vector. The claim then follows from the results  of King \cite{king}.
\end{pf}

Recall the notion of a 0-generic differential from Section \ref{strat}. Note that any such differential is, in particular, complete.
In this and the next section we shall prove the following  precise correspondence, which implies Theorem \ref{greenlees} from the Introduction.

\begin{thm}
\label{th}
Assume that $\phi$ is   0-generic  and take a class $\alpha\in \Gamma$ satisfying  $Z_\phi(\alpha)\in \R$. Then  each connected component of the moduli scheme $\cM_\sigma(\alpha)$ is either a point or a copy of $\PP^1$. Moreover 
\begin{itemize}
\item[(a)] the zero-dimensional components of $\cM_\sigma(\alpha)$ are in bijection with the non-closed saddle trajectories for $\phi$ of class $\alpha$;
\smallskip
\item[(b)] the one-dimensional components of $\cM_\sigma(\alpha)$ are in bijection with the non-degenerate ring domains for $\phi$ of  class $\alpha$.
\end{itemize} \end{thm}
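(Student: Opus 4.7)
The plan is to rotate so that $Z_\phi(\alpha) \in \R_{>0}$, placing $\sigma$-stable objects of class $\alpha$ at phase $1$ and horizontal finite-length trajectories at classes proportional to $\alpha$ by $0$-genericity. By Lemma \ref{alport} the saddle trajectories of $\phi$ arrange in a few specific configurations: isolated non-closed saddles, pairs of closed saddles bounding a non-degenerate ring domain (case i), and the torus configurations (cases ii, iii); the last is excluded under our amenability hypothesis. I would treat the two types of components separately.

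For a non-closed saddle trajectory $\gamma$ of class $\alpha$, I would perturb by a small rotation $\phi_+=e^{i\varepsilon}\phi$ with $\varepsilon>0$. By Propositions \ref{stay} and \ref{more} the perturbed differential is saddle-free for small $\varepsilon$, so it has a WKB signed triangulation $(T_+,\epsilon_+)$, and $\gamma$ deforms into a standard saddle connection in a unique horizontal strip $h$ whose corresponding simple object $S_h$ of $\A(T_+,\epsilon_+)$ is stable in $\sigma(\phi_+)$ with class $\alpha_h=\alpha$. By Proposition \ref{fe}(b) the object $S_h$ remains $\sigma$-semistable in the limit $\phi_+\to\phi$, and $0$-genericity (together with Lemma \ref{wolf} controlling the tilt across the wall $\Im Z(\alpha)=0$) forbids proper nontrivial subobjects of the same phase, so $S_h\in\cM_\sigma(\alpha)$. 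Distinct non-closed saddles yield non-isomorphic simples in $\A(T_+,\epsilon_+)$ and hence distinct points of $\cM_\sigma(\alpha)$. The converse is Lemma \ref{done}.

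For a non-degenerate ring domain $A$ of class $\alpha$, I would apply ring-shrinking (Proposition \ref{endofwallmove}) to reduce locally to the configurations (J1) or (J2) of Section \ref{rs}. On the $\phi_+$ side of the associated wall, Proposition \ref{jugprop} asserts that the only saddle trajectories are the $\gamma_i$, so the only $\sigma(\phi_+)$-stable objects whose phases approach that of $\alpha$ are the simples $S_i$. Via Lemma \ref{move} the number of arrows between the corresponding vertices of the WKB quiver matches the intersection numbers of standard saddle classes, yielding $\dim_k\Ext^1(S_1,S_2)=2$ in case (J1); the full subcategory of extensions of $S_1,S_2$ in $\A(T_+,\epsilon_+)$ is then equivalent to representations of the Kronecker quiver, whose moduli of $(1,1)$-semistables is $\PP^1$ (cf. the discussion in Section \ref{jug} and Example \ref{eg2}). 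As we take $\Im Z(\alpha)\to 0$ this $\PP^1$ becomes a connected component of $\cM_\sigma(\alpha)$. Case (J2) is handled analogously using the relation $\alpha_3=\alpha_1+\alpha_2$ from Section \ref{rs}, where one again identifies the moduli of indecomposable stables with $\PP^1$. Different ring domains of $\phi$ are hat-proportional but the associated local configurations are disjoint, giving distinct components.

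The main obstacle is to argue exhaustively that these constructions account for all components of $\cM_\sigma(\alpha)$ and to verify the claimed smoothness. For exhaustiveness I would argue that, after rotation by $\varepsilon$, every $\sigma$-stable object of class $\alpha$ becomes a $\sigma(\phi_\pm)$-stable object of nearby phase, and by Lemma \ref{done} and the support property it must come from a finite-length trajectory of $\phi$ of class $\alpha$; the classification of such trajectories via Lemma \ref{alport} then forces the object to sit in one of the local pictures above. Smoothness of the zero-dimensional components is automatic, and of the $\PP^1$ components follows from the fact that they arise as projective spaces of $\Ext^1$ between a rigid pair of simples in the perturbed heart, the relevant obstruction vanishing by the \CY property and the concrete Kronecker-type structure of the local subcategory.
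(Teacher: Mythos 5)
Your overall architecture coincides with the paper's: reduce via Lemma \ref{alport} and $0$-genericity to either a tame differential or one with a unique strongly non-degenerate ring domain, handle the first case by rotating off the wall, and handle the second by ring-shrinking and identifying $\P(1)$ with representations of the Kronecker or affine $A_2$ quiver. Two steps, however, would fail as written. The assertion that $e^{i\varepsilon}\phi$ is saddle-free for all small $\varepsilon>0$ is false precisely when $\phi$ contains a non-degenerate ring domain: as explained in Section \ref{jug}, the spectrum $\Theta_\phi\subset S^1$ of phases carrying saddle trajectories then accumulates at $0$ (there are saddle connections in every class $\delta+k\alpha$), so arbitrarily small rotations still carry saddles, and Propositions \ref{stay} and \ref{more} only move you into $B_{p-1}$, not into $B_0$. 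This is exactly why the paper first proves the statement under the hypothesis that $\phi$ has at most one saddle trajectory (Proposition \ref{halfway}), and then treats every differential with a ring domain --- including the two non-closed saddles of case (J2), which your first paragraph cannot reach by direct rotation --- entirely through the shrinking construction followed by a crossing to the $\phi_-$ side of the wall $\Im Z(\alpha_1)/Z(\alpha_2)=0$, where Proposition \ref{jugprop} guarantees tameness and Proposition \ref{halfway} applies.

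Second, Lemma \ref{move} and the Calabi--Yau Euler form only compute the skew-symmetrization $n_{ji}-n_{ij}=\alpha_i\cdot\alpha_j$, not $\dim_k\Ext^1(S_i,S_j)$ itself; to conclude that $\P(1)$ is Kronecker (resp.\ affine $A_2$) one must separately show that each $S_i$ is spherical and that extensions between distinct $S_i$ go in only one direction. The paper does this by noting that two-directional extensions would force stable objects of class $[S_1]+[S_2]$ on both sides of the wall, contradicting the classification of saddle connections for $\phi_-$. Relatedly, the $S_i$ are not simples of a WKB heart (the shrunk differential is not saddle-free), so they must be pinned down as the phase-one stables via Propositions \ref{fe}, \ref{chamberprop} and \ref{jugprop} rather than read off a triangulation. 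Finally, transporting the answer from the shrunk differential back to $\sigma$ requires checking that the set of stable objects of phase one is constant along the shrinking path; this follows from Lemma \ref{done} together with the list of finite-length trajectories in Section \ref{rs}, but it is a step that needs to be made explicit rather than absorbed into ``as we take $\Im Z(\alpha)\to 0$''.
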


 We first prove the result under an additional assumption; the general case will be dealt with in the next section.

\begin{prop}
\label{halfway}
Take assumptions as in Theorem \ref{th}. Suppose moreover that $\phi$  has at most one saddle trajectory. Then the conclusion of Theorem \ref{th} holds.
\end{prop}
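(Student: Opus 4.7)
The plan is to use the $S^1$-action to reduce to the case $Z_\phi(\alpha)\in\R_{<0}$, so that any $\sigma$-stable object of class $\alpha$ has phase $1$ and lies in the heart of $\sigma$. By the $0$-genericity hypothesis, every nonzero $\alpha$ with $Z_\phi(\alpha)\in\R$ is an integer multiple of a primitive class $\beta\in\Gamma$. The hypothesis that $\phi$ has at most one saddle trajectory leaves exactly three configurations to analyze: (A) $\phi$ is saddle-free; (B) $\phi$ has a unique non-closed saddle trajectory $\gamma$; (C) $\phi$ has a unique closed saddle trajectory $\gamma$, which necessarily lies in the boundary of a ring domain, and since no second saddle is available to form the opposing boundary, this ring is forced to be degenerate and to enclose a double pole $p$. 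In each case I would rotate $\phi$ by $e^{it}$ for small $t>0$ to obtain a saddle-free, complete, generic differential $\phi_+$ with signed WKB triangulation $(T_+,\epsilon_+)$ and canonical heart $\A_+=\A(T_+,\epsilon_+)\subset\D$, and then use Proposition \ref{skateboard} to identify $\sigma$ as a limit of stability conditions in $\Stab(\A_+)$.

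In case (A), Lemma \ref{done} immediately gives $\cM_\sigma(\alpha)=\emptyset$, which matches the bijections of Theorem \ref{th} since $\phi$ has no non-closed saddles and no non-degenerate ring domains. In case (B), Proposition \ref{flipandpop}(a) realises $\gamma$ as corresponding to a non-self-folded edge $e$ of $T_+$, and under the isomorphism of Lemma \ref{crucaal} the simple object $S_e\in\A_+$ has central charge $Z(S_e)=Z_\phi(\Hat{\gamma})\in\R_{<0}$, so $\sigma$ lies on the boundary component of $\Stab(\A_+)$ where $\Im Z(S_e)=0$ and $\A_+$ remains the heart of $\sigma$ with $S_e$ stable of phase $1$. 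Since the primitive class $\beta$ is then $[S_e]$, any object $E\in\A_+$ with $[E]=k[S_e]$ for $k\geq 1$ is necessarily an iterated self-extension of $S_e$—the classes of the simples of $\A_+$ are a basis of $K(\A_+)$, so no other simple can appear as a Jordan--H\"older factor—and such an object is stable only when $k=1$ and $E=S_e$. Hence $\cM_\sigma(\alpha)$ is a single point when $\alpha=[S_e]$ and is empty otherwise, in bijection with the unique non-closed saddle trajectory.

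In case (C), Proposition \ref{flipandpop}(b) together with the description of the modified basis in Section \ref{sing} and Lemma \ref{crucaal} yields two distinguished simples $S_e,S_f\in\A_+$ (the encircling and self-folded edges of the self-folded triangle containing $p$) whose classes in $K(\D)=\Gamma$ satisfy $[S_f]-[S_e]=\pm\beta_p$. On the pop wall only the difference $Z(S_f)-Z(S_e)$ becomes real, so neither $S_e$ nor $S_f$ individually lies on the boundary of $\Stab(\A_+)$, and $\A_+$ persists as the heart of $\sigma$. By $0$-genericity, any $\alpha$ of interest is a nonzero integer multiple of $\beta_p$, hence of the form $\pm m([S_f]-[S_e])$ with $m\geq 1$; such a class is not expressible as a non-negative integer combination of the simples of $\A_+$, so no object of $\A_+$ has class $\alpha$, and $\cM_\sigma(\alpha)=\emptyset$. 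This matches Theorem \ref{th} since $\phi$ has no non-closed saddle and no non-degenerate ring domain.

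The main obstacle will be the pop case (C): one has to confirm that the closed saddle, despite sitting on a wall of the stability manifold, does not contribute a stable object. The resolution is the sign/effectivity obstruction coming from the characteristic relation $[S_f]-[S_e]=\pm\beta_p$ imposed by the self-folded triangle, which prevents the class $\beta_p$ from being realised by any object of $\A_+$. Geometrically this is the algebraic counterpart of the fact that degenerate ring domains correspond to pop walls rather than to $\PP^1$-moduli of stable objects, and it is consistent with the expected picture in which an object of class $\beta_p$ appears only as a strictly semistable extension involving $S_e[1]$ and $S_f$.
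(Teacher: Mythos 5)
Your argument follows the same route as the paper's own proof: rotate $\phi$ to a nearby saddle-free differential, invoke the flip/pop dichotomy of Proposition \ref{flipandpop}, and read off the stable objects of real central charge from the standard heart $\A_+$. Your handling of the pop case via effectivity of classes (no object of $\A_+$ can have class a nonzero multiple of $\beta_p=\gamma_f-\gamma_e$, since that would force a negative coefficient on one of the simples) is an equivalent substitute for the paper's observation that the heart is unchanged across the pop wall, so that $\P(0)=0$; the flip case matches the paper's argument essentially verbatim.

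There is, however, one genuine omission. Your trichotomy (A)/(B)/(C) — and the very first step of rotating $\phi$ to a saddle-free differential — presupposes that $\phi$ lies in the stratum $F_2$, i.e.\ that it has no recurrent trajectories. ``At most one saddle trajectory'' does not by itself exclude spiral domains: a priori a single closed saddle trajectory could bound a degenerate ring domain on one side and a spiral domain on the other, in which case small rotations of $\phi$ are \emph{not} saddle-free (they only land in $B_{p-1}$ for the relevant $p>2$) and Proposition \ref{flipandpop} does not apply. The paper disposes of this at the outset: if $\phi$ had a spiral domain then, as in the proof of Proposition \ref{moremore}, the surface would have to be a closed surface with a single double pole, which is excluded by amenability. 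You need this step, most visibly in your case (C). Two smaller points: to conclude that the zero-dimensional components of $\cM_\sigma(\alpha)$ are \emph{reduced} points you should note that $S_e$ is spherical, so has no self-extensions; and you should record that $\phi$ has no non-degenerate ring domains (their boundary requires at least two saddle trajectories, by Lemma \ref{alport}), so that the second bijection of Theorem \ref{th} is vacuously satisfied.
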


\begin{pf}
Suppose that $\phi$ has a unique saddle trajectory $\gamma$  of some class $\alpha\in \Gamma$. Since the surface $(\S,\M)$ is assumed to be amenable, it is not a closed surface with a single puncture, and it follows that there can be no spiral domains. Thus $\phi\in F_2$ and we are in the situation of Proposition \ref{flipandpop}. 

Assume first that $\gamma$ has distinct ends, as in Proposition \ref{flipandpop}(a). Small rotations of $\phi$ are saddle-free, and $\gamma$ is represented by a standard saddle class. The corresponding stability conditions have a unique  simple object  $S$  of class $\alpha$. This is, in particular, stable and spherical, and all semistable objects of  class proportional to $\alpha$  are of the form $S^{\oplus k}$, and hence strictly semistable for  all $k>1$.

Suppose instead that $\gamma$ is a closed saddle trajectory. This is the situation of  Proposition \ref{flipandpop}(b).  By Proposition \ref{skateboard},  stability conditions on either side of the wall obtained by small rotations of $\sigma$ have the same heart. This implies that $\P(0)=(0)$ and  so there are no semistable objects  with class a multiple of $\alpha$.  
 
For the converse, suppose that the stability condition $\sigma$ has a stable object of class $\alpha$. Then, by construction of the map $K$, the differential $\phi$ cannot be saddle-free. Thus $\phi$ has a saddle trajectory $\gamma$ of some class $\beta$ proportional to  $\alpha$.   Applying  what we have proved in the first part, it follows that  $\sigma$ has at most one   stable object of class $\beta$, and no stable objects of any other class proportional to $\beta$.  We conclude that $\alpha=\beta$ and so $\phi$ has a (necessarily unique) saddle trajectory of class $\alpha$.
\end{pf}


\subsection{Ring-shrinking again}

In this section we complete the proof of Theorem \ref{th}. It will be convenient to denote the differential and corresponding stability condition of the statement by $\phi_+$ and $\sigma_+$ respectively.
Thus we consider a complete, framed, 0-generic  differential
\[\phi_+\in \Quad^\Gamma(\S,\M)_0,\]
and let $\sigma_+=K(\phi_+)$ be the corresponding stability condition on $\D$, 
well-defined up to the action of the group $\Allo(\D)$. We may assume that $\phi_+$  has more than one saddle trajectory, since otherwise we are in the situation of Proposition \ref{halfway}.

The surface $(\S,\M)$ is amenable, hence not a once-punctured torus, so according to Section \ref{jug}  there are two possible cases, labelled (J1) and (J2). Shrinking the unique ring domain $A$ as in Section \ref{jug} gives a smooth path in $\Quad^\Gamma(\S,\M)_0$ ending at a non-generic point \[\phi\in \Quad^\Gamma(\S,\M)_0\]
with either 2 or 3 saddle trajectories $\gamma_i$.  Label the  saddle trajectories $\gamma_i$   exactly as in Section \ref{jug}, and write $\alpha_i\in \Gamma$ for the corresponding hat-homology classes. We recall that there  is a linear relation \[\alpha_3=\alpha_1+\alpha_2.\]

Our stategy will be to first understand the stable objects of  phase 1 in the stability condition $\sigma=K(\phi)$ by applying Proposition \ref{halfway} to nearby points on the other side of the wall
\[ \Im Z_{\phi_-}(\alpha_1)/Z_{\phi_-}(\alpha_2)=0.\]
We will then follow this information back through the ring-shrinking operation. 

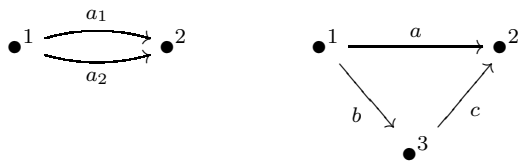
\begin{figure}[ht]
\begin{center}
\begin{equation*}
\xymatrix@C=.4em{\bullet^1 \ar^{a_1}@/^/[rrrr] \ar_{a_2}@/_/[rrrr]&&&& \bullet^2 &&&&   \bullet^1 \ar_b[drr]\ar^a[rrrr] &&&& \bullet^2       \\ 
&&&&&&&& && \bullet^3 \ar[urr]_c }
\end{equation*}
\caption{The quivers relevant to the two cases (J1) and (J2). \label{quivers}}
\end{center}
\end{figure}

Let  $\A=\P(1)$ denote the abelian category of semistable objects of phase 1 in the stability conditon  $\sigma=K(\phi)$. This category has finite-length, so we can model it by the category of representations of the Jacobi algebra of a quiver with potential $(Q,W)$. The relevant quivers $Q$ in the two cases (J1) and (J2) are as shown in Figure \ref{quivers}; in both cases the potential is necessarily zero.

\begin{prop}
The category $\A$  is equivalent to the category of finite-dimensional representations of the corresponding quiver $Q$.
\end{prop}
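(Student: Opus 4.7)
My plan has three parts: identify the simple objects of $\A$ from the saddle trajectories of $\phi$, determine the $\Ext^1$-quiver between these simples using the \CY property and Lemma \ref{crucaal}, and conclude using the fact that the resulting quivers are acyclic.

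For the first step, I would use wall-crossing to pin down the simples. Approaching $\phi$ from the $\phi_-$ side where inequality (\ref{minus}) holds, Proposition \ref{jugprop} guarantees that every saddle trajectory of a sufficiently close generic $\phi_-$ has hat-homology class $\alpha_i$ for some $i$. Rotating $\phi_-$ slightly further to make it saddle-free and then tracking $K(\phi_-)$ back across the finitely many single-saddle walls separating it from $\phi$, Proposition \ref{halfway} produces a stable object $S_i$ of phase $1$ and class $\alpha_i$ for each $i$. Using Proposition \ref{fe}(b) and continuity of $K$, each $S_i$ remains semistable at $\sigma = K(\phi)$, and is simple in $\A = \P(1)$ by the minimality of its class among classes realised by saddle trajectories of $\phi$. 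To rule out further simples, observe that any simple $T \in \A$ is $\sigma$-stable of phase $1$, so by Lemma \ref{done} the differential $\phi$ admits a saddle trajectory of class proportional to $[T]$; by hypothesis this trajectory must be one of the $\gamma_i$, so $T \cong S_i$.

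Next, I would compute the $\Ext^1$-quiver on these simples. Using the \CY property and the vanishing of $\Hom_\A(S_i, S_j)$ for $i \neq j$, the Euler form satisfies
\[
\chi(S_i, S_j) = \dim \Ext^1_\A(S_j, S_i) - \dim \Ext^1_\A(S_i, S_j).
\]
Under the isomorphism $K(\D) \iso \Hat{H}(\phi)$ of Lemma \ref{crucaal}, this equals the hat-homology intersection pairing $\alpha_i \cdot \alpha_j$, which can be read off from how the $\gamma_i$ meet at the zeroes of $\phi$. In case (J1) the trajectories $\gamma_1, \gamma_2$ share both endpoints, and their lifts to the spectral cover meet at both zeroes with consistent orientations (inherited from the ring domain structure of $\phi_+$), giving $|\alpha_1 \cdot \alpha_2| = 2$ and reproducing the Kronecker quiver. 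In case (J2) each pair $\gamma_i, \gamma_j$ meets at a single zero with intersection of absolute value $1$; combined with the linear relation $\alpha_3 = \alpha_1 + \alpha_2$, this gives the three-vertex acyclic quiver of Figure \ref{quivers}, after a possible relabelling of vertices.

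Finally, since neither quiver in Figure \ref{quivers} contains any oriented cycles, every reduced potential on either is identically zero. Applying Theorem \ref{eyes} to the quiver with zero potential therefore identifies $\A$ with the category of finite-dimensional representations of $Q$. The main obstacle I anticipate is the geometric verification in step two: correctly determining the signs and multiplicities of the intersections $\alpha_i \cdot \alpha_j$ so that the $\Ext^1$-counts match Figure \ref{quivers} precisely requires careful tracking of how hat-homology classes deform under the ring-shrinking process of Proposition \ref{endofwallmove}, together with consistent orientation conventions across the two cases.
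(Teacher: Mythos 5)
Your overall route -- approaching $\phi$ from the side of the wall where \eqref{minus} holds, using Propositions \ref{jugprop} and \ref{halfway} to control which classes support stable objects, and then reading off the $\Ext$-quiver from the intersection form via Lemma \ref{crucaal} -- is the same as the paper's. However, there are two genuine gaps at exactly the delicate points.

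First, your claim that each $S_i$ ``is simple in $\A$ by the minimality of its class among classes realised by saddle trajectories of $\phi$'' fails for $S_3$ in case (J2), since $\alpha_3=\alpha_1+\alpha_2$ is \emph{not} minimal: a priori $S_3$ could be a nontrivial extension of $S_1$ by $S_2$ (or vice versa) in $\P(1)$ and hence not simple. The paper closes this by first showing the set of stables in $\P(1)$ is contained in $\{S_1,S_2,S_3\}$, deducing that $S_1,S_2$ are stable on class grounds, then proving that extensions between $S_1$ and $S_2$ go in only one direction (otherwise there would be stable objects of class $\alpha_1+\alpha_2$ on \emph{both} sides of the wall, contradicting the $\sigma_-$ analysis), which rules out the sequence $0\to S_2\to S_3\to S_1\to 0$ because $S_3$ is stable on the $\sigma_-$ side.

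Second, your $\Ext$-computation only determines the skew-symmetrization $\chi(S_i,S_j)=\dim\Ext^1(S_j,S_i)-\dim\Ext^1(S_i,S_j)$; this does not pin down the individual dimensions $n_{ij}$ (e.g.\ $(1,3)$ and $(0,2)$ have the same difference). To conclude you need two further facts that your argument omits: each $S_i$ is spherical (which Proposition \ref{halfway} supplies, since $S_i$ is the unique simple of the relevant class in a standard heart nearby), and for each pair $i\neq j$ one of $\Ext^1(S_i,S_j)$, $\Ext^1(S_j,S_i)$ vanishes -- which is again the one-directionality statement above, proved by the wall-crossing argument rather than by inspecting intersection multiplicities of the $\gamma_i$. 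Once these are in place your final step is fine, though note that what is really being used is the standard fact that a finite-length heart in a CY$_3$ category is modelled by modules over the completed Jacobi algebra of its $\Ext$-quiver with some potential, which must vanish here by acyclicity; Theorem \ref{eyes} as stated goes in the opposite direction.
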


\begin{pf}
 For definiteness we consider the more difficult  case (J2).  
 
Let $n_{ij}$ denote the number of arrows in the quiver $Q$ connecting vertex $i$ to vertex $j$. To prove the Lemma we must show

\begin{itemize}
\item[(a)]the category $\A$ has exactly 3 stable objects $S_i$;
\smallskip
\item[(b)]  these  objects have classes $[S_i]=\alpha_i$ respectively;
\smallskip
\item[(c)] For all $i,j$ we have $\dim_k \Ext_\A^1(S_i,S_j)=n_{ij}$.\end{itemize}

 It is easy to see that after contracting the ring domain in Figure \ref{Fig:Hatprop}  the intersection multiplicities $\alpha_i\cdot \alpha_j$ coincide with the expressions $n_{ji}-n_{ij}$.  So for the last part it will be enough to show that each object $S_i$ is spherical, and for each pair  $i\neq j$,  we have either
 \[\Ext^1_\A(S_i,S_j)=0 \text{ or }\Ext^1_\A(S_j,S_i)=0.\]

Take  a class $\beta\in \Gamma$  and consider  the wall-and-chamber decomposition of $\Stab(\D)$  with respect to the class $\beta$. Take a chamber  containing $\sigma$ in its closure, and containing points $\sigma_-=K(\phi_-)$ which satisfy
\[ \Im Z_{\phi_-}(\alpha_1)/Z_{\phi_-}(\alpha_2)<0, \quad \Im Z_{\phi_-}(\beta)=0.\]
Let us choose such a point $\sigma_-$ in this chamber, and assume further that the corresponding differential $\phi_-$ is generic, and that it lies in the open subset $U$ of Proposition \ref{jugprop}. Of course, our choice of  $\phi_-=\phi_-(\beta)$ will depend on the class $\beta$ we started with.

The genericity condition implies that all saddle trajectories for $\phi_-$ have classes which are multiples of $\beta$. Proposition \ref{jugprop} then implies that any such saddle trajectory is one of the $\gamma_i$. Since the classes $\alpha_i$ are pairwise non-proportional, it follows in particular that $\phi_-$  has at most one saddle trajectory. Thus  Proposition \ref{halfway} applies to $\phi_-$, and shows that $\phi_-$ has a saddle trajectory of class $\beta$ precisely if $\sigma_-$ has a stable object of class $\beta$.

In the case when $\beta=\alpha_i$ for some $i$, Proposition \ref{halfway} implies that $\sigma_-=\sigma_-(\alpha_i)$ has a unique stable object $S_i$ of class $\beta$, which is moreover spherical. By Proposition \ref{chamberprop}, the  object $S_i$  is then  semistable in $\sigma$, and hence lies in the category $\A=\P(1)$. 

Conversely, suppose that an object $E\in \A$ is stable in $\sigma$ of phase 1.  Proposition \ref{fe}(a) shows that  $E$ is  stable in all nearby stability conditions, and in particular we can assume that this is the case for   $\sigma_-=\sigma_-(\beta)$. Then $\phi_-$ must have a saddle trajectory of class $\beta$, and by Proposition \ref{jugprop} it follows that $\beta=\alpha_i$ for some $i$. Since $S_i$ was the unique stable object in $\sigma_-(\alpha_i)$ of class $\alpha_i$ it follows  that $E=S_i$.

Thus the set of stable objects in $\P(1)$ is some subset of the objects $\{S_1,S_2,S_3\}$. It follows from this that  the objects  $S_1$ and $S_2$  must actually be stable in $\sigma$. For example, if $S_2$ was unstable, it would have a filtration by the objects $S_1$ and $S_3$, which is impossible because $\alpha_2$ is not a positive linear combination of the classes $\alpha_1$ and $\alpha_3=\alpha_1+\alpha_2$. 

Suppose there are extensions in two directions between $S_1$ and $S_2$. Then on both sides of the wall $\Im Z_\phi(\alpha_1)/Z_\phi(\alpha_2)=0$ there would be stable objects of class $[S_1]+[S_2]$, which is not the case for $\sigma_-$. It follows that the object  $S_3$ must also be stable, since the only other possibility is that there is a sequence
\[0\lra S_2\lra S_3\lra S_1\lra 0,\]
which is impossible since $S_3$ is stable on the $\sigma_-$ side of the wall. Using the same argument as before, we can now check that nonzero extensions between one of the objects $S_1$ or $S_2$ and $S_3$ only go in one direction. This completes the proof.\end{pf}

 Consider stability conditions  $W\colon K(\A)\to \C$  on the abelian category $\A$ satisfying  the conditions
 \begin{equation}
\label{cond}Z(S_1)+Z(S_2)\in i\R, \quad  \Im Z(S_1)/Z(S_2)>0.\end{equation}
 In the  case (J2), assume also that $Z(S_3)\in i\R$. Then

\begin{lemma}
\label{nono}
 The  set of stable objects  satisfying  $Z(E)\in i\R$ is independent of the particular choice of stability condition satisfying \eqref{cond}, and  is as follows:
\begin{itemize}
\item[(J1)] a single $\PP^1$ family of objects of dimension vector $(1,1)$;
\smallskip
\item[(J2)]  a single $\PP^1$ family of objects of dimension vector $(1,1,1)$, and unique objects of dimension vectors $(1,1,0)$ and $(0,0,1)$.
\end{itemize}
\end{lemma}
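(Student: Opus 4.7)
The plan is to verify the lemma by a direct computation in the representation categories $\A \cong \mathrm{Rep}(Q)$ established by the preceding proposition. First, the central-charge condition \eqref{cond}, combined (in case (J2)) with the relation $[S_3] = [S_1] + [S_2]$ in $K(\D)$ inherited from the geometry, restricts objects $E$ with $Z(E) \in i\R_{>0}$ to have dimension vector $(m,m)$ in case (J1), and $(m_1, m_2, m_3)$ with $m_1 = m_2$ in case (J2). For each such dimension vector I enumerate the $\sigma$-stable representations.

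In case (J1) this is the classical Kronecker computation: the $(1,1)$-representations form $k^2$ modulo the diagonal $k^*$-action, $\sigma$-stability amounts to nonvanishing of both arrows, and the moduli is $\PP^1$; for $m\geq 2$ every $(m,m)$-representation admits a $(1,1)$-subrepresentation of equal phase and so fails strict stability. In case (J2), I parameterize $(1,1,1)$-representations by triples $(a,b,c) \in k^3$ for the arrows $a\colon V_1 \to V_2$, $b\colon V_1 \to V_3$, $c\colon V_3 \to V_2$, modulo the effective $(k^*)^2$-gauge action $(a,b,c) \mapsto (\alpha a, \beta b, (\alpha/\beta)c)$. The potentially destabilizing subrepresentations are $(1,0,0), (1,0,1), (1,1,0), (0,0,1)$, and tracing through the conditions for each to arise (respectively $a=b=0$, $a=c=0$, $b=0$, $c=0$) shows that $\sigma$-stability amounts to $b\neq 0$ and $c \neq 0$; the gauge quotient then furnishes the $\PP^1$-family parameterized by the projective invariant $[a : bc]$, with the boundary point $[1:0]$ corresponding to the polystable S-equivalence class $(1,1,0) \oplus (0,0,1)$ of the same central charge. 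The unique $\sigma$-stable representations of dimension vectors $(1,1,0)$ and $(0,0,1)$ are identified directly as simple $(1,1,0)$-Kronecker and $S_3$ respectively.

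To rule out $\sigma$-stable representations of higher dimension vectors $(m,m,n)$ in case (J2), I use generic kernel-image dimension arguments on the three structure maps: when $n < m$, the nontrivial kernel of $b\colon V_1 \to V_3$ furnishes a $(1,1,0)$-subrepresentation of the same phase; when $n > m$, the nontrivial kernel of $c\colon V_3 \to V_2$ furnishes a $(0,0,1)$-subrepresentation; and when $n = m \geq 2$, a determinantal condition on the pencil $\{\lambda a + \mu cb\}$ inside $\Hom(V_1, V_2)$ yields $(1,1,1)$-subrepresentations at the roots of the associated quadratic form. Independence of the chosen stability condition satisfying \eqref{cond} is then immediate: the condition carves out a connected chamber in $\Stab(\D)$, the strict signs of $\Re Z(S_i)$ persist throughout, and these signs alone determine which dimension vectors can support destabilizing subrepresentations, so no walls are crossed and the set of $\sigma$-stable objects is constant.

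The main obstacle will be the systematic exclusion of stable representations of higher dimension vectors $(m,m,n)$, particularly in the symmetric case $n = m$, where the determinantal pencil argument must produce enough $(1,1,1)$-subrepresentations generically; the remainder of the enumeration is a largely mechanical check of phase conditions on subrepresentations.
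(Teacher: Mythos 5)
Your proof is correct in substance but takes a more explicit route than the paper, which simply invokes the well-known classification of indecomposable representations of tame quivers: for the Kronecker and affine $A_2$ quivers the indecomposables are indexed by the positive roots, the only roots with $m_1=m_2$ are $(m,m)$, respectively $(m,m,m)$ and $(m,m,m\pm1)$, and the stability analysis then reduces to identifying the regular simples. Your direct enumeration --- the central-charge constraint on dimension vectors, the explicit gauge-quotient computations in dimensions $(1,1)$ and $(1,1,1)$, and the kernel/determinantal-pencil arguments excluding higher dimension vectors --- replaces that citation by a self-contained verification, and the exclusion steps you sketch (a vector in $\ker b$ generating a destabilizing $(1,1,0)$- or $(1,0,0)$-sub when $n<m$, a vector in $\ker c$ giving an $S_3$-sub when $n>m$, a root of $\det(\lambda a+\mu\, cb)$ giving a $(1,1,1)$-sub when $n=m\geq2$) all go through over an algebraically closed field. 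Your independence argument is also sound, since $\Re W$ of every dimension vector is a fixed positive multiple of $(m_1-m_2)\Re W(S_1)$ throughout the locus \eqref{cond}.

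Two points need repair or clarification. First, in case (J1) stability of a $(1,1)$-representation is \emph{not} equivalent to both arrows being nonzero; that condition would give a moduli space $\C^*$, not the $\PP^1$ you then assert. The only destabilizing subobject is $S_1$ (phase $>1/2$), which embeds precisely when both arrows vanish, while $S_2$ always embeds but has phase $<1/2$; so the stable locus is $(a_1,a_2)\neq(0,0)$ and the quotient is genuinely $\PP^1$. Second, in case (J2) your computation shows that the honestly stable $(1,1,1)$-objects are those with $b\neq0\neq c$, i.e.\ the complement of the single point $[1:0]$ in the $\PP^1$ of S-equivalence classes, that point being the polystable class $(1,1,0)\oplus(0,0,1)$. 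You should state explicitly that the ``$\PP^1$ family'' of the lemma is to be read as this coarse moduli of semistables (as in the moduli scheme of Theorem \ref{th}); as a literal set of stable objects the $(1,1,1)$-locus is only an affine line. Your phrasing already points at the right reading, but as written it sits uneasily with the claim that every point of the family is stable.
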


\begin{pf}
In the case (a) we are considering representations of the Kronecker quiver and the result is well-known. In case (b)  we are considering the affine $A_2$ quiver, and since indecomposable representations are completely understood in terms of the real and imaginary roots, the result is again easy. 
\end{pf}

Note the precise correspondence with the finite-length trajectories of $\phi_+$ listed in Section \ref{jug}. The following result then completes the proof of Theorem \ref{th}.

\begin{prop}
An object $E\in \D$ is stable of phase 1 in $\sigma_+$ precisely if it lies in the abelian subcategory $\A$ and is stable with respect to stability conditions $W$ as above.
\end{prop}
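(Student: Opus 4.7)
The plan is to link $\sigma=K(\phi)$ to $\sigma_+=K(\phi_+)$ by a continuous family of stability conditions obtained from the ring-shrinking family of Proposition \ref{endofwallmove}, and then to reduce the statement to the representation-theoretic classification of Lemma \ref{nono} by restricting to the subcategory $\A$.

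Explicitly, let $\phi_t\in\Quad^{\Gamma}(\S,\M)$ for $t\in[0,1]$ be the family with $\phi_0=\phi$, $\phi_1=\phi_+$, and ring domain $A_t$ of width $t\cdot w$; the map $K$ of Proposition \ref{most} produces a continuous path $\sigma_t=K(\phi_t)$ in $\Stab(\D)$.  Because the shrinking preserves the common $\phi_t$-length of the two boundary circles of $A_t$, the period $Z_{\phi_t}(\alpha)\in\R_{>0}$ is constant in $t$, and for $t>0$ the labelling convention \eqref{plus} propagates by continuity; hence the restriction $W_t=Z_{\sigma_t}|_{K(\A)}$, read via the identification $[S_i]\leftrightarrow\alpha_i$, satisfies (after the rotation by $i$ implicit in the phase-$1$ convention) both conditions \eqref{cond}.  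In particular $W=W_1$ is a stability on $\A$ of exactly the form treated in Lemma \ref{nono}.

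The backward implication is then direct.  Suppose $E\in\A$ is $W$-stable with $W(E)\in i\R$.  Since $\A$ is a finite-length abelian subcategory of $\D$ which is extension-closed (being a heart) and the restriction of $Z_{\sigma_+}$ to $K(\A)$ is $W$ up to the global rotation, applying the $\A$-truncation functor to any $\sigma_+$-destabilising triangle $A\to E\to B$ in $\D$ produces a subobject of $E$ in $\A$ of strictly larger $W$-phase, contradicting the $W$-stability; hence $E$ is $\sigma_+$-stable, necessarily of phase $1$.

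Conversely, suppose $E\in\D$ is $\sigma_+$-stable of phase $1$.  The $0$-genericity of $\phi_+$ confines the class $[E]$ to the rank-one sublattice of $\Gamma$ on which $Z_{\phi_+}$ is real, and this sublattice contains all the $\alpha_i$.  I would then deform $\sigma_+$ backwards along $\sigma_t$: by Proposition \ref{fe}(a), $E$ remains stable on a neighbourhood of $t=1$, and at any wall-crossing $E$ decomposes as an extension of $\sigma_+$-stable factors whose classes are still constrained to the same rank-one sublattice.  Pushing to $t=0$, these factors collapse to the simples $S_1,S_2,S_3$ of $\A$, and extension-closure of $\A$ in $\D$ forces $E\in\A$; its $W$-stability is then read off from the backward direction.

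The main technical obstacle is controlling the wall-crossings in the forward deformation, since a priori the stable factors appearing at intermediate walls need not themselves lie in $\A$.  The cleanest resolution, which I would ultimately pursue, is to combine the backward direction with the enumeration in Lemma \ref{nono} of $W$-stable classes with $W(\blank)\in i\R$: a $\PP^1$-family of class $\alpha$ in case (J1), and a $\PP^1$-family of class $\alpha$ together with two isolated objects of class $\alpha_3$ in case (J2).  Together with $0$-genericity and the support property, this list determines every possible $\sigma_+$-mass of a phase-$1$ stable object, and a short count in the Kronecker or affine $A_2$ category rules out any further candidate, forcing any such $E$ to coincide with one of the objects already produced by the backward direction.
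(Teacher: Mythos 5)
Your overall strategy --- deforming along the ring-shrinking path of Proposition \ref{endofwallmove} and reducing to the quiver classification of Lemma \ref{nono} --- is the same as the paper's, but both of the sub-arguments you substitute for the paper's key step have gaps, and you have correctly located the crux without resolving it. The missing ingredient is Lemma \ref{done} applied at \emph{every} intermediate time of the deformation: if $E$ is stable of phase $1$ in $\sigma_t$ for some $t$, then $\phi_t$ must have a saddle trajectory of class proportional to $[E]$, and since the shrinking leaves the differential unchanged outside the ring domain, the explicit list of finite-length trajectories in cases (J1) and (J2) of Section \ref{jug} forces $[E]$ to be a multiple of $\alpha$. Consequently every potential destabiliser of a phase-$1$ object is an extension of stable objects whose classes are proportional to $\alpha$, so no wall $W_\gamma(\alpha')$ for such a class $\gamma$ is ever crossed along the path; by Proposition \ref{chamberprop} the set of phase-$1$ stable objects is literally constant away from the degenerate endpoint, and closedness of semistability (Proposition \ref{fe}(b)) then places every $\sigma_+$-stable object of phase $1$ inside $\A=\P_\sigma(1)$, after which the rotation by $\pi/2$ identifies $\sigma_t$-stability with $W$-stability.

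Your two replacements do not achieve this. In the backward direction there is no ``$\A$-truncation functor'': $\A=\P_\sigma(1)$ is the category of semistable objects of phase $1$ for a \emph{different} stability condition, and a $\sigma_+$-destabilising subobject $A\subset E$ has no reason to admit a truncation landing in $\A$, let alone one of larger $W$-phase; what places $A$ in $\A$ is precisely the knowledge that its stable factors have class proportional to $\alpha$, i.e.\ the geometric input above. In the forward direction, $0$-genericity constrains classes only with respect to $Z_{\phi_+}$; the central charges $Z_{\phi_t}$ genuinely vary with $t$ (the width of $A_t$ enters the periods of classes crossing the ring domain), so stable factors appearing at an intermediate wall need not have class in the rank-one real sublattice of $\phi_+$, exactly as you suspect. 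Finally, the concluding ``short count'' cannot close this gap: an enumeration of the admissible classes and masses of phase-$1$ stable objects does not exclude a further stable object of $\D$, outside $\A$, sharing the class and mass of one already constructed --- and ruling out such objects is the whole content of the forward implication.
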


\begin{pf}
As  the ring domain $A$ shrinks we move along a path of differentials $\phi_+(t)$ for $0\leq t\leq 1$ with $\phi_+(0)=\phi_+$ and $\phi_+(1)=\phi$. Let $\sigma_+(t)=K(\phi_+(t))$ be the corresponding stability conditions.
The first claim is that  the class of stable objects of phase 1 in the stability condition $\sigma_+(t)$ is constant for $0\leq t<1$.
To prove this it will be enough to show that if $E$ is a stable object of phase 1 in some differential $\sigma_+(t)$, then the  class $\beta$ of $E$ is a multiple of the class $\alpha$ of the ring domain, and hence remains of phase 1 for all $t$. This follows immediately from Lemma \ref{done} and the list of saddle trajectories for $\sigma_+$ given in  Section \ref{jug}.

Suppose that $E\in \D$ is stable in $\sigma_+$ of phase 1. Then  by the above, the class of $E$ is proportional to $\alpha$, and moreover $E$ is stable  in $\sigma_+(t)$ for $0\leq t<1$. It follows that $E$  is at least  semistable in $\sigma$, and hence that $E\in \A=\P(1)$. Thus we are reduced to understanding which objects $E\in \A$ whose classes are proportional to $\alpha$ are stable  in the stability conditions $\sigma_+(t)$ for $0\leq t <1$.

Consider the central charge of  $\sigma_+(t)$ and  rotate it by an angle of $\pi/2$. It then induces a stability function $W$ on $\A$ satisfying  the conditions \eqref{cond}. It follows that an object $E\in \A$ of class proportional to $\alpha$ is stable in $\sigma_+(t)$ precisely if it is stable with respect to the stability conditions $W$ of Lemma \ref{nono}. 
\end{pf}


\subsection{Completion of the proof}
The following result will be enough to complete the proof of Theorem \ref{alltogethernow}.

\begin{prop}
\label{fed}
The map $K$ of Proposition \ref{most} is an isomorphism of complex manifolds.
\end{prop}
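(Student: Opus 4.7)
The plan is to show $K$ is bijective: as $K$ is a holomorphic map between complex manifolds of equal dimension intertwining the two local isomorphisms $\pi$ of diagram \eqref{diag}, it is automatically a local isomorphism, and a bijective local isomorphism of complex manifolds is an isomorphism. The source $\Quad^{\Gamma}_*(\S,\M)$ is connected by definition, and the target $\Stab_\triangle(\D)/\uAllo(\D)$ is connected as a quotient of the connected manifold $\Stab_\triangle(\D)$.

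For surjectivity, the image of $K$ is open by the local isomorphism property, so it suffices to prove it is also closed. Suppose $K(\phi_n) \to [\sigma]$ in the target; after replacing $\sigma(\phi_n)$ by its image under a suitable element of $\Allo(\D)$ we may lift to a sequence $\sigma_n \in \Stab_\triangle(\D)$ converging to some $\sigma$. The commutativity of \eqref{diag} forces the periods $Z_{\phi_n} \colon \Gamma \to \C$ to converge to the central charge of $\sigma$. To invoke Proposition \ref{mum} it remains to show that the lengths of non-degenerate saddle connections in the $\phi_n$ are uniformly bounded below. This is where Theorem \ref{th} enters: combined with the $\C$-equivariance of $K$, it implies that every such saddle connection in $\phi_n$ of class $\gamma$ gives rise to a stable object of class $\gamma$ in some stability condition close to $\sigma_n$. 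The support property for $\sigma$, together with Proposition \ref{lastnew}, then furnishes a uniform constant $C > 0$ with $\|\gamma\| \leq C|Z_{\sigma_n}(\gamma)|$ for classes of stable objects in all stability conditions sufficiently close to $\sigma$; since $\|\gamma\|$ is bounded below on $\Gamma\setminus\{0\}$, the saddle connection length $|Z_{\phi_n}(\gamma)|$ is bounded below by a strictly positive constant for $n$ large. Proposition \ref{mum} then produces a convergent subsequence $\phi_n \to \phi \in \Quad^\Gamma(\S,\M)$, and $K(\phi) = [\sigma]$ by continuity.

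For injectivity, suppose $K(\phi_1) = K(\phi_2)$. The construction of $K$ in Proposition \ref{most} supplies an equivalence $\Phi \in \Allo(\D)$ intertwining the stability conditions $\sigma(\phi_1)$ and $\sigma(\phi_2)$; since $\Phi$ acts trivially on $K(\D) \cong \Gamma$, the periods $Z_{\phi_i}$ coincide. After a small generic rotation supplied by Lemma \ref{opener}, we may assume both $\phi_i$ are saddle-free with signed WKB triangulations $(T_i,\epsilon_i)$. The autoequivalence $\Phi$ then carries the canonical heart $\A(T_1,\epsilon_1)$ to $\A(T_2,\epsilon_2)$ and matches simple objects class-by-class. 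Combining the bijection of Theorem \ref{kell} between ordered reachable hearts and ordered tagged triangulations with the description of $\uAll(\D)$ in Theorem \ref{clareshort}, the image of $\Phi$ in $\MCG^\pm(\S,\M)$ takes $(T_1,\epsilon_1)$ to $(T_2,\epsilon_2)$ in a way compatible with the framings. Thus $\phi_1$ and $\phi_2$ have identical horizontal strip decompositions and equal periods of the corresponding standard saddle classes, so Proposition \ref{ivans} yields $\phi_1 = \phi_2$ as framed differentials.

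The principal obstacle is the closedness step of the surjectivity argument: one must convert an algebraic convergence (of central charges controlled by the support property) into an analytic convergence of quadratic differentials on a moduli space which is not itself proper. This is precisely where Theorem \ref{th} does substantial work, providing the dictionary between non-degenerate saddle connections and stable objects that underlies the entire correspondence.
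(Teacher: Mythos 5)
Your proposal is correct and follows essentially the same strategy as the paper's proof: injectivity is established by reducing to saddle-free differentials, matching canonical hearts to deduce that the WKB triangulations differ by a (signed) mapping class group element compatibly with the framings, and concluding via Proposition \ref{ivans}; surjectivity follows from openness of the image plus closedness, the latter via Proposition \ref{mum} together with the dictionary of Theorem \ref{th} between non-degenerate saddle connections and stable objects and the support property. The only cosmetic differences are that the paper invokes Proposition \ref{weiwen} directly where you route through Theorems \ref{kell} and \ref{clareshort}, and that the paper explicitly perturbs the sequence $\phi_n$ to be complete and generic (by density) before applying Theorem \ref{th} and Proposition \ref{mum}.
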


\begin{pf}
We begin by showing that $K$  is injective; it then follows that it is an open embedding because it commutes with the period maps, which are local isomorphisms.
Suppose we have two distinct framed differentials
\[\phi_1,\phi_2\in \Quad_*^\Gamma(\S,\M)\]
such that $K(\phi_2)=\Phi(K(\phi_1))$ for some autoequivalence $\Phi\in \Allo(\D)$.   Since the period maps are local isomorphisms, and  $K$ commutes with these maps, if we deform both the $\phi_i$ maintaining the condition that their periods are equal, we will also preserve the condition $K(\phi_2)=\Phi(K(\phi_1))$. Thus we can assume that the $\phi_i$ are saddle-free. Let $(T_i,\epsilon_i)$ be the associated signed WKB triangulations.

 By definition, $K(\phi_i)=\Psi_i(\sigma(\phi_i))$, where the $\Psi_i$ are equivalences
\[\Psi_i\colon \D(T_i,\epsilon_i)\to \D(T_0,\epsilon_0)\]
satisfying the conditions (i) and (ii) of the proof of Proposition \ref{most}.
It follows that $\Psi_2^{-1}\circ \Phi\circ \Psi_1$ takes the canonical heart $\A(T_1,\epsilon_1)\subset \D(T_1,\epsilon_1)$ to the canonical heart $\A(T_2,\epsilon_2)\subset \D(T_2,\epsilon_2)$. In particular, the quivers $Q(T_i)$ are isomorphic. Thus by Proposition \ref{weiwen}, the WKB  triangulations $T_i$ differ by an  orientation-preserving diffeomorphism.

Examining Figure \ref{dualgraph} it is easy to see that this implies  that the differentials $\phi_i$ have the same horizontal strip decomposition in the sense of Section \ref{periodshorizontal}.  The fact that the equivalences $\Psi_i$ satisfy the condition (ii)  of the proof of Proposition \ref{most} implies further  that the horizontal strips of the $\phi_i$ have the same labellings by elements of $\Gamma$.  Proposition \ref{ivans} then shows that $\phi_1=\phi_2\in \Quad^\Gamma(\S,\M)$.

 The fact that the image  of $K$ is closed follows from  Proposition \ref{mum}. Indeed, suppose $\sigma_n\to \sigma\in \Stab_\triangle(\D)$ with $\sigma_n=K(\phi_n)$. We can assume that each $\phi_n$ is complete and generic since these points are dense. Theorem \ref{th} implies that the lengths of non-degenerate saddle connections in $\phi_n$ correspond to the masses of stable objects in $\sigma_n$. 
Thus it will be  sufficient to show that the masses of  objects in the $\sigma_n$ are uniformly bounded below.
By continuity it is enough to check that the masses of  objects in $\sigma$ are bounded below, and this follows from the support property \eqref{support}.
 \end{pf}

To prove Theorem \ref{alltogethernow} it remains to show that $K$ descends to give an isomorphism of complex orbifolds fitting into the commutative diagram
\begin{equation}\xymatrix@C=1.5em{ \Quad_*^{\Gamma}(\S,\M)\ar[d] \ar[rrr]^{K} &&& \Stab_\triangle(\D)/\uAll^0(\D)\ar[d]\\
\Quadorb(\S,\M) \ar[rrr]_{J} &&& \Stab_\triangle(\D)/\uAll(\D) }\end{equation}
Since the map on the left is a covering map, to prove that $K$ descends it is enough to check that all framings of the differential $\phi_0$ give the same stability condition up to the action of $\uAll(\D)$. This is immediate from the definition of $K_0$. 

To prove that the resulting map $J$ is injective, hence an isomorphism, we follow the same argument used above to show that $K$ is injective. Suppose that $J(\phi_1)=J(\phi_2)$. By deforming the differentials $\phi_i$  as before, it is enough to deal with the case when the $\phi_i$ are complete and saddle-free. As above we conclude that the $\phi_i$ have the same horizontal strip decomposition. Since they also  have the same periods, they are equal.  \qed

\smallskip

\begin{corollary}
\label{reach}
An autoequivalence of $\D$ preserves the connected component $\Stab_\triangle(\D)$ precisely if it is reachable. In particular, the  shift functor $[1]$ lies in the group $\Reach(\D)$ of reachable autoequivalences. 
\end{corollary}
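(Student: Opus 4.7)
The plan is to establish the two inclusions separately and then deduce the ``in particular'' statement about $[1]$. The easy direction is that $\Reach(\D)$ is contained in the group of autoequivalences preserving $\Stab_\triangle(\D)$. If $\Phi\in \Reach(\D)$ preserves $\Tilt_\triangle(\D)$, then for any $\A\in \Tilt_\triangle(\D)$ the chamber $\Stab(\A)\subset \Stab_\triangle(\D)$ is carried to the chamber $\Stab(\Phi(\A))\subset \Stab_\triangle(\D)$. Since $\Phi$ is a homeomorphism that permutes the connected components of $\Stab(\D)$, and $\Phi(\Stab(\A))$ is a nonempty open subset of $\Stab_\triangle(\D)$, the image $\Phi(\Stab_\triangle(\D))$ is the connected component containing this subset, namely $\Stab_\triangle(\D)$ itself.

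For the reverse inclusion I will use Theorem \ref{alltogethernow} crucially. Suppose $\Phi\in \Aut(\D)$ preserves $\Stab_\triangle(\D)$, and let $U\subset \Stab_\triangle(\D)$ denote the subset of stability conditions corresponding, under the quotient $\Stab_\triangle(\D)\to \Stab_\triangle(\D)/\uReach(\D)\cong \Quadorb(\S,\M)$, to complete, saddle-free, generic framed differentials. By the construction of $K$ in the proof of Proposition \ref{most}, the stability condition $K_0(\phi)=\Psi(\sigma(\phi))$ associated to such a $\phi$ has heart $\Psi(\A(T(\phi),\epsilon(\phi)))$, which lies in $\Tilt_\triangle(\D)$ because $\Psi$ was chosen to map the distinguished component of the tilting graph of $\D(T(\phi),\epsilon(\phi))$ into $\Tilt_\triangle(\D)$. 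Since different lifts to $\Stab_\triangle(\D)$ of a given point of $U$ differ by an element of $\Reach(\D)$, which preserves $\Tilt_\triangle(\D)$, every point of $U$ has heart in $\Tilt_\triangle(\D)$. Moreover $U$ is open and dense in $\Stab_\triangle(\D)$, since generic complete saddle-free differentials form a dense open subset of $\Quadorb(\S,\M)$ and the quotient map is open and continuous.

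Given any $\A\in \Tilt_\triangle(\D)$ I now pick $\sigma$ in the nonempty open intersection $\Stab(\A)\cap \Phi^{-1}(U)$. Then $\sigma$ has heart $\A$, while $\Phi(\sigma)\in U$ has heart $\Phi(\A)\in \Tilt_\triangle(\D)$. This shows that $\Phi$ maps $\Tilt_\triangle(\D)$ into itself, which is precisely the statement $\Phi\in \Reach(\D)$. Finally, by the discussion of the $\C$-action in Section \ref{stabsumm}, the shift functor $[1]$ acts as the element $1\in \C$, which is a continuous action preserving each connected component of $\Stab(\D)$; hence $[1]$ preserves $\Stab_\triangle(\D)$ and so $[1]\in \Reach(\D)$. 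The only genuine obstacle is the preceding Theorem \ref{alltogethernow} itself: once one has the identification $\Quadorb(\S,\M)\cong \Stab_\triangle(\D)/\uReach(\D)$ together with the fact that saddle-free differentials give hearts in $\Tilt_\triangle(\D)$, the remainder is a formal manipulation with open dense subsets.
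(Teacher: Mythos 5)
Your argument is correct, but it takes a genuinely different route from the paper's for the substantive (converse) direction. The paper argues by transporting a path from $\sigma_0$ to $\Phi(\sigma_0)$ back to the space of framed differentials via $K^{-1}$, homotoping it to a tame path meeting only finitely many walls (Propositions \ref{moremore} and \ref{TamePath}), and then applying Proposition \ref{skateboard} at each wall; this exhibits $\Phi$, modulo negligible autoequivalences, as an explicit composite of the flip and pop equivalences, whence it is reachable. You instead argue by density: the locus $U$ of stability conditions lying over complete saddle-free differentials is dense in $\Stab_\triangle(\D)$, and by the construction of $K_0$ every point of $U$ has heart in $\Tilt_\triangle(\D)$; meeting $\Phi^{-1}(U)$ with the interior of a chamber $\Stab(\A)$ then forces $\Phi(\A)\in\Tilt_\triangle(\D)$, and since $\Phi$ acts by graph automorphisms on $\Tilt(\D)$ even a single such $\A$ would suffice. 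Both proofs rest on Theorem \ref{alltogethernow}: in your case it is the surjectivity of $K$ (Proposition \ref{fed}) that makes $U$ dense in all of $\Stab_\triangle(\D)$ rather than merely in the image of $K$. Your route is softer and avoids re-running the path-deformation machinery at this stage, at the price of being non-constructive; the paper's route identifies $\Phi$ concretely in terms of the Keller--Yang equivalences. Two cosmetic points: the generic locus is dense but only a countable intersection of open sets, so $U$ as you define it need not be open --- harmless, since you only use density, and you may as well drop genericity from the definition of $U$; and $\Stab(\A)$ is homeomorphic to a product of semi-closed upper half-planes, so strictly one should intersect $\Phi^{-1}(U)$ with its nonempty interior, which is what your argument in effect does.
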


\begin{pf}
One implication is always true, by Lemma \ref{wolf}. Conversely, suppose that $\Phi\in \Aut(\D)$ preserves $\Stab_\triangle(\D)$. Take a smooth path connecting $\sigma_0$ and $\Phi(\sigma_0)$ in $\Stab_\triangle(\D)$. Applying $K^{-1}$ this defines a smooth path of framed quadratic differentials. As in the proof of Proposition \ref{most}, we can use Proposition \ref{moremore} to  find a homotopic path consisting entirely of tame differentials, and crossing only finitely many walls. Applying Proposition \ref{skateboard} at each of these walls shows that $\Phi$ is reachable.
\end{pf}




\subsection{Non-amenable cases}
\label{therest}

Suppose now that $(\S,\M)$ is  a marked bordered surface which fails to be amenable because it violates one of the first four conditions of Definition \ref{amenable}.  We shall make some brief comments about what can be said in these various cases.

{\bf Case (a)}. Suppose that $(\S,\M)$ does not satisfy Assumption \ref{asstwo}. We refer the reader to the comments made following Assumption \ref{asstwo}. We cannot deal with the case of a 4-punctured sphere, but the other cases are all described explicitly in Section \ref{applications}.

{\bf Case (b)}. Suppose that $(\S,\M)$ is one of the three surfaces listed in Proposition \ref{weiwen} having distinct triangulations with the same associated  quiver.
The isomorphism of Theorem \ref{kell} still holds and induces an action of the group $\Reach(\D)$ on the  set of tagged triangulations. We say that a reachable  autoequivalence  is \emph{allowable} if the induced action on the quotient set
\[\Tri_{\bowtie}(\S,\M)/\MCG^\pm(\S,\M)\]
is trivial. If we replace the group $\uAll(\D)$ of reachable autoequivalences by the subgroup $\uAll^{\operatorname{allow}}(\D)$ of allowable ones then Proposition \ref{clareshort} holds for $(\S,\M)$ with the same proof.   

In these cases Theorem \ref{alltogethernow} still holds, with the group $\uReach(\D)$ replaced by $\uAll^{\operatorname{allow}}(\D)$. The proof is the same, one just needs to check at several places that certain autoequivalences are allowable. This is easily done, and we omit the details. The case of a once-punctured disc with two points on the boundary is described in detail in Example \ref{egg} below.

{\bf Case (c)}. Consider the two non-closed surfaces listed in Proposition \ref{freely}, for which the action of the mapping class group on ordered ideal triangulations is not free. Proposition \ref{clareshort} also fails in these two cases. These surfaces are dealt with explicitly in Examples \ref{eg} and \ref{eg2} below. Note that  the orbifold $\Quadorb(\S,\M)$ has  non-trivial generic automorphism group $\Z_2$, and Theorem \ref{alltogethernow} continues to hold if we rigidify the orbifold $\Quadorb(\S,\M)$ by  killing this  group.

{\bf Case (d)}. We leave the case of a closed surface  with a single puncture for future research, and restrict ourselves here to some sketchy comments about some of the special features of these surfaces.

The first new feature  is that the graph $\Tri_{\bowtie}(\S,\M)$  has two connected components. This means that  we have two potentially distinct categories $\D(\S,\M)$, depending on the choice of sign $\epsilon=\epsilon(p)$. But by the result 
\cite[Prop. 10.4]{LF3} referred to in the proof of Theorem \ref{clareshort}, the two potentials $W(T,\pm 1)$ are right-equivalent up to scale, so  in fact there is a well-defined category $\D=\D(\S,\M)$.

 As stated, Theorem \ref{kell} is   false for these examples, again because the graph $\Tri_{\bowtie}(\S,\M)$  is disconnected. A related issue is that Corollary \ref{reach} fails, and the shift functor $[1]$ is not reachable\footnote{To see this, note that because there are no self-folded triangles, the residue class $\beta_p$ is always either a strictly positive or strictly negative linear combination of the classes of the simple objects of any heart, and this sign is constant under mutation.}. It might therefore be best to replace the notion of a reachable heart by a \emph{shift-reachable} heart: one for which $\A[i]$  lies in $\Tilt_\triangle(\D)$ for some $i\in \Z$. Of course, in the case of an amenable surface, the notions of reachable and  shift-reachable coincide by Corollary \ref{reach}.

One more feature in this case is that the complete Ginzburg algebra definitely does not coincide with the uncompleted version in general, see Remark \ref{completed}. It may well be that it is more natural to consider the uncompleted Ginzburg algebra in this context.




\section{Examples}
\label{applications}
In this section we consider some special cases of Theorem \ref{Thm:Main1} corresponding to surfaces $(\S,\M)$  of genus $g=0$. This leads to descriptions of spaces of stability conditions on \CY categories associated to  certain simple quivers familiar in representation theory. We adopt a less formal approach in this section, and some of the details are left for the reader.


\subsection{Unpunctured discs: $A_n$ type}

Fix an integer  $n\geq 2$ and let 
 $(\S,\M)$ be an unpunctured disc with $n+3$ points on its boundary. This corresponds to differentials on $\bP^1$ with a single pole of order $n+5$.
 The space $\Quadorb(\S,\M)$ coincides with $\Quad(\S,\M)$ and parameterizes differentials of the form \[\varphi(z)=p_{n+1}(z) \,dz^{\tensor 2}\] where $p_{n+1}(z)$ is a polynomial of degree $n+1$ having simple roots, considered modulo the automorphisms of $\bP^1$ which fix infinity. Taking the sum of the roots to be 0  we can reduce to differentials of the form
\[\phi(z)=\prod_{i=1}^{n+1} (z-a_i) \, dz^{\tensor 2},  \quad \sum_{i=1}^{n+1} a_i = 0,\quad a_i\neq a_j,\]
modulo a residual action of $\Z_{n+3}$ acting by rescaling $z$ by an $(n+3)$rd root of unity. Thus  \[\Quad(\S,\M)\isom \operatorname{Conf}_0^{n+1}(\C)/\Z_{n+3},\]
where $\operatorname{Conf}_0^{n+1}(\C)$ denotes the configuration space of $n+1$ distinct points in $\C$ with centre of mass at the origin, and  the group $\Z_{n+3}$ acts by multiplication by  $(n+3)$rd roots of unity.

\begin{figure}[ht]
\begin{center}
\includegraphics[scale=0.4]{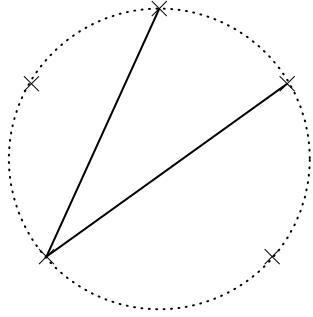}
\end{center}
{\caption{Triangulation of a 5-gon.}}
\end{figure}

The mapping class group  of the surface is \[\MCG(\S,\M)=\Z_{n+3}\] and coincides with the signed mapping class group.  Ideal triangulations  of $(\S,\M)$ correspond to triangulations of an $(n+3)$-gon. For triangulations containing no internal triangles the resulting quivers  are orientations of a Dynkin diagram of $A_n$ type and necessarily have zero potential.

Our main Theorem gives an isomorphism of complex orbifolds
\[\operatorname{Conf}_0^{n+1}(\C)/\Z_{n+3} \isom \Stab_\triangle(\D)/\uReach(\D).\]
 Note that there is a short exact sequence
\[1\lra \operatorname{Br}(A_n) \lra \pi_1 \Quad(\S,\M)\lra \Z_{n+3}\lra 1,\]
where the Artin braid group $\operatorname{Br}(A_n)$ is the fundamental group of the configuration space $\operatorname{Conf}_0^{n+1}(\C)$.  On the other hand, the  group $\Sph(\D)\subset \Aut(\D)$ is isomorphic to $\operatorname{Br}(A_n)$ by  \cite[Theorem 1.3]{ST}. The sequence of Lemma \ref{clareshort} therefore becomes\[ 1\lra \operatorname{Br}(A_n) \lra \uReach(\D)\lra \Z_{n+3}\lra 1.\]
The fact that these two sequences coincide suggests that $\Stab_\triangle(\D)$ is simply-connected.

\begin{Example}
\label{beggar}
The non-amenable case $n=0$ corresponding to an unpunctured disc with 3 marked points on its boundary is very degenerate. The space $\Quad(\S,\M)$ is a single point with  automorphism group $\Z_3$, corresponding to the unique differential $\phi(z)=z\, dz^{\tensor 2}$. The mapping class group is $\MCG(\S,\M)=\Z_3$. There is a unique ideal triangulation, but it contains no edges, and the associated quiver is empty.   \end{Example}

\begin{Example}
\label{eg}
Consider the non-amenable case  $n=1$ corresponding to an unpunctured disc with four marked points on its boundary. The space $\Quad(\S,\M)$ consists of differentials of the form
\[\phi(z)=(z^2+c) \, dz^{\otimes 2}, \quad c\in \C^*, \]
 modulo the action of $\Z_4$ acting on $z$ by multiplication by $i$.
Thus  \[\Quad(\S,\M)\isom \C^*/\Z_4\] with the generator of $\Z_4$ acting by change of sign. Note that the generic stabilizer group is $\Z_2$. 

There are two ideal triangulations of $(\S,\M)$, each with a single edge. These are related by the action of the mapping class group $\Z_4$. Note that Proposition \ref{freely} breaks down in this case: the element of $\Z_4$ of order 2 fixes all triangulations.

The  quiver corresponding to either triangulation has a single vertex and no arrows. The category $\D$ is generated by a single spherical object $S$, and the group $\uReach (\D)$ is generated by the shift functor $[1]$. Proposition \ref{clareshort} also breaks down in this case, since $\Sph(\D)\isom \Z$ is generated by the second shift $[2]$.

The quotient space  $\Stab(\D)/\langle[2]\rangle$ is isomorphic to $\C^*$ with co-ordinate $Z(S)$. Thus
\[ \Stab(\D)/\uReach(\D) \isom \C^*/\Z_2, \] with the generator of $\Z_2$ acting by change of sign. 
There is a  morphism
\[\Quad(\S,\M)\to \Stab(\D)/\uReach(\D)\]
 given by setting  $Z(S)=\pi i c$, but it is not  an isomorphism of orbifolds, since the generic automorphism groups are different. 
\end{Example}


\subsection{Punctured discs: $D_n$ type}

Fix an integer $n\geq 2$ and let $(\S,\M)$ be a once-punctured disc with $n$ points on its boundary.  This corresponds to differentials on $\bP^1$ with  polar type  $(2,n+2)$.

The space $\Quad(\S,\M)$ consists of differentials of the form \[\phi(z)=\prod_{i=1}^n (z-a_i) \,\frac{dz^{\tensor 2}}{z^2}, \quad a_i\neq a_j,\]  modulo the action of $\C^*$ rescaling $z$. Thus
\[\Quad(\S,\M)\isom \operatorname{Conf}^{n}(\C)/\Z_{n},\]
where $\operatorname{Conf}^n(\C)$ denotes the configuration space of $n$ distinct points in $\C$, and the group $\Z_n$ acts by multiplication by  $n$th roots of unity. 

Note that the product  $a=\prod_{i=1}^n a_i$ is invariant under the $\Z_n$ action,  and hence defines a map \[a\colon \Quad(\S,\M)\to \C.\] The cover $\Quad^\pm(\S,\M)$ 
corresponds to choosing a square-root of $a$.

 When  $n\geq 3$ the obvious rotationally-symmetric triangulation of $(\S,\M)$ has an associated  quiver with potential  which consists of a cycle of  $n$ arrows, equipped with a nonzero superpotential of degree $n$. This  is known
 \cite[Example 6.7]{FST}  to be mutation-equivalent to any orientation of the  Dynkin diagram of $D_n$ type, necessarily with zero potential.

\begin{Example}
\label{begg}
In the non-amenable case $n=1$  the space $\Quad(\S,\M)$ consists of differentials of the form
\[\phi(z)=(z+c) \,\frac{ dz^{\otimes 2}}{z^2}, \quad c\in \C.\]
The residue at 0 is $\res_0(\phi)=4\pi i\sqrt{c}$.
 The cover $\Quad^\pm(\S,\M)$ corresponds to choosing a square-root $s=\sqrt{c}$. Thus \[\Quadorb(\S,\M)\isom\C/\Z_2\]
with $\Z_2$ acting on $\C$ by $s\mapsto -s$.

There is only one triangulation, and  the resulting quiver is a single vertex with no arrows. The category $\D$ is generated by a single spherical object. The mapping class group is trivial, and $\MCG^\pm(\S,\M)=\Z_2$. The group $\Reach(\D)$ is generated by the shift $[1]$, and the subgroup $\Sph(\D)$ by the second shift $[2]$. The quotient $\Stab_\triangle(\D)/\Sph(\D)$ is isomorphic to $\C$ with co-ordinate $Z(S)$.
Thus the relation
\[\Quadorb(\S,\M)\isom \Stab_\triangle(\D)/\uReach(\D),\]
also holds in this case.
\end{Example}

\begin{Example}
\label{egg}In the non-amenable case $n=2$ the space $\Quad(\S,\M)$ consists of differentials of the form
\[\phi(z)=(az^2+bz+c) \,\frac{dz^{\otimes 2}}{z^2}, \quad a\in \C^*, \quad b,c\in \C, \quad b^2-4ac\neq 0,\]
modulo the rescaling action of $\C^*$. Using this we can take $a=1$; there is then a residual action of $\Z_2$ acting by $z\mapsto -z$. These differentials have a double pole  at $z=0$ and a fourth order pole at infinity. The respective residues are \[\res_0(\phi)=4\pi i \sqrt{c}, \quad \res_{\infty}(\phi)=2\pi i b.\]
The cover $\Quad^\pm(\S,\M)$ corresponds to choosing a square-root of $c$. Writing the differential as
\[\phi(z)=(z^2+2sz+t^2) \,\frac{dz^{\otimes 2}}{z^2}, \quad s,t \in \C,\quad s^2\neq t^2,\]
we see that $\Quad^\pm(\S,\M)$ is the quotient $ (\C^2\setminus \Delta)/\Z_2$ where $\Z_2$ acts by changing the sign of the first co-ordinate $s$, and $\Delta\subset \C^2$ is the union of the hyperplanes $s=\pm t$. We also have
\begin{equation}
\label{andrei}\Quadorb(\S,\M)=(\C^2\setminus\Delta)/\Z_2^{\oplus 2}\end{equation}
with the two $\Z_2$ factors changing the signs of $s$ and $t$ respectively.
There is a short exact sequence
\[1\lra \Z^{\oplus 2}\lra \pi_1 (\Quadorb(\S,\M))\lra \Z_2^{\oplus 2}\lra 1.\]

 The mapping class group and its signed version are
 \[\MCG(\S,\M)=\Z_2, \quad \MCG^\pm(\S,\M)=\Z_2^{\oplus 2}.\]
There are four tagged triangulations; the two possible taggings of the left-hand triangulation in Figure \ref{fi}, and the tagged triangulations corresponding to the right-hand picture and its rotation. The corresponding quivers all consist of two vertices with no arrows. 

\label{ex}
\begin{figure}[ht]
\begin{center}
\includegraphics[scale=0.4]{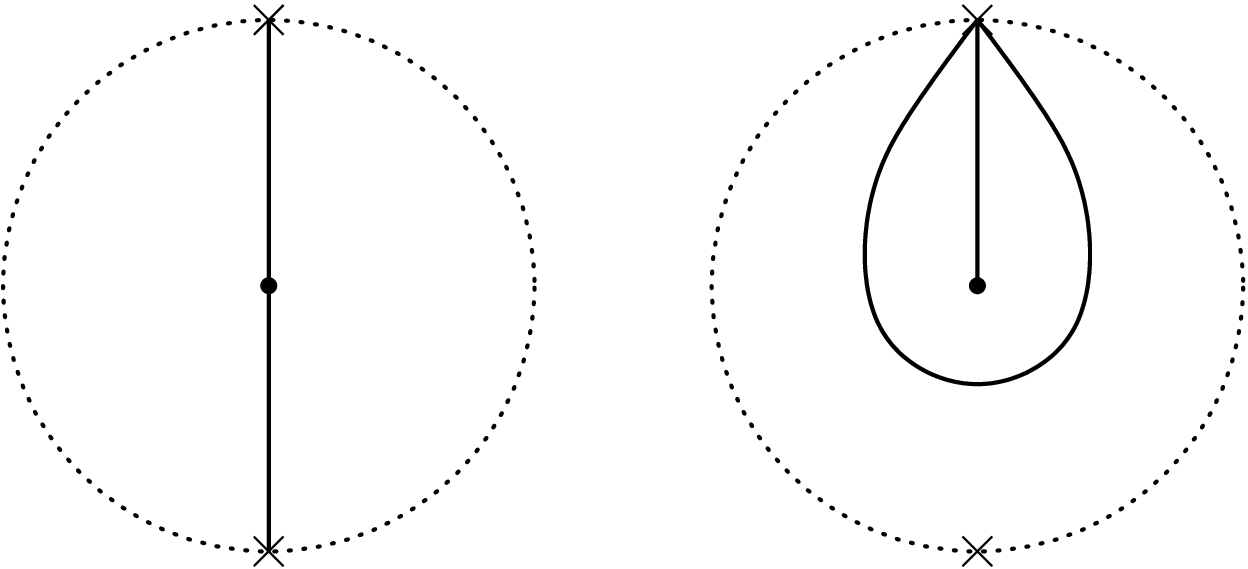}
\end{center}
\caption{Triangulations of a once-punctured disc with two marked points on the boundary.\label{fi}}
\end{figure} 

The category $\D$ is generated by two spherical objects $S_1,S_2$ lying in the heart of the t-structure corresponding to the triangulation on the left in Figure \ref{fi}. They have zero $\Ext$ groups between them. The twist functor $\Tw_{S_i}$ acts by  sending $S_i$ to $S_i[2]$  and leaving the other $S_j$ unchanged.

The group of allowable autoequivalences $\uAll^{\operatorname{allow}}(\D)$ is generated by the spherical twists $\Tw_{S_i}$, together with the autoequivalence swapping $S_1$ and $S_2$, and the shift functor $[1]$. It fits into a short exact seqence
\[1\lra \Z^{\oplus 2}\lra \uAll^{\operatorname{allow}}(\D)\lra \Z_2^{\oplus 2}\lra 1.\]
It has index 2 in the full group $\uAll(\D)$, which also contains the element sending $S_1$ to $S_1[1]$ and leaving $S_2$ fixed. 

The quotient $\Stab_\triangle(\D)/\Sph(\D)$ is isomorphic to $(\C^*)^2$ with co-ordinates $Z(S_i)$.
 The isomorphism of Theorem \ref{Thm:Main1} is given by \[Z(S_1)-Z(S_2)=2\pi i s, \quad Z(S_1)+Z(S_2)=2\pi i t.\]
In the $Z(S_i)$  co-ordinates the discriminant $\Delta$ of \eqref{andrei} is given by $Z(S_1)Z(S_2)=0$. One of the  $\Z_2$ factors acts by exchanging the $Z(S_i)$, and the  other  acts by changing the signs of the $Z(S_i)$. 
\end{Example}


\subsection{Unpunctured annuli: affine $A_n$ type}

Fix integers $p,q\geq 1$ and let  $(\S,\M)$ be an annulus whose boundary components contain  $p$ and $q$ marked points respectively.  This corresponds to differentials on $\bP^1$ with  polar type $(p+2,q+2)$. Let $n=p+q$.

The space $\Quad^\pm(\S,\M)=\Quad(\S,\M)$ consists of differentials of the form \[\phi(z)=\prod_{i=1}^n (z-a_i)\,\frac{dz^{\tensor 2}}{z^{p+2}},\quad a_i\in \C^*, \quad a_i\neq a_j.\]
If $p\neq q$ these are  considered modulo the  action of $\C^*$ rescaling $z$, so  we have
\[\Quad(\S,\M)\isom \operatorname{Conf}^{n}(\C^*)/\Z_{q},\]
where $\operatorname{Conf}^{n}(\C^*)$ denotes the configuration space of $n$ distinct points in $\C^*$, and the group $\Z_q$  acts by multiplication by a $q$th root of unity.
In the case $p=q$   one should quotient by an extra factor of $\Z_2$ acting by $z\leftrightarrow 1/z$.

We remark that despite appearences this answer is symmetric under exchanging $(p,q)$. To see this observe that the relevant quotients can also be viewed as the quotient of $\C^*\times \operatorname{Conf}^n(\C^*)$ by the action of $\C^*$ acting with weight $-p$ or $-q$ on the first factor and weight $1$ on each of the points in the configuration. These two actions are exchanged by  the involution of $\C^*\times \operatorname{Conf}^n(\C^*)$ defined by \[\big(t,\{a_1,\cdots, a_n\}\big)\mapsto \big((a_1\cdots a_n)\cdot t,\{a_1^{-1},\cdots,a_n^{-1}\}\big).\]

For any triangulation of $(\S,\M)$ the resulting quiver  $Q$ is a cycle of $n$ arrows but with a non-cyclic orientation. Thus $Q$ is a non-cyclic orientation of the affine $A_{n-1}$ Dynkin diagram, necessarily with zero potential. 

\begin{Example}
\label{eg2}
Consider the non-amenable case $p=q=1$.   The space $\Quad(\S,\M)$ parameterizes differentials of the form
\[\phi(z)=(tz+2s+tz^{-1}) \frac{dz^{\otimes 2}}{z^2},  \quad s\in \C,\quad t\in \C^*,  \quad t^2\neq s^2\]
modulo an action of $\Z_2^{\oplus 2}$, with one generator acting by changing the sign of $t$, and the other acting trivially, via the automorphism $z\mapsto 1/z$ of $\PP^1$. 
 Writing $a=s\in \C$ and $b=t^2\in \C^*$ we obtain
\[\Quad(\S,\M)\isom (\C\times\C^*\setminus \Delta)/\Z_2, \]
where $\Delta$ is the hypersurface $b=a^2$ and $\Z_2$ acts trivially. Write $\Quad'(\S,\M)$ for the rigidified moduli space obtained by forgetting about the trivial $\Z_2$ action. There is a short exact sequence
\[1\lra \Z*\Z\lra \pi_1 \Quad'(\S,\M)\lra \Z\lra 1\]
obtained from the obvious projection to $\C^*$ whose fibre over $b\in \C^*$ is $\C\setminus\{\pm \sqrt{b}\}$. 

\begin{figure}[ht]
\begin{center}
\includegraphics[scale=0.4]{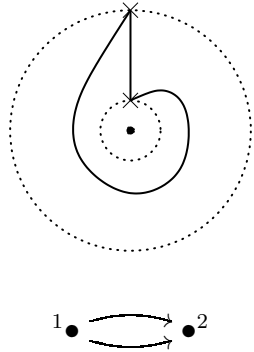}

\begin{equation*}
\xymatrix@C=1em{{^1}\bullet \ar@/^/[rr] \ar@/_/[rr]&& \bullet^2    }\end{equation*}
\caption{Triangulation and quiver for the annulus with one marked point on each boundary component.}
\end{center}
\end{figure}

The mapping class group is $\MCG(\S,\M)=\Z_2\ltimes \Z$ with the $\Z_2$ acting by exchanging the two boundary components, and $\Z$ acting by a Dehn twist around an equatorial curve. There is a single triangulation of $(\S,\M)$ up to diffeomorphism, and the associated quiver is the Kronecker quiver.

The spherical autoequivalence group $\Sph(\D)$ is free on the two generators $\Tw_{S_i}$. The mapping class group does not act freely on ordered triangulations, and Proposition \ref{clareshort} is false  in this case. But the same argument gives
a short exact sequence
\[1\lra \Z * \Z \lra \uAll(\D)\lra \Z\lra 1.\]
Theorem \ref{alltogethernow} continues to hold if we replace $\Quad(\S,\M)$ by $\Quad'(\S,\M)$. The central charges of the two simple objects are given by the elliptic integrals
\[Z(S_i)=\pm 2\int \sqrt{(tz+2s+tz^{-1})} \cdot \frac{dz}{z}\]
where the paths of integration are half-loops connecting the two zeroes of $\phi$.  \end{Example}
 
 
 \subsection{Three-punctured sphere}

\label{threepunctures}
 
 Let $(\S,\M)$ be the  three-punctured sphere. This surface is not amenable, but a version of our Theorem continues to hold. An interesting point is that it seems to be most natural to work with uncompleted Ginzburg algebras in this case (see Remark \ref{completed}).

 The space $\Quad(\S,\M)$ consists of differentials of the form
 \[\phi(z)=\frac{ (az^2+bz+c)\,dz^{\tensor 2}}{z^2 (z-1)^2}\]
 for $a,b,c\in \C$ with $b^2\neq 4ac$, modulo the action of the symmetric group $S_3$ acting via automorphisms of $\PP^1$ permuting $0,1,\infty$. The residues at the points $0,1,\infty$ are 
\[\frac{1}{4\pi i}\res_0 = u= \sqrt{c}, \quad \frac{1}{4\pi i}\res_1=v= \sqrt{a+b+c}, \quad \frac{1}{4\pi i}\res_\infty=w= \sqrt{a}.\]
The space $\Quad^\pm(\S,\M)$ is therefore $\C^3$ with co-ordinates $(u,v,w)$, minus the inverse image of the  discriminant locus $b^2-4ac$, all modulo $S_3$ acting by permutations on $(u,v,w)$. The inverse image of the discriminant  locus is easily seen to be the divisor $z_1 z_2 z_3 z_4=0$, where
\[z_1=-u+v+w, \quad z_2=u-v+w, \quad z_3=u+v-w, \quad z_4=u+v+w.\]
Thus we conclude that \[\Quad^\pm(\S,\M)\isom ((\C^*)^3\setminus \Delta)/S_3\]
where the discriminant $\Delta$ is given by $z_1+z_2+z_3=0$ and  the symmetric group $\Sym_3$ acts by permuting $(z_1,z_2,z_3)$. Hence
\[\Quadorb(\S,\M)\isom ((\C^*)^3\setminus \Delta)/(\Sym_3\ltimes (\Z_2)^{\oplus 3}),\]
where the $\Z_2$ factors change the signs of $u,v,w$ respectively. The fundamental group then sits in a sequence
\[ 1\lra \pi_1((\C^*)^3\setminus \Delta)\lra \pi_1 (\Quadorb(\S,\M))\lra \Sym_3\ltimes \Z_2^{\oplus 3}\lra 1.\]
The space $(\C^*)^3\setminus \Delta$ is a trivial $\C^*$-bundle over its projectivisation, which is the complement of four hyperplanes in $\mathbb{CP}^2$. 
A theorem due to Zariski \cite[Lemma, p. 317]{Zariski} asserts that $\pi_1((\C^*)^3\setminus \Delta)$ is abelian, hence isomorphic to $H_1((\C^*)^3\setminus \Delta) \isom \Z^{\oplus 4}.$

The mapping class group is $\MCG(\S,\M)=S_3$ and permutes the punctures in the obvious way. The signed mapping class group is $\MCG^\pm(\S,\M)=S_3\ltimes \Z_2^{\oplus 3}$. There are two triangulations up to the  mapping class group action: one  has two triangles meeting along three common edges, and the other consists of two self-folded triangles glued along their encircling edges. We claim that the relevant quiver with potential in both cases is the one depicted in Figure \ref{quiv}.

\begin{figure}[ht]
\begin{center}
\begin{equation*}
\xymatrix@C=1em{\bullet^1 \ar^{a}@/^/[rrrr] \ar^{c'}@/^/[ddrr]&&&& \bullet^2 \ar^b@/^/[ddll]    \ar^{a'}@/^/[llll] \\ \\ 
  && \bullet^3 \ar^c@/^/[uull]^c \ar^{b'}@/^/[uurr]}
\end{equation*}
\[W=aa'+bb'+cc' -abc-c'b'a'.\]
\caption{The quiver with potential for the 3-punctured sphere.\label{quiv}}
\end{center}
\end{figure}
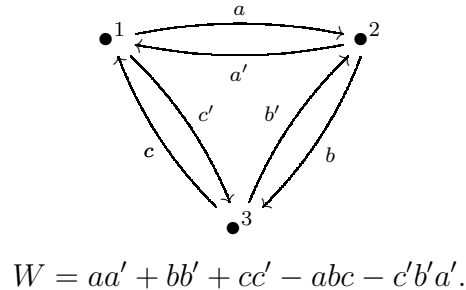
Note that the associated reduced quiver with potential has three verices and no arrows. Let $\A'$ be the category of finite-dimensional representations of the ordinary (incomplete)  Jacobi algebra of the above quiver with potential, and $\A$ the category of finite-dimensional representations of the completed version. Then $\A\subset \A'$ is the full subcategory of nilpotent representations.

\begin{lemma}
The category $\A'$  has exactly four indecomposable objects, all of them simple, namely the three vertex simple objects $S_1,S_2,S_3$ together with the representation $S_4$ of dimension vector $(1,1,1)$ obtained by taking all arrows to be the identity. The subcategory $\A$ is the subcategory consisting of direct sums of  the objects $S_1,S_2,S_3$.
\end{lemma}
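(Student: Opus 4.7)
The plan is to extract enough of the structure of the Jacobi algebra $J' = J^\dagger(Q,W)$ from the potential $W = aa' + bb' + cc' - abc - c'b'a'$ to force any finite-dimensional representation to split as a sum of the four claimed indecomposables.

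First I would write out the cyclic derivatives of $W$ and record the six Jacobian relations
\begin{equation*}
a' = bc, \qquad a = c'b', \qquad b' = ca, \qquad b = a'c', \qquad c' = ab, \qquad c = b'a'.
\end{equation*}
These immediately yield three elements $u, v, w \in J'$, located at the three vertex idempotents, admitting two expressions each:
\begin{equation*}
u := aa' = c'c \in e_1 J' e_1, \qquad v := a'a = bb' \in e_2 J' e_2, \qquad w := b'b = cc' \in e_3 J' e_3.
\end{equation*}
The key computation, which I would carry out carefully in one direction and then cite symmetry, is that these elements are idempotent: for instance,
\begin{equation*}
u^2 = (aa')(aa') = a(a'a)a' = a(bb')a' = (ab)(b'a') = c' \cdot c = u.
\end{equation*}
By the same pattern $v^2=v$ and $w^2=w$. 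I would also record the twisting identities that come from substituting $c'=ab$ into $a = c'b'$, namely $a = a(bb') = avb$ and $ua = av$ (and the five analogues), together with the composition identities $a'a = u|$-intertwining which express that $a$ carries the $u$-part into the $v$-part and the kernel of $u$ to the kernel of $v$.

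Given these building blocks, the classification is essentially formal. For a finite-dimensional representation $M = (V_1,V_2,V_3,\rho)$ I would use the idempotents to decompose
\begin{equation*}
V_1 = V_1^+ \oplus V_1^-, \quad V_2 = V_2^+ \oplus V_2^-, \quad V_3 = V_3^+ \oplus V_3^-,
\end{equation*}
where $V_i^+$ is the image of the relevant idempotent and $V_i^-$ its kernel. The relations $\rho(a) = v\rho(a)$ and $\rho(a) u = v \rho(a)$ force $\rho(a)$ to kill $V_1^-$ and to map $V_1^+$ into $V_2^+$; similarly for the other five arrows. On the $+$ parts, the six arrows become mutually inverse isomorphisms, since $\rho(a')\rho(a) = u$ acts as the identity on $V_1^+$, and so on, and the cycle relation $\rho(cba) = u|_{V_1^+} = \mathrm{id}$ is satisfied. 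Hence $M^+ := (V_1^+, V_2^+, V_3^+)$ is canonically a direct sum of copies of the ``identity'' representation $S_4$, while $M^- := (V_1^-, V_2^-, V_3^-)$ has all arrows zero and so is a direct sum of copies of the vertex simples $S_1, S_2, S_3$. This produces the desired decomposition and shows that $S_1, S_2, S_3, S_4$ exhaust the indecomposables.

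For the last sentence, I would observe that $\A \subset \A'$ is the full subcategory on which the arrow ideal $\mathfrak{m} \subset J'$ acts nilpotently, and then use the fact that $u, v, w \in \mathfrak{m}$ are idempotents: a nilpotent idempotent on a finite-dimensional space is zero. Thus $M^+ = 0$ for every $M \in \A$, and $\A$ consists precisely of the finite direct sums of $S_1, S_2, S_3$. The main obstacle is purely notational: the relation calculations are not deep, but correctly tracking the path-concatenation conventions (so that the six Jacobian equations, and the derived identities $u^2 = u$ and $\rho(a) = v\rho(a)$, are stated and used consistently) is where an otherwise straightforward argument can easily go wrong.
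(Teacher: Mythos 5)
Your proposal is correct and follows essentially the same route as the paper: both arguments split each vertex space along the $3$-cycle endomorphisms $abc$, $bca$, $cab$ into a part on which the arrows are mutually inverse isomorphisms (copies of $S_4$) and a part on which all arrows vanish (copies of the vertex simples), the paper doing this via the relation $a=abca$ to show $\Im(abc)\cap\Ker(abc)=0$, and you doing it by observing that $u=aa'=abc$ and its cyclic analogues are idempotents of the Jacobi algebra, which is a clean repackaging of the same computation. The identities you actually need, $ua=a=av$, $aa'=u$, $a'a=v$, are all correct (a couple of the displayed intertwining formulas, e.g.\ ``$a=avb$'', contain slips of notation, but the intended content is right), and your nilpotent-idempotent argument for the last sentence matches the paper's identification of $\A$ with the nilpotent representations.
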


\begin{pf}
Differentiating the potential we see that the Jacobi algebra has relations
\[a'=bc, \quad b'=ca, \quad c'=ab,\]
which allow us to eliminate $a',b',c'$. The remaining relations are then
\begin{equation}
\label{late}
a=abca, \quad b=bcab, \quad c=cabc.\end{equation}
Given a representation of the Jacobi algebra we can split the vector space associated to vertex 1 as $\Im(abc)\oplus \Ker(abc)$. Indeed, by the relations \eqref{late}, if $v=(w)abc$ lies in the kernel of the map $abc$ then $(w)a=(w)abca=(v)a=(v)abca=0$ and hence  $v=0$. Similar splittings exist  at the other vertices and it follows easily from the relations \eqref{late}  that all arrows preserve these splittings. Hence, if $E$ is an indecomposable object of $\A'$ we either have $abc=bca=cba=0$ or each of these maps is injective. 

In the first case it follows from the relations \eqref{late} that all arrows are 0, and hence, since $E$ is indecomposable it must be one of the vertex simples. In the second case, each of $a,b,c$ is injective, hence they are all isomorphisms, and \eqref{late} implies that $abc$ is the identity, and similarly for $bca$ and $cab$. We can then choose the gauge so that $a,b,c$ are identity maps, and since $E$ is indecomposable it follows that the dimension vector must be $(1,1,1)$. This unique extra indecomposable $S_4$ is  simple, since there are no maps between it and the vertex simples.
\end{pf}

It follows from the Lemma that  the objects $S_i$ are all spherical and have no  extensions between them, since otherwise there would be more indecomposable objects in the category. Consider now the the  derived category of the  uncompleted Ginzburg algebra of the above quiver with potential, and the subcategory $\D'$ of objects with finite-dimensional cohomology. It is a \CY triangulated category  with a heart $\A'\subset \D$ containing 4 simple, spherical objects, with zero $\Ext$-groups between them.

The Grothendieck group is $K(\D')\isom \Z^{\oplus 4}$ and the dimension vector defines a group homomorphism
\[d\colon K(\D')\to \Z^{\oplus 3}.\]
 Consider the space $\Stab'(\D')\subset \Stab(\D')$ consisting of  those stability conditions whose central charge factors via $d$. This is acted on by the group of autoequivalences $\Aut'(\D')$ whose action on $K(\D')$ preserves the kernel of $d$. 

The group  of all exact autoequivalences of $\D'$ is $\Aut(\D')\isom \Sym_4\ltimes \Z^{\oplus 4} $, with the four generators $\rho_i$ shifting the four  simples $S_i$ respectively, and the symmetric group $\Sym_4$ permuting them. The subgroup $\Aut'(\D')$ contains a subgroup $\Sph(\D')\isom \Z^{\oplus 4}$  generated by the 4 elements $\rho_i^2$.

\begin{lemma}
There is a short exact sequence
 \[1\lra \Sph(\D')\lra \Aut'(\D')\lra  \Sym_3\ltimes\Z_2^{\oplus 3}\lra 1.\]
\end{lemma}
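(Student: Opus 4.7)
The plan is to reduce the statement to a computation inside the finite quotient $\Aut(\D')/\Sph(\D')$ acting on the Grothendieck group.

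First I will check the inclusion $\Sph(\D') \subseteq \Aut'(\D')$. Since the twist $\Tw_{S_i} = \rho_i^{-2}$ acts on $[S_i]$ by two shifts (which cancel in $K(\D')$) and fixes the other basis classes, each generator of $\Sph(\D')$ acts trivially on $K(\D')$; in particular it preserves $\ker d$. It follows that $\Sph(\D')$ is automatically normal in $\Aut'(\D')$ since it is already normal in $\Aut(\D')$.

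Next I will identify the quotient $\Aut(\D')/\Sph(\D')$ with the hyperoctahedral group $B_4 = \Sym_4 \ltimes \Z_2^{\oplus 4}$, acting faithfully on $K(\D') \cong \Z^{\oplus 4}$ by signed permutations of the basis $\{[S_1],\ldots,[S_4]\}$: the generator $\rho_i$ descends to the sign change in the $i$-th coordinate, $\Sym_4$ acts by permutations, and the kernel of the action on $K(\D')$ is exactly $\Sph(\D') = 2\Z^{\oplus 4}$. Writing $v = [S_1]+[S_2]+[S_3]-[S_4]$ for the generator of $\ker d$, the image of $\Aut'(\D')/\Sph(\D')$ in $B_4$ is precisely the subgroup $H$ stabilising the line $\Z \cdot v$.

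The main step is to compute $H$. Because $v$ has all coordinates $\pm 1$, for any $\sigma \in \Sym_4$ the requirement that $(\sigma,\epsilon)$ fix $v$ pointwise determines $\epsilon$ uniquely by $\epsilon_i = v_{\sigma(i)} v_i$; hence the pointwise stabiliser $\Stab_{B_4}(v) \cong \Sym_4$ has order $24$. The central element $-I \in B_4$ (all signs $-1$) negates $v$ and commutes with $\Stab_{B_4}(v)$, so together with this stabiliser it generates the full stabiliser of $\{v,-v\}$, yielding $H = \Stab_{B_4}(v) \times \langle -I \rangle \cong \Sym_4 \times \Z_2$ of order $48$.

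Finally I will invoke the classical isomorphism $\Sym_4 \times \Z_2 \cong \Sym_3 \ltimes \Z_2^{\oplus 3}$: both groups realise the Weyl group $W(B_3)$, that is, the full symmetry group of the $3$-cube, which decomposes as the direct product of its rotation subgroup $W(A_3) \cong \Sym_4$ (acting on the four main diagonals) and the central inversion $\Z_2$. The substantive step is the stabiliser calculation in $B_4$, which reduces to the easy observation that $v$ has unit entries; the rest is group-theoretic bookkeeping.
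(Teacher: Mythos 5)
Your proof is correct, and it reaches the conclusion by a genuinely different route from the paper. You pass entirely to the lattice $K(\D')$: you identify $\Aut(\D')/\Sph(\D')$ with the signed permutation group $B_4=\Sym_4\ltimes\Z_2^{\oplus 4}$ acting faithfully on $\Z^{\oplus 4}$, observe that $\Aut'(\D')/\Sph(\D')$ is by definition the stabiliser of the line $\Z v$ with $v=[S_1]+[S_2]+[S_3]-[S_4]$, compute that stabiliser to be $\Sym_4\times\langle -I\rangle$ using the fact that $v$ has unit entries (so each $\sigma\in\Sym_4$ admits a unique compatible sign vector), and finish with the classical isomorphism $\Sym_4\times\Z_2\cong W(B_3)=\Sym_3\ltimes\Z_2^{\oplus 3}$. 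The paper instead exhibits explicit autoequivalences $\tau_{(12)(34)}\colon (S_1,S_2,S_3,S_4)\mapsto(S_2,S_1,S_4[1],S_3[1])$ and its conjugates, notes the relation $\tau_{(12)(34)}\circ\tau_{(13)(24)}\circ\tau_{(23)(14)}=[1]$, and argues that any element can be reduced by these to a permutation of $(S_1,S_2,S_3)$. Your argument is cleaner as a determination of the abstract isomorphism type and makes the order count $48$ transparent, but it identifies the quotient with $\Sym_3\ltimes\Z_2^{\oplus 3}$ only via an abstract isomorphism of groups; the paper's explicit generators are what make the subsequent matching with the signed mapping class group $\MCG^\pm(\S,\M)$ (permutations of the three punctures and sign changes at each) visible, and also locate the shift functor $[1]$ inside the quotient. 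For the lemma as literally stated, your argument is complete.
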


\begin{pf}
An element of $\Aut'(\D')$ is determined by its action on the objects $S_i$, and, up to the action of $\Sph(\D')$, each of these is taken to an object of the form $S_j$ or $S_j[1]$. Consider the  transformation 
\[\tau_{(12)(34)}\colon (S_1,S_2,S_3,S_4)\mapsto (S_2,S_1,S_4[1],S_3[1])\]
along with its two conjugates by permutations of $(S_1,S_2,S_3)$. There is a relation
\[\tau_{(12)(34)}\circ \tau_{(13)(24)}\circ \tau_{(23)(14)}=[1]\in \Aut'(\D')/\Sph(\D').\]
Consider an element $\sigma$ of the quotient group $\Aut'(\D')/\Sph(\D')$. Composing with the above transformations we can asssume that $\sigma$ takes $S_4$ to itself.
The relation $[S_4]=[S_1]+[S_2]+[S_3]$ then forces $\sigma$ to be a permutation of the objects  $(S_1,S_2,S_3)$.
\end{pf}

The  space $\Stab'(\D')/\Sph(\D')$ is equal to $(\C^*)^3\setminus \Delta$ with co-ordinates $z_i=Z(S_i)$, where the discriminant locus $\Delta$ is given by  $z_1+z_2+z_3=0$ as before. Thus we obtain
  an isomorphism
 \[ \Quadorb(\S,\M)\isom \Stab'(\D')/\Aut'(\D')\]
 which can be thought of as a modifed version of Theorem \ref{Thm:Main0}.


\bibliographystyle{amsplain}

\end{document}